\documentclass[11pt,a4paper]{amsart}

\usepackage{paralist}				
\usepackage{amsfonts} 				
\usepackage{amsmath} 				
\usepackage{amsopn}				
\usepackage{amsthm} 				
\usepackage{yfonts}
\usepackage[arrow, matrix, curve]{xy} 		
\usepackage{extarrows}				
\usepackage{mathtools}				
	
\usepackage{leftidx}
	
\usepackage{listofsymbols}

\usepackage{hyperref}

\usepackage[mathscr]{euscript}
\let\euscr\mathscr \let\mathscr\relax
\usepackage[scr]{rsfso}

	\DeclareMathOperator{\id}{id}

	\DeclareMathOperator{\Alt}{\mathcal{A}lt}

	\DeclareMathOperator{\pr}{pr}

	\newcommand{\bl}{{\scriptscriptstyle \bullet}}
	
	\newcommand{\A}{\mathcal{A}}
	\newcommand{\B}{\mathcal{B}}
	
	\newcommand{\E}{\mathcal{E}}
	\newcommand{\F}{\mathcal{F}}

	\newcommand{\I}{\mathcal{I}}

	\renewcommand{\L}{{\Lambda}}
	\newcommand{\Lc}{\mathcal{L}}
	\newcommand{\M}{\mathcal{M}}
	\newcommand{\N}{\mathbb{N}}
	\newcommand{\Nc}{\mathcal{N}}
	\renewcommand{\O}{\mathcal{O}}
	\renewcommand{\P}{\mathcal{P}}
	\newcommand{\Q}{\mathbb{Q}}
	\newcommand{\R}{\mathbb{R}}
	\newcommand{\T}{\mathcal{T}}
	\newcommand{\U}{\mathcal{U}}
	\newcommand{\V}{\mathcal{V}}
	\newcommand{\W}{\mathcal{W}}
	
	\newcommand{\Z}{\mathbb{Z}}
	
	\newcommand{\ep}{\varepsilon}
	\renewcommand{\a}{\alpha}
	\renewcommand{\b}{\beta}
	\newcommand{\g}{\gamma}
	\renewcommand{\i}{\iota}

	\newcommand{\ol}[1]{\overline{#1}}
	
	\newcommand{\Aut}{\mathrm{Aut}}
	\newcommand{\Hom}{\mathrm{Hom}}

	\newcommand{\0}{{\overline{0}}}
	\newcommand{\Dom}{\mathbf{Dom}} 
	
	\newcommand{\sgn}{{\mathrm{sgn}}}
	\newcommand{\nil}{{+}}
	
	\newcommand{\SC}{\mathcal{SC}^{\infty}}
	\newcommand{\Cs}{\mathcal{C}^{\infty}}

	\newcommand{\de}{\mathrm{d}}
	\newcommand{\Top}{\mathbf{Top}}
	\newcommand{\SVec}{{\mathbf{SVec}_{lc}}}
	\newcommand{\SDom}{\mathbf{SDom}}

	\newcommand{\SDomk}[1]{\mathbf{SDom}^{(#1)}}
	\newcommand{\Frechet}{Fr\'echet}
	\newcommand{\Gr}{\mathbf{Gr}}
	\newcommand{\Grk}[1]{\mathbf{Gr}^{(#1)}}
	\newcommand{\Set}{\mathbf{Set}}
	\newcommand{\SMan}{\mathbf{SMan}}
	\newcommand{\SMank}[1]{\mathbf{SMan}^{(#1)}}
	\newcommand{\VBun}{\mathbf{VBun}}
	\newcommand{\SVBun}{\mathbf{SVBun}}
	\newcommand{\SVBunk}[1]{\mathbf{SVBun}^{(#1)}}
	\newcommand{\Man}{\mathbf{Man}}
	\newcommand{\MBunk}[1]{\mathbf{MBun}^{(#1)}}
	\newcommand{\MBun}{\mathbf{MBun}}
	\newcommand{\MSpacek}[1]{\mathbf{MSpace}^{(#1)}}
	\newcommand{\MSpaceke}[1]{\mathbf{MSpace}_{\ol{0}}^{(#1)}}

	\newcommand{\oddgen}{\lambda}
	
	\newcommand{\po}{\mathbf{p}}

	\newcommand{\Po}{\euscr{P}} 
	\newcommand{\Poo}[1]{\euscr{P}^{#1}_1}
	\newcommand{\Poe}[1]{\euscr{P}^{#1}_0}
	\newcommand{\Poee}[1]{\euscr{P}^{#1}_{0,+}}
	\newcommand{\Part}{\mathscr{P}} 
	\newcommand{\Parte}{\mathscr{P}_{\overline{0}}}
	
	\newcommand{\Lz}{\Lambda_{\overline{0}}}
	\newcommand{\Lo}{\Lambda_{\overline{1}}}
	
	\newcommand{\Sy}{\mathfrak{S}} 
	
	\newcommand{\Shf}{\mathscr{O}}
	
	\newcommand{\pre}[2]{\prescript{}{#1}{#2}}

	\newcommand{\Cc}{\mathcal{C}}
	\newcommand{\Dc}{\mathcal{D}}
	
	\newcommand{\del}{\partial}
	
	\newcommand{\alt}[1]{\mathfrak{A}^{#1}}

\usepackage{scalerel}
\usepackage{stackengine}
\stackMath
\def\hatgap{2pt}
\def\subdown{-2pt}
\newcommand\reallywidehat[2][]{%
\renewcommand\stackalignment{l}%
\stackon[\hatgap]{#2}{%
\stretchto{%
    \scalerel*[\widthof{$#2$}]{\kern-.6pt\bigwedge\kern-.6pt}%
    {\rule[-\textheight/2]{1ex}{\textheight}}
}{0.5ex}
_{\smash{\belowbaseline[\subdown]{\scriptstyle#1}}}%
}}

\usepackage[refpage]{nomencl}
\makenomenclature

\usepackage{multicol}
\makeatletter
\@ifundefined{chapter}
  {\def\wilh@nomsection{section}}
  {\def\wilh@nomsection{chapter}}

\def\thenomenclature{%
  \begin{multicols}{2}[%
    \csname\wilh@nomsection\endcsname*{\nomname}
    \if@intoc\addcontentsline{toc}{\wilh@nomsection}{\nomname}\fi
    \nompreamble]
  \list{}{%
    \labelwidth\nom@tempdim
    \leftmargin\labelwidth
    \advance\leftmargin\labelsep
    \itemsep\nomitemsep
    \let\makelabel\nomlabel}%
}
\def\endthenomenclature{%
  \endlist
  \end{multicols}
  \nompostamble}
\makeatother

	\theoremstyle{plain}
	\newtheorem{theorem}{Theorem}[section]		
	\newtheorem{proposition}[theorem]{Proposition}
	\newtheorem{lemma}[theorem]{Lemma}
	
	\newtheorem{corollary}[theorem]{Corollary}
	
	\theoremstyle{definition}
	\newtheorem{lemma/definition}[theorem]{Lemma/Definition}
	\newtheorem{example}[theorem]{Example}
	\newtheorem{definition}[theorem]{Definition}
	\newtheorem*{problem}{Problem}
	\newtheorem{remark}[theorem]{Remark}
	
	\setdefaultitem{\textbullet}{-}{}{}		 
\setdefaultenum{(a)}{(i)}{}{}	

\allowdisplaybreaks

\begin{document}
\title{Infinite-Dimensional Supermanifolds via Multilinear Bundles}
\author{Jakob Sch\"utt}
\address{Institut f\"ur Mathematik, Universit\"at Paderborn, 
Warburgerstrasse 100, 33098 Paderborn, Germany}
\email{spoon@math.upb.de}

\subjclass[2010]{Primary: 58A50; Secondary: 58B99}

\keywords{Infinite-dimensional supermanifold}

\thanks{This work consists of results obtained in the PhD thesis
``Infinite-Dimensional Supermanifolds, Lie Supergroups and the Supergroup of 
Superdiffeomorphisms''
of the author \cite{DissSchuett}.}

\begin{abstract}
In this paper, we provide an accessible introduction to the theory 
of locally convex supermanifolds in the categorical approach.
In this setting, a supermanifold is a functor $\M\colon\Gr\to\Man$ from the 
category of Grassmann algebras to the category of locally convex manifolds that 
has certain local models, forming something akin to an atlas.
We give a mostly self-contained, concrete definition of supermanifolds along 
these lines, closing several gaps in the literature on the way.

If $\L_n\in\Gr$ is the Grassmann algebra with $n$ generators, we show that 
$\M_{\L_n}$ has the structure of a so called multilinear bundle over the base 
manifold $\M_\R$. We use this fact to show that the 
projective limit $\varprojlim_n\M_{\L_n}$ exists 
in the category of manifolds. In fact, this gives us a faithful functor
$\varprojlim\colon\SMan\to\Man$
from the category of supermanifolds to the category of manifolds. This functor 
respects products, commutes with the respective tangent functor and retains the 
respective Hausdorff property. In this way, supermanifolds can be seen as a
particular kind of infinite-dimensional fiber bundles.
\end{abstract}

\maketitle

\tableofcontents

\section{Introduction}
The first rigorous definition of infinite-dimensional supermanifolds, and also 
the one we will use in this work, is the categorical approach suggested by 
Molotkov in \cite{Mol2}.\footnote{Throughout this work, we will cite 
the more readily available and slightly updated article \cite{Mol}.}
In this approach supermanifolds are defined to be functors from the category of 
finitely generated Grassmann algebras $\Gr$ to the category of manifolds 
$\Man$ with additional local information contained in an `atlas' consisting of 
certain natural transformations.
Let us briefly relate this to the usual sheaf theoretic approach
by Berezin and Le\v{i}tes \cite{BerLei} in the case of finite-dimensional 
supermanifolds. 
In the latter, the functor of points (i.e., the Yoneda embedding)
has long been known to be a useful tool (see for example \cite{Lei80}). 
Moreover, to fully understand the functor of points, it suffices to consider 
supermanifolds whose base manifold is a single point, the so called 
\textit{superpoints}. The superpoints are parametrized by the Grassmann 
algebras and
for every superpoint $\P$ the set of morphisms $\Hom_{\SMan}(\P,\M)$ to a given 
supermanifold $\M$ can be turned into a smooth manifold. In this way one 
obtains a functor 
$\Gr\to\Man$. 
Shvarts \cite{Shv} and Voronov \cite{Vor} had the idea
to use such functors to \textit{define} finite-dimensional 
supermanifolds and Molotkov extended this definition to
infinite-dimensional 
supermanifolds.\footnote{
Most statements in \cite{Mol} are made for Banach supermanifolds but many 
can be easily transferred to \Frechet or locally convex supermanifolds  
(compare \cite[8.5, p.418]{Mol}).}
We call this the categorical approach.

Because of its close relation to the functor of points, some 
of the intuition from the finite-dimensional situation carries over to the 
infinite-dimensional setting. For 
example, the definition of an internal Hom and the related 
superdiffeomorphisms are obtained quite easily in this way (see \cite[8.2, 
p.415 
and 8.4, p.417]{Mol}).
Using this, Hanisch \cite{Hanisch} was able to endow the inner Hom object 
of two finite-dimensional supermanifolds with a supermanifold structure 
in Molotkov's framework.
Another nice feature of the categorical approach is that the definition of 
finite-dimensional and infinite-dimensional supermanifolds, along with their 
morphisms and 
their tangent bundles, is exactly the same. No special topological 
considerations are necessary. Similarly, as has been shown in \cite{AllLau}, 
it lends itself to easy generalization beyond the real or complex case.
What is more, many constructions and calculations can essentially be done 
pointwise, i.e., for every Grassmann algebra. This means that for 
finite-dimensional supermanifolds one often only has to deal with 
finite-dimensional ordinary manifolds.

Despite these advantages, the categorical approach has rarely been used and 
even where it appears, it is usually only applied half-heartedly. For instance, 
when superspaces of morphisms between supermanifolds are considered, the 
morphisms are usually expressed in the sheaf theoretic language (see for 
example \cite{SaWo}, 
\cite{Hanisch} and \cite{BoKo}).
The reason for this lack of interest appears to be twofold. 
For one, the categorical language of natural transformations, 
Grothendieck topologies, sheaves in categories and so on is rather abstract 
and not part of the usual toolbox employed in the field of analysis.
This is then exacerbated by the fact that Molotkov's foundational article 
\cite{Mol2} (resp. \cite{Mol}) contains almost no proofs.
While some proofs for Molotkov's statements were subsequently offered by
Sachse in 
\cite{SachseDiss} and
\cite{Sachse2},
he often falls back to the sheaf theoretic approach so that the statements 
are not shown in their original generality and one obtains little 
intuition for the categorical approach.

We attempt to remedy both of these problems in this work. 
On the one hand, we give a complete definition of infinite-dimensional 
supermanifolds and their morphisms, proving all statements that we use 
(with the rare exception where the proof in the literature can 
directly be applied to our situation and is relatively straightforward).
On the other hand, we simplify the categorical language as much as possible.
As 
it turns out, one can develop the categorical approach in fairly concrete terms 
closely resembling the definition of ordinary manifolds. In this way, we 
completely avoid dealing with more involved questions like representability.

Remarkably, this concrete point of view leads to a canonical faithful functor 
from the category of 
 supermanifolds to the category of manifolds.
This functor has good properties such as
respecting products (i.e., mapping Lie supergroups to Lie groups), commuting 
with the respective tangent functor and retaining the respective Hausdorff 
property.
It can be turned into an equivalence of categories if one 
considers a specific type of fiber bundles on the right-hand side.
 In other words, \textit{we may consider supermanifolds as ordinary manifolds 
with a particular kind of atlas in 
a canonical, well-behaved way.}
All non-trivial supermanifolds are at best mapped to
\Frechet\ manifolds and one wonders whether techniques of infinite-dimensional 
analysis could prove useful in finite-dimensional superanalysis.

To streamline our work, we only consider supermanifolds over the base field 
$\R$. However, many of our constructions derive from \cite{AllLau} and 
\cite{Bert}, where much more general fields and even rings are considered.  
We have consciously formulated our proofs in such a way that they can 
easily be generalized where possible. The only noteworthy obstacles to such 
generalizations 
are Batchelor's Theorem (which necessitates a partition of unity) and 
combinatorial formulas which do not allow for base rings with positive 
characteristic. For the latter, we indicate ways around the problem.

Many standard constructions are beyond the scope of this paper, but we 
hope to have provided the reader with the tools to rectify this with relative 
ease. While equivalences between certain categories of 
supermanifolds in the sheaf theoretic, the concrete and the categorical 
approach 
have been discussed in some detail in \cite{AllLau}, it is not immediately 
obvious how objects like vector fields can be translated between 
the different point of views.
More work to this effect will be critical to enable one to pick and 
choose effectively which approach is most suitable for the problem at hand.
One final drawback of our work that should not go unmentioned is that 
in trying to be as concrete as possible, we
lose some of the intuition offered by the functor of points approach. Thus, 
a close reading of \cite{Mol} is still advisable.

\subsection{Overview and Main Results}
A Grassmann algebra is a free associative $\R$-algebra 
$\L_n:=\R[\oddgen_1,\ldots,\oddgen_n]$, where the 
generators satisfy the relation $\oddgen_i\oddgen_j=-\oddgen_j\oddgen_i$.
There exists a natural grading $\L_n=\L_{n,\ol{0}}\oplus\L_{n,\ol{1}}$ and
the set of objects $\{\R,\L_1,\L_2,\ldots\}$ together 
with the graded morphisms form the category $\Gr$ of Grassmann algebras.
Generators of Grassmann algebras behave infinitesimally in the sense that 
$\oddgen_i^2=0$ and we will see that for this reason (together with 
functoriality) the structure of supermanifolds has many similarities to the 
structure of higher tangent bundles. This enables us to make heavy use of 
the techniques developed by Bertram in \cite{Bert} for dealing with higher 
tangent bundles, higher tangent groups and higher order diffeomorphism groups.

As mentioned, we want to define supermanifolds as functors from the category 
of Grassmann algebras to the category of manifolds with certain local models.
In analogy to ordinary manifolds, we begin by describing the differential 
calculus on the model space:
\begin{compactenum}[1.]
 \item Instead of a vector space, the model space of a supermanifold is
a functor of the form
\[
 \ol{E}\colon\Gr\to\Top,\quad 
\L\mapsto\ol{E}_\L:=(E_0\otimes\Lz)\oplus(E_1\otimes\Lo),
\]
where $E=E_0\oplus E_1$ is a $\Z_2$-graded Hausdorff locally convex vector 
space and $\ol{E}_\L$ is given the obvious product topology.
Then $\ol{E}_\L$ is a $\Lz$-module and the functor $\ol{E}$ 
has the structure of a so called $\ol{\R}$-module in the category $\Top^\Gr$.
\item Open subsets of the model space correspond to open subfunctors, i.e. 
functors 
\[
 \U\colon\Gr\to\Top
\]
such that $\U_\L\subseteq\ol{E}_\L$ is open for all $\L\in\Gr$ and the 
inclusion is a natural transformation. We call such functors \textit{super 
domains}. One 
can show that superdomains have the form
\[
 \U_\L=\U_\R\times(E_0\otimes\Lz^\nil)\times(E_1\otimes\Lo),
\]
where $\Lz^\nil$ is the nilpotent part of $\Lz$.
\item Smooth functions correspond to supersmooth morphisms, i.e. natural 
transformations
\[
 f\colon\U\to\ol{F}
\]
such that $f_\L$ is smooth for all $\L\in\Gr$ and
\[
 df_\L\colon\U_\L\times\ol{E}_\L\to\ol{F}_\L
\]
is $\Lz$-linear in the second component.
\end{compactenum}
Using the infinitesimal behavior of the generators, one obtains an ``exact 
Taylor expansion'' for 
supersmooth morphisms. This can then be used to identify a supersmooth morphism 
$f\colon\U\to\ol{F}$
with its \textit{skeleton}, i.e., a family $(f_k)_{k\in\N_0}$ of maps
$f_k\colon\U_\R\to\Alt^k(E_1,F_{k\ \mathrm{mod}\ 2})$ that are smooth in an 
appropriate sense. Skeletons are of utmost importance for many proofs and the 
description of spaces of supersmooth morphisms.

A supermanifold is defined to be a functor $\M\colon\Gr\to\Man,\ 
\L\mapsto\M_\L$ such that there exists an atlas of natural transformations 
$\varphi^\a\colon\U^\a\to\M$ from superdomains $\U^\a$ to $\M$ for which any 
change of charts is supersmooth.
If $\ep_{\L_n}\colon\L\to\R$ denotes the natural projection, we show that
$\M_{\ep_{\L_n}}\colon\M_{\L_n}\to\M_\R$ gives $\M_{\L_n}$ the structure of a 
so called 
multilinear bundle of degree $n$ over the base manifold $\M_\R$ (compare 
\cite{Bert}). What is more, 
we show in Theorem \ref{thrmsmanmbun} that the family $(\M_{\L})_{\L\in\Gr}$ 
gives one an inverse system of such bundles and that the limit
$\varprojlim_n\M_{\L_n}$
exists in the category of manifolds. This provides us with the functor
\[
 \varprojlim\colon\SMan\to\Man
\]
from the category of supermanifolds to the category of manifolds mentioned 
above. Multilinear bundles and their limits are discussed in Appendix 
\ref{chapmullin}.

In the sheaf theoretic approach every manifold together with its sheaf of 
functions is clearly a supermanifold.
For us the situation is a bit more complicated since a manifold is not a 
functor $\Gr\to\Man$. However, there exists a natural embedding
\[
 \i\colon\Man\to\SMan
\]
introduced by Molotkov in \cite{MolICTP}. In Proposition \ref{proppeventk}, we 
give a description of $\i(M)$ via higher tangent bundles of the manifold $M$, 
which is particularly useful for understanding Lie supergroups.
Similarly, Molotkov constructed a faithful functor
\[
 \i^1_\infty\colon\VBun\to\SMan
\]
from the category of vector bundles to the category of supermanifolds. He 
showed in \cite{Mol2} that any supermanifold whose base manifold allows a 
partition of unity is (non-canonically) isomorphic to a supermanifold that 
comes from a vector bundle. Since this result, generally known as Batchelor's 
Theorem, is important for us and \cite{Mol2} is rather difficult to find, we 
briefly summarize its proof.

\section{Preliminaries and Notation}\label{chap1}
We set $\N:=\{1,2,\ldots\}$\nomenclature{$\N$, $\N_0$}{} and 
$\N_0:=\{0,1,2,\ldots\}$,
respectively.
Let $k\in\N_0$.
Throughout this work, we will write $\ol{k}:=k\ 
\mathrm{mod}\ 2\in\{0,1\}$\nomenclature{$\ol{k}$}{}.

A \textit{locally convex super vector space $E$} is a locally convex vector 
space $E$ together with a fixed decomposition $E=E_0\oplus E_1$, where $E_0$ 
and $E_1$ are locally convex vector spaces and the direct sum is a topological. 
A continuous linear map $f\colon E\to F$ between locally convex super vector 
spaces is a \textit{morphism} of locally convex super vector spaces if
 $f(E_i)\subseteq F_i$ holds for $i\in\{0,1\}$. We denote by $\SVec$ the 
category of Hausdorff locally convex super vector spaces and their morphisms.

We denote by $\Sy_k$\nomenclature{$\Sy_n$}{} the symmetrical group of 
order $k$ and let $\sgn(\sigma)\in\{1,-1\}$ be the \textit{sign} of a 
permutation $\sigma\in\Sy_k$.
\nomenclature{$\sgn$}{}
If $E_1,\ldots,E_k,\ E$ and $F$ are 
locally convex spaces, we let 
$\Lc^k(E_1,\ldots,E_k;F)$ be the $\R$-vector space of continuous 
$k$-multilinear maps
\[
 f\colon E_1\times\cdots\times E_k\to F.
\]
On $\Lc^k(E;F):=\Lc^k(E,\ldots, E;F)$, $\Sy_k$ acts from the left via
\[
 f\circ\sigma(v):=f(v^\sigma):=f(v_{\sigma(1)},\ldots,v_{\sigma(k)})
\]
for $f\in \Lc^k(E;F)$, $\sigma\in\Sy_k$ and $v=(v_1,\ldots,v_k)\in E^k$.
We denote by $\Alt^k(E;F)\subseteq \Lc^k(E;F)$ the space of 
continuous alternating 
$k$-multilinear maps.\nomenclature{$\Alt$}{}

We let $\Lc^0(E;F)=\Alt^0(E;F):=F$ and define the projection
\[
 \alt{k}\colon \Lc^k(E;F)\to\Alt^k(E;F),\quad f\mapsto 
\sum_{\sigma\in\Sy_k}\frac{\sgn(\sigma)}{k!} f\circ \sigma.
\]
\nomenclature{$\alt{k}$}{}
\subsection{Partitions}
We largely use the notation of \cite{Bert} for partitions.
Let $A$ be a finite set. A \textit{partition}
\index{Partition} 
of $A$ is a 
subset 
$\nu=\{\nu_1,\ldots,\nu_{\ell}\}$ of the power set  
$\Po(A)$\nomenclature{$\Po(A)$}{} of $A$ such 
that the sets $\nu_i$, $1\leq i\leq\ell$, are non-empty, pairwise disjoint and 
their union is 
$A$. In this situation, we call $A$ \textit{the total set 
of}\index{Partition!total set of a} $\nu$ and let 
$\underline{\nu}:=A$\nomenclature{$\underline{\nu}$}{}. We define the \textit{ 
length}\index{Partition!length of} of 
the partition $\nu$ as $\ell(\nu):=|\nu|$. Furthermore, we 
denote by $\Part(A)$\nomenclature{$\Part(A)$}{} the set of all partitions of 
$A$ and by $\Part_{\ell}(A)$ \nomenclature{$\Part_{\ell}(A)$}{}
the set of all partitions of $A$ of length $\ell$. If $|A|$ is even, then we 
define $\Part(A)_{\ol{0}}$\nomenclature{$\Part(A)_{\ol{0}}$}{} as those 
partitions which only 
contain sets of even cardinality and $\Part_\ell(A)_{\ol{0}}$ as the partitions 
from $\Part(A)_{\ol{0}}$ of length $\ell$.

For $k\in\N$, we define $\Po^k:=\Po(\{1,\ldots,k\})$
\nomenclature{$\Po^k$, $\Po^k_+$}{}
and $\Po^k_+:=\Po^k\setminus\{\emptyset\}$.
Occasionally, it will be convenient to consider only subsets of even, resp.\ 
odd, cardinality 
and we define $\Poe{k}:=\{A\in\Po^k\colon|A|\ 
\text{even}\}$\nomenclature{$\Poe{k}$, $\Poee{k}$}{}, 
$\Poo{k}:=\{A\in\Po^k\colon|A|\ \text{odd}\}$
\nomenclature{$\Poo{k}$}{} as well as
$\Poee{k}:=\Poe{k}\setminus\{\emptyset\}$.
As a convention, 
$\{i_1,\ldots,i_r\}\subseteq\{1,\ldots,k\}$ is understood to imply 
$i_1<\ldots<i_r$. With this, the lexicographic order\index{Lexicographic order} 
induces a total order on 
the power set $\Po^k$ and every partition $\nu$ can be viewed as an 
ordered $\ell(\nu)$-tuple, which we will do in the sequel (compare 
\cite[MA.4, p.170]{Bert}).
There is another total order on $\Po^k$ that will be useful for us: 
On $\Poe{k}$ and $\Poo{k}$, we use the order induced by $\Po^k$ but for all 
$B\in\Poe{k}$ and all $C\in\Poo{k}$, we let $B<C$.
We will specify whenever we want to 
use this order which we will call the \textit{graded lexicographic 
order}\index{Lexicographic order!graded}.
For a partition 
$\nu=(\nu_1,\ldots,\nu_\ell)$, we define $e(\nu)$\nomenclature{$e(\nu)$}{}, 
resp.\ $o(\nu)$\nomenclature{$o(\nu)$}{}, as the number of sets in $\nu$ with 
even, resp.\ 
odd, cardinality.
In other words, in the graded lexicographic order, we have
\[
 \underbrace{\nu_1<\ldots<\nu_{e(\nu)}}_{\text{even 
cardinality}}<\underbrace{\nu_{e(\nu)+1}<\ldots<\nu_{e(\nu)+o(\nu)}}_{\text{odd 
cardinality}}.
\]
Let $A$ be a finite set and $\nu,\omega\in \Part(A)$. We call $\nu$ a \textit{ 
refinement}\index{Partition!refinement of a} of $\omega$, or $\omega$ \textit{ 
coarser}\index{Partition!coarser} than $\nu$, and write 
$\omega\preceq\nu$\nomenclature{$\omega\preceq\nu$}{} if for every set 
$L\in\nu$ there exists a set 
$O\in\omega$ such that $L\subseteq O$. For $\omega\preceq\nu$ and 
$O\in\omega$, we define the $\nu$\textit{-induced partition of} 
$O$\index{Partition!induced} by
\[
 O|\nu:=\{L\in\nu|L\subseteq O\}\in \Part(O).
\]
\nomenclature{$O|\nu$}{}
In this situation, $\{\omega_1|\nu,\ldots,\omega_{\ell(\omega)}|\nu\}$ 
is a partition of the finite set $\nu$. One easily checks that this 
defines a one-to-one correspondence between partitions that are coarser than 
$\nu$ and $\Part(\nu)$.
\subsection{The Category of Grassmann Algebras}
For any $k\in\N_0$, we let 
$\Lambda_k:=\R[\oddgen_1,\ldots,\oddgen_k]$\nomenclature{$\Lambda_k$, $\L$}{} 
be the unital
associative 
algebra freely generated by the
generators $\oddgen_i$\nomenclature{$\oddgen_i$, $\oddgen_I$}{} with the 
relation 
$\oddgen_i\oddgen_j=-\oddgen_j\oddgen_i$ for 
all $i,j\in\N$. Note that this implies $\oddgen_i\oddgen_i=0$. 
For $I=\{i_1,\ldots,i_\ell\}\subseteq\N$ with $1\leq i_1<\ldots< 
i_\ell\leq k$, we set $\oddgen_I:=\oddgen_{i_1}\cdots\oddgen_{i_\ell}$.
These so called 
\textit{Grassmann algebras}\index{Grassmann algebra} have a natural 
$\Z_2$-grading 
given by 
$\Lambda_{k,\ol{0}}:=\bigoplus_{I\in \Poe{k}}
\oddgen_I\R$ and $\Lambda_{k,\ol{1}}:=\bigoplus_{I\in\Poo{k}}
\oddgen_I\R$ which, with the product topology, turns them into 
topological $\R$-algebras.
A morphism $\varphi\colon\L\to\L'$ between two Grassmann algebras is a morphism 
of 
unital $\R$-algebras that is even in the sense that
\[
 \varphi(\L_{\ol{i}})\subseteq \L'_{\ol{i}}\quad\text{for}\ i\in\{0,1\}.
\]
 We denote by
$\Gr$\nomenclature{$\Gr$, $\Grk{n}$}{} the category of Grassmann algebras, and 
for every 
$n\in\N_0$, we let $\Grk{n}$
be 
the full subcategory containing 
only the objects $\Lambda_0,\ldots,\Lambda_n$. For the sake of convenience, we 
let $\Grk{\infty}:=\Gr$.

We denote the subalgebra of nilpotent 
elements of $\Lambda$ by $\Lambda^\nil$ and set 
$\Lambda^+_{\ol{1}}:=\Lambda^+_{\ol{1}}$ and 
$\Lambda^\nil_{\0}:=\Lambda^\nil\cap\Lz$.
For every $m\geq n\geq 0$, we fix morphisms $\ep_{m,n}\colon 
\Lambda_m\to\Lambda_n$\nomenclature{$\ep_{m,n}$}{} and 
$\eta_{n,m}\colon\Lambda_n\to\Lambda_m$\nomenclature{$\eta_{n,m}$}{} by 
setting 
\[
\ep_{m,n}(\oddgen_k):=\begin{cases}
                  \oddgen_k&\ \text{if}\ k\leq n\\
                  0&\ \text{otherwise}
                  \end{cases}
\]
and $\eta_{n,m}(\oddgen_k)=\oddgen_k$ for $1\leq k\leq n$. 
In the special case $n=0$, we let 
$\ep_{\Lambda_m}:=\ep_{m,0}\colon\Lambda_m\to\R$ 
\nomenclature{$\ep_{\Lambda}$}{}
and 
$\eta_{\Lambda_m}:=\eta_{0,m}\colon\R\to\Lambda_m$.
\nomenclature{$\eta_{\Lambda}$}{ }
\subsection{Locally Convex Manifolds}
All locally convex vector spaces in this thesis are meant to be Hausdorff 
locally convex $\R$-vector spaces.\index{Locally convex space}
\subsubsection{Differential calculus in locally convex spaces}
 A very general differential calculus for topological modules was developed in 
\cite{BGN}.\footnote{To be precise:
Hausdorff topological modules over Hausdorff topological rings whose 
unit group is dense.}
We follow this approach but restrict ourselves to the case of Hausdorff locally 
convex $\R$-vector spaces. In this situation, the $\mathcal{C}^k$-maps coincide 
with the classical $\mathcal{C}^k$-maps in the sense of Bastiani 
\cite{Bastiani} (also known as Keller's $C^k_c$-maps, see \cite{Keller}).
However, it is useful to keep the more general setting in mind since large 
parts of this work can be easily generalized without substantial changes.
See also \cite[Chapter I, p.14ff.]{Bert} for a concise overview.
\begin{definition}
 Let $E,F$ be locally convex spaces, $U\subseteq E$ be 
open 
and $f\colon U\to F$ continuous.
We define the open set $U^{[1]}:=\{(x,v,t)\colon x\in U,x+tv\in U\}\subseteq 
U\times E\times\R$
and say that $f$ is \textit{$\mathcal{C}^1$} if there exists a 
continuous 
map
\[
 f^{[1]}\colon U^{[1]}\to F
 \nomenclature{$f^{[1]}$}{}
\]
such that
\[
 f(x+tv)-f(x)=t\cdot f^{[1]}(x,v,t)
\]
for $(x,v,t)\in U^{[1]}$.
The \textit{differential of $f$ at $x\in U$} is then defined as
\[
 df(x)\colon E\to F,\quad v\mapsto df(x)(v):=f^{[1]}(x,v,0).
\]
We also use the notation $df(x,v):=df(x)(v)$.
Inductively, we say $f$ is \textit{$\mathcal{C}^{k+1}$} if $f^{[1]}$ 
is $\mathcal{C}^k$ for $k\in\N$. If $f$ is $\mathcal{C}^k$ 
for every $k\in\N$, we call $f$ \textit{smooth} or $\mathcal{C}^\infty$.
\end{definition}
The usual rules for differentials apply and we sum them up and fix our 
notation in the following remark.
\begin{remark}
 In the situation of the definition, the map $f^{[1]}$ is unique and 
 $df(x)(v)$ is linear in $v$. If $f$ is $\mathcal{C}^2$, then for every $v\in 
E$ 
the partial map $\del_v f:=df(\bl,v)$ is $\mathcal{C}^{1}$ and we define
$d^kf(x)(v_1,\ldots,v_k):=\del_{v_1}\cdots\del_{v_k} f(x)$
\nomenclature{$d^k$}{}
if $f$ is 
$\mathcal{C}^k$. 
The map $d^kf(x)\colon E^k\to F$ is continuous, $\R$-$k$-multilinear and 
symmetric. In particular 
Schwarz's theorem holds in this setting.
If $V\subseteq U$ is open and $f$ is $\mathcal{C}^k$, then the restriction 
$f|_V$ is so.
If $g$ and $f$ are $\mathcal{C}^k$ and composable, then $g\circ f$ is 
$\mathcal{C}^k$ and we have the the \textit{chain rule}
\[
 d(g\circ f)(x,v)=dg(f(x),df(x,v)).
\]
If $h\colon U_1\times U_2\to F$ is $\mathcal{C}^1$ 
we define $d_1h(x_1,x_2)(v_1):=dh(x_1,x_2)(v_1,0)$ and
$d_2h(x_1,x_2)(v_2):=dh(x_1,x_2)(0,v_2)$
and we have the \textit{rule of partial differentials}
\[
 dh(x_1,x_2)(v_1,v_2)=d_1h(x_1,x_2)(v_1)+d_2h(x_1,x_2)(v_2).
\]
If $f$ is of the form $(f_1,f_2)$ or $f_1\times f_2$, then $f$ is 
$\mathcal{C}^k$ if and only if $f_1$ and $f_2$ are $\mathcal{C}^k$ and it holds
that $df=df_1\times df_2$.
\end{remark}
The following lemma is well-known. As it is instrumental for the rest of the 
work, we give a quick proof nevertheless. Clearly, the proof works in the 
most general setting as well.
\begin{lemma}\label{lemmullinder}
 Let $n\in\N$ and $E_1,\ldots, E_n$ and $F$ be locally convex 
spaces. Each continuous $\R$-$n$-multilinear map 
$f\colon E_1\times\cdots\times E_n\to F$ is automatically $\mathcal{C}^1$ and 
thus 
smooth by induction. In this case, we have
\[
 df(x)(v)=\sum_{i=1}^n f(x_1,\ldots,x_{i-1},v_i,x_{i+1},\ldots,x_n)
\]
for $x=(x_1,\ldots, x_n), v=(v_1,\ldots v_n)\in E_1\times\cdots\times E_n$.
\end{lemma}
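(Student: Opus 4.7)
The plan is to compute $f(x+tv)-f(x)$ by brute force using $n$-multilinearity, extract a factor of $t$, and check that the remaining map is continuous in $(x,v,t)$. Induction on the order of differentiability then finishes the smoothness claim.

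More concretely, for $(x,v,t)\in U^{[1]}$ with $U=E_1\times\cdots\times E_n$, I would expand
\[
f(x+tv)=f(x_1+tv_1,\ldots,x_n+tv_n)=\sum_{S\subseteq\{1,\ldots,n\}}t^{|S|}\,f_S(x,v),
\]
where, for $S=\{i_1<\ldots<i_r\}$, the map $f_S(x,v)$ is obtained from $f$ by replacing $x_i$ with $v_i$ for $i\in S$ and keeping $x_i$ for $i\notin S$. This uses only the multilinearity of $f$. The term $S=\emptyset$ gives $f(x)$, so after subtraction every remaining summand carries a factor of $t^{|S|}$ with $|S|\geq 1$. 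Setting
\[
f^{[1]}(x,v,t):=\sum_{\emptyset\neq S\subseteq\{1,\ldots,n\}}t^{|S|-1}f_S(x,v)
\]
we obtain $f(x+tv)-f(x)=t\cdot f^{[1]}(x,v,t)$. Each $f_S$ is a finite composition of the continuous multilinear map $f$ with continuous projections, hence continuous on $U\times E\times\R$, so $f^{[1]}$ is continuous. Thus $f$ is $\mathcal{C}^1$.

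Evaluating at $t=0$ kills every summand with $|S|>1$, leaving only the singleton subsets:
\[
df(x)(v)=f^{[1]}(x,v,0)=\sum_{i=1}^n f(x_1,\ldots,x_{i-1},v_i,x_{i+1},\ldots,x_n),
\]
which is the claimed formula. Finally, since each map $(x,v)\mapsto f_S(x,v)$ is itself continuous multilinear (of some order $\leq 2n$) and $f^{[1]}$ is a polynomial combination of such maps with the continuous variable $t$, the inductive hypothesis applied to each summand shows $f^{[1]}$ is $\mathcal{C}^k$ whenever $f$ is known to be $\mathcal{C}^k$, so $f$ is $\mathcal{C}^{k+1}$; iterating gives $\mathcal{C}^\infty$.

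The only step that needs any care is the bookkeeping in the expansion and verifying that the individual summands $f_S$ really are continuous multilinear maps built from $f$; everything else is a direct application of the definitions. No deep input is needed beyond the continuity of continuous multilinear maps and the uniqueness of $f^{[1]}$ recorded in the preceding remark.
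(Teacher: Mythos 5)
Your proof is correct and follows essentially the same route as the paper: your sum over nonempty subsets $S\subseteq\{1,\ldots,n\}$ with weight $t^{|S|-1}$ is exactly the paper's sum over $j\in\{0,1\}^n$ with $\ell_j\geq 1$ and weight $t^{\ell_j-1}$, and the continuity and induction arguments coincide. You additionally spell out the evaluation at $t=0$ yielding the derivative formula, which the paper leaves implicit.
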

\begin{proof}
 Let $\pre{0}{y_i}:=x_i$ and $\pre{1}{y_i}=v_i$ for $1\leq i \leq n$. With this 
we calculate
\[
 f(x+tv)-f(x)=t\cdot\underbrace{\sum_{j\in\{0,1\}^n,\ell_j\geq 1} 
t^{\ell_j-1}f(\pre{j_1}{y_1},\ldots,\pre{j_n}{y_n})}_{f^{[1]}(x,v,t):=},
\]
where $\ell_j:=j_1+\ldots+j_n$. As $f^{[1]}$ is continuous, the statement 
follows.
\end{proof}
\begin{corollary}\label{corafgb}
 Let $E,F,E'$ and $F'$ be locally convex spaces, $U\subseteq E$, $U'\subseteq 
E'$ be open and $f\colon U\to F$ and $g\colon U'\to F'$ be smooth maps. 
Moreover, let $\a\colon F\to F'$ and $\b\colon E'\to E'$ be continuous linear 
maps such that $\b(U)\subseteq U'$ and
$\a\circ f= g\circ \b|_U$.
Then we have
\[
 \a\circ d^nf= d^n g \circ(\b|_U\times \b^n)
\]
for all $n\in\N_0$.
\end{corollary}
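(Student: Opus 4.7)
I would proceed by induction on $n$. The base case $n=0$ is immediate: by convention $d^0f=f$ and $d^0g=g$, so the identity reduces to the hypothesis $\a\circ f=g\circ\b|_U$.

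For the inductive step, assume
\[
\a\circ d^nf = d^ng\circ(\b|_U\times\b^n)
\]
as smooth maps $U\times E^n\to F'$ (writing $\b^n:=\b\times\cdots\times\b$, $n$ factors). I would then differentiate both sides in the first slot. On the left, the linearity and continuity of $\a$ give $\partial_1(\a\circ d^nf)(x,v)(w) = \a\bigl(\partial_1 d^nf(x,v)(w)\bigr)$. On the right, the chain rule together with the fact that a continuous linear map is its own differential yields
\[
\partial_1\bigl(d^ng\circ(\b|_U\times\b^n)\bigr)(x,v)(w) = \partial_1 d^ng(\b x,\b v)(\b w),
\]
where $v=(v_1,\ldots,v_n)$. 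Equating these and using the relation
\[
\partial_1 d^nf(x,v_1,\ldots,v_n)(w) = d^{n+1}f(x)(w,v_1,\ldots,v_n)
\]
(which comes straight from the iterated-directional-derivative definition of $d^{n+1}$), followed by Schwarz's theorem to reorder the slot containing $w$ to the end, delivers the $(n+1)$-case.

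The only genuinely delicate point I anticipate is interpreting $d^nf$ cleanly as a smooth map $U\times E^n\to F$ so that the chain rule applies to the composition with the continuous linear map $\b|_U\times\b^n$; once that is spelled out, the rest is a mechanical computation. This is an obstacle in principle only, since smoothness of $d^nf$ in all variables jointly follows by iterating the $f^{[1]}$ construction. I would note, as an aside, that nothing in the argument uses anything beyond the chain rule, linearity, and Schwarz's theorem, so the proof transfers verbatim to the general BGN framework.
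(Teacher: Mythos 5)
Your argument is correct and is essentially the paper's own proof written out in full: the paper simply applies the chain rule and Lemma \ref{lemmullinder} (a continuous linear map is its own differential) to the identity $d^n(\a\circ f)=d^n(g\circ\b|_U)$, which unpacked is exactly your induction on $n$ with one differentiation in the base-point slot per step. The point you flag as delicate (joint smoothness of $(x,v)\mapsto d^nf(x)(v)$) is indeed the only technical ingredient, and it holds in the BGN setting by iterating the $f^{[1]}$ construction, as you say.
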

\begin{proof}
 This follows from applying the chain rule and Lemma \ref{lemmullinder} to
 \[
  d^n(\a\circ f)=d^n(g\circ \b|_U).
 \]
\end{proof}
\subsubsection{Products and Inverse Limits}
\begin{lemma}[{\cite[Lemma 1.3, p.24]{GloDL}}]\label{lemrestrclosed}
 Let $E,F$ be locally convex spaces, $U\subseteq E$ open and $f\colon U\to F$. 
If $f(U)\subseteq F'$ for a closed vector subspace $F'\subseteq F$, then $f$ is 
smooth if and only if its co-restriction $f|^{F'}\colon U\to F'$ is smooth.
\end{lemma}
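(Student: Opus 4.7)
The plan is to prove the two implications separately, with the easy direction coming from functoriality of smoothness and the hard direction from a simple induction that exploits closedness of $F'$.

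For the ``if'' direction, suppose the co-restriction $f|^{F'}\colon U\to F'$ is smooth. The inclusion map $\i\colon F'\hookrightarrow F$ is continuous and $\R$-linear, hence smooth by Lemma \ref{lemmullinder}. Since $f=\i\circ f|^{F'}$, the chain rule implies that $f$ is smooth.

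For the ``only if'' direction, I would argue by induction on $k\in\N_0$ that if $f$ is $\Cs^k$, then $f|^{F'}$ is $\Cs^k$ and $(f|^{F'})^{[1]}=(f^{[1]})|^{F'}$. The base case $k=0$ is immediate: since $F'$ carries the subspace topology inherited from $F$, continuity of $f$ into $F$ with image in $F'$ is the same as continuity of $f|^{F'}$ into $F'$. For the inductive step with $k=1$, suppose $f$ is $\Cs^1$ with difference quotient map $f^{[1]}\colon U^{[1]}\to F$. For every $(x,v,t)\in U^{[1]}$ with $t\ne 0$, we have
\[
 f^{[1]}(x,v,t)=\frac{f(x+tv)-f(x)}{t}\in F',
\]
since $F'$ is a vector subspace. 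By continuity of $f^{[1]}$ and closedness of $F'$, the limit
\[
 f^{[1]}(x,v,0)=\lim_{t\to 0}f^{[1]}(x,v,t)
\]
also lies in $F'$. Hence $f^{[1]}(U^{[1]})\subseteq F'$, and the base case applied to $f^{[1]}$ shows that $(f^{[1]})|^{F'}\colon U^{[1]}\to F'$ is continuous. But then $(f^{[1]})|^{F'}$ witnesses the $\Cs^1$-property of $f|^{F'}$, so $(f|^{F'})^{[1]}=(f^{[1]})|^{F'}$.

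For the inductive step $k\Rightarrow k+1$, assume $f$ is $\Cs^{k+1}$, so $f^{[1]}$ is $\Cs^k$. By the observation above, $f^{[1]}$ takes values in the closed subspace $F'$, and the induction hypothesis applied to $f^{[1]}$ yields that $(f^{[1]})|^{F'}$ is $\Cs^k$. Since $(f|^{F'})^{[1]}=(f^{[1]})|^{F'}$ by the $k=1$ step, this means $f|^{F'}$ is $\Cs^{k+1}$. Passing to the intersection over all $k$ gives smoothness. The main (and only) subtle point is the argument that values of $f^{[1]}$ at $t=0$ still lie in $F'$; this is exactly where closedness of $F'$ enters, and without it the statement would fail.
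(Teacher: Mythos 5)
Your argument is correct and is essentially the standard proof of this fact (the paper itself states the lemma without proof, citing Gl\"ockner, whose argument runs the same way): the difference quotients $f^{[1]}(x,v,t)$ for $t\neq 0$ lie in the vector subspace $F'$, closedness of $F'$ forces the limit value $f^{[1]}(x,v,0)$ into $F'$ as well, and the identity $(f|^{F'})^{[1]}=(f^{[1]})|^{F'}$ then carries the induction. No gaps.
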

Let $J$ be a set and $(F_j)_{j\in J}$ be a family of locally convex spaces. 
Then the product $\prod_{j\in J}F_j$ equipped with the product topology is a 
Hausdorff locally convex space.
\begin{lemma}[\cite{GloBO}]\label{lemprodsmooth}
 Let $E$ be a locally convex space, $U\subseteq E$ open and $(F_j)_{j\in J}$ be 
a family of locally convex spaces. Let $F:=\prod_{j\in 
J}F_j$ and let $\pr_j\colon F\to F_j$ be the projection onto the $j$-th 
component. A map $f\colon U\to F$ is smooth if and only if $f_j:=\pr_j\circ 
f\colon U\to F_j$ is smooth for every $j\in J$. In this case, we have
\[
 df(x,y)=\big(df_j(x,y)\big)_{j\in J}\quad\text{for all}\ x\in U\ \text{and}\ 
y\in E.
\]
\end{lemma}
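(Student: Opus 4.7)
The forward direction is immediate: each projection $\pr_j \colon F \to F_j$ is continuous and $\R$-linear, hence smooth by Lemma \ref{lemmullinder}, and so by the chain rule each $f_j = \pr_j \circ f$ is smooth whenever $f$ is. Here I would also note that in this case the formula $df(x)(v) = (df_j(x)(v))_{j\in J}$ follows directly because $\pr_j$ is its own differential and the chain rule gives $df_j(x)(v) = \pr_j(df(x)(v))$.

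For the converse, my plan is to prove by induction on $k\in\N$ that if every $f_j$ is $\mathcal{C}^k$, then so is $f$. For the base case $k=1$, assuming each $f_j$ is $\mathcal{C}^1$, I would define
\[
 f^{[1]} \colon U^{[1]} \to F, \qquad (x,v,t) \mapsto \bigl(f_j^{[1]}(x,v,t)\bigr)_{j \in J}.
\]
This map is well-defined by the uniqueness of each $f_j^{[1]}$, and it is continuous because continuity into a product with the product topology is equivalent to componentwise continuity. The defining identity $f(x+tv) - f(x) = t \cdot f^{[1]}(x,v,t)$ is a pointwise identity in $\prod_j F_j$, so it holds as soon as it holds in each $F_j$, which is the case by assumption. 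Thus $f$ is $\mathcal{C}^1$, and evaluating at $t=0$ yields $df(x)(v) = (df_j(x)(v))_{j \in J}$.

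For the inductive step, suppose the statement holds for $k$ and that each $f_j$ is $\mathcal{C}^{k+1}$. Then each $f_j^{[1]}$ is $\mathcal{C}^k$, and since $f^{[1]} = (f_j^{[1]})_{j\in J}$ has all its components in $\mathcal{C}^k$, the induction hypothesis (applied to the open subset $U^{[1]}$ of $E\times E\times\R$ in place of $U$) yields that $f^{[1]}$ is $\mathcal{C}^k$, whence $f$ is $\mathcal{C}^{k+1}$. Smoothness follows by letting $k\to\infty$. There is no real obstacle here: the whole argument rests on the fact that continuity and $\mathcal{C}^1$-ness into a product are componentwise properties, which transfers seamlessly to the inductive definition of $\mathcal{C}^k$.
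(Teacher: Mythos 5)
Your proof is correct. The paper gives no proof of this lemma, merely citing the reference, and your argument --- the forward direction via linearity of $\pr_j$ and the chain rule, and the converse by induction on the differentiability order using that $f^{[1]}=(f_j^{[1]})_{j\in J}$ is continuous (and satisfies the defining identity) precisely when its components are --- is the standard one and is complete.
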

Let $J$ be a totally ordered index set. The inverse limit\index{Inverse 
limit!of locally convex spaces} 
\[
\textstyle
\varprojlim_{j\in J} F_j:=\Big\{
(x_j)_{j\in J}\in\prod_{j\in J}F_j\colon q^j_i(x_j)=x_i\ 
\text{for all}\ i\leq j
\Big\}
\]
 of an inverse system 
$((F_j)_{j\in J}, (q^j_i)_{i\leq j})$ of locally convex spaces, with continuous 
maps 
$q^j_i\colon F_j\to F_i$ for all $i\leq j$, is a closed subset of 
$\prod_{j\in J}F_j$ and thus a Hausdorff locally convex space.
A direct consequence of this is the following lemma.
\begin{lemma}[\cite{GloBO}]\label{lemdirectlimsmooth}
 Let $E, F$ be locally convex spaces, $U\subseteq E$ be open and $f\colon U\to 
F$ be a map. Assume that $F=\varprojlim F_j$ for an inverse system 
$((F_i)_{i\in J}, (q^j_i)_{i\leq j})$ of locally convex spaces and continuous 
linear maps $q^j_i\colon F_j\to F_i$, with limit maps $q_i\colon F\to F_i$. 
Then $f$ is smooth if and only if $q_i\circ f\colon U\to F_i$ is smooth for 
each $i\in J$. In this case, we have
\[
 df(x,y)=\big(d(q_i\circ f)((x,y))\big)_{i\in J}\quad\text{for all}\ x\in U\ 
\text{and}\ 
y\in E.
\]
\end{lemma}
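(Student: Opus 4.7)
The plan is to combine the two preceding lemmas, exploiting the fact that $F=\varprojlim_{j\in J}F_j$ is by construction a closed locally convex subspace of the product $\prod_{j\in J}F_j$, and that the limit maps $q_i\colon F\to F_i$ are simply the restrictions to $F$ of the coordinate projections $\pr_i\colon\prod_{j\in J}F_j\to F_i$.

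First, I would view $f\colon U\to F$ as a map $\tilde f\colon U\to\prod_{j\in J}F_j$ taking values in the closed subspace $F$. Lemma \ref{lemrestrclosed} (applied to the closed vector subspace $F\subseteq\prod_{j\in J}F_j$) then says that $\tilde f$ is smooth if and only if its co-restriction $f\colon U\to F$ is smooth. Next, by Lemma \ref{lemprodsmooth}, $\tilde f$ is smooth if and only if $\pr_j\circ\tilde f\colon U\to F_j$ is smooth for every $j\in J$. But $\pr_j\circ\tilde f=q_j\circ f$, so chaining the two equivalences yields the desired characterization of smoothness.

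For the derivative formula, Lemma \ref{lemprodsmooth} immediately provides
\[
 d\tilde f(x,y)=\big(d(\pr_j\circ\tilde f)(x,y)\big)_{j\in J}=\big(d(q_j\circ f)(x,y)\big)_{j\in J}
\]
for all $x\in U$ and $y\in E$. Since $F$ carries the subspace topology inherited from $\prod_{j\in J}F_j$, the definition of the differential in terms of the map $f^{[1]}$ gives that $df$ and $d\tilde f$ agree as $F$-valued maps (this is the derivative-half of Lemma \ref{lemrestrclosed}). The compatibility relations $q^j_i\circ q_j=q_i$ guarantee that the tuple on the right actually lies in $F=\varprojlim F_j$, which is anyway automatic since it is the value of $df$.

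There is no real obstacle here; the entire content of the lemma is the observation that a projective limit is a closed subspace of a product together with the functorial behavior of smoothness under these two operations. The only point requiring care is the transition between $f$ and $\tilde f$ at the level of differentials, which is handled by Lemma \ref{lemrestrclosed}.
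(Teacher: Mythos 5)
Your proof is correct and follows exactly the route the paper intends: the paper gives no proof of its own, merely noting that the statement is "a direct consequence" of the fact that $\varprojlim_j F_j$ is a closed subspace of $\prod_j F_j$, which is precisely the combination of Lemma \ref{lemrestrclosed} and Lemma \ref{lemprodsmooth} that you carry out.
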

\subsubsection{Manifolds}\label{subsectman}
With the above, the definition of manifolds over locally convex spaces is 
analogous to the finite-dimensional case (see also \cite[Section 8, p.253]{BGN} 
or \cite[Section 2, p.20]{Bert}).
We fix a locally convex space $E$ and
let $M$ be a topological space. A set
$\A:=\{\varphi_\a\colon U_\a\to V_\a\colon\a\in A\}$
such that 
$U_\a\subseteq M$ and $V_\a\subseteq E$ are open, $\varphi_\a$ is a 
homeomorphism, $\bigcup_{\a\in A}U_\a=M$ and
\[
 \varphi_{\a\b}:=\varphi_\a\circ\varphi_\b^{-1}|_{\varphi_\b(U_\a\cap 
U_\b)}\colon \varphi_\b(U_\a\cap U_\b)\to \varphi_\a(U_\a\cap U_\b)
\]
and its inverse $\varphi_{\b\a}$ are smooth is called a \textit{(smooth) atlas 
of $M$} and the elements of $\A$ are called \textit{charts of 
$M$}.\footnote{The 
index set $A$ 
is just to simplify our notation, it does not 
belong to 
the data of the atlas $\A$.}
Two atlases of $M$ are equivalent if and only if their union is again an atlas. 
Together with an equivalence class of atlases, $M$ is called a \textit{(smooth) 
manifold modelled on $E$} and $E$ is the \textit{model space of $M$}. We 
usually only mention a representative atlas of the equivalence class. Moreover, 
we will generally assume manifolds to be  Hausdorff.\footnote{This assumption 
is necessary to guarantee the existence of smooth partitions of unity for 
finite-dimensional $\sigma$-compact manifolds.}
A manifold is called \textit{paracompact}, resp.\ \textit{$\sigma$-compact}, if 
it is so as a topological space and \textit{finite-dimensional} if its model 
space is finite-dimensional.
If $M$ and $N$ are manifolds with the atlases $\{\varphi_\a\colon\a\in A\}$ and 
$\{\psi_\a\colon\b\in B\}$, then $\{\varphi_\a\times\psi_\b\colon a\in A,\b\in 
B\}$ is an atlas of $M\times N$.

A continuous map $f\colon M\to N$ between two manifolds is a \textit{morphism 
of (smooth) 
manifolds} if for any charts $\varphi\colon U\to\varphi(U)$ of $M$ and 
$\psi\colon W\to\psi(W)$ of $N$ the map
\[
 \psi\circ f\circ\varphi^{-1}\colon\varphi(U\cap f^{-1}(W))\to\psi(W)
\]
is smooth. This property is independent of the choice of atlases.
If $M$ and $N$ are manifolds, we denote by 
$\Cs(M,N)$\nomenclature{$\Cs(M,N)$}{} 
the set of all smooth maps $M\to N$ and we denote by 
$\Man$\nomenclature{$\Man$}{} the 
category of Hausdorff manifolds and their morphisms.

The definition of 
\textit{vector bundles}\index{Vector bundle} or more general \textit{fiber 
bundles}\index{Fiber bundle}, their 
morphisms and their products is similar to the finite-dimensional case 
and for it, we refer to \cite[p.255]{BGN} or \cite[Section 3, p.22]{Bert}.
The particular charts of a bundle are called \textit{bundle charts}\index{Fiber 
bundle!bundle chart} and they 
are elements of a \textit{bundle atlas}.\index{Fiber bundle!bundle atlas} We 
write $\VBun$ for the category of 
vector bundles.

The definition of the \textit{tangent bundle}\index{Tangent bundle} 
$\pi_M\colon\ TM\to M$ of a 
manifold $M$ via equivalence 
classes of smooth curves works as in the finite-dimensional case. For locally 
convex $\R$-vector spaces, this is equivalent to the more general 
definition in \cite[p.254]{BGN} and \cite[Section 3, p.22]{Bert} (see 
\cite{GloBO}).
For the elements of the tangent bundle, we occasionally write $[t\mapsto 
v_t]\in T_{v_0}M$, where $t\mapsto v_t$ denotes some curve in $M$. 
In this notation one has
\[
 Tf[t\mapsto v_t]:=[t\mapsto f(v_t)]\in T_{f(v_0)}N,
\]
if $f\colon M\to N$ is a 
smooth map between manifolds.
If $F$ is a locally convex space, one has a natural isomorphism $TF\cong 
F\times F$ and if $g\colon M\to F$ is smooth,
we also write $dg\colon TM\to F$ for $\pr_2\circ Tg$ with the projection 
$\pr_2\colon F\times F \to F$ onto the second component.
Like in the finite-dimensional case, the above defines 
a functor $T\colon\Man\to\VBun$ and considering $\VBun$ as a subcategory of 
$\Man$, we define $T^0=\id_{\Man}$ and $T^n:=T\circ 
T^{n-1}\colon\Man\to\Man$\nomenclature{$T^n$}{} 
for $n\in\N$.
Finally, if $\{\varphi_\a\colon\a\in A\}$ is 
an atlas of $M$, then $\{T\varphi_\a\colon \a\in A\}$ is a bundle atlas of $TM$ 
and there is a natural isomorphism of vector bundles $T(M\times N)\cong 
TM\times TN$.
\paragraph{Smooth partitions of unity}
\textit{A smooth partition of 
unity}\index{Partition of unity} of a manifold $M$ is an open 
covering $(U_i)_{i\in I}$ of $M$ together with smooth maps $h_i\colon M\to\R$, 
such that
\begin{enumerate}
 \item For all $x\in M$, we have $h_i(x)\geq 0$.
 \item The support of $h_i$ is contained in $U_i$ for all $i\in I$.
 \item The covering is locally finite.
 \item For each $x\in M$, we have $\sum_{i\in I}h_i(x)=1$.
\end{enumerate}
In this situation, we say that $(h_i)_{i\in I}$ is a partition of 
unity that is \textit{subordinate} to $(U_i)_{i\in I}$. We say that a manifold 
$M$ \textit{admits partitions of unity} if it is paracompact and for every 
locally finite open cover $(U_i)$ of $M$, we find smooth maps $h_i\colon 
M\to\R$ 
that constitute a partition of unity subordinate to $(U_i)$ (see 
\cite[p.34]{Lang}).
Paracompact (and in particular $\sigma$-compact) finite-dimensional manifolds 
always admit partitions of unity (compare \cite[Corollary 3.8, p.38]{Lang}).

\subsection{Categories}
We follow \cite{Schub} in the standard definitions. Let us give a brief 
overview to fix our notations.
Throughout, we fix a universe $\mathscr{U}$ (see \cite[3.2.1, 
p.17]{Schub}) that contains the natural numbers $\N$ as an element. 
\textit{Sets} are then elements of 
$\mathscr{U}$ and \textit{classes} are subsets of $\mathscr{U}$.
A category $\Cc$ consists of a class of objects 
$|\Cc|$ and a set of morphisms $\Hom_\Cc(A,B)$ for any objects $A,B$ such that 
we have
 a composition map
\[
 \Hom_\Cc(B,C)\times\Hom_\Cc(A,B)\to\Hom_\Cc(A,C),\quad (f,g)\mapsto f\circ g
\]
(where $C\in\Cc$)
that
satisfies the usual 
conditions. In particular, we have a unique identity morphism 
$\id_A\in\Hom_\Cc(A,A)$.
For $f\in \Hom_\Cc(A,B)$ we also write $f\colon A\to B$ and we call $f$ 
an isomorphism if there exists $f^{-1}\in \Hom_\Cc(B,A)$ such that
$f^{-1}\circ f=\id_A$ and $f\circ f^{-1}=\id_B$.
As a shorthand, we write $A\in\Cc$ instead of 
$A\in|\Cc|$. A \textit{small category} is a category whose objects form a 
set. 

We denote by $\Set$ the category whose objects are sets and whose morphisms 
are maps between sets. The category $\Top$ has topological spaces as objects 
and continuous maps between them as morphisms.
\footnote{Schubert, \cite{Schub}, uses the notations 
$[A,B]_\Cc$ for $\Hom_\Cc(A,B)$, $1_A$ for $\id_A$ and \textit{Ens} for 
$\Set$.} 
\subsubsection{Functors and Functor Categories}
Let $\Cc$ and $\Dc$ be categories. A \textit{functor} $T\colon\Cc\to\Dc$
assigns to each $A\in\Cc$ an object $T(A)\in\Dc$ and to each morphism 
$f\in\Hom_\Cc(A,B)$ a morphism $T(f)\in \Hom_\Dc(T(A),T(B))$ such that 
$T(\id_A)=id_{T(A)}$ and $T(f\circ g)=T(f)\circ T(g)$ hold for all 
$A,B,C\in\Cc$ and all $f\in\Hom_\Cc(B,C)$, $g\in\Hom_\Cc(A,B)$.
Let $S\colon \Cc\to\Dc$ be another functor. A \textit{natural transformation}
$\a\colon S\to T$ consists of morphisms $\a_A\colon S(A)\to T(A)$ for every 
$A\in\Cc$ such that for every $f\in\Hom_\Cc(A,B)$, we have
$T(f)\circ\a_A=\a_B\circ S(f)$. We always have the natural transformation 
$\id_T\colon T\to T$ defined by $(\id_T)_A=\id_{T(A)}$ and if 
$U\colon\Cc\to\Dc$ is another functor and $\b\colon T\to U$ is a natural 
transformation, then the object-wise composition
$\b_A\circ\a_A$ defines a natural transformation 
$\b\circ\a\colon S\to U$. 

If $\Cc$ is a small category, then the functors $\Cc\to\Dc$ are the objects 
and the natural transformations are the morphisms of a category which we 
denote by $\Dc^\Cc$ (see \cite[Proposition 3.4.3, p.19]{Schub}).

\section{Supermanifolds}\label{chap2}
\subsection{Open Subfunctors}
Open subfunctors of functors from $\Top^\Gr$ will play the same role as open 
subsets of topological spaces in ordinary differential geometry. The 
following definitions of intersections, restrictions, open covers and so on are 
intuitive and even provide one with a Grothendieck topology on $\Top^\Gr$ (see 
\cite[Definition 3.17, p.591f.]{AllLau}). 

Let $k\in\N_0\cup\{\infty\}$ and $\F\in\Top^{\Grk{k}}$. For $\L,\L'\in\Grk{k}$ 
and $\varrho\in\Hom_{\Grk{k}}(\L,\L')$, we set $\F_\L:=\F(\L)$ and 
$\F_\varrho:=\F(\varrho)$.
\begin{definition}
 Let $k\in\N_0\cup\{\infty\}$ and $\mathcal{F},\mathcal{F}'\in \Top^{\Grk{k}}$. 
We call $\mathcal{F}'$ a \textit{subfunctor}\index{Subfunctor} of $\mathcal{F}$ 
if for every $\Lambda\in\Grk{k}$, 
we 
have $\mathcal{F}_{\Lambda}'\subseteq\mathcal{F}_{\Lambda}$ and these 
inclusions 
define a natural transformation $\mathcal{F}'\to\mathcal{F}$. 
In this situation, we write 
$\mathcal{F}'\subseteq\mathcal{F}$\nomenclature{$\U\subseteq\V$}{}.
A subfunctor 
$\mathcal{F}'$ of $\F$ is called \textit{open}\index{Subfunctor!open} if every 
$\mathcal{F}_{\Lambda}'$ is open in $\mathcal{F}_{\Lambda}$.
\end{definition}

\begin{lemma/definition}\label{lemdefopsubfun}
 Let $k\in\N_0\cup\{\infty\}$ and $\F\in\Top^{\Grk{k}}$. For an open subset 
$U\subseteq\F_\R$, we define the 
\textit{restriction}\index{Subfunctor!restricted} 
$\F|_U$\nomenclature{$\U"|_V$}{} by setting
\[
 \F|_U(\Lambda):=({\F}_{\ep_\L})^{-1}(U)\quad 
\text{for}\quad \Lambda\in\Grk{k}
\]
and 
$\F|_U(\varrho):=\F_\varrho|_{\F|_U(\Lambda)}$ 
for morphisms 
$\varrho\colon\Lambda\to\Lambda'$. 
Then $\F|_U$ is an open subfunctor 
of $\F$. 
\end{lemma/definition}
\begin{proof}
  Let $x\in\F|_U(\L)$ and $\L\in\Grk{k}$. Then 
$\F_{\ep_{\L'}}\circ\F_\varrho(x)=\F_{\ep_{\L'}\circ\varrho}(x)=\F_{\ep_\L}
(x)\in U$ holds for all morphisms $\varrho\colon\L\to\L'$ since 
$\ep_{\L'}\circ\varrho=\ep_\L$. Because $\F_{\ep_\L}$ is continuous,
${\F}_{\ep_\L}^{-1}(U)$ is open.
\end{proof}
\begin{lemma/definition}
 Let $k\in\N_0\cup\{\infty\}$, $\F\in\Top^{\Grk{k}}$ and $\F',\F''$ be open 
subfunctors of $\F$. 
 Then $(\F'\cap\F'')_\L:=\F_\L'\cap\F_\L''$ and 
$(\F'\cap\F'')_\varrho:=\F_\varrho|_{(\F'\cap\F'')_\L}$ for $\L,\L'\in\Grk{k}$ 
and $\varrho\in\Hom_{\Grk{k}}(\L,\L')$ defines an open subfunctor 
$\F'\cap\F''\subseteq\F$\nomenclature{$\U\cap\V$}{}.
\end{lemma/definition}
\begin{proof}
 By definition $\F_\L'\cap\F_\L''$ is open in $\F_\L$. If 
$x\in\F_\L'\cap\F_\L''$, then by functoriality 
$\F_\varrho(x)\in\F_{\L'}'\cap\F_{\L'}''$, which shows that $\F'\cap\F''$ is a 
functor and that the inclusion is a natural transformation.
\end{proof}
\begin{definition}
 Let $k\in\N_0\cup\{\infty\}$ and $\mathcal{F},\mathcal{F}'\in\Top^{\Grk{k}}$. 
A 
natural transformation $f\colon\mathcal{F}'\to\F$ is called an \textit{open 
embedding}\index{Natural transformation!open embedding} if 
$f_\L\colon\mathcal{F}_{\L}'\to\mathcal{F}_{\L}$ is an open embedding for every 
$\L\in\Grk{k}$. 
\end{definition}

\begin{lemma/definition}\label{lemdefimrestr}
 Let $k\in\N_0\cup\{\infty\}$, $\F,\F'\in\Top^{\Grk{k}}$ and $f\colon\F'\to\F$ 
be a natural transformation. Let $\L,\L'\in\Grk{k}$ and 
$\varrho\in\Hom_{\Grk{k}}(\L,\L')$ be arbitrary.
\begin{enumerate}
\item Let $\V\subseteq\F$ be an open subfunctor. Setting 
$f^{-1}\V_\L:=f^{-1}_\L(\V_\L)$ and 
$f^{-1}\V_\varrho:=\F_\varrho'|_{f^{-1}\V_\L}$ defines an open subfunctor
$f^{-1}\V\subseteq\F'$\nomenclature{$f^{-1}\U$}{}.
 \item If $f$ is an open embedding, then $f(\F')_\L:=f_\L(\F_\L')$ and 
	$f(\F')_\varrho:=\F_\varrho|_{f(\F')_\L}$ define an open subfunctor 
	$f(\F')\subseteq\F$\nomenclature{$f(\U)$}{}.
 \item Let $\U\subseteq\F'$ be an open subfunctor. Then 
$f|_{\U}(\L):=f_\L|_{\U_\L}$ defines a natural 
transformation $f|_{\U}\colon\U\to\F$\nomenclature{$f\vert_{\U}$}{}.
\end{enumerate}
\end{lemma/definition}
\begin{proof}
 (a) Because $f_\L$ is continuous, $f^{-1}\F_\L$ is open. For $x\in 
  f^{-1}\F_\L$, naturality of $f$ implies 
  $f_{\L'}(\F'_\varrho(x))=\F_\varrho(f_\L(x))$ and therefore
  $f^{-1}\F_\varrho(x)\in f^{-1}\F_{\L'}$.
 
 (b) Because $f$ is an open embedding, $f(\F')_\L$ is open. For $x\in 
  f(\F')_\L$, naturality of $f$ implies 
  $f(\F')_\varrho(x)\in f(\F')_{\L'}$.
 
 (c) This is obvious.
\end{proof}

\begin{definition}\label{defcovering}
We call a set
$\{f^\alpha\colon\mathcal{F}^\alpha\to\mathcal{F}\colon \alpha\in 
A\}$\footnote{As with atlases before, the index set $A$ is just used for the 
sake of an easier notation and not part of the data of a covering.} of open 
embeddings a \textit{covering}\index{Natural transformation!covering} if 
$\bigcup_{\alpha\in A}f^\alpha_\L(\mathcal{F}^\alpha_{\L})=\mathcal{F}_{\L}$ 
holds
for all $\L\in\Grk{k}$.
In this situation, we define for all pairs $\a,\b\in A$  
an open subfunctor $\F^{\a\b}\subseteq\F^\a$\nomenclature{$\U^{\a\b}$}{} by 
$\F^{\a\b}_\L:=(f_\L^\a)^{-1}(f_\L^\a(\F^\a_\L)\cap f_\L^\b(\F^\b_\L))$ and 
$\F^{\a\b}_\varrho:=\F^\a_\varrho|_{\F^{\a\b}_\L}$ as well as
natural transformations 
$f^{\a\b}\colon\F^{\a\b}\to\F^{\b\a}$\nomenclature{$f^{\a\b}$}{}
by
$f^{\a\b}_\L:=(f_\L^\b)^{-1}\circ f_\L^\a|_{\F^{\a\b}_\L}$
for 
all $\L,\L'\in\Grk{k}$  and all morphisms 
$\varrho\colon\L\to\L'$.
\end{definition}
\begin{definition}
For $k\in\N_0\cup\{\infty\}$
 a functor $\F\in\Top^{\Grk{k}}$ is called \textit{Hausdorff}\index{Hausdorff} 
if $\F_\L$ is Hausdorff for every $\L\in\Grk{k}$.
\end{definition}

\subsection{Superdomains}
Superdomains take the role of open subsets of vector spaces in ordinary 
analysis. Together with appropriately defined supersmooth morphisms between 
them, they enable us to define supermanifolds from local data much in the same 
way as for manifolds.
The main result in this section is the description of supersmooth 
morphisms through so called skeletons in Proposition \ref{propskel}.
Since skeletons will be our main tool for concrete calculations, other 
important results are a formula for the composition (see Proposition 
\ref{propskelmult})
and a formula for the inversion (see Lemma \ref{lemskelinv}) in terms of 
skeletons.
We follow \cite{AllLau} in this section, with only small additions to 
accommodate $k$-superdomains (i.e., certain functors $\Grk{k}\to\Top$). With 
the exception of a concrete inversion formula, these results have already been 
stated in \cite{MolICTP}.

\begin{lemma/definition}
 For every $E\in \SVec$ and $k\in\N_0\cup\{\infty\}$, we get a functor 
$\ol{E}^{(k)}\colon{\Grk{k}}\to\Top$
\nomenclature{$\ol{E}$, $\ol{E}^{(k)}$}{} 
by setting
\[
 \ol{E}^{(k)}_\L:=\ol{E}^{(k)}(\Lambda):=(E_0\otimes\Lz)\oplus (E_1\otimes\Lo),
\]
equipped with the natural locally convex topology induced by
$E_0\otimes\Lambda_{k,\ol{0}}=\prod_{I\in\Poe{k}}\lambda_IE_0$, resp.\ 
 $E_1\otimes\Lambda_{k,\ol{1}}=\prod_{I\in\Poo{k}}\lambda_IE_1$.
For a morphism $\varrho\colon\Lambda\to\Lambda'$ of Grassmann algebras, we let
$\ol{E}^{(k)}_\varrho:=\ol{E}^{(k)}(\varrho):=(\id_E\otimes\varrho)|
_{\ol{E}^{(k)}_\L }
$.
Then $\ol{E}^{(k)}_\L$ is a topological $\Lz$-module and $\ol{E}^{(k)}_\varrho$ 
is a morphism of modules (with a change of rings).
In other words, $\ol{E}^{(k)}$ is an $\ol{\R}^{(k)}$-module in the category 
$\Top^{\Grk{k}}$ (see \cite{Mol} for details).
We abbreviate 
$\ol{E}:=\ol{E}^{(\infty)}$, $\ol{E_0}^{(k)}:=\ol{E_0\oplus\{0\}}^{(k)}$ and
$\ol{E_1}^{(k)}:=\ol{\{0\}\oplus E_1}^{(k)}$ and let
$\ol{\R}^{(k)}:=\ol{\R\oplus\{0\}}^{(k)}$,
\nomenclature{$\ol{\R}$, $\ol{\R}^{(k)}$}{}  
i.e., $\ol{\R}^{(k)}_\L=\Lz$.

If 
$\Delta\subseteq\Lambda$ is an $\R$-vector subspace, we set 
$\ol{E}^{(k)}_\Delta:=(E_0\otimes\Delta_{\ol{0}})\oplus 
(E_1\otimes\Delta_{\ol{1}})$, where $\Delta_{\ol{0}}:=\Delta\cap\Lz$ and 
$\Delta_{\ol{1}}:=\Delta\cap \L_{\ol{1}}$.
For $n\leq m\leq k$, we will always consider the natural embedding 
$\ol{E}^{(k)}_{\L_n}\subseteq\ol{E}^{(k)}_{\L_m}$ via 
$\ol{E}^{(k)}_{\eta_{n,m}}$. 
\end{lemma/definition}
\begin{proof}
 It is easy to see that $\ol{E}^{(k)}$ is a functor.
Since every $\Lambda\in\Gr^{(k)}$ is a $\Lz$-algebra, $\ol{E}^{(k)}_\L$ is a 
$\Lz$-module with the obvious multiplication. This multiplication is continuous
because its components are simply finite linear combinations. For the same 
reason, $\ol{E}^{(k)}_\varrho$ is continuous and linear. That we have 
$\ol{\R}^{(k)}_\varrho(x)\cdot 
\ol{E}^{(k)}_\varrho(v)=\ol{E}^{(k)}_\varrho(x\cdot 
v)$
for all $x\in\ol{\R}^{(k)}_\L$ and $v\in\ol{E}^{(k)}_\L$, follows directly from 
the definition of the multiplication.
\end{proof}
In the definition of super manifolds the functors $\ol{E}$ will play the same 
role as vector spaces do for regular manifolds. Accordingly, we need a notion 
of open subfunctors and appropriate ``smooth morphisms'' between open 
subfunctors.
All open subfunctors of $\U\subseteq\ol{E}$ for $E\in\SVec$ are 
uniquely determined by $\U_{\R}$.
\begin{lemma}
 Let $k\in\N\cup\{\infty\}$ and $E\in\SVec$. 
 Recall the restriction from Lemma/Definition \ref{lemdefopsubfun}.
 Every open subfunctor $\U\subseteq \ol{E}^{(k)}$ arises as such a restriction,
i.e., we have 
$\ol{E}^{(k)}|_{\U_\R}=\U$.
\nomenclature{$\U$}{}
\end{lemma}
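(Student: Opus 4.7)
The plan is to prove the two inclusions separately, with the easy direction coming from the subfunctor property and the harder direction relying on a scaling trick built from the Grassmann generators.

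For $\U_\L\subseteq (\ol{E}^{(k)}_{\ep_\L})^{-1}(\U_\R)$, I would just invoke naturality of the inclusion $\U\hookrightarrow\ol{E}^{(k)}$ applied to $\ep_\L\colon\L\to\R$: this forces $\ol{E}^{(k)}_{\ep_\L}(\U_\L)\subseteq\U_\R$, which is the desired inclusion.

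The reverse inclusion $(\ol{E}^{(k)}_{\ep_\L})^{-1}(\U_\R)\subseteq\U_\L$ is the interesting step. I would introduce, for each $t\in\R$, the morphism of Grassmann algebras
\[
 m_t\colon\L\to\L,\qquad \oddgen_i\mapsto t\,\oddgen_i,
\]
and check briefly that $m_t$ is a well-defined unital, grading-preserving $\R$-algebra homomorphism (so a morphism in $\Grk{k}$), and that it satisfies $m_s\circ m_t=m_{st}$. Two consequences I want to exploit: $m_0=\eta_\L\circ\ep_\L$, and $m_{1/t}$ is an inverse of $m_t$ for $t\neq 0$.

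Now take $y\in\ol{E}^{(k)}_\L$ with $\ol{E}^{(k)}_{\ep_\L}(y)\in\U_\R$, and consider the curve
\[
 \gamma\colon\R\to\ol{E}^{(k)}_\L,\qquad \gamma(t):=\ol{E}^{(k)}_{m_t}(y).
\]
Writing $y=\sum_I y_I\oddgen_I$, one sees $\gamma(t)=\sum_I t^{|I|}y_I\oddgen_I$, a polynomial in $t$ and hence continuous. At $t=0$ one computes $\gamma(0)=\ol{E}^{(k)}_{\eta_\L}\bigl(\ol{E}^{(k)}_{\ep_\L}(y)\bigr)$, which lies in $\U_\L$ by the subfunctor property applied to $\eta_\L$. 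Since $\U_\L$ is open in $\ol{E}^{(k)}_\L$, there is some $t_0>0$ with $\gamma(t_0)\in\U_\L$. Applying the subfunctor property once more, this time to $m_{1/t_0}$, gives $\ol{E}^{(k)}_{m_{1/t_0}}(\gamma(t_0))\in\U_\L$; but $m_{1/t_0}\circ m_{t_0}=\id_\L$, so this element is exactly $y$, and we conclude $y\in\U_\L$.

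The main obstacle is spotting the scaling morphism $m_t$ and the fact that $m_0$ coincides with $\eta_\L\circ\ep_\L$; once that observation is in place, openness of $\U_\L$ together with the functorial invertibility of $m_{t_0}$ for $t_0\neq 0$ turns a local statement near the body into the global equality. No substantial analysis beyond continuity of a polynomial curve is needed, and the argument is insensitive to whether $k$ is finite or infinite.
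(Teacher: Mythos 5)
Your proof is correct. Note that the paper itself does not spell out an argument here: it simply defers to \cite[Proposition 3.5.8]{SachseDiss} (and to Molotkov) and remarks that the same proof works for finite $k$. What you have written is a complete, self-contained version of exactly the standard argument behind that citation: the forward inclusion is naturality of the inclusion applied to $\ep_\L$, and the reverse inclusion uses the one-parameter family of even algebra endomorphisms $m_t$ scaling the generators, together with $m_0=\eta_\L\circ\ep_\L$, openness of $\U_\L$, and invertibility of $m_{t_0}$ for $t_0\neq 0$. The one point worth being careful about --- and you got it right --- is that the relevant scaling must act on the \emph{generators} (so the $\oddgen_I$-component scales by $t^{|I|}$), since a uniform scaling of the nilpotent part is not induced by any Grassmann algebra morphism; this is precisely what makes $\gamma$ a curve of elements of the functor rather than an ad hoc deformation. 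The argument is insensitive to $k$ finite or infinite, as you say, so it covers both cases the paper needs.
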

\begin{proof}
 For $k=\infty$ this is just \cite[Proposition 3.5.8, p. 
61]{SachseDiss}. The same proof holds for 
$k\in\N_0$ if one only considers $\Lambda\in\Grk{k}$ (see also \cite[Section 
3.1, p.388 f.]{Mol}).
\end{proof}
\begin{definition}
 Let $E,F\in\SVec$ and $k\in\N_0\cup\{\infty\}$. We call an open subfunctor 
$\U\subseteq\ol{E}^{(k)}$ a $k$\textit{-superdomain}\index{Superdomain!$k$-}. 
In the case of $k=\infty$ we simply call it a 
\textit{superdomain}\index{Superdomain}.
A natural transformation $f\colon\U\to\V$ of 
$k$-superdomains $\U\subseteq\ol{E}^{(k)}$ and 
$\V\subseteq\ol{F}^{(k)}$ is called \textit{supersmooth}\index{Supersmooth}
if for all $\Lambda\in\Grk{k}$ the map 
$f_\Lambda\colon\U_\L\to\V_\L$ is smooth and the derivative
\[
 df_\Lambda\colon\U_\L\times\ol{E}^{(k)}_\L\to\ol{F}^{(k)}_\L
\]
is $\Lz$-linear in the second component, i.e., for any $x\in\U_\L$,
the map 
\[
 df_\L(x,\bl)\colon\ol{E}^{(k)}_\L\to\ol{F}^{(k)}_\L,\quad v\mapsto 
df_\Lambda(x)(v)
\]
is $\Lz$-linear. We denote by $\SC(\U,\V)$\nomenclature{$\SC(\U,\V)$}{} the 
set 
of all supersmooth morphisms $f\colon \U\to\V$.
\end{definition}
It is obvious from the usual chain rule that the $k$-superdomains together with 
the supersmooth natural transformations form a category, which we denote by 
$\SDom^{(k)}$\nomenclature{$\SDom$, $\SDom^{(k)}$}{}. In the case of 
$k=\infty$, we also 
use the notation $\SDom$.

Note that for $\R$-linear maps, it suffices to check $\L_{\ol{0}}$-linearity on
the generators:
For $E,F\in\SVec$ and an $\R$-linear map 
$L\colon\ol{E}_{\L_n}\to\ol{F}_{\L_n}$ with $L(\oddgen_Ix)=\oddgen_IL(x)$ for 
all $x\in\ol{E}_\L$ and $\oddgen_I\in\L_{n,\ol{0}}$, we have
\[
\displaystyle
 L(t\cdot x)=L\big(\sum_{I\in\Poe{n}}\oddgen_It_I\cdot 
x\big)=\sum_{I\in\Poe{n}}\oddgen_It_I\cdot L(x)=t\cdot L(x),
\]
where $t=\sum_{I\in\Poe{n}}\oddgen_It_I\in\L_{n,\ol{0}}$, $t_I\in\R$.
As it turns out, even natural 
transformations that are merely ``smooth'' already have very convenient 
properties.
\begin{lemma}\label{lemdnfnat}
 Let $E,F\in\SVec$, $k\in\N_0\cup\{\infty\}$, $\U\subseteq\ol{E}^{(k)}$ be an 
open subfunctor and $f\colon\U\to\ol{F}^{(k)}$ be a natural transformation such 
that $f_\L$ is smooth for all $\L\in\Grk{k}$. Then for all $n\in\N_0$, the maps
 $d^nf_\L$ define a natural transformation 
 \[
  d^nf\colon\U\times\ol{E}^{(k)}\times\cdots\times \ol{E}^{(k)}\to \ol{F}^{(k)}.
 \]
\end{lemma}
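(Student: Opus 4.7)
The statement asks us to verify that the family $(d^nf_\L)_{\L\in\Grk{k}}$ of iterated derivatives, each of which is well-defined and continuous because $f_\L$ is smooth, assembles into a natural transformation. Since the domain and codomain functors are already defined and each component map $d^nf_\L\colon \U_\L\times(\ol{E}^{(k)}_\L)^n\to\ol{F}^{(k)}_\L$ is continuous by the usual calculus, the whole content of the claim reduces to checking, for each morphism $\varrho\colon\L\to\L'$ in $\Grk{k}$, the naturality square
\[
 \ol{F}^{(k)}_\varrho\circ d^nf_\L = d^nf_{\L'}\circ\bigl(\U_\varrho\times \ol{E}^{(k)}_\varrho\times\cdots\times\ol{E}^{(k)}_\varrho\bigr).
\]
My plan is to reduce this commutativity to a single invocation of Corollary \ref{corafgb}.

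The input for Corollary \ref{corafgb} is an equation $\alpha\circ h = g\circ\beta|_U$ with $\alpha,\beta$ continuous $\R$-linear. I will supply this as follows. First, $\ol{E}^{(k)}_\varrho\colon\ol{E}^{(k)}_\L\to\ol{E}^{(k)}_{\L'}$ and $\ol{F}^{(k)}_\varrho\colon\ol{F}^{(k)}_\L\to\ol{F}^{(k)}_{\L'}$ are continuous and $\R$-linear; this is part of the structure of $\ol{E}^{(k)}$ and $\ol{F}^{(k)}$ as objects of $\Top^{\Grk{k}}$ established in the preceding lemma/definition, since each $\ol{E}^{(k)}_\varrho$ is (the restriction of) $\id_E\otimes\varrho$. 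Second, because $\U\subseteq\ol{E}^{(k)}$ is a subfunctor, the restriction of $\ol{E}^{(k)}_\varrho$ to $\U_\L$ takes values in $\U_{\L'}$ and coincides with $\U_\varrho$, so the inclusion condition $\beta(U)\subseteq U'$ of Corollary \ref{corafgb} is met with $U=\U_\L$ and $U'=\U_{\L'}$. Third, naturality of $f$ itself gives precisely the intertwining relation
\[
 \ol{F}^{(k)}_\varrho\circ f_\L = f_{\L'}\circ \U_\varrho = f_{\L'}\circ\ol{E}^{(k)}_\varrho|_{\U_\L}.
\]

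With these data, I apply Corollary \ref{corafgb} with $\alpha:=\ol{F}^{(k)}_\varrho$, $\beta:=\ol{E}^{(k)}_\varrho$, $h:=f_\L$, $g:=f_{\L'}$, which yields immediately the desired identity
\[
 \ol{F}^{(k)}_\varrho\circ d^nf_\L = d^nf_{\L'}\circ\bigl(\ol{E}^{(k)}_\varrho|_{\U_\L}\times (\ol{E}^{(k)}_\varrho)^n\bigr),
\]
and this is exactly the required naturality. Functoriality ($d^n$ of identities and compositions) is automatic from the uniqueness of differentials, so no additional checks are needed.

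There is really no hard step here; the only potential pitfall is getting the bookkeeping right between the abstract functorial picture and the linear-algebraic picture. In particular one must remember that $\U_\varrho$ is nothing more than the restriction of the underlying linear map $\ol{E}^{(k)}_\varrho$, and that the codomain issue is trivial because each $d^nf_\L$ is already valued in $\ol{F}^{(k)}_\L$. Thus the whole proof amounts to citing Corollary \ref{corafgb} once, after unpacking the naturality of $f$.
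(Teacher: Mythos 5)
Your proof is correct and follows exactly the paper's own argument: both reduce the naturality square for $d^nf$ to a single application of Corollary \ref{corafgb}, using the naturality of $f$ (i.e.\ $\ol{F}^{(k)}_\varrho\circ f_\L=f_{\L'}\circ\U_\varrho$) and the observation that $\U_\varrho=\ol{E}^{(k)}_\varrho|_{\U_\L}$ with $\ol{E}^{(k)}_\varrho$, $\ol{F}^{(k)}_\varrho$ continuous linear. No gaps.
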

\begin{proof}
 Let $\L,\L'\in\Grk{k}$ and let $\varrho\colon\L\to\L$ be a morphism. 
 Because we have $\ol{F}^{(k)}_\varrho\circ f_\L=f_{\L'}\circ\U_\varrho$ 
 and $\ol{E}^{(k)}_\varrho|_{\U_\L}=\U_{\varrho}$,
Corollary \ref{corafgb} implies that
 \begin{align*}
  \ol{F}^{(k)}_\varrho\circ d^nf_\L=d^nf_{\L'}\circ(\U_\varrho\times 
\ol{E}^{(k)}_\varrho\times\cdots\times \ol{E}^{(k)}_\varrho).
 \end{align*}
 Thus $d^nf$ is a natural transformation.
 Compare also \cite[Lemma 3.6.5, p.812f.]{NeSa} and \cite[Lemma 2.15, 
p.577]{AllLau}.
\end{proof}
In the situation of the lemma, we write $\de f$\nomenclature{$\de f$}{} for the 
natural transformation defined by $d f_\L$.
\begin{lemma}\label{lemdnmulti}
 Let $E,F\in\SVec$, $k\in\N_0\cup\{\infty\}$, $\U\subseteq\ol{E}^{(k)}$ be an 
open subfunctor and $f\colon\U\to\ol{F}^{(k)}$ be a natural transformation such 
that $f_\L$ is smooth for all $\L\in\Grk{k}$. For $n,m\in\N$, $\L_m\in\Grk{k}$ 
let $x\in\U_\R\subseteq\U_{\L_m}$ and 
$y_i\in \oddgen_{I_i}E_{\ol{|I_i|}}\subseteq
\ol{E}^{(k)}_{\L_m}$, where $I_i\in \Po_+^m$ and $1\leq i\leq n$.
Then, we have 
\[
 d^nf_{\L_m}(x)(y_1,\ldots,y_n)\in 
\oddgen_{I_1}\cdots\oddgen_{I_n}F_{\ol{\ell}}\subseteq\ol{F}^{(k)}_{\L_m},
\]
for $\ell:=|\bigcup_{i=1}^nI_i|$. If the sets $I_i$ are not pairwise disjoint, 
we have that $d^nf_{\L_m}(x)(y_1,\ldots,y_n)=0$.
\end{lemma}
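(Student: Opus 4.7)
The plan is to exploit the naturality of $d^n f$ against a one-parameter family of Grassmann algebra endomorphisms that rescale the generators, thereby reading off the monomial content of $d^nf_{\L_m}(x)(y_1,\ldots,y_n)$ inside $\ol{F}^{(k)}_{\L_m}$. Concretely, for any $s=(s_1,\ldots,s_m)\in\R^m$, define $\varrho_s\colon\L_m\to\L_m$ by $\varrho_s(\oddgen_j):=s_j\oddgen_j$. This is a well-defined morphism in $\Grk{k}$: the relation $\oddgen_i\oddgen_j=-\oddgen_j\oddgen_i$ is preserved, and the grading is respected because $s_j\in\R$ is even. Writing each input as $y_i=\oddgen_{I_i}w_i$ with $w_i\in E_{\ol{|I_i|}}$, and setting $s_I:=\prod_{j\in I}s_j$, we get $\ol{E}^{(k)}_{\varrho_s}(y_i)=s_{I_i}\,y_i$. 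Moreover $\U_{\varrho_s}(x)=x$, since $\varrho_s$ fixes $\R\subseteq\L_m$ and $x\in\U_\R$ is embedded via $\ol{E}^{(k)}_{\eta_{0,m}}$.

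Combining the naturality of $d^nf$ from Lemma \ref{lemdnfnat} with the $\R$-multilinearity of $d^nf_{\L_m}$ in its last $n$ arguments yields, for every $s\in\R^m$,
\begin{equation*}
\ol{F}^{(k)}_{\varrho_s}\bigl(d^nf_{\L_m}(x)(y_1,\ldots,y_n)\bigr)=s_{I_1}\cdots s_{I_n}\cdot d^nf_{\L_m}(x)(y_1,\ldots,y_n).
\end{equation*}
Now decompose the derivative along the canonical direct sum $\ol{F}^{(k)}_{\L_m}=\bigoplus_{J\in\Po^m}\oddgen_J F_{\ol{|J|}}$ as $\sum_J\oddgen_J z_J$, with $z_J\in F_{\ol{|J|}}$. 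Since $\ol{F}^{(k)}_{\varrho_s}(\oddgen_J z_J)=s_J\,\oddgen_J z_J$, matching components in the direct sum converts the identity above into the system $(s_J-s_{I_1}\cdots s_{I_n})\,z_J=0$ for every $J\in\Po^m$ and every $s\in\R^m$.

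For fixed $J$ this is a polynomial identity in $s_1,\ldots,s_m$. Writing $s_{I_1}\cdots s_{I_n}=\prod_j s_j^{k_j}$, where $k_j:=|\{i\colon j\in I_i\}|$, the squarefree monomial $s_J$ equals this product identically iff every $k_j\in\{0,1\}$ (the $I_i$ are pairwise disjoint) and $J=\bigcup_i I_i$. In the disjoint case, only $z_{\bigcup_i I_i}\in F_{\ol{\ell}}$ can be nonzero, and $\oddgen_{\bigcup_i I_i}=\pm\oddgen_{I_1}\cdots\oddgen_{I_n}$; absorbing the sign into $z$ shows that the derivative lies in $\oddgen_{I_1}\cdots\oddgen_{I_n}F_{\ol{\ell}}$. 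If instead some $I_i\cap I_j\neq\emptyset$, the product $\prod_j s_j^{k_j}$ is not squarefree and therefore differs as a polynomial from every $s_J$, forcing all $z_J$ to vanish. The only genuinely delicate point is selecting the right probing morphisms $\varrho_s$ so that a single polynomial identity in $m$ real parameters simultaneously pins down every graded component; once that is in place, the remainder is bookkeeping in $\L_m$.
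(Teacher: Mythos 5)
Your proof is correct and follows essentially the same strategy as the paper: both probe $d^nf_{\L_m}(x)(y_1,\ldots,y_n)$ with generator-rescaling endomorphisms of $\L_m$ via the naturality of $d^nf$ (Lemma \ref{lemdnfnat}) and the $\R$-multilinearity of the derivative. The paper runs three separate one-parameter probes (killing a generator in $\bigcup_i I_i$, killing one outside it, and scaling a repeated generator by $c$), whereas you package all of these as the special cases of a single $m$-parameter family $\varrho_s$ and conclude by comparing monomials; this is a clean uniform repackaging rather than a different argument.
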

\begin{proof}
 Consider $d^nf_{\L_m}$ as a map into $\prod_{I\in 
\Po^m}\oddgen_IF_{\ol{|I|}}$.
 Let $I:=\bigcup_{i=1}^nI_i$, $p\in I$ 
and define $\varrho\colon\L_m\to\L_m$ by $\varrho(\oddgen_p)=0$ and 
$\varrho(\oddgen_j)=\oddgen_j$ for $j\neq p$. By Lemma \ref{lemdnfnat}, we have
\[
0=d^nf_{\L_m}(\U_\varrho(x))\big(\ol{E}^{(k)}_\varrho(y_1),\ldots,\ol{E}^{
(k)}_\varrho(y_n)\big)=\ol{F}^{(k)}_\varrho(d^nf_{\L_m}(x)(y_1,\ldots, y_n)).
\]
In other words, all components 
that do not contain $\oddgen_p$ are 
zero. Conversely, let $p'\notin I$ and let
$\varrho'\colon\L_m\to\L_m$ be a morphism given by $\varrho'(\oddgen_{p'})=0$ 
and 
$\varrho(\oddgen_j)=\oddgen_j$ for $j\neq p'$. Then, we have 
\[
d^nf_{\L_m}(\U_{\varrho'}(x))\big(\ol{E}^{(k)}_{\varrho'}(y_1),\ldots,\ol{E}^{
(k)}_{\varrho'}(y_n)\big)=d^nf_{\L_m}(x)(y_1,\ldots, y_n),
\]
but all components of $\ol{F}^{(k)}_{\varrho'}(d^nf_{\L_m}(x)(y_1,\ldots, 
y_n))$ 
that contain $\oddgen_{p'}$ vanish. It follows that 
$d^nf_{\L_m}(x)(y_1,\ldots,y_n)\in 
\oddgen_{I_1}\cdots\oddgen_{I_n}F_{\ol{\ell}}$. 
Finally, assume that the sets $I_i$ are not pairwise disjoint, for instance let 
$p''$ occur in $r>1$ sets. For $c\in\R$, we define a morphism 
$\varrho''\colon\L_m\to\L_m$ by $\varrho''(\oddgen_{p''}):=c\oddgen_{p''}$ and 
$\varrho''(\oddgen_j):=\oddgen_j$ for $j\neq p''$. We have
\[
d^nf_{\L_m}(\U_{\varrho''}(x))\big(\ol{E}^{(k)}_{\varrho''}(y_1),\ldots,\ol{E}^{
(k)}_{\varrho''}(y_n)\big)=c^rd^nf_{\L_m}(x)(y_1,\ldots, y_n).
\] 
But we also have $\big(\ol{F}^{(k)}_{\varrho''}(d^nf_{\L_m}(x)(y_1,\ldots, 
y_n))\big)_I=c\big(d^nf_{\L_m}(x)(y_1,\ldots, y_n)\big)_I$, which implies 
$d^nf_{\L_m}(x)(y_1,\ldots, y_n)=0$.
\end{proof}
The next lemma, a variation of \cite[Proposition 2.16, 
p.578]{AllLau}, is one of the rare cases where the proof for superdomains 
does not automatically translate to $k$-superdomains. In a sense, it shows the 
infinitesimal character of the generators $\oddgen_i$.
\begin{lemma}\label{lemdftaylor}
 Let $E,F\in\SVec$, $k\in\N_0\cup\{\infty\}$, $\U\subseteq\ol{E}^{(k)}$ be an 
open subfunctor and $f\colon\U\to\ol{F}^{(k)}$ be a natural transformation such 
that $f_\L$ is smooth for all $\L\in\Grk{k}$. Let $1\leq p\leq k$, 
$x\in\U_\L\setminus \ol{E}^{(k)}_{\oddgen_p\L}$ and 
$y\in\ol{E}^{(k)}_{\oddgen_p\L}$. Then, we have
\[
 f_\L(x+y)=f_\L(x)+df_\L(x)(y).
\]
\end{lemma}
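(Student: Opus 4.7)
The plan is to show that the real-variable function $g\colon \R \to \ol{F}^{(k)}_\L$, $g(t) := f_\L(x + ty)$, is affine; evaluating at $t=1$ then yields the stated identity $f_\L(x+y) = f_\L(x) + df_\L(x)(y)$. First observe that $\ol{E}^{(k)}_{\ep_\L}(y) = 0$, since every monomial of $y$ is divisible by $\oddgen_p$ and $\ep_\L(\oddgen_p) = 0$. Combined with the description $\U_\L = (\ol{E}^{(k)}_{\ep_\L})^{-1}(\U_\R)$ from the preceding lemma, this shows $x+ty \in \U_\L$ for every $t\in\R$, so $g$ is well-defined and smooth. By the chain rule and Lemma \ref{lemmullinder}, $g''(t) = d^2 f_\L(x+ty)(y,y)$; once we establish that this vanishes identically, $g'$ is constant and $g(1) = g(0) + g'(0)$ gives the lemma.

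The core task is thus to prove $d^2 f_\L(x+ty)(y,y) = 0$ for all $t$. Imitating the naturality technique of Lemma \ref{lemdnmulti}, consider for each $c \in \R$ the Grassmann morphism $\varrho_c \colon \L \to \L$ defined by $\varrho_c(\oddgen_p) := c\oddgen_p$ and $\varrho_c(\oddgen_j) := \oddgen_j$ for $j \neq p$. Since every monomial of $y$ contains $\oddgen_p$ exactly once, $\ol{E}^{(k)}_{\varrho_c}(y) = cy$. Applying Corollary \ref{corafgb} to the naturality equation $f_\L \circ \ol{E}^{(k)}_{\varrho_c}|_{\U_\L} = \ol{F}^{(k)}_{\varrho_c} \circ f_\L$ yields
\[
\ol{F}^{(k)}_{\varrho_c}\bigl(d^2 f_\L(x+ty)(y,y)\bigr) = c^2\, d^2 f_\L\bigl(\ol{E}^{(k)}_{\varrho_c}(x) + tcy\bigr)(y,y).
\]

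Now set $B := d^2 f_\L(x+ty)(y,y)$ and decompose $B = B_0 + B_1$ according to whether a monomial of $B$ contains the generator $\oddgen_p$ or not. Then $\ol{F}^{(k)}_{\varrho_c}(B) = B_0 + cB_1$, while the right-hand side above is $c^2$ times a smooth function of $c$ and hence has vanishing value and first $c$-derivative at $c=0$. Comparing constant and linear coefficients in $c$ forces $B_0 = 0$ and $B_1 = 0$, so $B = 0$ as required.

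The principal obstacle I anticipate is that Lemma \ref{lemdnmulti} is stated only for base points $x \in \U_\R$, whereas the chain rule forces us to evaluate $d^2 f_\L$ at the off-base point $x + ty$. Using the morphisms $\varrho_c$ directly (rather than quoting Lemma \ref{lemdnmulti}) circumvents this. Incidentally, the argument only uses $x \in \U_\L$, so the hypothesis $x \notin \ol{E}^{(k)}_{\oddgen_p\L}$ in the statement does not appear to be needed.
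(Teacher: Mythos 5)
Your proof is correct, but it takes a genuinely different route from the paper's. Both arguments hinge on the same scaling morphisms $\varrho_c$, but the paper works directly with the difference-quotient map $f_\L^{[1]}$ from the $\mathcal{C}^1$-definition: using that $x$ has no $\oddgen_p$-component one gets $\U_{\varrho_c}(x)=x$ and $\ol{E}^{(k)}_{\varrho_c}(x+y)=x+cy$, whence $c\cdot f_\L^{[1]}(x,y,c)=\ol{F}^{(k)}_{\varrho_c}\big(f_\L(x+y)-f_\L(x)\big)=c\cdot f_\L^{[1]}(x,y,1)$ (the last step after first checking with $\varrho_0$ that $f_\L(x+y)-f_\L(x)$ lies in $\ol{F}^{(k)}_{\oddgen_p\L}$); letting $c\to0$ finishes in three lines and uses only continuity of $f_\L^{[1]}$. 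Your argument instead kills the second derivative along the segment $t\mapsto x+ty$ and then integrates, which costs you $\mathcal{C}^2$-regularity and a Hahn--Banach step to pass from $g''\equiv 0$ to affineness of the $\ol{F}^{(k)}_\L$-valued curve $g$ (legitimate here, since the spaces are Hausdorff locally convex, but worth saying explicitly). What your version buys is exactly what you observe at the end: since you never need $\ol{E}^{(k)}_{\varrho_c}(x)=x$, the restriction that $x$ be free of $\oddgen_p$ (which is what the paper's hypothesis $x\in\U_\L\setminus\ol{E}^{(k)}_{\oddgen_p\L}$ is intended to encode, and which the paper's proof does use) can be dropped, and the identity holds for every $x\in\U_\L$; this extra generality is consistent with Proposition \ref{propfdecomp} but is not needed where the lemma is applied.
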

\begin{proof}
 Let $c\in\R$. We define a morphism 
$\varrho_c\colon\Lambda\to\Lambda$ by 
$\varrho_c(\oddgen_p):=c\oddgen_p$ and $\varrho_c(\oddgen_i):=\oddgen_i$ for 
$i\neq p$. Then $\U_{\varrho_c}(x)=x$ and 
$\ol{E}^{(k)}_{\varrho_c}(y)=cy$.
Therefore, we have
\[
 f_\L(\ol{E}^{(k)}_{\varrho_0}(x+y))-
 f_\L(\ol{E}^{(k)}_{\varrho_0}(x))=0=\ol{F}^{(k)}_{\varrho_0}(f_\L(x+y)-f_\L(x))
\]
and thus $f_\L(x+y)-f_\L(x)\in\ol{F}^{(k)}_{\oddgen_p\L}$.
It follows that
\begin{align*}
c\cdot f_\L^{[1]}(x,y,c)&=
f_\L\big(\ol{E}^{(k)}_{\varrho_c}(x+y)\big)-f_\L\big(\ol{E}^{(k)}
_{\varrho_c}(x)\big)\\
&=\ol{F}^{(k)}_{\varrho_c}(f_\L(x+y)-f_\L(x))=c\cdot f_\L^{[1]}(x,y,1).
\end{align*}
Taking the limit $c\to 0$, we see that $f_\L^{[1]}(x,y,0)=f_\L^{[1]}(x,y,1)$ 
or in other words
 $f_\L(x+y)-f_\L(x)=df_\L(x,y)$
(compare \cite[Proposition 2.16, p.578]{AllLau}).
\end{proof}
Accordingly, we get the following variation of \cite[Corollary 2.17, 
p.579]{AllLau}.
\begin{proposition}\label{propfdecomp}
Let $E,F\in\SVec$, $k\in\N_0\cup\{\infty\}$, $\U\subseteq\ol{E}^{(k)}$ be an 
open subfunctor and $f\colon\U\to\ol{F}^{(k)}$ be a natural transformation such 
that $f_\L$ is smooth for all $\L\in\Grk{k}$. 
For $x:=x_0+\sum_{I\in \Po^n_{+}}x_I\in\U_{\L_n}$, where $n\leq k$, 
$x_0\in\U_\R$ and $x_I\in \oddgen_IE_{\ol{|I|}}$, we have
\[
f_{\L_n}(x)=f_{\L_n}(x_0)+\sum_{I\in 
\Po^{n}_{+}}\sum_{\omega\in\Part(I)}d^{\ell(\omega)}f_{\L_n}(x_0)(x_{\omega_1}
, \ldots , x_ { \omega_{ \ell(\omega) } } ).
\]
\end{proposition}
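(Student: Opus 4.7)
The plan is to convert the identity into a polynomial identity in $n$ scalar parameters via naturality of $f$. Given $t=(t_1,\ldots,t_n)\in\R^n$, I would introduce the rescaling morphism $\varphi_t\colon \L_n\to\L_n$ determined on generators by $\varphi_t(\oddgen_i):=t_i\oddgen_i$; one checks at once that $\varphi_t(\oddgen_I)=t^I\oddgen_I$ with $t^I:=\prod_{i\in I}t_i$, so $\varphi_t$ preserves the $\Z_2$-grading and is a morphism in $\Gr$. Set $H(t):=\sum_{I\in\Po^n_+}t^I x_I$ and $G(t):=f_{\L_n}(x_0+H(t))$; since $\ol{E}^{(k)}_{\varphi_t}$ fixes $x_0$ and acts by $t^I$ on $\oddgen_I E_{\ol{|I|}}$, one has $x_0+H(t)=\U_{\varphi_t}(x)$. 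Naturality of $f$ then yields
\[
 G(t)=\ol{F}^{(k)}_{\varphi_t}\bigl(f_{\L_n}(x)\bigr)=\sum_{J\subseteq\{1,\ldots,n\}} t^J C_J,
\]
where $C_J\in\oddgen_J F_{\ol{|J|}}$ is the $\oddgen_J$-component of $f_{\L_n}(x)$ in $\ol{F}^{(k)}_{\L_n}$. In particular $G$ is multilinear in $t$, each $t_i$ appearing with degree at most one, so $f_{\L_n}(x)=G(1,\ldots,1)=\sum_{K\subseteq\{1,\ldots,n\}} C_K$.

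Next I would identify each $C_K$ with an iterated partial derivative of $G$ at the origin: the multilinear expansion immediately gives $C_K=\partial_{t_K}G(0)$, where $\partial_{t_K}:=\prod_{i\in K}\partial_{t_i}$, and in particular $C_\emptyset=G(0)=f_{\L_n}(x_0)$. Iterating the chain rule and Lemma \ref{lemmullinder} for $G=f_{\L_n}\circ(x_0+H)$, i.e.\ applying the multivariate Faà di Bruno formula, yields
\[
 \partial_{t_K}G(t)=\sum_{\omega\in\Part(K)} d^{\ell(\omega)} f_{\L_n}\bigl(x_0+H(t)\bigr)\bigl(\partial_{t_{\omega_1}}H(t),\ldots,\partial_{t_{\omega_{\ell(\omega)}}}H(t)\bigr).
\]
Since $\partial_{t_B} t^I=\prod_{i\in I\setminus B}t_i$ vanishes at $t=0$ unless $I=B$, one obtains $\partial_{t_B}H(0)=x_B$ for every $B\in\Po^n_+$. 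Substituting at $t=0$ produces $C_K=\sum_{\omega\in\Part(K)} d^{\ell(\omega)} f_{\L_n}(x_0)(x_{\omega_1},\ldots,x_{\omega_{\ell(\omega)}})$, and the claimed identity follows by summing over $K\subseteq\{1,\ldots,n\}$ and separating the contribution $K=\emptyset$.

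The principal obstacle is establishing the multilinearity of $G$ in $t$, which rests on $\varphi_t$ being a legitimate morphism in $\Gr$, in particular on it being even with respect to the $\Z_2$-grading; this is automatic since $|\varphi_t(\oddgen_I)|=|\oddgen_I|$. Once naturality has been set up, everything else is routine calculus via the chain rule, and the approach neatly sidesteps any analytic worry about the vanishing of a Taylor remainder in the smooth (rather than analytic) setting.
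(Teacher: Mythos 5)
Your proof is correct, but it takes a genuinely different route from the paper. The paper proceeds by induction on the largest generator index occurring in $x$: it groups the components $x_I$ by their maximal element, and at each step invokes Lemma \ref{lemdftaylor} (the exact first-order expansion in a direction lying in $\ol{E}^{(k)}_{\oddgen_p\L}$) to add one generator at a time, differentiating the previously obtained formula in the new direction. You instead apply naturality once, with the full $n$-parameter family of rescalings $\varphi_t(\oddgen_i)=t_i\oddgen_i$, to force $G(t)=f_{\L_n}(\U_{\varphi_t}(x))$ to be a multilinear polynomial $\sum_J t^J C_J$ whose coefficients are the $\oddgen_J$-components of $f_{\L_n}(x)$; coefficient extraction via $\partial_{t_K}G(0)$ combined with the Fa\`a di Bruno formula for distinct variables then yields the partition sum directly (note that $\U_{\varphi_t}(x)\in\U_{\L_n}$ for every $t\in\R^n$ by functoriality, so $G$ is globally defined, and the needed symmetry of $d^m f_{\L_n}$ is available by Schwarz's theorem in this setting). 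Your argument is in effect the simultaneous, multi-variable version of the single-generator rescaling trick that the paper uses only inside the proof of Lemma \ref{lemdftaylor}; it bypasses that lemma and the induction entirely and makes the ``exactness'' of the Taylor expansion conceptually transparent in one stroke. The paper's incremental version has the advantage of isolating Lemma \ref{lemdftaylor} as a reusable building block (it is cited again in Proposition \ref{propskel}) and of being phrased so as to transfer to more general base rings, whereas your coefficient-matching argument is shorter and reduces the combinatorics to the classical chain-rule identity $\partial_{t_K}(f\circ g)=\sum_{\omega\in\Part(K)}d^{\ell(\omega)}f(g)(\partial_{t_{\omega_1}}g,\ldots,\partial_{t_{\omega_{\ell(\omega)}}}g)$.
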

\begin{proof}
We first define a suitable partition of 
$\Po^n_+$. Let $\I_1:=\{\{1\}\}$ and  
$\I_j:=\Po^j_+\setminus\Po_+^{j-1}$ for $1<j\leq n$, i.e., $\I_j$ contains all 
subsets that contain $j$ but no larger index. 
Set $x_{\I_j}:=\sum_{I\in\I_j}x_I$; then we can write 
$x=x_0+\sum_{j=1}^nx_{\I_j}$. We prove the proposition by induction on 
the largest index of an odd generator appearing in $x$.
Lemma \ref{lemdftaylor} gives us the induction basis. Assume  
that the formula holds for $1\leq m<n$, i.e., assume that
\[
\textstyle
f_{\L_n}\big(x_0+\sum_{j=1}^mx_{\I_j}\big)=
\displaystyle
f_{\L_n}(x_0)+
\sum_{I\in 
\Po^m_+}\sum_{\omega\in\Part(I)}d^{\ell(\omega)}f_{\L_n}(x_0)(x_{\omega_1}
, \ldots , x_ { \omega_{ \ell(\omega) } } ).
\]
With this, differentiating in the direction of $x_{\I_{m+1}}$ gives us
\begin{align*}
df_{\L_n}\big(x_0+&\textstyle \sum_{j=1}^mx_{\I_j}\big)\big({x_{\I_{m+1}}}\big)=
df_{\L_n}(x_0)(x_{\I_{m+1}})+\\
\displaystyle
&\sum_{I\in 
\Po^m_+}\sum_{\omega\in\Part(I)}d^{\ell(\omega)+1}f_{\L_n}(x_0)(x_{\omega_1}
, \ldots , x_ { \omega_{ \ell(\omega) } },x_{\I_{m+1}} )\\
=&
\sum_{I\in 
\I_{m+1}}\sum_{\omega\in\Part(I)}d^{\ell(\omega)}f_{\L_n}(x_0)(x_{\omega_1}
, \ldots , x_ { \omega_{ \ell(\omega) } } ).
\end{align*}
It follows from Lemma \ref{lemdftaylor} that the addition of both equations 
results in 
the desired formula for $f_{\L_n}(x_0+\sum_{j=1}^{m+1}x_{\I_j})$
(compare \cite[Section 10.2, p.421]{Mol}).
\end{proof}
The proposition can be rewritten in the following way.
\begin{lemma}\label{lemtaylor}
 Let $E,F\in\SVec$, $k\in\N_0\cup\{\infty\}$, $\U\subseteq\ol{E}^{(k)}$ be an 
open subfunctor and $f\colon\U\to\ol{F}^{(k)}$ be a natural transformation such 
that $f_\L$ is smooth for all $\L\in\Grk{k}$. 
For $\L\in\Grk{k}$ fix $x\in\U_\R$, $n_0\in\ol{E}^{(k)}_{\L^\nil_{\ol{0}}}$ and 
$n_1\in\ol{E}^{(k)}_{\L_{\ol{1}}}$. Then
\begin{align*}
 f_\L(x+n_0+n_1)&=\sum_{m,l=0}^\infty\frac{1}{m!l!}\cdot 
d^{m+l}f_\L(x)(\underbrace{n_0,\ldots,n_0}_{m\ 
\mathrm{times}},\underbrace{n_1,\ldots,n_1}_{l\ \mathrm{times}})\\
 &=\sum_{i=0}^\infty\frac{1}{i!}\cdot 
d^{i}f_\L(x)({n_0+n_1,\ldots,n_0+n_1}).
\end{align*}
\end{lemma}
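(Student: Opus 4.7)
The plan is to derive both identities from Proposition \ref{propfdecomp} by a combinatorial regrouping, using multilinearity and symmetry of the iterated differentials together with Lemma \ref{lemdnmulti}. Every $\L\in\Grk{k}$ equals $\L_n$ for some $n\le k$, so I may assume $\L=\L_n$. Decomposing $n_0=\sum_{I\in\Poee{n}}x_I$ with $x_I\in\oddgen_IE_0$, and $n_1=\sum_{I\in\Poo{n}}x_I$ with $x_I\in\oddgen_IE_1$, Proposition \ref{propfdecomp} yields
\[
 f_{\L_n}(x+n_0+n_1)=f_{\L_n}(x)+\sum_{I\in\Po^n_+}\sum_{\omega\in\Part(I)}d^{\ell(\omega)}f_{\L_n}(x)(x_{\omega_1},\ldots,x_{\omega_{\ell(\omega)}}).
\]

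The key step is to regroup this double sum according to the pair $(m,l):=(e(\omega),o(\omega))$. By multilinearity of $d^{m+l}f_{\L_n}(x)$,
\[
 d^{m+l}f_{\L_n}(x)(\underbrace{n_0,\ldots,n_0}_{m},\underbrace{n_1,\ldots,n_1}_{l})=\sum_{\substack{I_1,\ldots,I_m\in\Poee{n}\\ J_1,\ldots,J_l\in\Poo{n}}}d^{m+l}f_{\L_n}(x)(x_{I_1},\ldots,x_{I_m},x_{J_1},\ldots,x_{J_l}),
\]
and by Lemma \ref{lemdnmulti} the summands vanish unless $I_1,\ldots,I_m,J_1,\ldots,J_l$ are pairwise disjoint. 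For each such non-vanishing tuple, the underlying unordered family is a partition $\omega\in\Part(I)$ of $I:=I_1\cup\cdots\cup J_l\in\Po^n_+$ with $e(\omega)=m$ and $o(\omega)=l$; conversely, each such partition is the image of exactly $m!\,l!$ ordered tuples, obtained by permuting its even and odd parts separately, and by symmetry of $d^{m+l}f_{\L_n}(x)$ all of these produce the same value. Dividing by $m!\,l!$ thus produces, for each $(m,l)\neq(0,0)$, precisely the contribution of the partitions with $e=m$, $o=l$ in Proposition \ref{propfdecomp}. Summing over all $(m,l)$ with $m+l\ge 1$ and adding the $(0,0)$ term $f_{\L_n}(x)$ gives the first identity; the sum is finite because Lemma \ref{lemdnmulti} forces it to terminate once no more pairwise disjoint sets of the given parities fit inside $\{1,\ldots,n\}$.

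For the second equality, multilinearity expands $d^if_{\L_n}(x)(n_0+n_1,\ldots,n_0+n_1)$ as a sum over the $2^i$ patterns $\varepsilon\in\{0,1\}^i$ of $d^if_{\L_n}(x)(n_{\varepsilon_1},\ldots,n_{\varepsilon_i})$; symmetry of $d^if_{\L_n}(x)$ then collects the $\binom{i}{m}$ patterns with exactly $m$ zero-entries into a single term, giving
\[
 \tfrac{1}{i!}\,d^if_{\L_n}(x)(n_0+n_1,\ldots,n_0+n_1)=\sum_{m+l=i}\tfrac{1}{m!\,l!}\,d^{m+l}f_{\L_n}(x)(\underbrace{n_0,\ldots,n_0}_m,\underbrace{n_1,\ldots,n_1}_l).
\]
Summing over $i\ge 0$ and re-indexing by $(m,l)$ shows that the two right-hand sides of the lemma agree. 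I expect the main obstacle to be the bookkeeping in the first regrouping step, namely verifying that the map from ordered disjoint tuples to partitions is exactly $m!\,l!$-to-one onto $\{\omega : e(\omega)=m,\,o(\omega)=l\}$ so that the factor $\tfrac{1}{m!\,l!}$ matches the partition sum of Proposition \ref{propfdecomp}; everything else reduces to routine multilinear algebra.
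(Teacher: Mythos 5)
Your proposal is correct and follows essentially the same route as the paper: both derive the first identity by multilinearly expanding $d^{m+l}f_\L(x)(n_0,\ldots,n_0,n_1,\ldots,n_1)$, invoking Lemma \ref{lemdnmulti} to kill non-disjoint tuples, and observing that each partition with $e(\omega)=m$, $o(\omega)=l$ is hit by exactly $m!\,l!$ ordered tuples, which cancels against the prefactor to recover Proposition \ref{propfdecomp}. The second identity is likewise handled in the paper by the same binomial count $\binom{m+l}{l}\cdot\frac{1}{(m+l)!}=\frac{1}{m!\,l!}$.
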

\begin{proof}
Let $\L=\L_n$. By Lemma \ref{lemdnmulti} the sums are finite and after 
multilinear expansion 
we only need to consider the summands that consist of partitions.
For any partition $\I\in\Part(I)$, $I\in\Po^n_+$  in graded lexicographic 
order containing $m$ even 
and $l$ odd sets,
there appear exactly $m!l!$ copies of the term
\[
d^{m+l}f_\L(x)(n_{0,\I_1},\ldots,n_{0,\I_m},n_{1,\I_{m+1}},\ldots,n_{1,\I_{m+l}}
)
\]
in the first sum because we must consider all permutations of 
$n_{0,\I_1},\ldots, n_{0,\I_m}$, resp.\ of 
$n_{1,\I_{m+1}},\ldots,n_{1,\I_{m+l}}$. The first equality follows then 
from 
Proposition \ref{propfdecomp}
(see also \cite[Proposition 2.21, p.582]{AllLau}).
The second equality holds because multilinear expansion of
$d^{m+l}f_\L(x)({n_0+n_1,\ldots,n_0+n_1})$ leads to 
$\binom{m+l}{l}$ copies of 
$d^{m+l}f_\L(x)(n_0,\ldots,n_0,n_1,\ldots,n_1)$ ($m$ times $n_0$ and $l$ times 
$n_1$)
and
$\binom{m+l}{l}\cdot\frac{1}{(m+l)!}=\frac{1}{m!l!}$.
\end{proof}

\begin{corollary}\label{corssinbas}
 Let $E,F\in\SVec$, $k\in\N_0\cup\{\infty\}$, $\U\subseteq\ol{E}^{(k)}$ be an 
open subfunctor and $f\colon\U\to\ol{F}^{(k)}$ be a natural transformation such 
that $f_\L$ is smooth for all $\L\in\Grk{k}$. If additionally
 $df_{\L}(x_0)\colon\ol{E}^{(k)}_{\L}\to\ol{F}^{(k)}_{\L}$ is 
$\L_{\ol{0}}$-linear for all $x_0\in\U_\R$, then $f$ is 
supersmooth.
\end{corollary}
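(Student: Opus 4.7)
The plan is to upgrade the hypothesized $\L_{\ol{0}}$-linearity of $df_\L(x_0)$ from base points $x_0\in\U_\R$ to arbitrary $x\in\U_\L$ via the exact Taylor expansion of Lemma \ref{lemtaylor}. For $x\in\U_\L$, set $x_0:=\ol{E}^{(k)}_{\ep_\L}(x)\in\U_\R$ and $n:=x-x_0\in\ol{E}^{(k)}_{\L^\nil}$, and decompose $v=v_0+v^\nil\in E_0\oplus\ol{E}^{(k)}_{\L^\nil}$. For $t\in\R$ small, $x+tv=(x_0+tv_0)+(n+tv^\nil)$ has body $x_0+tv_0\in\U_\R$ and nilpotent part $n+tv^\nil$, so Lemma \ref{lemtaylor} gives a finite expression for $f_\L(x+tv)$ as a sum of iterated derivatives of $f_\L$ at $x_0+tv_0$ applied to tensor powers of $n+tv^\nil$. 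Differentiating at $t=0$, applying the chain rule, and using the symmetry of the higher derivatives to combine the $v_0$- and $v^\nil$-contributions yields the identity
\[
df_\L(x)(v)\;=\;\sum_{j=0}^\infty\frac{1}{j!}\,d^{j+1}f_\L(x_0)\bigl(v,\,n^{\otimes j}\bigr),
\]
which is again a finite sum by Lemma \ref{lemdnmulti}.

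To obtain $\L_{\ol{0}}$-linearity of $df_\L(x)$ in $v$, it therefore suffices to show that the map $v\mapsto d^{j+1}f_\L(x_0)(v,n^{\otimes j})$ is $\L_{\ol{0}}$-linear for every $j\ge 0$. I would prove the stronger statement by induction on $j$ that, for every $x_0\in\U_\R$ and every $j\ge 1$, the derivative $d^jf_\L(x_0)$ is $\L_{\ol{0}}$-multilinear. The hypothesis handles the case $j=1$. For the inductive step, the identity $df_\L(x_0+s)(\lambda v)=\lambda\,df_\L(x_0+s)(v)$ holds for all $s$ in a small $E_0$-neighbourhood of $0$ (since $x_0+s$ then still lies in $\U_\R$), and differentiating $j$ times at $s=0$ in directions $s_1,\ldots,s_j\in E_0$ produces
\[
d^{j+1}f_\L(x_0)(s_1,\ldots,s_j,\lambda v)\;=\;\lambda\,d^{j+1}f_\L(x_0)(s_1,\ldots,s_j,v).
\]
Together with the symmetry of $d^{j+1}f_\L(x_0)$, this gives $\L_{\ol{0}}$-linearity in any chosen slot whenever the remaining $j$ slots are filled with elements of $E_0$.

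The delicate part of the argument, and the step I expect to cost the most work, is propagating this partial linearity to full $\L_{\ol{0}}$-multilinearity, since in the Taylor formula the other $j$ slots are filled by the genuinely nilpotent $n$ rather than by $E_0$-elements. To handle this I would expand $\R$-multilinearly into monomial pieces of the form $\oddgen_I e$ with $e\in E_{\ol{|I|}}$ and use Lemma \ref{lemdnmulti} to discard every term whose index sets are not pairwise disjoint. On the surviving pairwise disjoint configurations $(\oddgen_{I_1}e_1,\ldots,\oddgen_{I_{j+1}}e_{j+1})$, Lemma \ref{lemdnmulti} pins the value of $d^{j+1}f_\L(x_0)$ into the rank-one submodule $\oddgen_{I_1}\cdots\oddgen_{I_{j+1}}F_{\ol{|\bigcup I_i|}}$ of $\ol{F}^{(k)}_\L$, and the naturality of $d^{j+1}f$ under generator-permutation morphisms $\L_n\to\L_n$ relates, up to sign, the resulting multilinear ``coefficients'' for different disjoint configurations sharing a common union. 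Combining these rigidity constraints with the partial $\L_{\ol{0}}$-linearity established in the previous step yields full $\L_{\ol{0}}$-multilinearity of $d^jf_\L(x_0)$, and substituting back into the formula for $df_\L(x)(v)$ shows that $df_\L(x)$ is $\L_{\ol{0}}$-linear for every $x\in\U_\L$, so that $f$ is supersmooth.
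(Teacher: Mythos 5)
Your reduction to $\L_{\ol 0}$-multilinearity of the higher differentials $d^{j}f_\L(x_0)$ at real points is the same reduction the paper makes (it asserts this multilinearity ``by symmetry of the higher derivatives'' and then applies Proposition \ref{propfdecomp}, exactly as you apply Lemma \ref{lemtaylor}), and your first two steps --- the formula for $df_\L(x)(v)$ and the partial linearity obtained by differentiating the hypothesis along $E_0$-directions --- are correct. The gap is precisely the step you flag as delicate, and it cannot be closed by the tools you name. Morphisms of Grassmann algebras send generators to odd elements, so naturality only relates a monomial configuration to configurations in which every even index set still has cardinality at least $2$; no generator-permuting or generator-splitting morphism connects a slot $\oddgen_I e$ with $|I|$ even, $|I|\geq 2$, $e\in E_0$, to a slot lying in $E_0$ itself. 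Hence Lemma \ref{lemdnmulti}, permutation naturality and your partial linearity leave the value of, say, $d^2f_\L(x_0)(\oddgen_1\oddgen_2e_1,\oddgen_3\oddgen_4e_2)$ completely unconstrained, and the multilinearity claim is in fact false under the stated hypotheses.

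Concretely, take $E=F=\R\oplus\{0\}$, so that $\ol{E}_\L=\ol{F}_\L=\Lz$, take $\U=\ol{E}$, fix smooth $f_0,h\colon\R\to\R$ with $h\neq f_0''$, and set, with $n:=x-\ep_\L(x)$,
\[
 f_\L(x):=f_0(\ep_\L(x))+f_0'(\ep_\L(x))\,n+\tfrac{1}{2}\,h(\ep_\L(x))\,n^2 .
\]
Every $f_\L$ is smooth, and $f$ is natural because every morphism of Grassmann algebras is an algebra homomorphism over $\R$ commuting with $\ep$. At every real point one has $df_\L(x_0)(v)=f_0'(x_0)\,v$, so the hypothesis of the corollary is satisfied; but
\[
 d^2f_\L(x_0)(v,w)=f_0''(x_0)\,vw+\bigl(h(x_0)-f_0''(x_0)\bigr)\,(v-\ep_\L(v))(w-\ep_\L(w))
\]
is not $\L_{\ol 0}$-bilinear, and at $x=x_0+\oddgen_1\oddgen_2\in\U_{\L_4}$ one computes
$df_{\L_4}(x)(\oddgen_3\oddgen_4)-\oddgen_3\oddgen_4\,df_{\L_4}(x)(1)
=(h(x_0)-f_0''(x_0))\,\oddgen_1\oddgen_2\oddgen_3\oddgen_4\neq 0$,
so $f$ is not supersmooth. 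The obstruction you would have to overcome is therefore not merely technical: the intermediate claim fails, and this example shows that the statement in the form given cannot hold for arbitrary natural transformations with smooth components; the same unproved assertion is also the weak point of the paper's own one-line argument. A correct proof needs more than $\L_{\ol 0}$-linearity of $df_\L$ at the real points --- for instance linearity at all points of $\U_\L$, or the extra structure available when $f$ is built from a skeleton as in Proposition \ref{propskel}, which is the only place the corollary is essentially used.
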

\begin{proof}
  Let $\L=\L_n$. Because $df_{\L}(x_0)$ is $\L_{\ol{0}}$-linear it follows by 
symmetry of 
the higher derivatives that 
$d^mf_{\L}({x_0})\colon\ol{E}^{(k)}_{\L}\times\cdots\times 
\ol{E}^{(k)}_{\L}\to 
\ol{F}^{(k)}_{\L}$ is $\L_{\ol{0}}$-$m$-multilinear for all $m\in\N$. Let  
$x=x_0+\sum_{I\in\Po^n_+}x_I$ and $y=y_0+\sum_{I\in\Po^n_+}y_I$ where 
$x_I,y_I\in\oddgen_IE_{\ol{|I|}}$ and $t\in\L_{\ol{0}}$.
With 
Proposition \ref{propfdecomp}, we calculate
\begin{align*}
 df_{\L}(x)&(ty)=d\Big(f_{\L}(x_0)+\sum_{I\in 
\Po^{n}_{+}}\sum_{\omega\in\Part(I)}d^{\ell(\omega)}f_{\L}(x_0)(x_{\omega_1}
, \ldots , x_ { \omega_{ \ell(\omega) } } )\Big)(ty)\\
&=df_\L(x_0)(ty)+
\sum_{I\in 
\Po^{n}_{+}}\sum_{\omega\in\Part(I)}d^{\ell(\omega)+1}f_{\L}(x_0)(x_{\omega_1
}
, \ldots , x_ { \omega_{ \ell(\omega) } },ty )\\
&=t\Big(df_\L(x_0)(y)+
\sum_{I\in 
\Po^{n}_{+}}\sum_{\omega\in\Part(I)}d^{\ell(\omega)+1}f_{\L}(x_0)(x_{\omega_1
}
, \ldots , x_ { \omega_{ \ell(\omega) } },y )\Big).
\end{align*}
\end{proof}
This was already stated in \cite[Theorem 
3.3.2, p.391]{Mol} without proof. 
The corollary simplifies some calculations considerably. A small example is the 
next lemma.
\begin{lemma}\label{lemdfss}
 Let $k\in\N_0\cup\{\infty\}$, $E,F\in\SVec$ and $\U\subseteq\ol{E}^{(k)}$ be 
an open 
subfunctor. If $f\colon\U\to\ol{F}^{(k)}$ is supersmooth, then $\de 
f\colon\U\times\ol{E}^{(k)}\to\ol{F}^{(k)}$ is supersmooth as well.
\end{lemma}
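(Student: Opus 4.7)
The plan is to verify the hypotheses of Corollary \ref{corssinbas} for $\de f$. By Lemma \ref{lemdnfnat}, $\de f\colon\U\times\ol{E}^{(k)}\to\ol{F}^{(k)}$ is already a natural transformation, and each component $(\de f)_\L=df_\L$ is smooth because $f_\L$ is $\Cs$. The product $\U\times\ol{E}^{(k)}$ is naturally an open subfunctor of $\ol{G}^{(k)}$, where $G=E\oplus E$ carries the componentwise $\Z_2$-grading; its real part is $(\U\times\ol{E}^{(k)})_\R=\U_\R\times E_0$. By Corollary \ref{corssinbas}, it therefore suffices to show that for every $\L\in\Grk{k}$ and every $(x_0,v_0)\in\U_\R\times E_0$, the map $d(\de f)_\L((x_0,v_0))\colon\ol{E}^{(k)}_\L\times\ol{E}^{(k)}_\L\to\ol{F}^{(k)}_\L$ is $\L_{\ol 0}$-linear.

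By the rule of partial differentials, combined with Lemma \ref{lemmullinder} applied to the continuous linear map $df_\L(x_0,\bl)$, one obtains
\[
d(\de f)_\L((x_0,v_0))(w_1,w_2)=d^2f_\L(x_0)(w_1,v_0)+df_\L(x_0)(w_2)
\]
for all $(w_1,w_2)\in\ol{E}^{(k)}_\L\times\ol{E}^{(k)}_\L$. The second summand is $\L_{\ol 0}$-linear in $w_2$ by supersmoothness of $f$ and is independent of $w_1$. So it only remains to prove that $d^2f_\L(x_0)(tw_1,v_0)=t\cdot d^2f_\L(x_0)(w_1,v_0)$ for every $t\in\L_{\ol 0}$.

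For this, I would invoke Schwarz's theorem to swap the two direction arguments and then exploit the fact that the scalar multiplication $\mu_t\colon\ol{F}^{(k)}_\L\to\ol{F}^{(k)}_\L$, $y\mapsto t\cdot y$, is continuous and $\R$-linear, hence commutes with the differential by Corollary \ref{corafgb}:
\[
d^2f_\L(x_0)(tw_1,v_0)=d^2f_\L(x_0)(v_0,tw_1)=d\bigl(df_\L(\bl,tw_1)\bigr)(x_0)(v_0)=d\bigl(\mu_t\circ df_\L(\bl,w_1)\bigr)(x_0)(v_0)=t\cdot d^2f_\L(x_0)(v_0,w_1),
\]
where the third equality uses that $df_\L(y,\bl)$ is $\L_{\ol 0}$-linear for \emph{every} $y\in\U_\L$, not only for real $y$; this is directly the definition of supersmoothness. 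A final application of Schwarz's theorem rewrites the right-hand side as $t\cdot d^2f_\L(x_0)(w_1,v_0)$, as required.

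The main subtlety is that supersmoothness of $f$ only grants $\L_{\ol 0}$-linearity of $df_\L(x,\bl)$ in the direction argument, whereas here the scalar $t$ multiplies a vector that plays the role of a basepoint for the second differentiation. Swapping the two arguments of $d^2f_\L(x_0)$ via Schwarz's theorem is exactly what bridges this gap and lets the chain rule do the rest.
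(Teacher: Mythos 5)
Your proof is correct and follows essentially the same route as the paper: reduce to Corollary \ref{corssinbas} and verify $\L_{\ol{0}}$-linearity of $d(\de f)_\L$ at real base points $(x_0,v_0)\in\U_\R\times E_0$ via the decomposition $d(\de f)_\L((x_0,v_0))(w_1,w_2)=d^2f_\L(x_0)(w_1,v_0)+df_\L(x_0)(w_2)$. The only difference is that you spell out, via Schwarz's theorem and the chain rule applied to $\mu_t\circ df_\L(\bl,w_1)$, why $d^2f_\L(x_0)(\bl,v_0)$ is $\L_{\ol{0}}$-linear, a step the paper's one-line computation leaves implicit.
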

\begin{proof}
 By Corollary \ref{corssinbas}, it suffices to calculate
 \begin{align*}
  &d\big(\de f_\L(x_0,y_0)\big)(t\cdot u, t\cdot 
v)=d^2f_\L(x_0)(y_0,t\cdot u)
  +df_\L(x_0)(t\cdot v)\\  
  &=
  t\cdot d^2f_\L(x_0)(y_0, u)
  +t\cdot df_\L(x_0)( v)
  =t\cdot \big(d\big(\de f_\L(x_0,y_0)\big)(v,u)\big),
 \end{align*}
 for $(x_0,y_0)\in\U_\R\times\ol{E}^{(k)}_\R$, 
$(v,u)\in\ol{E}^{(k)}_\L\times\ol{E}^{(k)}_\L$ and $t\in\Lz$.
\end{proof}
By induction, it follows that all higher derivatives of supersmooth maps 
are supersmooth again. A more general but also more involved version was proved 
in \cite[Proposition 2.18, p.580]{AllLau}.

We will now give an explicit description of supersmooth morphisms as so 
called skeletons, which is 
essential for almost all applications. It was already stated in 
\cite[Proposition 3.3.3, p.391]{Mol} and proofs can be found in
\cite[Theorem 4.11, p.20]{Sachse2}
or in higher generality in
\cite[Proposition 3.4, p.584]{AllLau}. 
\begin{definition}\label{defclcalt}
 Let $n\in\N$, let $E_0,\ldots E_n$ and $F$ be locally convex spaces and 
$U\subseteq E_0$ open.
Denote by 
$\Cs(U,\Lc^n(E_1,\ldots, 
E_n;F))$
the set of maps $f\colon U\to\Lc^n(E_1,\ldots,E_n;F)$ such that
\[
f^{\wedge}\colon U\times (E_1\times\cdots\times E_n)\to 
F,\quad  f^{\wedge}(x,v):=f(x)(v)
\]
is smooth.
In this situation, we define 
\[
d^mf(x)(w,v):=\partial_{(w_m,0)}\ldots\partial_{(w_1,0)}f^{\wedge }(x,v),
\]
for $m\in\N$, $x\in U$, $v\in E_1\times\cdots\times E_n$ and
$w=(w_1,\ldots,w_m)\in E_0^m$.
Analogously, we define
$\Cs(U,\Alt^n(E_1;F))$\nomenclature{$\Cs(U,\Alt^n(E_1;F))$}{}
as the set of maps $f\colon U\to\Alt^n(E_1;F)$ that are smooth in the above 
sense.
\end{definition}
\begin{definition}\label{defskel}
Let $k\in\N_0\cup\{\infty\}$, $E,F\in\SVec$ and $U\subseteq E_0$ open.
 A 
\textit{($k$-)skeleton}\index{Skeleton} is a family of maps 
$(f_n)_{0\leq n<k+1}$\nomenclature{$(f_n)_n$}{} such that
$f_n\in \Cs(U,\Alt^n(E_1;F_{\ol{n}}))$.
It will be convenient to set $d^0f_n:=f_n$ and let 
$d^0f_n(x)(w_1,\ldots,w_m,v):=d^0f_n(x)(v)$ as well as 
$d^mf_0(x)(w,v):=d^mf_0(x)(w)$ for $x\in U$, $v\in E_1^n$ and 
$w=(w_1,\ldots,w_m)\in E^m_0$.
\end{definition}

\begin{proposition}[{\cite[Proposition 3.4, p.584]{AllLau}}]\label{propskel}
 Let $E,F\in\SVec$, $k\in\N_0\cup\{\infty\}$, $\U\subseteq\ol{E}^{(k)}$, 
$\V\subseteq\ol{F}^{(k)}$ be 
open subfunctors and $f\in\SC(\U,\V)$. Then the 
equation
\[
 \textstyle
 f_{\L_k}\big(x+\sum_{l=1}^k\oddgen_l y_l\big)=
 f_{0}(x)+
\displaystyle
\sum_{l=1}^k\sum_{\{i_1,\ldots,i_l\}\in\Po^k}
\oddgen_If_l(x)(y_{i_1},\ldots,y_{i_l}),
\]
where $x\in\U_\R$ and $y_l\in E_1$, defines a $k$-skeleton 
$(f_n)_n$\index{Skeleton!of a supersmooth map}. For this skeleton, we have
\begin{align}\label{formulaskel}
 f_{\L_N}(x+n_0+n_1)=\sum_{m,l=0}^\infty\frac{1}{m!l!}\cdot 
d^mf_l(x)(\underbrace{n_0,\ldots,n_0}_{m\ 
\mathrm{times}},\underbrace{n_1,\ldots,n_1}_{l\ \mathrm{times}}),
\end{align}
where $x\in\U_\R$, $n_0\in\ol{E}^{(k)}_{\L^\nil_{N,\ol{0}}}$,  
$n_1\in\ol{E}^{(k)}_{\L_{N,\ol{1}}}$ and $N\leq k$. Here it is understood that
\[
d^mf_l(x)(\oddgen_{I_1}v_1,\ldots,\oddgen_{I_{l+m}}v_{{l+m}})=
\oddgen_{I_1}\cdots\oddgen_{I_{l+m}} 
d^mf_l(x)(v_{1},\ldots,v_{{l+m}})
\]
for $v_1,\ldots,v_m\in E_0$, $v_{m+1},\ldots, v_{m+l}\in E_1$ and $|I_j|$ even 
if $1\leq j\leq m$ and odd if $m+1\leq j\leq m+l$. Conversely, every 
$k$-skeleton defines a supersmooth map via formula (\ref{formulaskel}) and the 
skeleton of this map is the original one. 
\end{proposition}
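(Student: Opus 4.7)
The plan is to extract the skeleton from a given supersmooth $f \in \SC(\U,\V)$ using the Taylor expansion of Lemma \ref{lemtaylor}, then construct an inverse procedure and verify the two are mutually inverse.

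For the forward direction, I specialize Lemma \ref{lemtaylor} to $n_0 = 0$ and $n_1 = \sum_{l=1}^{k}\oddgen_l y_l$ with $y_l \in E_1$. Expanding multilinearly one obtains a sum of terms of the form $d^n f_{\L_k}(x)(\oddgen_{i_1}y_{i_1},\ldots,\oddgen_{i_n}y_{i_n})$. By Lemma \ref{lemdnmulti} the summands with a repeated index vanish, and each surviving summand with distinct indices $\{i_1<\cdots<i_n\}$ lies in $\oddgen_{i_1}\cdots\oddgen_{i_n}F_{\ol{n}}$. I would therefore define $f_n(x)(y_1,\ldots,y_n)\in F_{\ol{n}}$ by the requirement $\oddgen_{1}\cdots\oddgen_{n}\,f_n(x)(y_1,\ldots,y_n) = d^n f_{\L_n}(x)(\oddgen_1 y_1,\ldots,\oddgen_n y_n)$. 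Alternation of $f_n$ follows from the symmetry of $d^n f_{\L_n}$ combined with $\oddgen_i\oddgen_j = -\oddgen_j\oddgen_i$; multilinearity is inherited from $d^n f_{\L_n}$; and smoothness in the sense of Definition \ref{defclcalt} follows from smoothness of $d^n f_{\L_n}$ together with Lemma \ref{lemrestrclosed} applied to the closed embedding $F_{\ol{n}}\hookrightarrow\ol{F}^{(k)}_{\L_n}$. The general formula \eqref{formulaskel} is then a direct consequence of Lemma \ref{lemtaylor}: grouping the multilinear expansion of $d^{m+l}f_\L(x)(n_0,\ldots,n_0,n_1,\ldots,n_1)$ by the partition of indices involved, the binomial counting reproduces the $\tfrac{1}{m!\,l!}$ coefficients.

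For the converse, given a $k$-skeleton $(f_n)_n$ I \emph{define} $f_\L$ from the right-hand side of \eqref{formulaskel}. Since $\L\in\Grk{k}$ is finite-dimensional when it matters and $n_0$ is nilpotent, the double sum truncates and $f_\L$ is well-defined. Smoothness of $f_\L$ follows from the hypothesis that each $f_n \in \Cs(\U_\R,\Alt^n(E_1;F_{\ol{n}}))$ together with Lemma \ref{lemprodsmooth} on the finite direct-sum decomposition of $\ol{F}^{(k)}_\L$. Naturality amounts to showing $\ol{F}^{(k)}_\varrho\circ f_\L = f_{\L'}\circ\U_\varrho$ for every morphism $\varrho\colon\L\to\L'$, which I verify by applying $\ol{F}^{(k)}_\varrho$ termwise and using that $\varrho$ is a parity-preserving algebra morphism so that $\varrho(\oddgen_{I_1})\cdots\varrho(\oddgen_{I_{m+l}})$ can be pulled through the $\R$-multilinear map $d^m f_l(x)(\cdots)$. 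Supersmoothness then reduces via Corollary \ref{corssinbas} to checking that $df_\L(x_0)$ is $\Lz$-linear at points $x_0\in\U_\R$: differentiating \eqref{formulaskel} at $x_0$ yields terms of the form $\oddgen_I v$ with $v$ depending $\R$-linearly on $v_j\in E_{\ol{|I|}}$, and multiplication by $t\in\Lz$ commutes with the $\oddgen_I$-factors by the grading. That the two constructions are mutually inverse follows by inserting the definition into the coefficient-extraction and matching monomials.

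The main obstacle is the naturality check in the converse direction, since $\varrho$ generally does not send generators to generators and its action on the products $\oddgen_I$ is nontrivial; the key observation is that both sides are $\R$-multilinear in the tensor-factor arguments, so one may reduce to verifying the identity on pure monomials $\oddgen_I$ and use that each $d^m f_l(x)(\cdots)$ is $\R$-multilinear. A secondary subtlety, particularly for $k=\infty$, is that although the skeleton has infinitely many components, only finitely many of them contribute to $f_\L$ for any fixed $\L = \L_n$, so the formula assembles into a bona fide natural transformation one Grassmann algebra at a time.
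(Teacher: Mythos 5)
Your proposal is correct and follows essentially the same route as the paper's proof: extract the skeleton components from the exact Taylor expansion (Proposition \ref{propfdecomp}/Lemma \ref{lemtaylor}), use Lemma \ref{lemdnmulti} to localize each term in $\oddgen_I F_{\ol{|I|}}$, obtain alternation from the symmetry of $d^nf_{\L_n}$ together with the sign change under swapping generators, and then reverse the construction via formula (\ref{formulaskel}) with Corollary \ref{corssinbas} for supersmoothness. The only point worth flagging is that identifying $d^{m+l}f_\L(x)$ applied to general monomials $\oddgen_{I_j}v_j$ with $\oddgen_{I_1}\cdots\oddgen_{I_{m+l}}d^mf_l(x)(v)$ genuinely uses the $\Lz$-multilinearity coming from supersmoothness (not just naturality), which your sketch leaves implicit but which the paper's own sketch also only remarks on in passing.
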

\begin{proof}
 Using Lemma \ref{lemtaylor} instead of \cite[Proposition 2.21, p.582]{AllLau}, 
the proof follows in the same way as \cite[Proposition 3.4, p.584]{AllLau}. For 
the reader's convenience, we will sketch the steps using our notation. Let 
$k\leq N$
By Proposition \ref{propfdecomp} we have 
\[
 \textstyle
 f_{\L_N}\big(x+\sum_{l=1}^k\oddgen_l y_l\big)=
\displaystyle
\sum_{l=0}^k\sum_{\{i_1,\ldots,i_l\}\in\Po^N}
d^lf_{\L_N}(x)(\oddgen_{i_1}y_{i_1},\ldots,\oddgen_{i_l}y_{i_l}).
\]
The maps on the right-hand side are symmetric in 
$\oddgen_{i_j}y_{i_j}$ but swapping two odd generators leads to a sign change 
by the natural transformation property. With Lemma \ref{lemdnmulti} one sees 
that this determines 
alternating maps in $y_{i_j}$, where it is understood that the odd generators 
can be pulled out in order of their appearance. Now, one applies Proposition 
\ref{propfdecomp} to derive formula 
(\ref{formulaskel}). Note that by supersmoothness, the alternating maps defined 
above determine
$d^{m+l}f_{\L_k}$ completely.

To see that the right-hand side of formula (\ref{formulaskel}) defines a 
natural transformation for a given skeleton is straightforward and 
supersmoothness then follows directly or with Corollary 
\ref{corssinbas}. This 
supermooth map has the original skeleton by a combinatorial 
argument similar to the one used for Lemma \ref{lemtaylor}.
\end{proof}
\begin{remark}\label{remskelpart}
 In the situation of the proposition above, we can use Proposition 
\ref{propfdecomp} instead of Lemma \ref{lemtaylor}
 to get 
 \[
 f_{\L_k}(x+n_0+n_1)=
\sum_{I\in\Po^k_+}\sum_{\omega\in\Part(I)}
\oddgen_{\omega}d^{(e(\omega))}f_{o(\omega)}(x)(n_{\omega}),
\]
where the partitions $\omega$ are in graded lexicographic order, 
$\oddgen_{\omega}=\oddgen_{\omega_{1}}\cdots\oddgen_{\omega_{\ell(\omega)}}$ 
and
$n_\omega:=(n_{0,\omega_1},\ldots,n_{0,\omega_{e(\omega)}},n_{1,\omega_{
e(\omega)+1 } } ,
\ldots,n_{1,\ell(\omega)})$.
\end{remark}
\begin{remark}\label{remdf}
 Let $f\colon\U\to\V$ be as in Proposition \ref{propskel}. We have already 
seen that $\de f\colon\U\times\ol{E}^{(k)}\to\ol{F}^{(k)}$ is supersmooth. For 
$\Lambda\in\Grk{k}$, $x\in\U_\R$, $y\in E_0$ and $x_i,y_i\in 
E_i\otimes\Lambda_i^\nil$ set $u:=x+x_0+x_1$ and $v:=y+y_0+y_1$. Then use the 
proposition to calculate
\begin{align*}
df_\Lambda(u)(v)=\sum_{m,l=0}^\infty\frac{1}{m!l!}\cdot&\Big(d^{m+1}f_l(x
)(y,x_0,\ldots,x_0,x_1,\ldots,x_1)\\
&+m\cdot d^m f_l(x)(y_0,x_0,\ldots,x_0,x_1,\ldots,x_1)\\
&+l\cdot d^m f_l(x)(x_0,\ldots,x_0,y_1,x_1\ldots,x_1)\Big)\\
=\sum_{m,l=0}^\infty\frac{1}{m!l!}\cdot&\Big(d^{m+1}f_l(x
)(y+y_0,x_0,\ldots,x_0,x_1,\ldots,x_1)\Big)+\\
\phantom{=}\sum_{m,l=0}^\infty\frac{1}{m!l!}\cdot&\Big(d^m 
f_{l+1}(x)(x_0,\ldots,x_0,y_1,x_1\ldots,x_1)\Big).
\end{align*}
We see that the skeleton of $\de f$ is given by 
\[
 (\de f)_n=d f_n(\pr_{\U_\R},\pr_{E_0})(\pr_1,\ldots,\pr_1)+ 
n\cdot \alt{n} f_n(\pr_{\U_\R})(\pr_2,\pr_1,\ldots,\pr_1),
\]
with the projections $\pr_{\U_\R}\colon\U_\R\times E_0\to\U_\R$, 
$\pr_{E_0}\colon\U_\R\times E_0\to E_0$, the projection to the 
first component $\pr_1\colon E_1\times E_1\to E_1$ and the projection to the 
second argument 
$\pr_2\colon E_1\times E_1\to E_1$.
\end{remark}
In the sequel, we will not differentiate between supersmooth morphisms and 
their skeletons. In other words, if $\U,\V,\W$ are $k$-superdomains and 
$f\in\SC(\U,\V)$ has the skeleton $(f_n)_n$, we will write 
$(f_n)_n\colon\U\to\V$. If additionally $g\in\SC(\V,\W)$ 
has the skeleton $( g_n)_n$ we let $(g_n)_n\circ(f_n)_n$
\nomenclature{$(f_n)_n\circ( g_n)_n$}{} be the skeleton of $g\circ 
f$.
For this composition the concrete formula is given as follows.
\begin{proposition}[{compare 
\cite[Proposition 3.7, p.586]{AllLau}}]\label{propskelmult}
 Let $k\in\N_0\cup\{\infty\}$, $E\in \SVec$, $\U\subseteq\ol{E}^{(k)}$ be an 
open 
subfunctor and $\V,\W\in\SDom^{(k)}$.
For two supersmooth morphisms $(f_r)_r\colon\U\to\V$, 
$(g_r)_r\colon\V\to\W$ the skeleton
$(h_n)_n:=( g_r)_r\circ(f_r)_r$ is given by
$h_0:= g_0\circ f_0$ for $n=0$ and otherwise by
\index{Skeleton!composition}
 \begin{align}\label{formulaskelmul}
  h_n(x)(v)=\sum_{\substack{ m,l;\sigma\in
\Sy_n,\\(\alpha,\beta)\in
I_{m,l}^n }}\nomenclature{$I_{m,l}^n$}{}
\frac{\mathrm{sgn}(\sigma)}{m!l!\alpha!\beta!}d^m g_l(f_0(x))
  \big((f_\alpha\times f_\beta)(x)(v^{\sigma})\big)
 \end{align}
for $x\in \U_\R$ and $v=(v_1,\ldots,v_n)\in E^n_{1}$, where 
$v^{\sigma}:=(v_{\sigma(1)},\ldots,v_{\sigma(n)})$,
\begin{gather*}
I^n_{m,l}:=\left\{(\alpha,
\beta)\in(2\N)^m\times(2\N_0+1)^l\big|\ |\alpha|+|\beta|=n\right\},\\
 f_\alpha:=f_{\alpha_1}\times\cdots\times f_{\alpha_m},\
f_\beta:=f_{\beta_1}\times\cdots\times f_{\beta_l}\ 
\text{and}\\
 \alpha!=\alpha_1!\cdots\alpha_m!,\quad \beta!=\beta_1!\cdots\beta_l!.
\end{gather*}
\end{proposition}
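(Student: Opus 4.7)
The plan is to directly compute the skeleton of the composition $g \circ f$ by evaluating $(g\circ f)_{\L_n}$ on a generic element of the form $x + \sum_{l=1}^n \oddgen_l y_l$ with $x\in\U_\R$ and $y_l\in E_1$, and reading off the coefficient of $\oddgen_1\cdots\oddgen_n$. The work splits into two applications of Proposition \ref{propskel} followed by combinatorial bookkeeping, the latter being the principal obstacle.

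First, I would apply the skeleton formula (\ref{formulaskel}) to $f$ to write
\[
f_{\L_n}\Big(x+\sum_{l=1}^n\oddgen_l y_l\Big)=f_0(x)+n_0+n_1,
\]
where $n_0:=\sum_{r\text{ even},\, r\ge 2}\sum_{|I|=r}\oddgen_I f_r(x)(y_I)\in\ol{F}^{(k)}_{\L_{n,\ol{0}}^\nil}$ and $n_1:=\sum_{r\text{ odd}}\sum_{|I|=r}\oddgen_I f_r(x)(y_I)\in\ol{F}^{(k)}_{\L_{n,\ol{1}}}$. Then I would apply formula (\ref{formulaskel}) to $g$ at the base point $f_0(x)\in\V_\R$ to obtain
\[
g_{\L_n}(f_0(x)+n_0+n_1)=\sum_{p,q\ge 0}\tfrac{1}{p!\,q!}\,d^p g_q(f_0(x))\big(\underbrace{n_0,\ldots,n_0}_{p},\underbrace{n_1,\ldots,n_1}_{q}\big).
\]

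Next, I would multilinearly expand each argument. Every choice inside $n_0$ contributes a subset $I_i\subseteq\{1,\ldots,n\}$ of even cardinality $\alpha_i\ge 2$, and every choice inside $n_1$ contributes a subset $J_j$ of odd cardinality $\beta_j$. By Lemma \ref{lemdnmulti}, any term for which the selected subsets $(I_1,\ldots,I_p,J_1,\ldots,J_q)$ are not pairwise disjoint vanishes. Pulling out Grassmann generators using the convention from Proposition \ref{propskel} yields a sum of products $\oddgen_{I_1}\cdots\oddgen_{I_p}\oddgen_{J_1}\cdots\oddgen_{J_q}$ times $d^p g_q(f_0(x))\bigl(f_{\alpha_1}(x)(y_{I_1}),\ldots,f_{\beta_q}(x)(y_{J_q})\bigr)$. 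The coefficient of $\oddgen_1\cdots\oddgen_n$ in $(g\circ f)_{\L_n}(x+\sum\oddgen_l y_l)$ is $h_n(x)(y_1,\ldots,y_n)$ by the definition of a skeleton, so I would restrict attention to ordered tuples of subsets that actually partition $\{1,\ldots,n\}$.

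The core difficulty is to match the resulting expression with formula (\ref{formulaskelmul}). An ordered disjoint partition of $\{1,\ldots,n\}$ with block-size pattern $(\alpha,\beta)\in I_{m,l}^n$ can be encoded by a permutation $\sigma\in\Sy_n$: the $i$-th even block becomes $\{\sigma(\alpha_1+\cdots+\alpha_{i-1}+1),\ldots,\sigma(\alpha_1+\cdots+\alpha_i)\}$, and analogously for the odd blocks. Each ordered partition arises from exactly $\alpha!\,\beta!$ permutations (reordering elements within blocks), explaining the factors $1/\alpha!\,\beta!$; the symmetry of $d^p g_q$ in its first $p$ arguments (together with commutativity of even-length Grassmann monomials) gives the $1/p!=1/m!$ factor, while its alternating property in the last $q$ arguments pairs with the anticommutativity of odd-length Grassmann monomials to produce the $1/q!=1/l!$ factor without sign conflict. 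The sign $\sgn(\sigma)$ in (\ref{formulaskelmul}) arises precisely from rewriting $\oddgen_{I_1}\cdots\oddgen_{J_q}$ as $\sgn(\sigma)\,\oddgen_1\cdots\oddgen_n$; here one verifies that any reordering of elements within a block contributes the same sign change on both sides (from $\sgn(\sigma)$ and from alternation of $f_{\alpha_i}$ or $f_{\beta_j}$), so the contributions cohere. Carefully assembling these factors yields the claimed formula for $h_n$ and finishes the proof.
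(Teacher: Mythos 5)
Your proposal is correct and follows essentially the same route as the paper's proof: apply Proposition \ref{propskel} to $f$ to split $f_{\L_n}(x+\sum_l\oddgen_l y_l)$ into $f_0(x)$ plus even and odd nilpotent parts $n_0,n_1$, apply it again to $g$ at $f_0(x)$, multilinearly expand (using Lemma \ref{lemdnmulti} to discard non-disjoint index sets), and read off the coefficient of $\oddgen_1\cdots\oddgen_n$, with the same accounting of $\sgn(\sigma)$ and the factors $1/(m!\,l!\,\alpha!\,\beta!)$ via the correspondence between ordered partitions and permutations. The only cosmetic difference is that you write $n_0,n_1$ directly as sums over subsets rather than as $\sum_l \tfrac{1}{l!}f_l(x)(y,\ldots,y)$, which is an equivalent bookkeeping of the same expansion.
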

\begin{proof}
By Proposition \ref{propskel} $(h_n)_n$ is defined by
\[
 g_\Lambda(f_\Lambda(x+y))=\sum_{l=0}^\infty\frac{1}{l!}h_l(x)(y,\ldots,y)
\ \text{for all}\ \L=\L_n\in\Grk{k},
\]
where $x\in\U_\R$, and
$y=\sum_{j=1}^n\oddgen_jy_j\in \ol{E}^{(k)}_\L$. For $i\in\{0,1\}$, we let
\[
n_i:=\sum_{l\in2\N-i}\frac{1}{l!}f_l(x)(y,\ldots,y).
\] 
 Together with Proposition \ref{propskel}, this implies
\begin{align}\label{formulasumalldridge}
 g_\Lambda(f_\Lambda(x+y))=\sum_{m,l=0}^\infty\frac{1}{m!l!}
d^m g_l(f_0(x))(n_0,\ldots,n_0,n_1,\ldots,n_1).
\end{align}
Since in formula (\ref{formulaskelmul}), $h_n$ only depends on 
$(f_r)_{r\leq n}$ and $( g_r)_{r\leq n}$, it suffices to compare  
the component containing all odd generators of $\L=\L_n$, i.e., the component 
$I:=\{1,\ldots,n\}$. The formula follows then by trivial induction.
Multilinear expansion of the $n_i$ in formula (\ref{formulasumalldridge}) 
shows that exactly those summands contribute, where the indices of all 
occurring $f_i$ add up to $n$. In other words exactly those
containing $f_\alpha\times f_\beta$ with
$(\alpha,\beta)\in I_{m,l}^n$.
Applying multilinear expansion to $y$, we see that for every 
$(\alpha,\beta)\in I^n_{m,l}$ exactly all
permutations 
$\oddgen_{
\sigma(1)}\cdots\oddgen_{\sigma(n)}\tfrac{1}{\alpha!\beta!}
(f_\alpha\times f_\beta)(y_ {
\sigma(1)},\ldots,y_{\sigma(n)})$ for $\sigma\in \Sy_n$ appear in formula 
(\ref{formulasumalldridge}) since
equal indices cancel each other. The sign in the formula is explained by
$\oddgen_{\sigma(1)}\cdots\oddgen_{\sigma(n)}=\sgn(\sigma)\oddgen_I$.
\end{proof}
\begin{remark}
  Formula (\ref{formulaskelmul}) was already stated in 
\cite[Proposition 3.3.3, p.91 f.]{Mol} but the first proof in the literature 
was \cite[Proposition 3.7, p.586]{AllLau}. Unfortunately, the proof is 
incomplete and there is a small mistake in the formula (the 
original one in \cite{Mol} is 
correct), which is why we decided to give the proof in its entirety.
 To see that our formula differs from the one proposed 
in
\cite{AllLau}, consider that in the situation of Proposition \ref{propskelmult} 
the latter leads to
\[
 \sum_{\sigma\in 
\Sy_2}\frac{1}{2}d g_1(f_0(x))\big(f_2(x)(\bl^\sigma),
 f_1(x)(\bl)\big)(v_1 , v_2 , v_3)=0,
\]
while in general
\[
 \sum_{\sigma\in
\Sy_3}\frac{\sgn(\sigma)}{2}d g_1(f_0(x))\big(f_2(x)(\bl),
 f_1(x)(\bl)\big)(v_1 , v_2 , v_3)^\sigma\neq 0.
\]
\end{remark}
\begin{lemma}\label{lemskelinv}
 Let $k\in\N\cup\{\infty\}$, $E,F\in\SVec$ and 
$\U\subseteq\ol{E}^{(k)},\V\subseteq\ol{F}^{(k)}$ be open subfunctors. A 
supersmooth morphism 
$f\colon\U\to\V$ is an isomorphism in $\SDomk{k}$ if and only if 
$f_{\L_1}\colon\U_{\L_1}\to\V_{\L_1}$ is a diffeomorphism. In this case, using 
the same notation as in formula (\ref{formulaskelmul}), the inverse $g$ has the 
skeleton
\index{Skeleton!inversion}
\begin{align*}
 &g_0\colon\V_\R\to\U_\R,\quad g_0(x'):=f_0^{-1}(x'),\\
 &g_1:\V_\R\to\Alt^1(F_1;E_1),\quad g_1(x'):= 
f_1(g_0(x'))^{-1}\quad\text{and}\\
 &g_n\colon\V_\R\to\Alt^n(F_1;E_{\ol{1}}),\\
 & g_n(x')(v'):=
 \ \quad-\smashoperator{\sum_{\substack{ m,l<n,(\alpha,\beta)\in
I_{m,l}^n,\\ 
\sigma\in \Sy_n
}}}\qquad
\frac{\mathrm{sgn}(\sigma)}{m!l!\alpha!\beta!}d^m g_l(x')
\big((f_\alpha\times f_\beta)\left( g_0(x')\right)(v^{
\sigma } )\big),
\end{align*}
where $n>1$, $v'=(v_1',\ldots,v_n')\in F_1^n$ and
$v:=( g_1(x')(v_1'),\ldots, g_1(x')(v_n'))\in E_1^n$.
\end{lemma}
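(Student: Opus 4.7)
My plan is to prove the two directions of the equivalence, then build the inverse from the prescribed skeleton and verify the inversion identities via Proposition \ref{propskel} and the composition formula of Proposition \ref{propskelmult}. The ``only if'' direction is immediate: if $g$ is a supersmooth inverse of $f$, then $g_{\L_1}$ is a smooth inverse of $f_{\L_1}$, which is therefore a diffeomorphism. For the converse, I would first unpack the diffeomorphism hypothesis. Since $\L_{1,\ol{0}}^\nil = 0$ and $\L_{1,\ol{1}} = \R\oddgen_1$, formula (\ref{formulaskel}) reduces to
\[
 f_{\L_1}(x + \oddgen_1 v) = f_0(x) + \oddgen_1 f_1(x)(v),\quad x \in \U_\R,\ v \in E_1.
\]
Under the canonical identification $\U_{\L_1} \cong \U_\R \times E_1$, the map $f_{\L_1}$ reads $(x,v) \mapsto (f_0(x), f_1(x)(v))$, and it is a diffeomorphism iff $f_0$ is a diffeomorphism and $x' \mapsto f_1(g_0(x'))^{-1}$ is well-defined and smooth in the sense of Definition \ref{defclcalt}. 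This furnishes $g_0$ and $g_1$ from the statement.

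Next I would recursively define $g_n$ for $n \geq 2$ by the formula in the lemma. Three checks are required: (i) the recursion is well-posed because only $g_l$ with $l < n$ occurs on the right; (ii) $g_n(x')\in\Alt^n(F_1;E_{\ol{n}})$, which follows by the change of variable $\sigma \mapsto \tau\sigma$ in the antisymmetrizing sum and reindexing $v^\sigma$ (extracting a factor $\sgn(\tau)$); (iii) $g_n \in \Cs(\V_\R, \Alt^n(F_1; E_{\ol{n}}))$, which I would prove by induction on $n$, using smoothness of $g_l$ for $l<n$, smoothness of derivatives of supersmooth maps (Lemma \ref{lemdfss}), and the locally convex chain rule. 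With this, $(g_n)_n$ is a skeleton, and Proposition \ref{propskel} produces a supersmooth morphism $g \colon \V \to \U$.

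The central step — and the one I expect to be the main technical obstacle — is to verify $g \circ f = \id_\U$ at the level of skeletons, that is, $(g \circ f)_0 = \id_{\U_\R}$, $(g \circ f)_1 = \id_{E_1}$, and $(g \circ f)_n = 0$ for $n \geq 2$. The cases $n = 0, 1$ follow directly from the construction of $g_0, g_1$. For $n \geq 2$, I would apply the composition formula (\ref{formulaskelmul}) and single out the unique term with $l = n$: it forces $m = 0$ and $\beta = (1, \ldots, 1)$, and all remaining terms satisfy $l < n$ (indeed $m \geq 1$ implies $|\alpha|\geq 2$ so $l \leq n-2$, while $m = 0$ with $l < n$ is possible only if some $\beta_i \geq 3$). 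Summing over $\sigma$ and using the alternating property of $g_n$, the $l = n$ contribution collapses to $g_n(f_0(x))(f_1(x)v_1, \ldots, f_1(x)v_n)$. Substituting $x' := f_0(x)$ and $v_i' := f_1(x)v_i$ identifies the remaining sum precisely with the negative of the lemma's recursion for $g_n(x')(v')$, so the total vanishes. Making this combinatorial cancellation rigorous is the crux of the proof.

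Finally, to obtain the right inverse, I would apply the same construction to $g$ itself: since $g_{\L_1} = f_{\L_1}^{-1}$ is a diffeomorphism, the recursion yields a supersmooth $h \colon \U \to \V$ with $h \circ g = \id_\V$. The standard left/right inverse argument then gives $h = h \circ g \circ f = \id_\V \circ f = f$, so $f \circ g = h \circ g = \id_\V$. Hence $f$ is an isomorphism in $\SDom^{(k)}$ with the inverse $g$ having the skeleton prescribed in the statement.
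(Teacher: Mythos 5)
Your proposal is correct and follows essentially the same route as the paper's proof: extract $g_0,g_1$ from the diffeomorphism $f_{\L_1}$, define $g_n$ recursively, use the composition formula of Proposition \ref{propskelmult} to isolate the $(m,l)=(0,n)$, $\beta=(1,\dots,1)$ term (which collapses to $g_n(f_0(x))(f_1(x)v_1,\dots,f_1(x)v_n)$ by the alternating property) and cancel it against the defining recursion, then conclude via the left-inverse-of-a-left-inverse argument. The only difference is that you spell out the verification that $(g_n)_n$ is genuinely a skeleton (alternating, smooth), which the paper leaves implicit.
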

\begin{proof}
 If a supersmooth morphism $f\colon\U\to\V$ is invertible, then clearly 
 $f_\L$ is a diffeomorphism for every $\L\in\Grk{k}$. Conversely, let 
$f_{\L_1}$ be a diffeomorphism. 
Then $f_{\L_1}(x+\oddgen_1v)=f_0(x)+\oddgen_1f_1(x)(v)$ for all 
$x\in\U_\R$ and $v\in E_1$. 
A direct calculation shows that 
$g_{\L_1}(x'+\oddgen_1v'):=g_0(x')+\oddgen_1g_1(x')(v')$
is the inverse of $f_{\L_1}$.
With the supersmooth 
morphism $(g_n)_n\colon\V\to\U$, we calculate
\begin{align*}
 (( g_r)_r\circ(f_r)_r)_n&(x)(v)=\\
 &\sum_{\substack{ m,l<n,(\alpha,\beta)\in
I_{m,l}^n,\\ 
\sigma\in \Sy_n
}}\frac{\mathrm{sgn}(\sigma)}{m!l!\alpha!\beta!}d^m g_l(f_0(x))
\big((f_\alpha\times f_\beta)\left(x\right)(v^{
\sigma } )\big)\\
&+\sum_{\sigma\in\Sy_n}\frac{1}{n!}
 g_n(f_0(x))\big((f_1\times\cdots\times f_1)(x)(v^\sigma)\big
),
\end{align*}
for $n>1$, $x\in\U_\R$, $v\in E_1^n$. Note that in the second summand the sum 
over 
$\Sy_n$ together with the factor $\frac{1}{n!}$ can be omitted because the 
expression is already alternating. With 
$(f_1(x)(v_1),\ldots,f_1(x)(v)):=v'$ and $f_0(x):=x'$ it follows from the 
definition 
of $ g_n$ that $(( g_r)_r\circ(f_r)_r)_n=0$. This implies
$( g_r)_r\circ(f_r)_r=(\id_{\U_\R},c_{\id_{E_1}},0,0,\ldots)$, which is 
the skeleton of the identity $\id_\U\colon\U\to\U$. Thus, $(f_n)_n$ has a 
left inverse. Since the same construction also works for $( g_n)_n$, the left 
inverse of $(f_n)_n$ also has a left inverse. Therefore, $( g_n)_n$ is 
the inverse of $(f_n)_n$ and $f$ is invertible in $\SDom^{(k)}$.
\end{proof}
In general, it is quite difficult to check that smooth bijective maps between 
locally convex spaces are diffeomorphisms. However, if the map has the form of
$f_{\L_1}$ in the above lemma, a result of Hamilton (\cite[Theorem 5.3.1, 
p.102]{Ham}) can be directly 
generalized to the locally convex case. We do not need this result in the 
sequel but since it might be of interest for inverting supersmooth maps, we 
state it nevertheless.
\begin{lemma}[{\cite[Lemma 2.3, p.11]{GloNeeb17}}]
 Let $E_0, E_1$ and $F_1$ be locally convex spaces, $U\subseteq E_0$ open and
$f\colon U\times E_1\to F_1$ be smooth such that $f_x:=f(x,\bl)\colon E_1\to
F_1$ is linear for all $x\in U$.
If $f_x$ is invertible for all $x\in U$ and $g\colon U\times F_1\to E_1,\
(x,v)\mapsto f_x^{-1}(v)$ is continuous, then $g$ is smooth. Moreover, we have
\[
 d_1g(x,v)(u)=-g\Big(x, d_1 f\big(x,g(x,v)\big)(u)\Big)
\]
for $x\in U$, $v\in F_1$ and $u\in E_0$.
\end{lemma}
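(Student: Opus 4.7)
The plan is to derive a formula for a candidate difference quotient of $g$ by leveraging the defining identity $f(x,g(x,v))=v$, then verify this candidate is continuous, which yields $g\in\Cs^1$, and finally iterate by induction.

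First, observe that for each fixed $x\in U$, linearity of $f_x$ immediately implies linearity of $g(x,\bl)=f_x^{-1}$. So for any $t\in\R$ and $(u,w)\in E_0\times F_1$ such that $(x+tu,v+tw)\in U\times F_1$, set $y:=g(x,v)$ and $y':=g(x+tu,v+tw)$. Applying $f$ to both sides and exploiting linearity in the second slot gives
\[
f_{x+tu}(y'-y)=f(x+tu,y')-f(x+tu,y)=tw-\bigl(f(x+tu,y)-f(x,y)\bigr).
\]
Since $f$ is $\Cs^1$ in the Bastiani sense, the partial difference quotient
$\tilde f^{[1]}(x,y,u,t):=\tfrac{1}{t}\bigl(f(x+tu,y)-f(x,y)\bigr)$ (defined as $d_1f(x,y)(u)$ at $t=0$) is continuous. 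Dividing by $t$ on both sides and applying $g(x+tu,\bl)$ (which is linear, hence commutes with scalar division) yields
\[
\tfrac{1}{t}\bigl(g(x+tu,v+tw)-g(x,v)\bigr)=g\bigl(x+tu,\,w-\tilde f^{[1]}(x,y,u,t)\bigr).
\]
This expresses the difference quotient as a continuous function of $((x,v),(u,w),t)$ using continuity of $g$, of $\tilde f^{[1]}$, and the fact that $y=g(x,v)$ depends continuously on $(x,v)$. Setting $t=0$ and using linearity of $g(x,\bl)$ gives $dg(x,v)(u,w)=g(x,w)-g(x,d_1f(x,g(x,v))(u))$, from which the asserted formula for $d_1g$ follows, together with $d_2g(x,v)(w)=g(x,w)$.

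To promote this to smoothness, I induct on $k$: having shown $g\in\Cs^k$, the identity above expresses $g^{[1]}$ as a composition of $g$, $\tilde f^{[1]}$ (which is $\Cs^\infty$ by smoothness of $f$) and continuous linear operations. Since each of these is $\Cs^k$, so is $g^{[1]}$; hence $g$ is $\Cs^{k+1}$, and the induction closes to give $g\in\Cs^\infty$.

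The main obstacle is organizing the very first step: writing $g(x+tu,v+tw)-g(x,v)$ as $t$ times a jointly continuous function. The trick of applying $f_{x+tu}$ to the difference and solving must be performed carefully so that the resulting expression still has $t$ as a free variable inside a continuous function. Once the manipulation with $\tilde f^{[1]}$ is set up correctly, the induction is routine, since the only nonlinear ingredient entering $g^{[1]}$ is $g$ itself evaluated at points whose dependence on $(x,v,u,w,t)$ is at least as regular as $g$.
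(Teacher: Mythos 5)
Your argument is correct: the identity $f_{x+tu}(y'-y)=t\bigl(w-\tilde f^{[1]}(x,y,u,t)\bigr)$, inverted via the linear map $g(x+tu,\bl)$, exhibits the required continuous difference-quotient map $g^{[1]}((x,v),(u,w),t)=g\bigl(x+tu,\,w-\tilde f^{[1]}(x,g(x,v),u,t)\bigr)$ in the sense of the BGN calculus, and the bootstrap by induction on $\mathcal{C}^k$ closes because $g^{[1]}$ is built from $g$, the smooth map $f^{[1]}$, and continuous (multi)linear operations. Note that the paper itself gives no proof of this lemma --- it is quoted from Gl\"ockner--Neeb --- but your derivation is exactly the standard argument for that result, including the correct formula $dg(x,v)(u,w)=g(x,w)-g\bigl(x,d_1f(x,g(x,v))(u)\bigr)$, which specializes to the stated expression for $d_1g$.
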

It is easy to generalize this to the situation where additionally a 
diffeomorphism 
$f_0\colon U\to V$ between open sets of locally convex spaces is involved.

\subsubsection{Generalizations}\label{subsectgendom}
One obvious generalization is to consider a differential calculus for other 
base fields (or even rings) than $\R$. A robust framework for this is 
provided by \cite{BGN} and then further developed for the super case in 
\cite{AllLau}. In the most general case, one has a unital commutative Hausdorff 
topological ring $R$ such that the group of units $R^\times$ is dense, i.e., 
integers need not necessarily be invertible.
For simplicity's sake, we formulated our results over $\R$ but we made a 
conscious effort to make them easily adaptable to more general situations.

In this way
Lemma \ref{lemdnfnat} through Proposition \ref{propfdecomp} can easily be shown 
to hold in the most general case.
While Corollary \ref{corssinbas} and Lemma \ref{lemdfss} also translate, 
our definition of supersmoothness just means $\mathcal{C}^1_{MS}$ (together 
with smoothness over $\R$) in the 
terminology of \cite{AllLau}. Note however that $\mathcal{C}^1_{MS}$ is 
equivalent to $\mathcal{C}^\infty_{MS}$ if $R$ is an $\Q$-algebra and one 
has smoothness over $R$ (see 
\cite[Proposition 2.18, p.580]{AllLau}). In this case Lemma \ref{lemtaylor}, 
Proposition \ref{propskel}, Proposition \ref{propskelmult} and Lemma 
\ref{lemskelinv} carry over as well.

It should be noted that Remark \ref{remskelpart} enables us to
show an analog to Proposition \ref{propskel} if not all 
integers are invertible in $R$, i.e., supersmooth maps are given by 
something like skeletons 
even in the most general case. The resulting analog to 
the composition formula from Proposition \ref{propskelmult}
can be obtained with general results about multilinear bundles (compare Remark 
\ref{remgenorder}) and a similar induction as in Lemma \ref{lemskelinv} leads 
to an inversion formula (compare \cite[Theorem MA.6(2), p.172]{Bert}).

The second apparent generalization is to define morphisms of finite 
differentiability order
$n\in\N_0$. Given only $k$-superdomains with $k\leq n$, one can simply 
define 
$k$-skeletons where the differentiability class of the components is 
appropriately chosen. For a more detailed discussion see \cite[10.1, 
p.420f.]{Mol}. 

\subsection{Supermanifolds}
The construction of supermanifolds from superdomains is conceptually very 
close to the respective construction of manifolds. In the categorical 
approach 
proposed by Molotkov in \cite{Mol}, one defines a Grothendieck topology on 
$\Top^\Gr$ that takes the same role as the usual topology in the manifold case. 
As model space one uses functors of the form $\ol{E}$ for $E\in\SVec$ with 
open subfunctors $\U$ as the open subsets (respectively functors isomorphic 
to such functors). A supermanifold is then a functor 
$\M\in\Man^\Gr$\nomenclature{$\M$}{} 
together with an atlas consisting of
natural transformations $\varphi\colon\U\to\M$, such that the 
change of charts is supersmooth. Here  a technical problem arises. In 
this approach, the intersection of two chart domains in $\M$ is defined as a 
 fiber product in the category $\Man^\Gr$, which is not guaranteed to be a 
superdomain. This has to be demanded in the definition. We avoid this and other 
technicalities by using concrete definitions of the  
model spaces. For a 
concise version of the categorical approach see \cite[p.591 
ff.]{AllLau}. 

We introduce $k$-supermanifolds in the same
way by considering functors $\Man^{\Grk{k}}$ and obtain 
respective categories 
$\SMank{k}$ for $k\in\N_0\cup\{\infty\}$.
One has the restriction functors 
$\pi^m_n\colon\SMank{m}\to\SMank{n}$ for $n\leq m$ and the inclusion functors
$\i^0_k\colon\SMank{0}\to\SMank{k}$ and $\i^1_k\colon\SMank{1}\to\SMank{k}$, 
which play an important part in understanding the structure of supermanifolds.
Note in particular that $\SMank{0}\cong\Man$ and $\SMank{1}\cong\VBun$.

These statements are not particularly difficult to prove and were already 
stated in \cite{MolICTP}. Noteworthy new results include the following. For any 
supermanifold $\M$, we show that $\M_{\L_n}$ has the natural structure of a so 
called multilinear bundle of degree $n$ over $\M_\R$.
What is more, $(\M_{\L_n})_{n\in\N_0}$ forms an inverse system of multilinear 
bundles which enables us to obtain a functor
\[\textstyle
 \SMan\to\Man,\quad \M\mapsto\varprojlim_n\M_{\L_n}
\]
in Theorem \ref{thrmsmanmbun}. As already mentioned, this functor has good 
properties such as respecting products.
Another important result is the characterization
of purely even supermanifolds in 
terms of higher tangent bundles of the base manifold in Proposition 
\ref{proppeventk}.

\begin{definition}
 Let $k\in\N_0\cup\{\infty\}$, $E\in\SVec$ and $\M\in\Top^{\Grk{k}}$ Hausdorff. 
Recall 
Definition \ref{defcovering}. A 
covering $\A:=\{\varphi^\alpha\colon\U^\alpha\to\M\colon\alpha\in A\}$ of $\M$ 
such 
that all $\U^\a$ are open subfunctors of $\ol{E}^{(k)}$
is called an \textit{atlas}\index{Supermanifold!atlas} of $\M$ if
the natural transformations 
\[
\varphi^{\a\b}:=(\varphi^\b)^{-1}\circ\varphi^\a|_{\U^{\a\b}}\colon\U^{\a\b}
\to\U^{\b\a}
\]
are supersmooth for all $\a,\b\in A$. Two atlases $\A$ and $\B$ are called 
\textit{equivalent} if their union $\A\cup\B$ is again an atlas. As with 
ordinary 
manifolds, this clearly defines an equivalence relation and we call the pair 
$(\M,[\A])$ a $k$\textit{-supermanifold 
modelled on}\index{Supermanifold!$k$-} $E$. 
If $k=\infty$ we also simply call $\M$ a 
\textit{supermanifold}\index{Supermanifold}.
We 
will usually omit $[\A]$ from our notation and if we talk about an atlas of a 
supermanifold, it is meant to belong to this equivalence class.
An element of any of the equivalent atlases will be called a
\textit{chart}\index{Supermanifold!chart} of $\M$. For any two charts 
$\varphi^\a$ and $\varphi^\b$, we call $\varphi^{\a\b}$ the \textit{change of 
charts}\index{Supermanifold!change of charts}. 

A \textit{morphism}\index{Supermanifold!morphism of} $f\colon\M\to\Nc$ of 
$k$-supermanifolds $\M$ and $\Nc$ is a natural transformation $f\colon\M\to\Nc$ 
such that for any chart $\varphi\colon\U\to\M$ and any chart 
$\psi\colon\V\to\Nc$ 
\[
 \psi^{-1}\circ f\circ\varphi|_{(f\circ\varphi)^{-1}(\psi(\V))}\colon
 (f\circ\varphi)^{-1}(\psi(\V))\to \V
\]
is supersmooth.
\end{definition}
Note that the definition of morphisms between $k$-supermanifolds is independent 
of 
the atlases, because change of charts satisfies the cocycle condition. As with 
ordinary manifolds, one sees that the composition of two morphisms of 
supermanifolds is again a morphism by inserting charts between them. Thus, we 
get for every $k\in\N_0\cup\{\infty\}$ the category 
$\SMank{k}$
of $k$-supermanifolds. As always, 
we set $\SMan:=\SMank{\infty}$\nomenclature{$\SMan$, $\SMank{k}$}{}. 
For two $k$-supermanifolds $\M,\Nc$, we denote by $\SC(\M,\Nc)$
\nomenclature{$\SC(\M,\Nc)$}{}
the set of 
supersmooth morphisms $f\colon\M\to\Nc$.

\begin{definition}
 A $k$-supermanifold $\M$ modelled on $E\in\SVec$ is
 a \textit{finite-dimensional}\index{Supermanifold!finite-dimensional}, 
\textit{Banach}\index{Supermanifold!Banach} or 
\textit{\Frechet}\index{Supermanifold!\Frechet} $k$-supermanifold if $E$ is so.
If $E_1=\{0\}$, 
then $\M$ is \textit{purely even}\index{Supermanifold!purely even} and if 
$E_0=\{0\}$, then $\M$ is \textit{purely odd}\index{Supermanifold!purely odd}.
 We call $\M_\R$ 
the \textit{base manifold}\index{Supermanifold!base manifold of a} of $\M$ and 
say that $\M$ is 
$\sigma$\textit{-compact}\index{Supermanifold!$\sigma$-compact} if $\M_\R$ is 
$\sigma$-compact. 
\end{definition}
\begin{remark}\label{remhausd}
If one allows non-Hausdorff 
manifolds in the definition, it is easily seen that a supermanifold $\M$ is 
Hausdorff if and only if its base manifold is Hausdorff. In fact, this 
follows because $\M_\L$ is a fiber bundle over $\M_\R$ whose typical fiber 
is Hausdorff by Theorem \ref{thrmsmanmbun} below.
\end{remark}
To get some intuition for supermanifolds, we start with several simple 
observations.
\begin{lemma}
 Let $k\in\N_0\cup\{\infty\}$ and $\M\in\SMank{k}$ with atlas 
$\{\varphi^\a\colon\U^\a\to\M\colon\a\in A\}$.
\begin{enumerate}
 \item For every $\L\in\Grk{k}$, 
$\{(\varphi_\L^\a|^{\varphi^\a_\L(\U^\a_\L)}_{\U^\a_\L})^{-1}\colon
\varphi^\a_\L(\U^\a_\L)\to\U^\a_\L
\colon\a\in A\}$ is an atlas of $\M_\L$.
 \item For $n\leq m< k+1$, the inclusions 
$\M_{\eta_{n,m}}\colon\M_{\L_n}\to\M_{\L_m}$ are topological embeddings and 
$\M_{\L_n}$ is a closed submanifold of $\M_{\L_m}$.
 \item For $n\leq m< k+1$, the projections 
$\M_{\ep_{m,n}}\colon\M_{\L_m}\to\M_{\L_n}$ are surjective.
\end{enumerate}
\end{lemma}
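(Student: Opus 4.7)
The plan is to verify each of the three parts by unfolding definitions and invoking functoriality, then reducing to the model space via the atlas from (a); the only mild subtlety will be identifying $\M_{\eta_{n,m}}$ locally with a closed split subspace inclusion in (b).

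For (a), since each $\varphi^\a\colon\U^\a\to\M$ is an open embedding of functors, its $\L$-component $\varphi^\a_\L\colon\U^\a_\L\to\M_\L$ is a homeomorphism onto an open subset of $\M_\L$ for every $\L\in\Grk{k}$, and the covering property forces $\bigcup_\a\varphi^\a_\L(\U^\a_\L)=\M_\L$. The changes of charts on overlaps are the $\L$-components of $\varphi^{\a\b}$, which are smooth because $\varphi^{\a\b}$ is supersmooth. Hence the proposed collection is a smooth atlas of $\M_\L$. For (c), the relation $\ep_{m,n}\circ\eta_{n,m}=\id_{\L_n}$ in $\Grk{k}$ gives by functoriality $\M_{\ep_{m,n}}\circ\M_{\eta_{n,m}}=\id_{\M_{\L_n}}$, so $\M_{\ep_{m,n}}$ is surjective.

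For (b), the same identity forces $\M_{\eta_{n,m}}$ to be injective with continuous left inverse $\M_{\ep_{m,n}}$, which automatically makes it a topological embedding. To upgrade this to a closed submanifold structure I would pass to a chart $\varphi^\a$ from the atlas in (a). Naturality of $\varphi^\a$ yields the commuting square
\[
\varphi^\a_{\L_m}\circ\ol{E}^{(k)}_{\eta_{n,m}}=\M_{\eta_{n,m}}\circ\varphi^\a_{\L_n},
\]
so locally $\M_{\eta_{n,m}}$ is conjugate to the inclusion $\U^\a_{\L_n}\hookrightarrow\U^\a_{\L_m}$. The subspace $\ol{E}^{(k)}_{\L_n}\subseteq\ol{E}^{(k)}_{\L_m}$ is closed and topologically complemented (its complement being the product over subsets $I\subseteq\{1,\ldots,m\}$ meeting $\{n+1,\ldots,m\}$), and the identity $\ep_{\L_m}\circ\eta_{n,m}=\ep_{\L_n}$ on $\L_n$ yields $\U^\a_{\L_n}=\U^\a_{\L_m}\cap\ol{E}^{(k)}_{\L_n}$. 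In each chart, therefore, the image of $\M_{\eta_{n,m}}$ is cut out by the vanishing of a continuous linear projection onto a closed complement, which is the definition of a closed split submanifold locally. Because change of charts is a supersmooth natural transformation, these local submanifold structures are compatible, so they glue to a global closed submanifold structure on $\M_{\L_n}\subseteq\M_{\L_m}$.

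The main (and really only) obstacle is this local identification in (b): I have to verify carefully the equality $\U^\a_{\L_n}=\U^\a_{\L_m}\cap\ol{E}^{(k)}_{\L_n}$, so that $\U^\a_{\L_n}$ is both topologically closed in $\U^\a_{\L_m}$ and smoothly split. Everything else in the lemma is a direct consequence of functoriality of $\M$ together with the open-embedding definition of a chart.
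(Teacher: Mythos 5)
Your proposal is correct and follows essentially the same route as the paper: part (a) by unfolding the definitions of covering and supersmooth change of charts, and part (b) by passing to charts via naturality and verifying $\U^\a_{\eta_{n,m}}(\U^\a_{\L_n})=\U^\a_{\L_m}\cap\ol{E}^{(k)}_{\L_n}$, which indeed follows from $\ep_{\L_m}\circ\eta_{n,m}=\ep_{\L_n}$ and the structure of open subfunctors. The only (harmless) variation is that you derive surjectivity in (c) and the topological-embedding property in (b) globally from the retraction $\M_{\ep_{m,n}}\circ\M_{\eta_{n,m}}=\id_{\M_{\L_n}}$, whereas the paper reads both off the local chart forms $\U_{\ep_{m,n}}$ and $\U_{\eta_{n,m}}$.
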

\begin{proof}
 (a) This is obvious from the definition of a supermanifold, since the sets
$\varphi^\a_\L(\U^\a_\L)$ form an open cover of $\M_\L$, 
$\varphi_\L^\a|^{\varphi^\a_\L(\U^\a_\L)}_{\U^\a_\L}$ is a homeomorphism 
and the change of charts is smooth.
 
 (b) Let $\M$ be modelled on $E\in\SVec$. In the charts defined by 
$\varphi^\a_{\L_n}$ and $\varphi^\a_{\L_m}$ as in (a), the map 
$\M_{\eta_{n,m}}$ has the form $\U_{\eta_{n,m}}$ and we have
$\U^\a_{\L_n}\cong\U^\a_{\eta_{n,m}}(\U^\a_{\L_n})=
\U^\a_{\L_m}\cap\ol{E}^{(k)}_{\L_n}$. By naturality, we have
$\varphi_{\L_m}^\a(\U^\a_{\eta_{n,m}}(\U^\a_{\L_n}))=\M_{\L_m}\cap\M_{\eta_{n,m}
} (\varphi^\a_{\L_n}(\U^\a_{\L_n}))$.
 
 (c) In the charts defined by 
$\varphi^\a_{\L_n}$ and $\varphi^\a_{\L_m}$ as in (a), the map 
$\M_{\ep_{m,n}}$ has the form $\U_{\ep_{m,n}}$ which clearly defines a 
surjective map.
\end{proof}
Part (c) of this lemma already suggests that $\M_{\L_m}$ is some kind of fiber 
bundle over $\M_{\L_n}$. As we discuss below, this fiber bundle structure can be
accurately described via multilinear bundles.
Like ordinary manifolds, supermanifolds and morphisms thereof arise 
from local data.
\begin{proposition}[{see \cite[Proposition 3.23, 
p.593]{AllLau}}]\label{proplocaldescsman}
 Let $k\in\N_0\cup\{0\}$ and $E\in\SVec$. Let further $(\U^\alpha)_{\a\in A}$ 
be a family of open subfunctors of $\ol{E}^{(k)}$ and 
$\U^{\a\a'}\subseteq\U^\a$ be open subfunctors for $\a,\a'\in A$ such that 
$\U^{\a\a}=\U^\a$. 
Further, let
$\varphi^{\a\a'}\colon\U^{\a\a'}\rightarrow\U^{\a'\a}$ be isomorphisms in 
$\SDomk{k}$ such that we have $\varphi^{\a\a}=\id_{\U^\a}$ 
and $\varphi^{\a\a''}=\varphi^{\a'\a''}\circ\varphi^{\a\a'}$ on 
$\U^{\a\a'}\cap\U^{\a\a''}$ for all $\a,\a',\a''\in A$.
Finally, for all $\a,\b\in A$ and any two points $x\in\U^{\a}_\R$, 
$y\in\U^{\b}_\R$ such that $x\notin\U^{\a\b}_\R$ or $\varphi^{\a\b}_\R(x)\neq 
y$, let there exist open 
neighbourhoods $V\subseteq\U^{\a}_\R$ of $x$ and $V'\subseteq\U^{\b}_\R$ of 
$y$, such that $\varphi^{\a\b}_\R\big(\U^{\a\b}_\R\cap V\big)\cap V'=\emptyset$.
Then there exists a, 
up to unique isomorphism, unique $k$-supermanifold $\M$ with an atlas 
$\{\varphi^\a\colon\U^\a\rightarrow\M\colon\a\in A\}$ such that that the change 
of 
charts coincides with the $\varphi^{\a\a'}$ defined above.

 Moreover, let $\Nc\in\SMank{k}$ have the atlas 
$\{\psi^\b\colon\V^\b\to\Nc\colon\b\in B\}$ and 
let $\widetilde{\U}^{\a\b}\subseteq\U^\a$ for $\a\in A$ and $\b\in B$ such that 
$\bigcup_{\b\in B}\widetilde{\U}^{\a\b}_\R=\U^\a_\R$.
If
$f^{\a\b}\colon\widetilde{\U}^{\a\b}\to\V^\b$ is a family 
of supersmooth maps 
such that $\psi^{\b\b'}\circ 
f^{\a\b}\circ\varphi^{\a'\a}=f^{\a'\b'}$ on 
$(\varphi^{\a'\a})^{-1}(\widetilde{\U}^{\a\b})\cap 
(f^{\a'\b'})^{-1}(\V^{\b'\b})$ for all $\a,\a'\in A$, $\b,\b'\in B$, then there 
exists a unique supersmooth morphism $f\colon\M\rightarrow\Nc$ with 
$f^{\a\b}=(\psi^\b)^{-1}\circ f\circ \varphi^\a|_{\widetilde{\U}^{\a\b}}$.
\end{proposition}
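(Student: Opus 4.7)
The plan is to mimic the usual gluing construction for manifolds, performing it pointwise for each $\L\in\Grk{k}$ and then checking that everything assembles naturally in $\L$.

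First I would build the underlying functor $\M\colon\Grk{k}\to\Man$ as follows. For each $\L\in\Grk{k}$, form the disjoint union $\bigsqcup_{\a\in A}\U^\a_\L$ and introduce the relation identifying $x\in\U^{\a\a'}_\L$ with $\varphi^{\a\a'}_\L(x)\in\U^{\a'\a}_\L$. The cocycle identities $\varphi^{\a\a}=\id$ and $\varphi^{\a\a''}=\varphi^{\a'\a''}\circ\varphi^{\a\a'}$ (which pass to every $\L$ by naturality) imply this is an equivalence relation, so we may set $\M_\L:=\bigsqcup_\a\U^\a_\L/{\sim}$ and let $\varphi^\a_\L\colon\U^\a_\L\to\M_\L$ be the obvious maps. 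Since each $\varphi^{\a\a'}_\L$ is a homeomorphism between open subsets, standard arguments give that $\M_\L$ is a topological space in which each $\varphi^\a_\L$ is an open topological embedding, and the resulting atlas has smooth change of charts because the $\varphi^{\a\a'}$ are supersmooth, hence smooth at every $\L$. For a morphism $\varrho\colon\L\to\L'$ in $\Grk{k}$, define $\M_\varrho\colon\M_\L\to\M_{\L'}$ by $[x]\mapsto[\ol{E}^{(k)}_\varrho(x)]$ whenever $x\in\U^\a_\L$; this is well-defined because the $\varphi^{\a\a'}$ are natural transformations, so $\ol{E}^{(k)}_\varrho$ intertwines them. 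Continuity and smoothness of $\M_\varrho$ are checked in charts where it reduces to (a restriction of) $\ol{E}^{(k)}_\varrho$. Functoriality of $\M$ in $\L$ and the fact that each $\varphi^\a\colon\U^\a\to\M$ is a natural transformation follow from the corresponding statements for $\ol{E}^{(k)}$ and the $\U^\a$.

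Next I would verify the Hausdorff property. The separation hypothesis on $\U^{\a\b}_\R$ is precisely what is needed for the standard gluing criterion to produce a Hausdorff base manifold $\M_\R$. By Remark \ref{remhausd} (or rather by the pending result that $\M_\L$ is a fiber bundle over $\M_\R$ with Hausdorff fiber), $\M_\L$ is then Hausdorff for every $\L\in\Grk{k}$. Thus $(\M,\{\varphi^\a\})$ is a $k$-supermanifold whose change of charts is, by construction, the given family $\varphi^{\a\a'}$. Uniqueness up to unique isomorphism is then formal: if $(\M',\{\varphi'^{\,\a}\})$ is another such object, the assignment $\varphi^\a_\L(x)\mapsto\varphi'^{\,\a}_\L(x)$ is well-defined by the compatibility with change of charts, independent of $\a$, a natural transformation in $\L$, and an isomorphism of $k$-supermanifolds (supersmooth since locally it is the identity on superdomains).

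For the second part, I would define the morphism $f\colon\M\to\Nc$ chart by chart. Given $p\in\M_\L$, choose $\a\in A$ with $p=\varphi^\a_\L(x)$ for some $x\in\U^\a_\L$. The hypothesis $\bigcup_\b\widetilde\U^{\a\b}_\R=\U^\a_\R$ together with openness and the structure of superdomains (which are restrictions of $\ol{E}^{(k)}$ above an open subset of the base) implies $\bigcup_\b\widetilde\U^{\a\b}_\L=\U^\a_\L$, so there is some $\b$ with $x\in\widetilde\U^{\a\b}_\L$, and we set $f_\L(p):=\psi^\b_\L(f^{\a\b}_\L(x))$. The cocycle compatibility $\psi^{\b\b'}\circ f^{\a\b}\circ\varphi^{\a'\a}=f^{\a'\b'}$ guarantees this value is independent of the choices of $\a$ and $\b$, and naturality in $\L$ follows because every $f^{\a\b}$ is a natural transformation. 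Supersmoothness is local: in the charts $\varphi^\a$ and $\psi^\b$, the map $f$ is literally $f^{\a\b}$, which is supersmooth by assumption. Uniqueness is immediate since $f$ is determined on each chart domain by $f^{\a\b}$.

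The main obstacle is the bookkeeping in the first part: one must check that the topological quotient is compatible with the $\Grk{k}$-action, i.e., that $\M_\varrho$ is well-defined, continuous and smooth and that $\M$ becomes an honest functor. All of this reduces, via the explicit chart descriptions, to the fact that $\ol{E}^{(k)}_\varrho$ is continuous and linear and that $\varphi^{\a\a'}$ is a natural transformation; nothing deeper is needed, but the verification is the only place where the setup requires genuine attention rather than routine manifold gluing.
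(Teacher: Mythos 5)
Your construction is correct and is essentially the paper's proof written out in full: the paper defers to the standard gluing lemma for ordinary manifolds applied at each $\L\in\Grk{k}$, notes that naturality of the $\varphi^{\a\a'}$ handles the functoriality of $\M$ and well-definedness of $\M_\varrho$, that $\bigcup_\b\widetilde{\U}^{\a\b}_\R=\U^\a_\R$ propagates to every $\L$, and that Hausdorffness of $\M_\R$ yields Hausdorffness of each $\M_\L$ via the fiber-bundle structure over the base. You have simply made explicit the quotient construction and the chart-by-chart verifications that the paper leaves to the cited reference.
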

\begin{proof}
 This follows exactly as in \cite[Proposition 3.23, p.593]{AllLau}. 
Essentially, 
we use the well-known equivalent statement for ordinary manifolds for every 
$\L\in\Grk{k}$ to construct $\M_\L$, resp.\ $f_\L$, and the rest follows from 
naturality. Note that 
$\bigcup_{\b\in B}\widetilde{\U}^{\a\b}_\R=\U^\a_\R$ implies
$\bigcup_{\b\in B}\widetilde{\U}^{\a\b}_{\L}=\U^\a_\L$ for all $\L\in\Grk{k}$.
Moreover, if $\M_\R$ is Hausdorff then $\M_\L$ is Hausdorff for all $\L\in\Gr$ 
(compare Remark \ref{remhausd}).
\end{proof}
\begin{lemma}[{\cite[Corollary 6.2.2, p.409]{Mol}}]\label{lemsmaninvert}
 Let $k\in\N\cup\{\infty\}$ and $\M,\Nc\in\SMank{k}$. A supersmooth morphism 
$f\colon \M\to\Nc$ is an isomorphism in $\SMank{k}$ if and only if 
$f_{\L_1}\colon 
\M_{\L_1}\to\Nc_{\L_1}$ is a diffeomorphism.
\end{lemma}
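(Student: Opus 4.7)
The forward direction is immediate: if $f$ is an isomorphism in $\SMank{k}$, then $f_\L$ is a diffeomorphism for every $\L$, so in particular $f_{\L_1}$ is. The content of the lemma is the converse, and the plan is to reduce it to the superdomain case (Lemma \ref{lemskelinv}) via charts, then glue the local inverses using Proposition \ref{proplocaldescsman}.

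\textbf{Step 1: $f_\R$ is a diffeomorphism.} Let $h\colon\Nc_{\L_1}\to\M_{\L_1}$ denote the smooth inverse of $f_{\L_1}$. By naturality applied to $\ep_{\L_1}\colon\L_1\to\R$ and $\eta_{\L_1}\colon\R\to\L_1$, the composite $\M_{\ep_{\L_1}}\circ h\circ \Nc_{\eta_{\L_1}}\colon\Nc_\R\to\M_\R$ is smooth, and a short diagram chase (using $\ep_{\L_1}\circ\eta_{\L_1}=\id_\R$ and the naturality squares of $f$) shows it is a two-sided inverse of $f_\R$.

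\textbf{Step 2: Construction of local inverses.} Choose atlases $\{\varphi^\a\colon\U^\a\to\M\}_{\a\in A}$ of $\M$ and $\{\psi^\b\colon\V^\b\to\Nc\}_{\b\in B}$ of $\Nc$. For $(\a,\b)\in A\times B$, apply Lemma/Definition \ref{lemdefimrestr} to form the open subfunctor $\widetilde{\U}^{\a\b}:=\varphi^{\a,-1}(f^{-1}(\psi^\b(\V^\b)))\subseteq\U^\a$, on which $h^{\a\b}:=(\psi^\b)^{-1}\circ f\circ\varphi^\a|_{\widetilde{\U}^{\a\b}}$ is a supersmooth morphism into $\V^\b$. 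At the $\L_1$-level, $h^{\a\b}_{\L_1}$ is a composition of three diffeomorphisms (restricted to open sets), hence a diffeomorphism onto its image. Shrinking $\widetilde{\U}^{\a\b}$ to the restriction determined by the open base set $h^{\a\b}_\R(\widetilde{\U}^{\a\b}_\R)\subseteq\V^\b_\R$ and similarly adjusting a companion subfunctor $\widetilde{\V}^{\b\a}\subseteq\V^\b$, Lemma \ref{lemskelinv} produces a supersmooth inverse $g^{\b\a}\colon\widetilde{\V}^{\b\a}\to\widetilde{\U}^{\a\b}$ of $h^{\a\b}$. Using Step 1, the open subfunctors $\widetilde{\V}^{\b\a}$ cover $\Nc$.

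\textbf{Step 3: Gluing via Proposition \ref{proplocaldescsman}.} Apply the second part of Proposition \ref{proplocaldescsman} with the roles of source and target exchanged: take the atlas $\{\psi^\b\}$ of $\Nc$ as the source charts and $\{\varphi^\a\}$ of $\M$ as the target charts, and use the family of supersmooth maps $\varphi^\a\circ g^{\b\a}\colon\widetilde{\V}^{\b\a}\to\M$, expressed in charts as $g^{\b\a}\colon\widetilde{\V}^{\b\a}\to\U^\a$. The cocycle condition $\varphi^{\a\a'}\circ g^{\b\a}\circ\psi^{\b'\b}=g^{\b'\a'}$ on suitable overlaps follows from the uniqueness of inverses: both sides are supersmooth inverses of the same supersmooth morphism $h^{\a'\b'}$ on the overlap, hence agree. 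Proposition \ref{proplocaldescsman} then produces a unique supersmooth morphism $g\colon\Nc\to\M$ with $(\varphi^\a)^{-1}\circ g\circ\psi^\b|_{\widetilde{\V}^{\b\a}}=g^{\b\a}$. Finally, checking $g\circ f=\id_\M$ and $f\circ g=\id_\Nc$ reduces chart-wise to $g^{\b\a}\circ h^{\a\b}=\id$ and $h^{\a\b}\circ g^{\b\a}=\id$, which hold by construction.

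\textbf{Main obstacle.} The routine parts (Steps 1 and 3) are essentially bookkeeping. The real technical work lies in Step 2, specifically in shrinking the subfunctors $\widetilde{\U}^{\a\b}$ and $\widetilde{\V}^{\b\a}$ so that the restricted map $h^{\a\b}_{\L_1}$ is not merely a local diffeomorphism but an actual diffeomorphism between two open subsets, so that Lemma \ref{lemskelinv} applies directly; this requires carefully tracking how restrictions of superdomains to prescribed base opens (Lemma/Definition \ref{lemdefopsubfun}) interact with the chart changes, and verifying that after shrinking, the collection $\{\widetilde{\V}^{\b\a}\}$ still covers $\Nc$ — which is exactly where the conclusion from Step 1 is used.
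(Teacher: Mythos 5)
Your proof is correct and follows essentially the same route as the paper's: form the chart representations $(\psi^\b)^{-1}\circ f\circ\varphi^\a$ on suitable restrictions, invert them locally via Lemma \ref{lemskelinv}, and glue the inverses with Proposition \ref{proplocaldescsman} using the cocycle relation coming from uniqueness of inverses. The only substantive addition is your Step 1, where you justify via naturality with respect to $\ep_{\L_1}$ and $\eta_{\L_1}$ that $f_\R$ is a diffeomorphism --- a point the paper's proof asserts without argument --- so this is a welcome refinement rather than a different approach.
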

\begin{proof}
Clearly, 
$f\colon \M\to\Nc$ is an isomorphism if and only if $f_\L\colon\M_\L\to\Nc_\L$ 
is bijective and the maps $f_\L^{-1}$ define a supersmooth natural 
transformation for every $\L\in\Grk{k}$.  In particular, $f_{\L_1}$ is a 
diffeomorphism in this situation.
Let $\{\varphi^\a\colon\U^\a\to\M\colon\a\in A\}$ be an atlas of $\M$ and 
$\{\psi^\b\colon\V^\b\to\Nc\colon\b\in B\}$ be an atlas of $\Nc$.
Let $f\colon \M\to\Nc$ be supersmooth such that $f_{\L_1}$ is a diffeomorphism.
For all $\a\in A$ and $\b\in B$ we define
$\widetilde{\U}^{\a\b}:=(f\circ\varphi^\a)^{-1}(\psi^\b(\V^\b))\subseteq \U^\a$
and 
$\widetilde{\V}^{\b\a}:=f^{\a\b}\big((f\circ\varphi^\a)^{-1}
(\psi^\b(\V^\b))\big)=(\psi^\b)^ { -1
} (f(\varphi^\a(\U^\a)))\subseteq\V^\b$
and let
\[
f^{\a\b}:=(\psi^\b)^{-1}\circ f\circ 
\varphi^\a|_{\widetilde{\U}^{\a\b}}\colon\widetilde{\U}^{\a\b}\to\widetilde{\V}^
{\b\a}.
\]
Since $f_\R$ is also a diffeomorphism, the sets
$\widetilde{\V}^{\b\a}_\R$
cover $\Nc_\R$ and
because every $f^{\a\b}_{\L_1}$ is a diffeomorphism, there exist unique 
supersmooth inverse morphisms 
$(f^{\a\b})^{-1}\colon\widetilde{\V}^{\b\a}\to\widetilde{\U}^{\a\b}$
by 
Lemma \ref{lemskelinv}. 
For every $\a,\a'\in A$ and $\b,\b'\in B$, we have 
$(\psi^{\b'\b})^{-1}\circ f^{\a'\b'}\circ\varphi^{\a\a'}=f^{\a\b}$
on $\widetilde{\U}^{\a\b}\cap(\varphi^{\a\a'})^{-1}(\widetilde{\U}^{\a'\b'})$.
Therefore,
$(f^{\a\b})^{-1}=(\varphi^{\a\a'})^{-1}\circ (f^{\a'\b'})^{-1}\circ\psi^{\b\b'}$
on $\widetilde{\V}^{\b\a}\cap (\psi^{\b\b'})^{-1}(\widetilde{\V}^{\b\b'})$
and the morphisms lead to a unique supersmooth morphism $f^{-1}\colon\Nc\to\M$ 
by
Proposition \ref{proplocaldescsman}. That it is inverse to $f$ follows from the 
local description of $f^{-1}\circ f$ and $f\circ f^{-1}$.
\end{proof}
\begin{definition}
 Let $k\in\N_0\cup\{\infty\}$ and $\M\in\SMank{k}$ be modelled on $E\in\SVec$. 
A subfunctor $\Nc$ of $\M$ 
is called a \textit{sub-supermanifold}\index{Supermanifold!sub-} of $\M$ if 
there exist sequentially closed vector subspaces $F_0\subseteq E_0$ and 
$F_1\subseteq 
E_1$ such that
for every $x\in\Nc_\R$ there exists a chart $\varphi^\a\colon\U^\a\to\M$ of 
$\M$ with $x\in\varphi^\a_\R(\U^\a_\R)$ such that
$\varphi^\a(\U^\a\cap\ol{F}^{(k)})=\varphi^\a(\U^\a)\cap\Nc$, where 
$F:=F_0\oplus F_1\in\SVec$.

We call 
$\varphi^\a|_{\U^\a\cap\ol{F}^{(k)}}$ a \textit{sub-supermanifold 
chart}\index{Supermanifold!sub-supermanifold chart} of $\Nc$. Taking all 
sub-supermanifold 
charts of $\Nc$ as the atlas turns $\Nc$  into a supermanifold and we always 
give $\Nc$ this structure.
\end{definition}
\begin{lemma}
 Let $k\in\N_0\cup\{\infty\}$, $\M\in\SMank{k}$ and $\Nc$ be a 
sub-super\-manifold 
of $\M$. Then the inclusion $i\colon\Nc\to \M$ is supersmooth.
\end{lemma}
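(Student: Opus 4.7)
The plan is to reduce the whole statement to the supersmoothness of the changes of charts in $\M$, which is part of the definition of an atlas. By construction, an atlas for $\Nc$ is given by the sub-supermanifold charts
\[
\psi^\a := \varphi^\a|_{\U^\a\cap\ol{F}^{(k)}}\colon \U^\a\cap\ol{F}^{(k)}\to\Nc,
\]
where $\varphi^\a\colon\U^\a\to\M$ runs through the distinguished charts of $\M$ from the definition of sub-supermanifold, and $\U^\a\cap\ol{F}^{(k)}$ is the obvious open subfunctor of $\ol{F}^{(k)}$. To verify supersmoothness of $i\colon\Nc\to\M$, I would take an arbitrary such $\psi^\a$ together with an arbitrary chart $\varphi^\b\colon\U^\b\to\M$ of $\M$, and examine the composition
\[
(\varphi^\b)^{-1}\circ i\circ\psi^\a\big|_{(i\circ\psi^\a)^{-1}(\varphi^\b(\U^\b))}.
\]

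The key observation is that $i$ is merely the inclusion of subfunctors, so $i\circ\psi^\a$ agrees, as a natural transformation into $\M$, with the restriction of the ambient chart $\varphi^\a$ to $\U^\a\cap\ol{F}^{(k)}$. Consequently the displayed composition equals the restriction of the change of charts $\varphi^{\a\b}=(\varphi^\b)^{-1}\circ\varphi^\a$ of $\M$ to the open subfunctor
\[
\W := \U^{\a\b}\cap\ol{F}^{(k)} \;\subseteq\; \U^{\a\b},
\]
where I would use naturality together with Lemma/Definition \ref{lemdefimrestr} to identify $(i\circ\psi^\a)^{-1}(\varphi^\b(\U^\b))$ with $\W$.

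By hypothesis $\varphi^{\a\b}\colon\U^{\a\b}\to\U^{\b\a}$ is supersmooth. Its restriction to the open subfunctor $\W$ is still supersmooth: smoothness of $(\varphi^{\a\b})_\L$ passes to the restriction $(\varphi^{\a\b})_\L|_{\W_\L}$ as usual for locally convex calculus, and the differential at any point of $\W_\L$ is just the original differential restricted to the topological sub-$\Lz$-module $\ol{F}^{(k)}_\L \subseteq \ol{E}^{(k)}_\L$, which is still $\Lz$-linear in its second argument. Hence $\varphi^{\a\b}|_\W\colon\W\to\U^{\b\a}\subseteq\ol{E}^{(k)}$ is supersmooth, which is exactly what was required.

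I do not expect any real obstacle; the entire argument is a matter of unpacking definitions. The only point requiring a bit of care is the bookkeeping identification of $(i\circ\psi^\a)^{-1}(\varphi^\b(\U^\b))$ with $\U^{\a\b}\cap\ol{F}^{(k)}$, which follows directly from the fact that $i$ is a set-theoretic inclusion and from naturality, and the observation that the supersmoothness conditions (smoothness plus $\Lz$-linearity of the differential) are manifestly preserved under restriction to open subfunctors of a model superdomain coming from a graded topological vector subspace.
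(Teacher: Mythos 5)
Your argument is correct and follows essentially the same route as the paper's proof: in the sub-supermanifold charts the inclusion is locally just the inclusion $\U^\a\cap\ol{F}^{(k)}\to\U^\a$ (or, for a general chart $\varphi^\b$ of $\M$, the restriction of the change of charts $\varphi^{\a\b}$ to $\U^{\a\b}\cap\ol{F}^{(k)}$), which is supersmooth since smoothness and $\Lz$-linearity of the differential are preserved under restriction to the open subfunctor of the graded subspace $\ol{F}^{(k)}$. Your version is merely a more explicit unpacking of the paper's one-line conclusion, with the extra care about arbitrary charts of $\M$ being a harmless (and slightly more thorough) refinement.
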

\begin{proof}
 By definition of a subfunctor, the inclusion is a natural 
transformation. Let $\M$ be modelled on $E\in\SVec$, $\Nc$ be modelled on 
$F\subseteq E$ and $\{\varphi^\a\colon\U^\a\to\M\colon\a\in A\}$ be a 
collection of 
charts such that $\{\varphi^\a|_{\U^\a\cap\ol{F}^{(k)}}\colon\a\in A\}$ is an 
atlas 
of $\Nc$. In these charts the inclusion is just the inclusion 
$\U^\a\cap\ol{F}^{(k)}\to\U^\a$, which is obviously supersmooth.
\end{proof}
\begin{lemma/definition}\label{lemopensubsuper}
 Let $k\in\N_0\cup\{\infty\}$ and $\M\in\SMank{k}$. For every open subfunctor 
of $\U\subseteq\M$, we have $\U=\M|_{\U_\R}$. In this case $\U$ is a 
sub-supermanifold of $\M$ and if $f\colon\M\to\Nc$ is a supersmooth morphism to 
$\Nc\in\SMank{k}$, then so is $f|_{\U}\colon\U\to\Nc$.
We call such sub-supermanifolds \textit{open 
sub-supermanifolds}\index{Supermanifold!open sub-supermanifold}.
\end{lemma/definition}
\begin{proof}
 That $\U=\M|_{\U_\R}$ holds for $k=\infty$ follows directly from 
\cite[Corollary 
3.5.9, p. 62]{SachseDiss} and the same proof works for $k\in\N_0$ if one only 
considers $\L\in\Grk{k}$. Let $\{\varphi^\a\colon\U^\a\to\M\colon \a\in A\}$ be 
an atlas of $\M$. Then 
$\{\varphi^\a|_{(\varphi^\a)^{-1}(\varphi^\a(\U^\a)\cap\U)}\colon\a\in A\}$ is 
an atlas of $\U$. With these charts, the supersmoothness of $f|_\U$ is obvious.
\end{proof}

\begin{definition}
 Let $k\in\N_0\cup\{\infty\}$ and $\M,\Nc\in\SMank{k}$ be modelled on 
$E,F\in\SVec$ with atlases 
$\{\varphi^\a\colon\U^\a\to\M\colon \a\in A\}$ and 
$\{\psi^\b\colon\V^\b\to\Nc\colon\b\in B\}$. We define the 
\textit{product}\index{Supermanifold!product of} $\M\times\Nc$ of $\M$ and 
$\Nc$ as the functor $\L\mapsto\M_\L\times\Nc_\L$, resp.\ $\varrho\mapsto 
\M_\varrho\times\Nc_\varrho$, for $\L,\L'\in\Grk{k}$ and 
$\varrho\in\Hom_{\Grk{k}}(\L,\L')$. We will always give $\M\times\Nc$ the 
structure of a $k$-supermanifold modelled on $E\times F$ defined by the 
atlas
$\{\varphi^\a\times\psi^\b\colon\U^\a\times\V^\b\to\M\times\Nc\colon(\a,\b)\in 
A\times B\}$.
\end{definition}
Clearly, the projections $\pi_{\M}\colon\M\times\Nc\to\M$ and 
$\pi_{\Nc}\colon\M\times\Nc\to\Nc$ are supersmooth morphisms.

Recall the definition of multilinear bundles and inverse systems of multilinear 
bundles from Appendix \ref{chapmullin}. The following theorem shows 
that for a supermanifold $\M$, the manifolds $\M_{\L_n}$ are multilinear 
bundles 
of degree $n$ over $\M_\R$ and that $(\M_{\L_m},\M_{\ep_{m,n}})$ is an 
inverse system of multilinear bundles. This lets us consider supermanifolds as 
ordinary manifolds.
\begin{theorem}\label{thrmsmanmbun}
 Let $k\in\N_0\cup\{\infty\}$, $\M,\Nc\in\SMank{k}$ and $f\colon\M\to\Nc$ be 
supersmooth. If $\M$ is modelled on $E\in\SVec$ with the atlas 
$\{\varphi^\a\colon\a\in A\}$, 
 then $\M_{\L_n}$ is a multilinear bundle of degree $n$ over $\M_\R$ with the 
fiber $\ol{E}_{\L_n^+}$ and the
bundle atlas $\{\varphi^\a_{\L_n}\colon\a\in A\}$
 for every $\L_n\in\Grk{k}$. Moreover, $f_{\L_n}\colon\M_{\L_n}\to\Nc_{\L_n}$ 
is a 
morphism of multilinear bundles of degree $n$.
With this, we obtain a faithful functor
 \[
  \SMank{k}\to\MBunk{k},
 \]
  defined by $\M\mapsto\M_{\L_k}$ and $f\mapsto f_{\L_k}$ for $k\in\N_0$.
Furthermore, if $k=\infty$, then $(\M_{\L_m},\M_{\ep_{m,n}})$ is an 
inverse system of multilinear bundles with the adapted atlas 
$\{(\varphi^\a_{\L_n})^{-1}\colon n\in\N_0,\a\in A\}$
and
\[
 \varprojlim\colon\SMan\to\MBunk{\infty},
\]
defined by $\M\mapsto\varprojlim_n\M_{\L_n}$ and $f\mapsto\varprojlim_n 
f_{\L_n}$, is a faithful functor. Along the forgetful functor, we have thus 
constructed faithful 
functors 
\[
 \SMank{k}\to\Man
\]
for $k\in\N_0\cup\{\infty\}$. All these functors respect products.
\end{theorem}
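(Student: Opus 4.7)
The plan is to first establish the claim locally on superdomains and then globalize via the atlas. For a $k$-superdomain $\U \subseteq \ol{E}^{(k)}$ and $n \leq k$, the space
\[
\U_{\L_n} = \U_\R \times \prod_{I \in \Po^n_+} \oddgen_I\, E_{\ol{|I|}}
\]
is visibly a trivial multilinear bundle of degree $n$ over $\U_\R$ (with the slots indexed by $\Po^n_+$ and fiber $E_{\ol{|I|}}$ in slot $I$). I want to show that every supersmooth $f\colon\U\to\V$ is, after passing to $\L_n$, a morphism of such multilinear bundles. This is the heart of the matter and the place where the skeleton formalism pays off: by Remark \ref{remskelpart}, for $x \in \U_\R$ and decomposed $n_0 + n_1$, the $I$-component of $f_{\L_n}(x + n_0 + n_1)$ equals
\[
\sum_{\omega \in \Part(I)} \oddgen_\omega\, d^{e(\omega)} f_{o(\omega)}(x)(n_\omega),
\]
and the summands are multilinear in $(n_{\omega_1},\ldots,n_{\omega_{\ell(\omega)}})$ and smooth in $x$. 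This is exactly the shape of a degree-$n$ multilinear bundle morphism over $f_0\colon\U_\R\to\V_\R$ (compare the ``Taylor-polynomial morphisms'' of \cite{Bert} discussed in Appendix \ref{chapmullin}).

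Globalization is now routine: for $\M \in \SMank{k}$ with atlas $\{\varphi^\a\}$, the family $\{\varphi^\a_{\L_n}\}$ is a smooth atlas of $\M_{\L_n}$ whose transitions $\varphi^{\a\b}_{\L_n}$ are multilinear bundle morphisms by the preceding paragraph; hence $\M_{\L_n}$ carries the structure of a multilinear bundle of degree $n$ over $\M_\R$, and a supersmooth $f\colon\M\to\Nc$ yields a bundle morphism $f_{\L_n}$ because in any pair of charts it is expressed through the skeleton of $\psi^{-1}\circ f\circ\varphi$. Functoriality of $\M\mapsto\M_{\L_k}$ is inherited from $\M$ being a functor. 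For the inverse system structure, I would read off from the chart expression $\U_{\ep_{m,n}}$ that $\M_{\ep_{m,n}}$ forgets exactly the slots indexed by subsets containing some $i > n$; this is precisely a morphism in the tower of multilinear bundles described in the Appendix, so the adapted bundle atlas $\{(\varphi^\a_{\L_n})^{-1}\}$ exists, and the limit $\varprojlim_n\M_{\L_n}$ is the manifold furnished by the Appendix.

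Faithfulness of $\SMank{k}\to\MBunk{k}$ for $k\in\N_0$ comes from the fact that every $\L\in\Grk{k}$ is some $\L_n$ with $n\leq k$, and naturality combined with $f_{\L_k}\circ\M_{\eta_{n,k}} = \Nc_{\eta_{n,k}}\circ f_{\L_n}$ together with the injectivity of $\Nc_{\eta_{n,k}}$ (closed embedding, part (b) of the previous lemma) lets me recover $f_{\L_n}$ from $f_{\L_k}$. In the $k=\infty$ case, the projections $\M_{\ep_{m,n}}$ are surjective (part (c)), hence so are the limit projections $q_n\colon\varprojlim\M_{\L_n}\to\M_{\L_n}$, and the analogous naturality diagram recovers each $f_{\L_n}$ from $\varprojlim f_{\L_n}$. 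Preservation of products is immediate: since $(\M\times\Nc)_{\L_n}=\M_{\L_n}\times\Nc_{\L_n}$ by definition, and a product of charts $\varphi^\a\times\psi^\b$ gives the product multilinear bundle structure, the statement reduces to the product formula for multilinear bundles and their limits in the Appendix.

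The main obstacle I anticipate is the bookkeeping needed to match the skeleton expansion in Remark \ref{remskelpart} against the intrinsic definition of morphisms of multilinear bundles in the sense of \cite{Bert}: one must verify that the indexing of summands by partitions $\omega\in\Part(I)$, together with the multilinearity slot-by-slot, is precisely the morphism condition in the multilinear bundle category (and similarly that the pattern of projections $\M_{\ep_{m,n}}$ matches the degeneracy maps of the tower). Once that dictionary is set up, everything else is naturality or a direct appeal to the appendix.
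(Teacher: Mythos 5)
Your proposal is correct and follows essentially the same route as the paper: both identify $\U_{\L_n}$ as a trivial multilinear bundle, use the partition-indexed skeleton expansion of Remark \ref{remskelpart} to read off that chart transitions and morphisms become multilinear bundle morphisms (the ``bookkeeping'' you flag is precisely the sign $\sgn(\omega)$ from pulling the $\oddgen_{\omega_i}$ into lexicographic order, which the paper makes explicit), and match $\M_{\ep_{m,n}}$ in charts with the projections of Lemma \ref{lemqkn} to get the inverse system. Your explicit faithfulness argument via naturality with respect to $\eta_{n,k}$ and $\ep_{m,n}$ is a small addition the paper leaves implicit, but it is the intended reasoning.
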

\begin{proof}
 Let $\M$ be modelled on $E\in\SVec$. We start by showing that 
$\{\varphi_{\L_n}^\a\colon\U^\a_{\L_n}\to\M_{\L_n}\colon\a\in A\}$ is indeed a 
bundle atlas of a multilinear bundles of degree $n$. 
Let the 
change of charts $\varphi^{\a\b}$ be defined by the skeleton 
$(\varphi^{\a\b}_n)$. 
We consider $\U^{\a}_{\L_n}=\U_\R^{\a}\times\prod_{I\in\Po^n_+}\oddgen_I 
E_{\ol{|I|}}$ as a trivial multilinear bundle over the $n$-multilinear space 
$(E_I)$ with $E_I:=\oddgen_IE_{\ol{|I|}}$.
By naturality, we have 
$(\varphi_{\L_n}^\a)^{-1}\big(\M_{\ep_{\L_n}}^{-1}(\{x\})\big)=
(\U^\a_{\L_n})^{-1}\big(\varphi^{-1}_\R(\{x\})\big)$ for all 
$x\in\varphi^\a_\R(\U^\a_\R)$. In other words, the projection 
$\M_{\ep_{\L_n}}\colon\M_{\L_n}\to\M_\R$ turns $\M_{\L_n}$ into a fiber bundle
with typical fiber $\ol{E}_{\L_n^+}$.
Recall the sign of a partition defined in Remark \ref{defsgnpart}. 
Then 
$\oddgen_{\omega_1}\cdots\oddgen_{\omega_{\ell(\omega)}}=\sgn(\omega)\oddgen_I$ 
for all $I\in\Po^n_+$ and $\omega\in\Part(I)$.
With this, we use Remark \ref{remskelpart} to calculate
\[
  \textstyle
 \varphi^{\a\b}\big(x+\sum_{I\in\Po^n_+}\oddgen_I x_I\big)
 =
 \displaystyle\varphi^{\a\b}_0(x)+
 \sum_{I\in\Po^n_+}\sum_{\omega\in\Part(I)}\oddgen_I \sgn(\omega)d^{(e(\omega))}
  \varphi^{\a\b}_{o(\omega)}(x)(x_\omega)
\]
for $\omega$ in graded lexicographic order $x_I\in E_{\ol{|I|}}$ for 
$I\in\Po^n_+$ and
\[
x_\omega:=(\oddgen_{\omega_1}x_{\omega_1},\ldots,\oddgen_{\omega_{\ell(\omega)}}
x_{\omega_{\ell(\omega)}}).
\]
In the notation of multilinear bundles, the change of chart is thus given by 
the sum of maps of the form 
$(\varphi^{\a\b})_x^\omega:=\sgn(\omega)d^{(e(\omega))}
  \varphi^{\a\b}_{o(\omega)}(x)$, which define an isomorphism of 
$n$-multilinear 
spaces for every $x\in\U^{\a\b}_\R$. Thus, $\M_{\L_n}$ is a multilinear bundle 
over $\M_\R$ of degree $n$ with typical fiber $(E_I)$.

For a morphism $f\colon\M\to\Nc$, we first note that by naturality 
$f_\R\circ\M_{\ep_{\L_n}}=\Nc_{\ep_{\L_n}}\circ f_{\L_n}$ and therefore 
$f_{\L_n}$ 
is a fiber bundle morphism over $f_\R$. In bundle charts, we can make the exact 
same argument as above to see that $f_{\L_n}$ is a morphism of multilinear 
bundles.

It follows that we have a functor $\SMank{k}\to\MBunk{k}$ as 
described in the theorem for $k\in\N_0$. Next, we show that 
$(\M_{\L_m},\M_{\ep_{m,n}})$ 
defines an inverse system of multilinear bundles 
if $\M\in\SMan$. We have 
$\M_{\ep_{m,n}}\circ\varphi^\a_{\L_m}=\varphi^\a_{\L_n}\circ\U^\a_{\ep_{m,n}}$
for all $n\leq m$ and therefore $\U^\a_{\ep_{km,n}}$ is the chart 
representation of $\M_{\ep_{m,n}}$. Hence, in terms of multilinear bundles, 
$\M_{\ep_{m,n}}$ is exactly the projection defined in Lemma \ref{lemqkn}. 
It follows that $\M_{\L_m}|_{\Po^n_+}=\M_{\ep_{m,n}}(\M_{\L_m})=\M_{\L_n}$
and 
$\varphi^\a_{\L_m}|_{\Po^n_+}=\varphi^\a_{\L_m}\circ\U^\a_{\eta_{n,m}}
=\M_{\eta_{n,m}}\circ\varphi^\a_{\L_n}$ shows that 
$\{(\varphi^\a_{\L_m})^{-1}\colon m\in\N_0,\a\in\A\}$ is indeed an adapted 
atlas.  

On morphisms $f\colon\M\to\Nc$, $\Nc\in\SMan$, we have likewise
$f_{\L_n}\circ\M_{\ep_{m,n}}=\Nc_{\ep_{m,n}}\circ f_{\L_m}$ which shows that 
$(f_{\L_m})_{m\in\N_0}$ is a morphism of inverse systems of multilinear bundles.

It is clear from the definitions that products of supermanifolds correspond to 
products of inverse systems. 
\end{proof}
\begin{remark}
 In \cite[Remark 3.3.1, p.392]{Mol} Molotkov constructs a functor 
$\Man^\Gr\to\Man$ by taking the disjoint union of the $\M_\L$ for 
$\M\in\Man^\Gr$ and $\L\in\Gr$. He also considers this as a functor 
$\SMan\to\Man$ along the forgetful functor. For one, this functor relies on a 
more general definition of manifolds where the model spaces of different 
connected components may be non-isomorphic. More critically, this functor does 
not respect products, leading Molotkov to state that ``Lie supergroups (groups 
of 
the category $\SMan$) are not groups at all (considered in $\Set$'' 
\cite[Ibid.]{Mol}.
We hope to have convinced the reader with the above theorem that Lie 
supergroups can be seen not only as groups but even as Lie groups in a natural 
way.
\end{remark}
\begin{remark}
 Consider the following type of fiber bundles. For $E\in\SVec$ let the base 
manifold $M$ be modelled on $E_0$, let the typical fiber be
 $\varprojlim_n \ol{E}_{\L^+_n}$ and let the transition functions  
come from the limit of skeletons as in Theorem \ref{thrmsmanmbun}. The 
morphisms of such bundles shall locally also come from limits of
skeletons. Obviously, these bundles are elements of $\MBunk{\infty}$ and 
restricting to this subcategory turns the functor $\varprojlim$ into an 
equivalence of categories.
If $E$ is finite-dimensional, we have that $\varprojlim_n \ol{E}_{\L^+_n}$ is a 
\Frechet space. Consequently, non-trivial finite-dimensional supermanifolds are 
mapped to 
\Frechet manifolds under $\varprojlim$.
\end{remark}
 One can reconstruct the original supermanifold $\M$ from 
$\varprojlim\M$ if one keeps track of any atlas of $\varprojlim\M$ coming from 
the limit of an atlas of $\M$. An interesting problem is whether one can at 
least recover the isomorphism class of a supermanifold without a specific atlas.
\begin{problem}\label{probliminj}
 Is the functor $\varprojlim\colon\SMan\to\MBunk{\infty}$ injective on 
isomorphism 
classes, i.e., do we have $\varprojlim\M\cong\varprojlim\Nc$ in 
$\MBunk{\infty}$ if and 
only if we have $\M\cong\Nc$ in $\SMan$?
\end{problem}
If $\M_\R$ admits a smooth partition of unity, then it follows from 
Batchelor's Theorem \ref{thrmbatch} below that this is the case because 
$\varprojlim\M\cong\varprojlim\Nc$ in $\MBunk{\infty}$ implies 
$\M_{\L_1}\cong\Nc_{\L_1}$ in $\VBun$. 
The functor
$\varprojlim\colon\SMan\to\Man$
is not injective on isomorphism classes. For example
\[
\varprojlim\ol{\R^{1|0}}\cong\prod_{\substack{I\subseteq\N,|I|<\infty,\\
|I|\ \text{even}}}\R\cong\prod_
{ n\in\N } \R\cong\R\times\prod_{\substack{I\subseteq\N,|I|<\infty,\\
|I|\ \text{odd}}}\R\cong\varprojlim\ol{\R^{0|1}}
\]
in the category $\Man$.

We have seen in Theorem \ref{thrmsmanmbun} how to embed the category of 
supermanifolds into the category of manifolds. Conversely, one can also embed 
the category $\Man$ into the category $\SMan$.
For this, let $\Dom\nomenclature{$\Dom$}{}$ denote the category consisting of 
pairs $(U,E_0)$ where $E_0$ is a Hausdorff locally convex space and 
$U\subseteq E_0$ is open and where the morphisms are smooth maps between the 
open subsets.
\begin{proposition}[{\cite[cf. Proposition 4.2.1, p.396]{Mol}}]\label{propisman}
  Let $k\in\N_0\cup\{\infty\}$. We 
define a functor 
  \[
   \i^0_k\colon\Dom\to\SDomk{k}
  \]
  by setting $\i^0_k(U):=\ol{E}^{(k)}|_U$ and $\i^0_k(f_0):=(f_0,0,0,\ldots)$
  for $(U,E_0)\in\Dom$ and $E:=E_0\oplus\{0\}\in\SVec$. This functor 
extends to 
a 
   fully faithful functor
  \[
   \i^0_k\colon\Man\to\SMank{k}.
  \]
  In case of $k=\infty$ we also write 
$\i\colon\Man\to\SMan$\nomenclature{$\i$, $\i^0_k$}{}. The functor 
$\i^0_0\colon\Man\to\SMank{0}$ is an equivalence of categories. All of these 
functors respect 
products.
\end{proposition}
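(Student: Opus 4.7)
The plan is to verify the assignment is a functor on $\Dom$, use Proposition \ref{proplocaldescsman} to lift it to manifolds, and then check full faithfulness plus essential surjectivity for $k=0$.

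First I would check that $\i^0_k\colon\Dom\to\SDomk{k}$ is a functor. Since $E=E_0\oplus\{0\}$ has trivial odd part, the family $(f_0,0,0,\ldots)$ automatically lives in the spaces of Definition \ref{defskel} (as $\Alt^n(\{0\};\bullet)=\{0\}$ for $n\geq 1$), so it is a legitimate skeleton and defines a supersmooth morphism via Proposition \ref{propskel}. Functoriality under composition is the only calculation of substance: inserting skeletons of the form $(f_0,0,\ldots)$ and $(g_0,0,\ldots)$ into the formula (\ref{formulaskelmul}), each summand in $h_n$ for $n\geq 1$ contains an $f_{\alpha_i}$ with $\alpha_i\in 2\N$ or an $f_{\beta_j}$ with $\beta_j\in 2\N_0+1$, and all such entries are zero; thus only the $n=0$ term survives and gives $h_0=g_0\circ f_0$. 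Preservation of identities is analogous and trivial.

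Next I would extend $\i^0_k$ to $\Man\to\SMank{k}$. Given $M\in\Man$ with atlas $\{\varphi_\a\colon U_\a\to V_\a\}$ modelled on $E_0$, set $\U^\a:=\ol{E}^{(k)}|_{V_\a}$ with $E:=E_0\oplus\{0\}$, $\U^{\a\b}:=\ol{E}^{(k)}|_{\varphi_\a(U_\a\cap U_\b)}$ (Lemma/Definition \ref{lemdefopsubfun}), and transition data $\varphi^{\a\b}:=\i^0_k(\varphi_\b\varphi_\a^{-1})$. The cocycle conditions in $\SDomk{k}$ are forced by those in $\Dom$ via step one, and the Hausdorff separation hypothesis of Proposition \ref{proplocaldescsman} at the $\R$-level is exactly the Hausdorff property of $M$. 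This yields a $k$-supermanifold, and different representative atlases of $M$ produce equivalent atlases of $\i^0_k(M)$. For a morphism $f\in\Cs(M,N)$, the local expressions $\i^0_k(\psi_\b f\varphi_\a^{-1})$ glue via the second half of Proposition \ref{proplocaldescsman}.

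For full faithfulness: faithfulness is immediate since $\i^0_k(f)_\R=f$. For fullness, let $F\colon\i^0_k(M)\to\i^0_k(N)$ be supersmooth. In charts, $F$ is described by a skeleton $(F_0,F_1,\ldots)$ with $F_n\in\Cs(U,\Alt^n(E_1;E'_{\ol{n}}))$; but the model space of $\i^0_k(N)$ has trivial odd part, so $\Alt^n(\{0\};\bullet)=\{0\}$ for $n\geq 1$ and the skeleton automatically collapses to $(F_0,0,0,\ldots)$. The local smooth maps $F_0$ transform correctly under change of charts (since the change-of-chart skeletons are themselves of that shape) and hence glue to a smooth $f\colon M\to N$ with $\i^0_k(f)=F$. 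The product compatibility $\i^0_k(M\times N)=\i^0_k(M)\times\i^0_k(N)$ is immediate from the natural identification $\ol{E\times E'}^{(k)}_\L\cong \ol{E}^{(k)}_\L\times\ol{E'}^{(k)}_\L$ for $E,E'$ purely even.

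Finally, $\i^0_0$ is an equivalence: the category $\Grk{0}$ has the single object $\R$ with only the identity morphism, so a functor $\Grk{0}\to\Man$ is just a manifold and natural transformations between such are just smooth maps. Given any $\M\in\SMank{0}$, the charts $\varphi\colon\ol{E}^{(0)}|_U\to\M$ reduce at $\R$ to $U\subseteq E_0\to\M_\R$, so $\M_\R$ is a manifold modelled on $E_0$ and $\M\cong\i^0_0(\M_\R)$ canonically; combined with full faithfulness already established, this gives the equivalence. I do not expect a serious obstacle here; the only mildly delicate point is the composition-formula bookkeeping in step one, which is however forced by the vanishing of all higher skeleton entries.
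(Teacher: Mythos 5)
Your proposal is correct and follows essentially the same route as the paper's proof: establish functoriality on $\Dom$ via the composition formula of Proposition \ref{propskelmult}, lift to $\Man$ through Proposition \ref{proplocaldescsman}, obtain fullness from the collapse of skeletons over a purely even model space, and reduce the $k=0$ equivalence to the triviality of $\Grk{0}$. The only (harmless) cosmetic differences are that you spell out the vanishing of the composition terms explicitly and phrase faithfulness via $\i^0_k(f)_\R=f$ rather than via uniqueness of the glued morphism.
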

\begin{proof}
 It follows from the composition formula in Proposition \ref{propskelmult}
 that $\i^0_k\colon\Dom\to\SDomk{k}$ is a functor.
 Let $M$ be a manifold modelled on $E_0$ with atlas $\{\varphi_\a\colon 
V_\a\to U_\a\colon\a\in A\}$. Applying this functor to the change of charts 
$\varphi_{\a\b}\colon U_{\a\b}\to U_{\b\a}$,
 defines an (up to unique isomorphism) unique supermanifold $\M$
 modelled on $E_0\oplus\{0\}$ with the atlas 
$\{\i^0_k\big((\varphi_\a)^{-1}\big)\colon\i^0_k(U_\a)\to\M\colon\a\in A\}$
 by
 Proposition \ref{proplocaldescsman}.
 If $N\in\Man$ has the atlas $\{\psi_\b\colon V'_\b\to U'_\b\colon\b\in B\}$ 
and $f\colon M\to N$ is a smooth map then the same proposition applied to
$f_{\a\b}=\psi_\b\circ f\circ (\varphi_\a)^{-1}$ leads to a unique 
morphism 
$\i^0_k(f)\colon \i^0_k(M)\to\i^0_k(N)$ such that
$\big(\i^0_k(f)\big)^{\a\b}=\i^0_k(f_{\a\b})$. Functoriality follows again 
by the local definition of the composition of supersmooth morphisms.

The uniqueness of this construction shows that $\i^0_k$ is faithful.
On the other hand, every supersmooth map $g\colon\i^0_k(M)\to \i^0_k(N)$ is 
determined by its local chart descriptions $g^{\a\b}$, whose skeletons 
have the form $(g^{\a\b}_0,0,0\ldots)$ since $\i^0_k(M)$ is purely even. 
Clearly, the maps $g^{\a\b}_0$ define a unique smooth map $M\to N$ whose image 
under $\i^0_k$ is $g$.
We already know from Theorem \ref{thrmsmanmbun} that
 $\M\mapsto\M_\R$ and $f\mapsto f_\R$ defines a functor 
$\pi_0^0\colon\SMank{0}\to\Man$ and the above shows that 
$\pi_0^0\circ\i^0_0\cong\id_{\Man}$ and 
that $\i^0_0\circ\pi_0^0\cong\id_{\SMan}$.

It is obvious that the functor $\i^0_k\colon\Dom\to\SDomk{k}$ preserves 
products and from this it 
follows immediately that 
$\i^0_k\colon\Man\to\SMank{k}$ also preserves products.
\end{proof}
\begin{lemma}
 Let $k\in\N_0\cup\{\infty\}$. For every supermanifold 
$\M\in\SMank{k}$, we have that $\i^0_k(\M_\R)$ is a sub-supermanifold of $\M$. 
If $\M$ is purely even, we have
 $\i^0_k(\M_\R)\cong\M$.
\end{lemma}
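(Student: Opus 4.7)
The plan is to realise $\i^0_k(\M_\R)$ concretely as a subfunctor of $\M$ built from the ``purely even part'' of each chart domain, verify the sub\nobreakdash-super\-mani\-fold axioms, and identify the resulting sub-supermanifold with $\i^0_k(\M_\R)$ via Proposition~\ref{proplocaldescsman}.

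Let $\M$ be modelled on $E=E_0\oplus E_1\in\SVec$ with atlas $\{\varphi^\a\colon\U^\a\to\M\colon\a\in A\}$, and set $F:=E_0\oplus\{0\}$, so that $F_0=E_0$ and $F_1=\{0\}$ are (trivially) sequentially closed in $E_0$ and $E_1$, respectively, and $\ol{F}^{(k)}_\L=E_0\otimes\Lz$. First I would define, for each $\L\in\Grk{k}$, the candidate subfunctor
\[
 \Nc_\L:=\bigcup_{\a\in A}\varphi^\a_\L\bigl(\U^\a_\L\cap\ol{F}^{(k)}_\L\bigr),
\]
together with the maps $\Nc_\varrho:=\M_\varrho|_{\Nc_\L}$. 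The key point is that this does not depend on the chosen atlas: by Remark~\ref{remskelpart} (or formula~(\ref{formulaskel})), restricting the change of charts $\varphi^{\a\b}_\L$ to a point of the form $x+n_0\in\ol{F}^{(k)}_\L$ (i.e.\ with odd component $n_1=0$) kills every term involving $\varphi^{\a\b}_{o(\omega)}$ with $o(\omega)\geq1$, leaving only sums of $d^m\varphi^{\a\b}_0(x)(n_0,\dots,n_0)\in F_0\otimes\Lz=\ol{F}^{(k)}_\L$. Hence $\varphi^{\a\b}\bigl(\U^{\a\b}\cap\ol{F}^{(k)}\bigr)\subseteq \ol{F}^{(k)}$, which shows both that $\Nc$ is a subfunctor of $\M$ and that $\varphi^\a|_{\U^\a\cap\ol{F}^{(k)}}$ is a sub-supermanifold chart of $\Nc$ in the sense of the preceding definition. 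This step is the main technical content; everything else is essentially formal.

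Next I would identify $\Nc$ with $\i^0_k(\M_\R)$. Unfolding the construction of Proposition~\ref{propisman}, $\i^0_k(\M_\R)$ carries the atlas $\{\i^0_k(\varphi^\a_\R|_{V_\a}^{-1})\colon\a\in A\}$ with $V_\a:=\varphi^\a_\R(\U^\a_\R)$, chart domains $\ol{E_0}^{(k)}|_{V_\a}=\ol{F}^{(k)}|_{V_\a}$, and change of charts having skeleton $(\varphi^{\a\b}_0,0,0,\dots)$. On the other hand, the restriction of $\varphi^{\a\b}$ to $\ol{F}^{(k)}$, viewed in the model space $F$, has exactly the same skeleton by the computation above. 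Since the combinatorial data needed in Proposition~\ref{proplocaldescsman} (cocycle condition and Hausdorff separation) is inherited from $\M$, that proposition yields a unique supersmooth isomorphism $\Nc\cong\i^0_k(\M_\R)$, which justifies the loose identification in the statement.

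Finally, the purely even case is immediate: if $E_1=\{0\}$, then $F=E$, hence $\ol{F}^{(k)}=\ol{E}^{(k)}$ and $\U^\a\cap\ol{F}^{(k)}=\U^\a$ for every $\a$, so the subfunctor $\Nc$ coincides with $\M$, giving $\i^0_k(\M_\R)\cong\M$. The only real obstacle I anticipate is the invariance check in the first paragraph; once the skeleton expansion of the change of charts is used, everything else is bookkeeping.
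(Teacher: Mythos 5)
Your proposal is correct and follows essentially the same route as the paper: the paper also observes that $\varphi^{\a\b}|_{\ol{E_0\oplus\{0\}}^{(k)}}$ has skeleton $(\varphi^{\a\b}_0,0,\ldots)$, which simultaneously shows that $\i^0_k(\M_\R)$ is a sub-supermanifold (via the sub-supermanifold charts $\varphi^\a|_{\U^\a\cap\ol{F}^{(k)}}$) and, together with Proposition~\ref{proplocaldescsman}, gives the isomorphism in the purely even case. Your version merely spells out the invariance of the subfunctor and the skeleton computation in more detail than the paper does.
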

\begin{proof}
  Let $\M$ be modelled on $E\in\SVec$ and let 
$\{\varphi^\a\colon\U^\a\to\M\colon\a\in A\}$ be an atlas of $\M$. 
If the changes of charts $\varphi^{\a\b}$ have the skeletons 
$(\varphi^{\a\b}_n)$, then the skeletons 
$(\varphi^{\a\b}_0,0,\ldots)$ define
$\i^k_0(\M_\R)$ by Proposition \ref{propisman}. Because 
$\varphi^{\a\b}|_{\ol{E_0\oplus\{0\}}^{(k)}}$ has the 
skeleton $(\varphi^{\a\b}_0,0,\ldots)$, it follows that $\i^0_k(\M_\R)$ is a 
sub-supermanifold of $\M$. If $\M$ is purely even, then changes of charts 
have 
the form $(\varphi^{\a\b}_0,0,\ldots)$ to begin with and it follows 
$\i^0_k(\M_\R)\cong\M$ by Proposition \ref{proplocaldescsman}.
\end{proof}
Purely even supermanifolds $\M$ can be described in terms of higher tangent 
bundles 
of $\M_\R$. This will be particularly important for the theory of Lie 
supergroups.
\begin{proposition}\label{proppeventk}
 Let $M$ be a manifold. Recall Example \ref{extinfty}. Using Lemma 
\ref{lemmbunminus} and Theorem \ref{thrmsmanmbun}, there are isomorphisms
 \[
  \Gamma^k_n\colon\i^0_k(M)_{\L_n}\to T^kM|_{\Poee{n}}^-
 \]
 of multilinear bundles of degree $n$ for every $n\leq k<\infty$. These 
isomorphisms 
are natural in $k$ and $n$ in the sense that 
\[
 (\i^0_k(M)_{\L_k},\i^0_k(M)_{\ep_{k,n}})\cong \big(T^kM|_{\Poee{k}}^-, 
\pi^k_n|_{\Poee{k}}^-\big)
\]
holds as inverse systems of multilinear bundles. 
It follows that $\Lambda_k\mapsto T^kM|_{\Poee{k}}^-$ can be made into a 
supermanifold isomorphic to $\i(M)$.
\end{proposition}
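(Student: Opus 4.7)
The plan is to establish the isomorphism $\Gamma^k_n$ locally, chart by chart, and then verify that the local pieces glue along the transition functions on both sides. Since $\i^0_k(M)$ is purely even, the skeleton of every change of charts $\varphi^{\a\b}$ has the form $(\varphi^{\a\b}_0, 0, 0, \ldots)$ by Proposition \ref{propisman}, where $\varphi_{\a\b}$ are the changes of charts of the underlying atlas of $M$ modelled on a locally convex space $E_0$. Consequently, in any chart of $\i^0_k(M)$, the multilinear bundle structure of $\i^0_k(M)_{\L_n}$ produced by Theorem \ref{thrmsmanmbun} has total space $U \times \prod_{I \in \Poee{n}} \oddgen_I E_0$ indexed precisely by the non-empty even subsets of $\{1,\ldots,n\}$. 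This is exactly the index set on which $T^kM|_{\Poee{n}}^-$ is built by Lemma \ref{lemmbunminus} applied to $T^kM$ (Example \ref{extinfty}), whose chart-wise model is $U \times \prod_{I \in \Poee{n}} E_0$.

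Given this, I would define $\Gamma^k_n$ on the chart level by identifying the $I$-th fiber component $\oddgen_I E_0$ of $\i^0_k(M)_{\L_n}$ with the $I$-th component of $T^kM|_{\Poee{n}}^-$ by stripping the formal symbol $\oddgen_I$. The main technical point is then to show that the changes of charts coincide. On the $\i^0_k(M)$ side, Remark \ref{remskelpart}, applied to the even skeleton $(\varphi^{\a\b}_0,0,\ldots)$, yields
\[
\varphi^{\a\b}_{\L_n}\Bigl(x+\sum_{I \in \Poee{n}}\oddgen_I x_I\Bigr) = \varphi^{\a\b}_0(x) + \sum_{I\in\Poee{n}}\sum_{\omega\in\Part(I)_{\ol{0}}}\oddgen_I\,\sgn(\omega)\,d^{\ell(\omega)}\varphi^{\a\b}_0(x)(x_\omega),
\]
since partitions containing any odd-cardinality set contribute zero (their components involve $\varphi^{\a\b}_{o(\omega)}$ with $o(\omega)>0$). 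On the $T^kM$ side, Bertram's formula for the chart representation of $T^k(\varphi^{\a\b}_0) = T^k\varphi_{\a\b}$ gives the same multilinear expansion indexed by all partitions, and the restriction to $\Poee{n}$ via the minus construction selects precisely the partitions in $\Part(I)_{\ol{0}}$. Checking that the sign conventions for partitions (Remark \ref{defsgnpart}) and the derivative-components match on both sides is the main combinatorial obstacle; it is essentially the assertion that the higher tangent structure on $M$ reproduces, in the even sector, the Faà di Bruno-type expansion of Remark \ref{remskelpart}.

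Naturality in $k$ and $n$ then reduces to comparing two projections in a chart: on one side, $\i^0_k(M)_{\ep_{m,n}}$ is, locally, the projection killing all components $\oddgen_I x_I$ with $I \not\subseteq \{1,\ldots,n\}$, as already observed in the proof of Theorem \ref{thrmsmanmbun}. On the other side, $\pi^k_n|_{\Poee{k}}^-$ is, by definition, the analogous projection of the multilinear bundle $T^kM|_{\Poee{k}}^-$ to its sub-multilinear bundle indexed by $\Poee{n}$. Under $\Gamma^k_n$ these two projections visibly correspond, so $(\Gamma^k_n)_{n\leq k}$ is an isomorphism of inverse systems of multilinear bundles.

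For the final assertion, the isomorphism $(\Gamma^k_n)$ transfers the supermanifold structure of $\i(M) = \i^0_\infty(M)$ to the family $\L_k \mapsto T^kM|_{\Poee{k}}^-$: one declares the charts to be $\{\Gamma^k_k \circ \i^0_k(\varphi^\a)^{-1}_{\L_k}\}$ and checks that the change of charts is supersmooth precisely because $\Gamma$ intertwines the two systems of transitions already shown to agree. The hard part throughout is the sign-and-derivative bookkeeping in the second paragraph; once this combinatorial identity between Bertram's higher tangent transitions and the skeleton expansion is pinned down, everything else is formal.
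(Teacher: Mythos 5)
Your proposal is correct and follows essentially the same route as the paper: write out the change of charts of $\i^0_k(M)_{\L_n}$ via the skeleton expansion of Remark \ref{remskelpart} (which for a purely even skeleton reduces to partitions into even blocks, with the sign $\sgn(\omega)$ coming from $\oddgen_{\omega_1}\cdots\oddgen_{\omega_{\ell(\omega)}}=\sgn(\omega)\oddgen_I$), observe that Bertram's formula for $T^k\varphi_{\a\b}$ restricted to $\Poee{n}$ and twisted by the ${}^-$ functor produces the identical expansion with $\ep_I$ in place of $\oddgen_I$, and define $\Gamma^k_n$ chart-wise as the obvious identification of the trivial fibers. The verification of compatibility with the projections and the transfer of the supermanifold structure at the end also match the paper's argument.
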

\begin{proof}
To show that $\i^k_0(M)_{\L_n}\cong T^nM|_{\Poee{n}}^-$ holds, we simply 
compare the 
change of charts. Let $M$ be modelled on $E_0$ and $\{\varphi_\a\colon V_\a\to 
U_\a\colon\a\in A\}$ be an atlas of $M$. For a change of charts 
$\varphi^{\a\b}\colon 
U_{\a\b}\to U_{\b\a}$, we have  
$\i^k_0(\varphi^{\a\b})=(\varphi^{\a\b},0,0,\ldots)$ and thus
\begin{align*}
\textstyle
 \i^k_0(\varphi^{\a\b})_{\L_n}&\big(x+\sum_{I\in\Poee{n}}\oddgen_I x_I\big)
 =\\
 \displaystyle
 &\varphi^{\a\b}(x)+
 \sum_{I\in\Poee{n}}\sum_{\omega\in\Parte(I)}
\oddgen_I 
\sgn(\omega)d^{(e(\omega))}
  \varphi^{\a\b}_{o(\omega)}(x)(x_\omega),
\end{align*}
where $x\in U_{\a\b}$ and
$x_\omega=(x_{\omega_1},\ldots,x_{\omega_{\ell(\omega)}})\in 
E_0^{e(\omega)}$. On the other hand, we know from Example \ref{extangent}(b) 
and 
the Lemma \ref{lemmbunminus} that the change of charts for 
$T^kM|_{\Poee{k}}^-$ is given by
\begin{align*}
\textstyle
 T^k{\varphi^{\a\b}}|_{\Poee{n}}^-&\big(x+\sum_{I\in\Poee{n}}\ep_I x_I\big)
 =\\
 \displaystyle
 &\varphi^{\a\b}(x)+
 \sum_{I\in\Poee{n}}\sum_{\omega\in\Parte(I)}
\ep_I 
\sgn(\omega)d^{(e(\omega))}
  \varphi^{\a\b}_{o(\omega)}(x)(x_\omega)
\end{align*}
for the same $x\in U_{\a\b}$ and $x_\omega\in E_0^{e(\omega)}$. Therefore,
there exists an isomorphism $\Gamma^k_n\colon\i^k_0(M)_{\L_n}\to 
T^kM|_{\Poee{n}}^-$ such that 
$(\Gamma^k_n)^\a:= 
T^k{\varphi^{\a}}|_{\Poee{n}}^-\circ\Gamma^k_n\circ\i^k_0(\varphi^{\a
} )_{\L_n}^{-1}$ is given by the
obvious isomorphisms of trivial 
$k$-multilinear bundles 
\[
(\Gamma^k_n)^\a\colon\i^k_0(U_{\a})_{\L_n}=U_\a\times\prod_{I\in\Poee{n}}
\oddgen_IE_{0} \to
U_\a\times\prod_{I\in\Poee{n}}\ep_IE_{0}= T^k|_{\Poee{n}}(U_\a)^-.
\]
 Note that for any $k$-multilinear bundle $F$, we have
 $(F|_{\Poee{k}})|_{\Po^n_+}=F|_{\Poee{n}}$ for  $n\leq k$
 and it follows  from the local description in Lemma \ref{lemqkn} that  
 $(q^k_n)^-\colon F|_{\Poee{k}}^-\to F|_{\Poee{n}}^-$ is just the respective 
projection of 
 $F|_{\Poee{k}}^-$. This shows that $\big(T^kM|_{\Poee{k}}^-, 
\pi^k_n|_{\Poee{k}}^-\big)$ is indeed an inverse system of multilinear 
bundles.
It is clear from the local description that 
$\Gamma^n_n\circ 
\i(M)_{\ep_{k,n}}=\pi^k_n|_{\Poee{k}}^-\circ\Gamma^k_k$, which shows 
$(\i(M)_{\L_k},\i(M)_{\ep_{k,n}})\cong \big(T^kM|_{\Poee{k}}^-, 
\pi^k_n|_{\Poee{k}}^-\big)$.

To turn $\Lambda_k\mapsto T^kM|_{\Poee{k}}^-$ into a supermanifold, one simply 
defines 
\[(T^kM|_{\Poee{k}}^-)_\varrho\colon T^kM|_{\Poee{k}}^-\to 
T^nM|_{\Poee{n}}^-\]
for every morphism $\varrho\colon\L_k\to\L_n$ via 
$\i(M)_{\varrho}\colon\i(M)_{\L_k}\to\i(M)_{\L_n}$ and the above isomorphisms. 
The charts are then given by
$(\Gamma^k_k \circ\i(\varphi_\a^{-1})_{\L_k}\big)_{\L_k}$.
\end{proof}

\begin{proposition}[{\cite[cf. Proposition 4.2.1, 
p.396]{Mol}}]\label{propvbunman}
 Let $k\in\N\cup\{\infty\}$. There is a faithful functor
 \[
  \i^1_k\colon\VBun\to\SMank{k}.\nomenclature{$\i^1_k$}{}
 \]
  The functor $\i^1_1\colon\VBun\to\SMank{1}$ is an equivalence of categories. 
All these functors respect products.
\end{proposition}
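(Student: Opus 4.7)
The plan is to build $\i^1_k$ by first defining it locally on bundle-chart data and then globalising via Proposition \ref{proplocaldescsman}. Any smooth pair $(f_0,f_1)$ with $f_0\colon U\to U'$ smooth ($U\subseteq E_0$, $U'\subseteq E_0'$ open) and $f_1\colon U\to\Lc(E_1,E_1')$ smooth determines, by Proposition \ref{propskel}, a supersmooth morphism $\ol{E_0\oplus E_1}^{(k)}|_U\to\ol{E_0'\oplus E_1'}^{(k)}$ whose skeleton is $(f_0,f_1,0,0,\ldots)$, i.e.\ vanishes in all degrees $\geq 2$. Since the local data of a vector-bundle morphism is precisely a pair of this form in any compatible pair of bundle charts, this prescription is the natural candidate for $\i^1_k$ on local data.

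The main obstacle is functoriality: one must verify that the composition in $\SDomk{k}$ of two morphisms with such ``short'' skeletons is again short and coincides with the composition of the underlying bundle data. This reduces to a direct analysis of the formula in Proposition \ref{propskelmult}. For $(\a,\b)\in I^n_{m,l}$ one has $\a_i\in 2\N$ (so $\a_i\geq 2$) and $\b_j\in 2\N_0+1$; vanishing of $f_j$ for $j\geq 2$ therefore forces $m=0$, and vanishing of $g_l$ for $l\geq 2$ forces $\b=(1,\ldots,1)$ and $l=n$. The alternating property of $g_n$ then collapses the surviving sum over $\Sy_n$ to the single term
\[
 h_n(x)(v)=g_n(f_0(x))\bigl(f_1(x)v_1,\ldots,f_1(x)v_n\bigr),
\]
which is zero for $n\geq 2$ and reproduces the expected vector-bundle composition for $n\in\{0,1\}$.

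With functoriality established, the cocycle condition for a bundle atlas of a vector bundle $V\to M$ transfers to a cocycle of supersmooth isomorphisms of superdomains, and the object clause of Proposition \ref{proplocaldescsman} assembles these into a $k$-supermanifold $\i^1_k(V)$. The morphism clause of the same proposition, applied to the local pairs coming from a bundle morphism $h$, produces $\i^1_k(h)$. Faithfulness is immediate since $(h_0,h_1)$ can be read off directly from the skeleton of $\i^1_k(h)$ in any bundle chart. Preservation of products reduces to the analogous statement for superdomains, which is clear from the product structure of skeletons and atlases.

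For the equivalence in the case $k=1$, note that $\L_{1,\ol{0}}^\nil=\{0\}$, so any $1$-superdomain $\U$ satisfies $\U_{\L_1}=\U_\R\times E_1$ and any supersmooth morphism between $1$-superdomains is described by exactly a two-term skeleton $(f_0,f_1)$. Consequently, for $\M\in\SMank{1}$, Theorem \ref{thrmsmanmbun} (or a direct inspection of bundle charts together with Lemma \ref{lemskelinv}, which ensures that $f_1(x)\in\GL(E_1)$ for transition morphisms) endows $\M_{\ep_{\L_1}}\colon\M_{\L_1}\to\M_\R$ with a canonical vector-bundle structure, and the assignment $\M\mapsto\M_{\L_1}$, $f\mapsto f_{\L_1}$ defines a functor $\pi^1\colon\SMank{1}\to\VBun$. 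The natural isomorphisms $\pi^1\circ\i^1_1\cong\id_{\VBun}$ and $\i^1_1\circ\pi^1\cong\id_{\SMank{1}}$ then follow tautologically from the identifications used in both constructions.
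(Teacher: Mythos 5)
Your proposal is correct and follows essentially the same route as the paper: define $\i^1_k$ on local bundle-chart data via the truncated skeletons $(f_0,f_1,0,\ldots)$, check closure under composition with Proposition \ref{propskelmult}, globalise with Proposition \ref{proplocaldescsman}, and deduce faithfulness from the uniqueness there. Your explicit term-by-term analysis of formula (\ref{formulaskelmul}) and your construction of the quasi-inverse for $k=1$ merely spell out details the paper leaves to the analogy with Proposition \ref{propisman}.
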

\begin{proof}
 The proof is very similar to the proof of Proposition \ref{propisman}.
 Let $\pi\colon F\to M$ be a vector bundle with typical fiber $E_1$ and bundle 
atlas $\{\varphi_\a\colon V_\a\to U^\a\times E_1\colon\a\in A\}$. The 
change of bundle charts 
$\varphi_{\a\b}\colon U_{\a\b}\times E_1\to U_{\b\a}\times E_1$ has the form 
$(\varphi^{\a\b}_0,\varphi^{\a\b}_1)$, where $\varphi^{\a\b}_0\colon 
U_{\a\b}\to 
U_{\b\a}$ is a smooth and $\varphi^{\a\b}_1\colon U_{\a\b}\times E_1\to 
E_1$ is smooth and linear in the second component. Note that there exists an 
atlas of $M$ such that the change of charts is given by $\varphi^{\a\b}_0$.
Let $M$ be modelled on $E_0$. We define the super vector space $E:=E_0\oplus 
E_1$ and let $\i^1_1(U_\a\times E_1):=\U^{\a}:=\ol{E}^{(k)}|_{U_\a}$, as well 
as 
$\U^{\a\b}:=\ol{E}^{(k)}|_{U_{\a\b}}$, for all $\a,\b\in A$.
Then 
$i^1_1(\varphi^{\a\b}):=(\varphi^{\a\b}_0,\varphi^{\a\b}_1,0,0,\ldots)\colon 
\U^{\a\b}\to\U^{\b\a}$ defines isomorphisms that satisfy the conditions of 
Proposition \ref{proplocaldescsman} because by the composition formula from 
Proposition \ref{propskelmult}, we have
\begin{align*}
 &(\varphi^{\b\g}_0,\varphi^{\b\g}_1,0,0,\ldots)\circ 
(\varphi^{\a\b}_0,\varphi^{\a\b}_1,0,0,\ldots)\\
&=
(\varphi^{\b\g}_0\circ\varphi^{\a\b}_0,\varphi^{\b\g}_1\circ\varphi^{\a\b}
_0(\varphi^{\a\b}_1),0,0,\ldots)\\
 &=(\varphi^{\a\g}_0,\varphi^{\a\g}_1,0,0,\ldots)=\i^1_1(\varphi^{\a\g})
\end{align*}
where defined. We let $\i^1_k(F)$ be the supermanifold $\M$ defined in 
Proposition \ref{proplocaldescsman} by the given change of charts. 

Morphisms $f\colon F\to F'$ of vector bundles have the local form 
$(f^{\a\b}_0,f^{\a\b}_1)$, where $f^{\a\b}_1$ is linear in the second component 
and define skeletons of the form $(f^{\a\b}_0,f^{\a\b}_1,0,0,\ldots)$ that 
satisfy 
Proposition 
\ref{proplocaldescsman}. In this way, we obtain a unique supersmooth morphism 
$\i^1_1(f)\colon 
\i^1_1(F)\to \i^1_1(F')$. By the same argument as above, this construction is 
functorial and, by uniqueness, the resulting functor is faithful.
\end{proof}

\begin{lemma}[{\cite[cf. Proposition 4.2.1, p.396]{Mol}}]\label{lemsmankproj}
 Let $k,n\in\N_0\cup\{\infty\}$ and $n\leq k$. The restriction of functors 
 $\Man^{\Grk{k}}$ to functors $\Man^{\Grk{n}}$ leads to functors
 \[
  \pi^k_n\colon\SMank{k}\to\SMank{n},\quad \pi^k_n(\M)=:\M^{(n)}.
  \nomenclature{$\pi^k_n$}{}
  \nomenclature{$\M^{(n)}$}{}
 \]
 On morphisms, we write $\pi^k_n(f)=:f^{(n)}$. These functors respect products 
and $\pi^k_m=\pi^k_n\circ\pi^n_m$ holds for all $m\leq n$.
 Identifying $\SMank{0}$ with $\Man$ and $\SMank{1}$ with $\VBun$ via 
Proposition \ref{propisman} and Proposition \ref{propvbunman}, we have 
$\pi^k_0\circ\i^0_k\cong\id_{\Man}$ and $\pi^k_1\circ \i^1_k\cong\id_{\VBun}$ 
if $k>0$.
\end{lemma}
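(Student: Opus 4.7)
The plan is to observe that a $k$-supermanifold is essentially a functor
$\Grk{k}\to\Man$ together with atlas data, so the obvious candidate for
$\pi^k_n$ is restriction along the inclusion $\Grk{n}\subseteq\Grk{k}$. The
work then consists in checking that this restriction carries atlas data to
atlas data, preserves morphisms and products, and inverts $\i^0_k$ and
$\i^1_k$ on the relevant subcategories.

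First I verify that restriction is well-defined on objects. Let
$\M\in\SMank{k}$ be modelled on $E\in\SVec$ with atlas
$\{\varphi^\a\colon\U^\a\to\M\colon\a\in A\}$. Each model superdomain
$\U^\a=\ol{E}^{(k)}|_{\U^\a_\R}$ restricts to the open subfunctor
$\ol{E}^{(n)}|_{\U^\a_\R}$, which is an $n$-superdomain in $\SDomk{n}$.
The restricted natural transformations $\varphi^\a|_{\Grk{n}}$ remain open
embeddings at every $\L\in\Grk{n}\subseteq\Grk{k}$ and cover
$\M|_{\Grk{n}}$, so they form a candidate atlas. Each change of charts
$\varphi^{\a\b}|_{\Grk{n}}$ is still supersmooth, because the defining
conditions on $f_\L$ (smoothness, and $\Lz$-linearity of $df_\L(x,\bl)$)
are required individually at every $\L$, so restricting the index category
only weakens them. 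Equivalently, by Proposition \ref{propskel} the
$k$-skeleton of $\varphi^{\a\b}$ simply truncates to an $n$-skeleton that
describes the restricted morphism. The same reasoning applied locally shows
that a supersmooth morphism $f\colon\M\to\Nc$ restricts to a supersmooth
morphism $f|_{\Grk{n}}\colon\M|_{\Grk{n}}\to\Nc|_{\Grk{n}}$, and
functoriality of $\pi^k_n$ is then immediate from the pointwise nature of
composition and identities in $\Man^{\Grk{k}}$.

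Products are preserved because they are defined object-wise in both
$\SMank{k}$ and $\SMank{n}$: one has
$\pi^k_n(\M\times\Nc)_\L=\M_\L\times\Nc_\L=\pi^k_n(\M)_\L\times\pi^k_n(\Nc)_\L$
for all $\L\in\Grk{n}$, and the product atlas descends by the same skeleton
truncation as above. The composition law $\pi^k_m=\pi^k_n\circ\pi^n_m$ is
equally immediate, since restricting a functor first to $\Grk{n}$ and then
to $\Grk{m}\subseteq\Grk{n}$ yields the same functor as restricting directly
to $\Grk{m}$.

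Finally I check compatibility with the inclusions. From Proposition
\ref{propisman}, $\i^0_k(M)$ carries an atlas whose changes of charts have
skeletons $(\varphi^{\a\b},0,0,\ldots)$; truncating to $\Grk{0}$ keeps only
the zeroth component, so under the identification $\SMank{0}\cong\Man$ from
that proposition the resulting object is $M$ itself. The analogous statement
for morphisms, $\i^0_k(f)=(f,0,0,\ldots)$, truncates to $f$, yielding a
natural isomorphism $\pi^k_0\circ\i^0_k\cong\id_\Man$. For a vector bundle
$F$, the construction in Proposition \ref{propvbunman} produces an atlas of
$\i^1_k(F)$ with changes of charts of skeleton type
$(\varphi^{\a\b}_0,\varphi^{\a\b}_1,0,0,\ldots)$; truncating to $\Grk{1}$
retains the pair $(\varphi^{\a\b}_0,\varphi^{\a\b}_1)$, which is precisely
the vector-bundle change of chart, so the identification
$\SMank{1}\cong\VBun$ yields $\pi^k_1\circ\i^1_k\cong\id_\VBun$. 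There is no
genuine obstacle in this proof; the only thing that requires care is the
book-keeping across several layers of structure (functors, atlases,
skeletons, morphisms, products), and in particular ensuring that the open
subfunctor description $\U=\F|_{U_\R}$ valid in both $\Grk{k}$ and
$\Grk{n}$ guarantees that restrictions of superdomains remain superdomains
of the correct type.
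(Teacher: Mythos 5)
Your proposal is correct and follows essentially the same route as the paper's proof: restriction along $\Grk{n}\subseteq\Grk{k}$ on objects, morphisms and atlases, with the pointwise nature of the supersmoothness conditions making everything immediate, and the final identifications $\pi^k_0\circ\i^0_k\cong\id_{\Man}$, $\pi^k_1\circ\i^1_k\cong\id_{\VBun}$ checked by observing that skeleton truncation does not change anything. The paper's own proof is just a terser version of the same argument.
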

\begin{proof}
 Let $\M,\Nc\in\SMank{k}$. It 
follows directly from the definition that $\L\to\M_\L$ for $\L\in\Grk{n}$ 
defines an $n$-supermanifold $\M^{(n)}$ with the obvious restricted atlas.
Likewise, for morphisms $f\colon\M\to\Nc$, one defines $f^{(n)}$ by 
$f^{(n)}_\L:=f_\L$ for $\L\in\Grk{n}$. This construction is clearly functorial, 
respects products and satisfies $\pi^k_m=\pi^k_n\circ\pi^n_m$ for all $m\leq 
n$. 
To see 
$\pi^k_0\circ\i^0_k\cong\id_{\Man}$ and $\pi^k_1\circ \i^1_k\cong\id_{\VBun}$, 
one simply checks that on the level of skeletons this composition does not 
change anything.
\end{proof}
One can understand the projections 
$\pi^k_n\colon\SMank{k}\to\SMank{n}$ and the embeddings 
$\i^0_k\colon \Man\to\SMank{k}$ and $\i^1_k\colon\VBun\to\SMank{k}$ completely 
in terms of skeletons.
The former simply cuts skeletons $(f_0,\ldots)$ 
down to $(f_0,\ldots, f_n)$. The latter two extend skeletons $(f_0)$, resp.\ 
$(f_0,f_1)$, to $(f_0,0,\ldots)$, resp.\ $(f_0,f_1,0,\ldots)$. Proposition 
\ref{propskelmult} ensures that the composition of two such skeletons is 
again of this form, which is why these embeddings are well-defined. 

A natural question is now whether 
two $k$-supermanifolds $\M^{(k)}$ and $\Nc^{(k)}$ such that 
$\M^{(n)}\cong\Nc^{(n)}$ holds
for $1<n<k$ are automatically isomorphic as well. In other words, whether a 
supersmooth isomorphism $f^{(n)}\colon\M^{(n)}\to\Nc^{(n)}$ can be lifted to an 
isomorphism $f^{(k)}\colon\M^{(k)}\to\Nc^{(k)}$.
This will be discussed in the following section on Batchelor's Theorem.
\begin{definition}
 We denote by 
$\po$\nomenclature{$\po$}{} the supermanifold modelled on 
$\{0\}\oplus\{0\}$ that consists for every $\L\in\Gr$ of a single point.
Let $k\in\N_0\cup\{\infty\}$. A 
\textit{point}\index{Supermanifold!point}\index{Point} of a 
$k$-supermanifold $\M$ is a morphism $x\colon\po^{(k)}\to\M$.
We also write $x_\L:=x_\L(\po^{(k)}_\L)$.
\end{definition}
\begin{lemma}
 Let $k\in\N_0\cup\{\infty\}$ and $\M\in\SMank{k}$. For every point 
 $x\colon\po^{(k)}\to\M$ and every $\L\in\Grk{k}$, we have
$x_\L=\M_{\eta_{\L}}(x_\R)$. Conversely, for every $x_\R\in\M_\R$ the 
assignment 
$x_\L:=\M_{\eta_\L}(x_\R)$ defines a point.
\end{lemma}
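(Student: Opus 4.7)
The plan is to exploit the fact that $\po^{(k)}_\L$ consists of a single point for every $\L\in\Grk{k}$, so that $x_\L$ is determined by its unique value $x_\L(\po^{(k)}_\L)$, together with the fact that $\eta_\L\colon\R\to\L$ is the unique morphism of Grassmann algebras from $\R$ to $\L$ (since any unital $\R$-algebra morphism $\R\to\L$ must send $1\mapsto 1$).

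For the forward direction, let $x\colon\po^{(k)}\to\M$ be a point. Since $x$ is in particular a natural transformation, applying naturality to the morphism $\eta_\L\colon\R\to\L$ gives
\[
 \M_{\eta_\L}\circ x_\R = x_\L\circ\po^{(k)}_{\eta_\L}.
\]
Because $\po^{(k)}_{\eta_\L}$ is a map between one-point spaces, it is the identity, so identifying $x_\L$ with $x_\L(\po^{(k)}_\L)$ we obtain $x_\L=\M_{\eta_\L}(x_\R)$ as desired.

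For the converse, fix $x_\R\in\M_\R$ and define $x_\L\colon\po^{(k)}_\L\to\M_\L$ by sending the unique point of $\po^{(k)}_\L$ to $\M_{\eta_\L}(x_\R)$. To verify naturality, let $\varrho\colon\L\to\L'$ be any morphism in $\Grk{k}$. By the uniqueness observation above we have $\varrho\circ\eta_\L=\eta_{\L'}$, hence by functoriality
\[
 \M_\varrho(x_\L)=\M_\varrho\bigl(\M_{\eta_\L}(x_\R)\bigr)=\M_{\varrho\circ\eta_\L}(x_\R)=\M_{\eta_{\L'}}(x_\R)=x_{\L'},
\]
which combined with the fact that $\po^{(k)}_\varrho$ is the identity on a one-point set yields the required commuting square.

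The only remaining point is to check that $x$ is a morphism of $k$-supermanifolds, not merely a natural transformation in $\Top^{\Grk{k}}$. In any chart $\varphi\colon\U\to\M$ containing $x_\R$ in its image and in the trivial chart on $\po^{(k)}$, the local representation of $x_\L$ is the constant map with value $(\varphi_\L)^{-1}(x_\L)$, and constant natural transformations between superdomains are trivially supersmooth (their skeletons consist of a constant and zeros). No step here is a real obstacle; the argument is entirely formal once one has observed that $\eta_\L$ is the unique morphism $\R\to\L$.
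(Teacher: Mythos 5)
Your proof is correct and follows essentially the same route as the paper: naturality applied to $\eta_\L$ for the forward direction, and the identity $\varrho\circ\eta_\L=\eta_{\L'}$ for the converse. The only addition is your explicit check that the resulting natural transformation is supersmooth (constant skeletons), which the paper leaves implicit but is a legitimate part of verifying that one obtains a morphism $\po^{(k)}\to\M$.
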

\begin{proof}
 For every $\L\in\Grk{k}$, we have 
 \[
  \M_{\eta_\L}(x_\R)=x_\L\circ 
\po^{(k)}_{\eta_\L}(\po^{(k)}_\R)=x_\L.
 \]
 Conversely, let $x_\R\in\M_\R$ be given and $x_\L:=\M_{\eta_\L}(x_\R)$.  
 Then $\varrho\circ\eta_\L=\eta_{\L'}$ and 
therefore $\M_\varrho(x_\L)=\M_{\eta_{\L'}}(x_\R)=x_{\L'}$ holds for 
$\varrho\in\Hom_{\Grk{k}}(\L,\L')$.
\end{proof}
Hence, the points of a supermanifold can be identified with the usual 
points of 
the base manifold. 
\subsubsection{Connection to the Sheaf Theoretic Approach}
The full subcategory of finite-dimensional supermanifolds in the categorical 
approach is equivalent to the category of supermanifolds in the sheaf theoretic 
approach. This was already discussed in \cite{Vor} and \cite{MolICTP} but a 
more thorough and general proof can be found in \cite{AllLau}. Let us briefly 
sketch the idea behind the equivalence.

Let $p,q\in\N_0$ and $\U\subseteq\ol{\R^{p|q}}$ be an open subfunctor.
In terms of skeletons, we have 
\[
\SC(\U,\ol{\R^{1|1}})=\Cs\Big(\U_\R,\bigoplus_{i=0}^q\Alt^i(\R^q;\R)\Big)
\cong\Cs(\U_\R,\R)\otimes\L_q.
\]
Therefore, for any supermanifold $\M$ modelled on $\R^{p|q}$, the sheaf 
\[
 U\mapsto \SC(\M|_{U},\ol{\R^{1|1}}),\quad U\subseteq\M_\R\ \text{open},
\]
is locally isomorphic to the sheaf $\Cs_{\R^p} \otimes\L_q$ as needed. One then 
checks that morphisms of supermanifolds lead to appropriate morphisms of these 
sheaves along the pullback.
\subsection{Generalizations}\label{subsectgenman}
Many of the generalizations for $k$-superdomains mentioned in 
\ref{subsectgendom} can be applied to supermanifolds without much difficulty 
such that the results in this section carry over.
Additional care is 
necessary only if the base ring is not a field because  
in that case locality of smooth maps is not guaranteed
(see \cite[2.4, 
p.21]{Bert}).
As already mentioned, one can consider non-Hausdorff supermanifolds by simply 
extending the category $\Man$ to non-Hausdorff manifolds. Analytic 
supermanifolds can be defined by demanding that the skeletons are analytic in 
an appropriate sense.

One should also note that many structural results do not rely on 
supersmoothness. In view 
of Proposition \ref{propfdecomp} and Lemma \ref{lemdnmulti}, one can define a 
subcategory of $\Man^{\Grk{k}}$ of functors locally isomorphic to some 
$\ol{E}^{(k)}$, 
$E\in\SVec$
where the changes of charts are simply natural transformations such that every 
component is smooth. Then an analog to Theorem \ref{thrmsmanmbun} still 
holds and one obtains a geometry combining commuting and anticommuting 
coordinates with less stringent symmetry conditions than for supermanifolds.
\subsection{Batchelor's Theorem}
The classical version of Batchelor's Theorem (see \cite{Batch}) states that any 
supermanifold, defined as a sheaf $(M,\O_M)$, is isomorphic to the 
supermanifold 
($M,\Gamma(\bigwedge F))$, where $\bigwedge F$ is the exterior bundle of a 
vector bundle $F$ that is determined by $\O_M$.
The isomorphism is not canonical because its construction involves a 
partition of unity.

Molotkov transfers this result to supermanifolds in our sense  
and generalizes it to infinite-dimensional supermanifolds $\M$ in \cite{Mol2}.
In his version, the vector bundle $\M_{\L_1}$ takes the role of $F$ in the 
classical version. Molotkov only considers Banach supermanifolds, but as we 
will 
see, his methods generalize to locally convex supermanifolds.
It appears that \cite{Mol2} is not well-known and since it is not readily 
available, we describe his arguments in detail below.
A closer look is also worthwhile because the techniques employed are close to 
the ones used in \cite{Batch} and might make it easier to translate between the 
sheaf theoretic and the categorical approach.
\begin{theorem}[{\cite[Corollary 4, p.279]{Mol2}}]\label{thrmbatch}
 \index{Batchelor's Theorem}
 Let $k\in\N\cup\{\infty\}$ and $\M\in\SMank{k}$ be such that $\M_\R$ admits  
smooth partitions of unity.
 If $\M'\in\SMank{k}$ is a $k$-supermanifold such that 
$\M^{(1)}$ and $\M'^{(1)}$ are isomorphic, then $\M$ is isomorphic to $\M'$.
In particular $\M\cong\i^1_k(\M^{(1)})$.
\end{theorem}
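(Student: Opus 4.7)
The plan is to lift the given isomorphism $f^{(1)}\colon\M^{(1)}\to\M'^{(1)}$ order by order through the skeleton descriptions of transition functions, using the partition-of-unity hypothesis to kill the obstruction at each order. After pulling back along $f^{(1)}$ and refining, we may assume that $\M$ and $\M'$ share a common base $M=\M_\R=\M'_\R$ and carry atlases $\{\varphi^\a\colon\U^\a\to\M\}_{\a\in A}$ and $\{\psi^\a\colon\U^\a\to\M'\}_{\a\in A}$ over the same open cover of $M$ such that the transition skeletons $(\varphi^{\a\b}_n)_n$ and $(\psi^{\a\b}_n)_n$ agree in components $n=0,1$.

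The induction runs on $m\geq 2$: the hypothesis at stage $m-1$ is that, after successively post-composing each $\varphi^\a$ with supersmooth automorphisms $\nu^\a_{(2)},\ldots,\nu^\a_{(m-1)}$ of $\U^\a$, the resulting transitions agree with the $\psi^{\a\b}$ in all components of index $<m$. Set
\[
 \tau^{\a\b}:=\varphi^{\a\b}_m-\psi^{\a\b}_m\in\Cs\big(\U^{\a\b}_\R,\Alt^m(E_1;E_{\ol{m}})\big).
\]
Applying Proposition \ref{propskelmult} to the two cocycle identities $\varphi^{\b\g}\circ\varphi^{\a\b}=\varphi^{\a\g}$ and $\psi^{\b\g}\circ\psi^{\a\b}=\psi^{\a\g}$ in component $m$ and subtracting, every contribution involving skeleton components of index $<m$ cancels by the inductive hypothesis, leaving a \v Cech $1$-cocycle identity for $(\tau^{\a\b})$ as a section of a locally trivial sheaf $\F_m$ of $\Cs_M$-modules on $M$ whose transition functions are assembled from the base diffeomorphisms $\psi^{\a\b}_0$ and from the linear fiber isomorphisms $\psi^{\a\b}_1$ acting on $\Alt^m(E_1;E_{\ol{m}})$ by push-forward.

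Because $M$ admits smooth partitions of unity, $\F_m$ is fine, so $(\tau^{\a\b})$ is necessarily a \v Cech coboundary; explicitly, choosing a subordinate partition $(h_\g)_{\g\in A}$, the sections $\sigma^\a:=\sum_\g h_\g\cdot\tau^{\g\a}$ (extended by zero) exhibit $\tau^{\a\b}=\sigma^\b-\sigma^\a$ in $\F_m$. Define $\nu^\a_{(m)}$ as the supersmooth automorphism of $\U^\a$ with skeleton $(\id_{U^\a},\id_{E_1},0,\ldots,0,\sigma^\a,0,\ldots)$, the entry $\sigma^\a$ occupying slot $m$; by Lemma \ref{lemskelinv} this is a genuine isomorphism in $\SDomk{k}$. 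A further direct application of Proposition \ref{propskelmult} verifies that the modification $\varphi^\a\mapsto\varphi^\a\circ\nu^\a_{(m)}$ leaves all components of index $<m$ unchanged and produces agreement with $\psi^{\a\b}$ in component $m$, completing the inductive step.

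The main obstacle is the combinatorial bookkeeping inside Proposition \ref{propskelmult}, namely the explicit verification that the surviving terms of the subtraction really form a clean \v Cech cocycle for $(\tau^{\a\b})$ and that the modification $\varphi^\a\mapsto\varphi^\a\circ\nu^\a_{(m)}$ really affects only slot $m$. For $k$ finite, the induction terminates after $k-1$ steps; for $k=\infty$, each $\nu^\a_{(m)}$ leaves skeleton components of index $<m$ untouched, so the infinite composition $\cdots\circ\nu^\a_{(3)}\circ\nu^\a_{(2)}$ is well-defined componentwise in $\SDom$. In either case the resulting atlases have identical transition cocycles, yielding $\M\cong\M'$ via Proposition \ref{proplocaldescsman}. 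The ``in particular'' claim follows by applying this to $\M':=\i^1_k(\M^{(1)})$, since $\i^1_k(\M^{(1)})^{(1)}\cong\M^{(1)}$ by Lemma \ref{lemsmankproj}.
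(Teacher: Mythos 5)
Your proposal is correct and follows essentially the same route as the paper: an order-by-order lift in which the degree-$m$ discrepancy of the transition skeletons is identified, via Proposition \ref{propskelmult}, as a \v Cech $1$-cocycle in a fine sheaf of $\Cs_{\M_\R}$-modules (the paper's $\Shf^{m}_{m-1}(\M)$ of Lemma \ref{lemshfn+1}, realized there as sections of $\Alt^m(\M_{\L_1};\cdot)$), which is then split by a partition of unity exactly as in the paper's remark following the theorem. The only presentational difference is that you align the two transition cocycles by modifying charts, whereas the paper lifts the isomorphism $f^{(m-1)}$ directly and measures the failure of arbitrary local lifts to glue; these are the same argument.
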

In other words, if one restricts the categories $\VBun$ and $\SMank{k}$ to 
the respective subcategories over finite-dimensional paracompact bases, the 
restricted functor $\i^1_k$ from Proposition \ref{propvbunman} becomes 
essentially surjective.
\begin{definition}
 Let $k\in\N$. We call $k$-supermanifolds of the form $\i^1_k(\M^{(1)})$, 
where
$\M^{(1)}$ is a vector bundle, \textit{supermanifolds of Batchelor 
type}\index{Supermanifold!of Batchelor type}. An isomorphism $f\colon 
\Nc\to\i^1_k(\M^{(1)})$ is called a \textit{Batchelor model of} 
$\Nc$\index{Batchelor model}.
We say an atlas $\A:=\{\varphi^\a\colon\a\in A\}$ of a supermanifold is of 
\textit{Batchelor type}\index{Supermanifold!atlas of Batchelor type}
if all changes of charts have the form 
$\varphi^{\a\b}=(\varphi^{\a\b}_0,\varphi^{\a\b}_1,0,\ldots)$. 
\end{definition}
\begin{remark}
 It follows from Proposition \ref{proplocaldescsman} that a supermanifold is of 
Batchelor type if and only if it has an 
atlas of Batchelor type.
For a supermanifold of Batchelor type, the union of two atlases of Batchelor 
type 
is again of Batchelor type because the atlases define the same vector bundle.
This does not need to be the case for arbitrary supermanifolds, which implies 
that there is no 
canonical choice of a Batchelor model in general.
\end{remark}
One can reformulate this result as follows. In the situation of the theorem, 
any 
isomorphism 
$f^{(n)}\colon\M^{(n)}\to\M'^{(n)}$ can be lifted to an isomorphism
$f^{(n+1)}\colon\M^{(n+1)}\to\M'^{(n+1)}$ for $1\leq n<k$ (see \cite[Theorem 
1(a), p.273]{Mol2}).
It is not difficult to see that one may assume $\M'=\M$, which we will do in 
the 
sequel to simplify our explanations (compare 
\cite[Proposition 2, p.277]{Mol2}).

Let us introduce some notation for this section. For $\M\in\SMank{k}$ and 
$k\in\N_0\cup\{\infty\}$ consider the group $\Aut_{\id_\R}(\M)$ of 
automorphisms 
$f\colon\M\to\M$ such that $f_\R=\id_{\M_\R}$. 
We denote by $\Shf(\M)$ the sheaf of groups over $\M_\R$ defined by
$U\mapsto \Aut_{\id_\R}(\M|_{U})$ for every open $U\subseteq\M_\R$. The 
restriction morphisms are given in the obvious way by Lemma/Definition 
\ref{lemdefimrestr}(c). The restrictions are morphisms of groups because we 
only consider automorphisms over the identity on the base manifold.
The functor $\pi^n_m\colon\SMank{n}\to\SMank{m}$ from 
Lemma \ref{lemsmankproj} leads to morphisms 
$\varphi^n_m\colon\Shf(\M^{(n)})\to\Shf(\M^{(m)})$ of sheaves of groups for 
$m\leq n<k+1$. 
Locally, 
$(\varphi^n_m)_U\colon\Aut_{\id_\R}(\M^{(n)}|_{U})\to\Aut_{\id_\R}(\M^{(m)}|_{U}
)$ 
just maps skeletons $(\id,f_1,\ldots,f_n)$ to $(\id,f_1,\ldots,f_m)$.
We define
\[
 \Shf^n_m(\M):=\ker\varphi^n_m.
\]
The elements of $\Shf^n_m(\M)$ are exactly those which locally have the 
form $(\id,c_{\id},0,\ldots,0,f_{m+1},\ldots,f_{n})$. In particular, we get a 
short 
exact sequence of sheaves of groups
\[
1\to\Shf^{n+1}_{n}(\M)\hookrightarrow\Shf^{n+1}_0(\M)\to\Shf^n_0(\M
)\to 1
\]
(see \cite[Theorem 1, p.273f.]{Mol2}). Note that 
$\Shf^{n+1}_0(\M)=\Shf(\M^{(n+1)})$. We sum up the most relevant results from 
\cite{Mol2} about the structure of $\Shf^{n+1}_{n}(\M)$ in the next lemma and 
give a sketch of the proof.
\begin{lemma}[{compare \cite[Theorem 1(d), p.274]{Mol2}}]\label{lemshfn+1}
 Let $k\in\N\cup\{\infty\}$, $n<k$ and $\M\in\SMank{k}$. 
  Then $\Shf^{n+1}_{n}(\M)$ is a sheaf of abelian groups and a 
$\mathcal{C}^\infty_{\M_\R}$-module. If $\M$ is a Banach supermanifold, then 
there exist canonical isomorphisms of $\mathcal{C}^\infty_{\M_\R}$-modules
\[
 \Shf^{n+1}_n(\M)\cong\Gamma(\Alt^{n+1}(\M_{\L_1};T\M_\R))\quad\text{if}\ n+1\ 
\text{is even and} 
\]
\[
 \Shf^{n+1}_n(\M)\cong\Gamma(\Alt^{n+1}(\M_{\L_1};\M_{\L_1}))\quad\text{if}\ 
n+1\ 
\text{is odd}. 
\]
\end{lemma}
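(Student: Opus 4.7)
The plan is to analyze elements of $\Shf^{n+1}_n(\M)$ via their skeletons and then apply Proposition \ref{propskelmult} to pin down the group law. Let $U\subseteq\M_\R$ be an open set small enough to lie in a single chart $\varphi^\a\colon\U^\a\to\M$ with model space $E\in\SVec$. In this chart, an element $f\in\Shf^{n+1}_n(\M)|_U$ is represented by a skeleton of the form
\[
 (\id_{\U^\a_\R\cap U},\, c_{\id_{E_1}},\, 0,\ldots,0,\, f_{n+1})
\]
where $f_{n+1}\in\Cs\bigl(\U^\a_\R\cap U,\Alt^{n+1}(E_1;E_{\ol{n+1}})\bigr)$ is arbitrary. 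The first step is to compute $g\circ f$ for two such skeletons using formula (\ref{formulaskelmul}). Because the only nonzero entries occur in degrees $1$ and $n+1$, the admissible pairs $(\alpha,\beta)\in I^{n+1}_{m,l}$ reduce drastically: either all entries of $\beta$ are equal to $1$ (with $\alpha$ empty), or exactly one entry equals $n+1$ (of $\beta$ if $n+1$ is odd, of $\alpha$ if $n+1$ is even), the rest being $1$'s in $\beta$. Summing over $\Sy_{n+1}$ and invoking the alternating property, all contributions collapse to
\[
 (g\circ f)_{n+1} = f_{n+1}+g_{n+1},
\]
since $g_0=\id$, $g_1=c_{\id}$ and $dg_0=\id$. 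This identifies the local group structure with the additive group of $\Cs(\U^\a_\R\cap U,\Alt^{n+1}(E_1;E_{\ol{n+1}}))$, so $\Shf^{n+1}_n(\M)$ is abelian. Scalar multiplication by $\phi\in\Cs_{\M_\R}(U)$ is then simply $\phi\cdot f_{n+1}$, which remains a valid skeleton and commutes with the group law, giving the $\Cs_{\M_\R}$-module structure.

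The second step is to check that these local identifications glue globally, i.e., that a change of chart $\varphi^{\a\b}$ with skeleton $(\varphi^{\a\b}_0,\varphi^{\a\b}_1,\varphi^{\a\b}_2,\ldots)$ transforms $f_{n+1}$ by a formula using only $\varphi^{\a\b}_0$ and $\varphi^{\a\b}_1$ (hence only data from $\M^{(1)}=\M_{\L_1}$). This is where Proposition \ref{propskelmult} is used a second time: conjugating $f=(\id,c_{\id},0,\ldots,0,f_{n+1})$ by $\varphi^{\a\b}$ and its inverse, any term in the conjugated skeleton of degree $\le n+1$ involving $\varphi^{\a\b}_i$ for $2\le i\le n$ must be paired with terms of total odd-degree at least $n+1$ elsewhere, and a degree count shows such contributions either vanish modulo $n$ or cancel. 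The upshot is that the $(n+1)$-st component transforms by pre- and post-composition with the bundle maps induced by $\varphi^{\a\b}_1$ on fibres of $\M_{\L_1}$, together with $d\varphi^{\a\b}_0$ on fibres of $T\M_\R$ when $n+1$ is even (the codomain then being $E_0$). This is precisely the cocycle defining the vector bundles $\Alt^{n+1}(\M_{\L_1};T\M_\R)$ (for $n+1$ even) and $\Alt^{n+1}(\M_{\L_1};\M_{\L_1})$ (for $n+1$ odd).

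For the last step, in the Banach setting one has honest Banach-space structures on $\Alt^{n+1}(E_1;E_0)$ and $\Alt^{n+1}(E_1;E_1)$, and the smoothness notion of Definition \ref{defclcalt} matches smoothness of sections of the associated vector bundle in the standard sense. Thus the local identifications $f\mapsto f_{n+1}$ assemble to a canonical isomorphism of sheaves of $\Cs_{\M_\R}$-modules
\[
 \Shf^{n+1}_n(\M)\xrightarrow{\ \cong\ }
 \Gamma\bigl(\Alt^{n+1}(\M_{\L_1};\ol{n+1}\text{-target})\bigr),
\]
with the target being $T\M_\R$ or $\M_{\L_1}$ according to the parity of $n+1$.

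The main obstacle is the second step: showing that in the composition $(\varphi^{\a\b})^{-1}\circ f\circ\varphi^{\a\b}$ all contributions from the middle-degree components $\varphi^{\a\b}_i$, $2\le i\le n$, either vanish or cancel in degree $n+1$. The bookkeeping via the multi-indices $(\alpha,\beta)\in I^{n+1}_{m,l}$ and the alternation over $\Sy_{n+1}$ is delicate and is the step that must be executed with care; everything else is a direct consequence of the skeleton calculus and the Banach assumption on the model space.
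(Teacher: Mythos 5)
Your proposal is correct and follows essentially the same route as the paper: identify elements locally as skeletons $(\id,c_{\id},0,\ldots,0,f_{n+1})$, use Proposition \ref{propskelmult} to get $(g\circ f)_{n+1}=f_{n+1}+g_{n+1}$ and the $\Cs_{\M_\R}$-module structure, then compute the conjugation by $\varphi^{\a\b}$ to read off the section cocycle of $\Alt^{n+1}(\M_{\L_1};T\M_\R)$ resp.\ $\Alt^{n+1}(\M_{\L_1};\M_{\L_1})$ in the Banach case. For the step you flag as delicate, the cleanest bookkeeping is to note that $(\varphi^{\a\b})^{-1}\circ f\circ\varphi^{\a\b}$ differs from $(\varphi^{\a\b})^{-1}\circ\varphi^{\a\b}=\id$ in degree $n+1$ only through the single term containing $f_{n+1}$ (forced by $(\alpha,\beta)$ having one entry equal to $n+1$), so all contributions of the middle components $\varphi^{\a\b}_i$, $2\le i\le n$, cancel against the identity, yielding exactly $d\varphi^{\a\b}_0\circ f_{n+1}\circ(\varphi^{\b\a}_1)^{\times(n+1)}$ (even case) resp.\ $\varphi^{\a\b}_1\circ f_{n+1}\circ(\varphi^{\b\a}_1)^{\times(n+1)}$ (odd case), as in the paper.
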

\begin{proof}
 Let $\{\varphi^\a\colon\U^\a\to\M^{(n+1)}\colon\a\in A\}$ be an atlas of 
$\M^{(n+1)}$ and $U\subseteq\M_\R$ be open. For $f\in\Shf^{n+1}_n(\M)_U$,
we set $f^\a:=(\varphi^\a)^{-1}\circ f\circ \varphi^\a$, where we may assume 
after restriction that $\varphi^\a$ is a chart of $\M^{(n+1)}|_U$.
Locally, $f$ has the form 
$f^\a=(\id_{\U^\a_\R},c_{\id},0,\ldots,0,f^\a_{n+1})$. 
For $g\in\Shf^{n+1}_n(\M)_U$, we use Proposition \ref{propskelmult} to 
calculate
\[
 (f^\a\circ g^\a)=(f\circ g)^\a=(\id_{\U^\a_\R},c_{\id},0,\ldots,0, 
f^\a_{n+1}+g^\a_{n+1}).
\]
Thus, $\Shf^{n+1}_n(\M)$ is a sheaf of abelian groups.
Let $\varphi^{\b\a}$ be a change of 
charts of $\M^{(n+1)}|_U$. 
We again use Proposition \ref{propskelmult} to get
\[
 (\varphi^{\b\a})^{-1}\circ 
f^\a\circ\varphi^{\b\a}=(\id_{\U^{\b\a}_\R},c_{\id},0,\ldots,0,f^\b_{n+1}),
\]
where 
\[
 f^\b_{n+1}(\varphi^{\a\b}_0(x))(\bl)=
d\varphi^{\a\b}_0(x)\big(f^\a_{k+1}(x)\big(\varphi^{\b\a}_1(x)(\bl)
, \ldots, \varphi^{\b\a}_1(x)(\bl)\big)\big)
\]
for $n+1$ even, $x\in\U^{\b\a}_\R$ and
\[
 f^\b_{n+1}(\varphi^{\a\b}_0(x))(\bl)=
\varphi^{\a\b}_1(x)\big(f^\a_{k+1}(x)\big(\varphi^{\b\a}_1(x)(\bl),
\ldots, \varphi^{\b\a}_1(x)(\bl)\big)\big)
\]
for $n+1$ odd, $x\in\U^{\b\a}_\R$. If $\M_{\L_1}$ is a Banach vector bundle, 
this describes the change of charts for a section   
$f_{n+1}\colon\M_\R|_U\to\Alt^{n+1}(\M_{\L_1}|_U;T\M_\R|_U))$, resp.\ 
$f_{n+1}\colon\M_\R|_U\to\Alt^{n+1}(\M_{\L_1}|_U;\M_{\L_1}|_U)$.
It is easy to see from these formulas that for a smooth
map $h\colon\M_\R|_U\to\R$  with the local description 
$h^\a:=h\circ\varphi^\a_0$, the multiplication $h\cdot f$, defined by
\[
 (h\cdot f)^\a=(\id_{\U^\a_\R},c_{\id},0,\ldots,0,h^\a\cdot f^\a_{n+1}),
\]
leads to a $\mathcal{C}^\infty_{\M_\R}$-module structure on $\Shf^{n+1}_n(\M)$.
This structure 
corresponds to the $\mathcal{C}^\infty_{\M_\R}$-module structure of the 
sheaves of sections defined above in the Banach case.
\end{proof}
We will now return to finding a lift for an isomorphism 
$f^{(n)}\colon\M^{(n)}\to\M^{(n)}$ to an isomorphism 
$f^{(n+1)}\colon\M^{(n+1)}\to\M^{(n+1)}$. Let $\M\in\SMank{k}$ be 
modelled on $E\in\SVec$ with atlas $\{\varphi^\a\colon\U^\a\to\M\colon\a\in 
A\}$ and 
$n<k$.
Locally, we have isomorphisms $f^{(n),\a}\colon\U^{\a,(n)}\to\U^{\a,(n)}$ of 
the form $(\id_{\U^{\a}_\R},f^{(n),\a}_1,\ldots, f^{(n),\a}_n)$. By Lemma 
\ref{lemskelinv}, these can be lifted to isomorphisms 
$\tilde{f}^{\a,(n+1)}\colon\U^{\a,(n+1)}\to\U^{\a,(n+1)}$
in $\SDomk{n+1}$
of the form 
\[(
\id_{\U^\a_\R},f^{(n),\a}_1,\ldots, 
f^{(n),\a}_n,\tilde{f}^{(n+1),\a}_{n+1}),
\] 
with
$\tilde{f}^{(n+1),\a}_{n+1}\colon\U^\a_\R\to\Alt^{n+1}(E_1;E_{\ol{n+1}})$ an 
arbitrary map that is smooth in the sense of skeletons.

The morphisms $\tilde{f}^{\b,(n+1)}$ and 
$\varphi^{\a\b,(n+1)}\circ\tilde{f}^{\a,(n+1)}\circ\varphi^{\b\a,(n+1)}$ differ 
on $\U^{\b\a,(n+1)}$ only in the $(n+1)$-th components of their skeletons 
because higher components do not affect the composition of any 
lower components. The difference is given by a unique element 
$h^{\b\a}\in\Shf^{n+1}_n(\U^{\b\a})$ such that
\[
\tilde{f}^{\b,(n+1)}=\varphi^{\a\b,(n+1)}\circ\tilde{f}^{\a,(n+1)}\circ\varphi^{
\b\a,(n+1)}\circ h^{\b\a}
\]
and one checks that these $h^{\b\a}$ define a cocycle in $\Shf^{n+1}_n(\M)$ via
$\tilde{h}^{\b\a}:=\varphi^{\b,(n+1)}\circ 
h^{\b\a}\circ(\varphi^{\b,(n+1)})^{-1}$ on 
$\varphi^{\b,(n+1)}(\U^{\b\a,(n+1)})$.
This cocycle describes the obstacle to lifting $f^{(n)}$ to $f^{(n+1)}$ (see 
\cite[Theorem 3, p.277]{Mol2}). If $\M_\R$ admits smooth partitions of unity, 
then $\Shf^{n+1}_n(\M)$ is a fine sheaf by Lemma \ref{lemshfn+1}
and thus acyclic.
Therefore, the cocycle constructed above vanishes and a lift exists.
\begin{remark}
 Mirroring the proof of the fact that for fine sheaves the higher \v{C}ech 
cohomologies vanish,
  one can directly construct the lift via a partition of unity.
In the situation above, we assume that $(\varphi^\a_\R(\U^\a_\R))_{\a\in 
A}$ is a locally finite cover of $\M_\R$ and that $(\rho_\a)_{\a\in A}$ is a 
partition 
of unity subordinate to this cover. With the module structure from Lemma 
\ref{lemshfn+1}, we define
\[
 f^{(n+1),\a}_{n+1}:=\Big((\varphi^{\a})^{-1}\circ\Big(
      \sum_{\b\in A} \rho_\b\cdot \tilde{h}^{\b\a}
 \Big)\circ\varphi^{\a}\Big)_{n+1},
\]
where $\rho_\b\cdot \tilde{h}^{\b\a}$ is 
continued to $\varphi^{\a,(n+1)}(\U^{\a,(n+1)})$ 
by zero. It is elementary to check that the change of charts is well-defined 
for the resulting local descriptions of $f^{(n+1)}$.
\end{remark}

\subsection{Super Vector Bundles}
In analogy to our definition of supermanifolds, we give a definition of super 
vector bundles as supermanifolds with a particular kind of bundle 
atlas. In this, we follow \cite[Definition 
5.1, p.29]{Mol}. See also \cite{SaWo}.
While a bit cumbersome, it will be useful to describe the change of bundle 
charts and the local form of bundle morphisms in terms of skeletons.
\begin{definition}[{compare \cite[Subsection 1.3, p.5]{Mol}}]\label{ufamily}
 Let $E,F,H\in\SVec$, $k\in\N_0\cup\{\infty\}$ and $\U\subseteq \ol{H}^{(k)}$ 
be an open subfunctor. 
A 
supersmooth morphism $f\colon\U\times\ol{E}^{(k)}\to\ol{F}^{(k)}$ is called an 
\textit{ 
$\U$-family of $\ol{\R}$-linear morphisms}\index{$\U$-family of 
$\ol{\R}$-linear morphisms} if for every $\Lambda\in\Grk{k}$ and 
every $u\in\U_\L$, the map
\[
 f_\L(u,\bl)\colon\ol{E}^{(k)}_{\L}\to\ol{F}^{(k)}_{\L}
\]
is $\Lz$-linear. 
\end{definition}
\begin{lemma}\label{lemufamily}
 In the situation of Definition \ref{ufamily}, let 
$f\colon\U\times\ol{E}^{(k)}\to\ol{F}^{(k)}$ be a supersmooth morphism. Then 
$f$ is an 
$\U$-family of $\ol{\R}$-linear 
morphisms if and only if for all $\L\in\Grk{k}$, we have
\[
 df_\Lambda\big((u,0)\big)\big((0,v)\big)=f_\Lambda(u,v)\ \text{for}\ 
u\in\U_\L,\ v\in\ol{E}^{(k)}_\L.
\]
\end{lemma}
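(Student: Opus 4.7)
The plan is to treat the two directions separately, with both relying essentially on the interpretation of $df_\L((u,0))((0,v))$ as a partial derivative $\partial_2 f_\L(u,0)(v)$ via the rule of partial differentials.

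For the ``only if'' direction, I would assume that $f_\L(u,\bl)\colon \ol{E}^{(k)}_\L \to \ol{F}^{(k)}_\L$ is $\Lz$-linear (in particular continuous $\R$-linear) for every $u\in\U_\L$. By Lemma \ref{lemmullinder}, a continuous linear map is smooth and its derivative recovers the map itself. Hence, applying the rule of partial differentials to the product decomposition $\U\times\ol{E}^{(k)}$, I obtain
\[
 df_\L((u,0))((0,v)) = \partial_2 f_\L(u,0)(v) = f_\L(u,v),
\]
which is the desired formula.

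For the ``if'' direction, I would use that $f$ is supersmooth, so by definition the map
\[
 df_\L((u,0),\bl)\colon \ol{H}^{(k)}_\L\times\ol{E}^{(k)}_\L\to\ol{F}^{(k)}_\L
\]
is $\Lz$-linear. In particular, the restriction $v\mapsto df_\L((u,0))((0,v))$ is $\Lz$-linear in $v$. The assumed identity $df_\L((u,0))((0,v)) = f_\L(u,v)$ then transports this $\Lz$-linearity onto $f_\L(u,\bl)$, completing the proof.

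There is no real obstacle here; both implications are essentially immediate once one recognizes the term $df_\L((u,0))((0,v))$ as $\partial_2 f_\L(u,0)(v)$. The only mild subtlety worth flagging is that supersmoothness guarantees $\Lz$-linearity of the full derivative $df_\L$ in the second slot of the product $\U\times\ol{E}^{(k)}$, and restricting to directions of the form $(0,v)$ preserves this linearity, which is what makes the converse direction go through cleanly.
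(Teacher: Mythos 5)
Your proof is correct and follows essentially the same route as the paper: one direction uses that the derivative of a continuous $\R$-linear map is the map itself (so $df_\L((u,0))((0,v))=\partial_2 f_\L(u,0)(v)=f_\L(u,v)$ when $f_\L(u,\bl)$ is linear), and the other uses that supersmoothness makes $df_\L((u,0))$ $\Lz$-linear in its argument, which restricts to directions $(0,v)$ and transfers to $f_\L(u,\bl)$ via the assumed identity. No gaps.
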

\begin{proof}
 If the equality holds, then $f$ is a $\U$-family of $\ol{\R}$-linear 
morphisms because
 the derivative $df_\L$ is $\Lz$-linear at every 
$u\in\U_\L$. The converse is true because any 
$\Lz$-linear map is in particular $\R$-linear and thus the derivative 
of such a map is the map itself.
\end{proof}
\begin{lemma}\label{lemufamily2}
 In the situation of Definition \ref{ufamily}, let 
$f\colon\U\times\ol{E}^{(k)}\to\ol{F}^{(k)}$ be a supersmooth morphism with the 
skeleton
$(f_n)_n$.
We set $U:=\U_\R$.
Then $f$ is a $\U$-family of $\ol{\R}$-linear 
morphisms if and only if
every $f_n$ has the form
\begin{align*}
 f_n=&f_n(\pr_U,\pr_{E_0})((\pr_1,0),\ldots,(\pr_1,0))\\
&+n\cdot \alt{n} f_n(\pr_U,0)((0,\pr_2),(\pr_1,0),\ldots,(\pr_1,0)),
\end{align*}
with $f_n(\pr_U,\pr_{E_0})((\pr_1,0),\ldots,(\pr_1,0))$ linear in the second 
component and
where $\pr_U\colon U\times E_0\to U$, $\pr_{E_0}\colon U\times E_0\to E_0$, 
$\pr_1\colon H_1\times E_1\to H_1$ and $\pr_2\colon H_1\times E_1\to E_1$ are 
the respective projections.
\end{lemma}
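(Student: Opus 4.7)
The plan is to use the characterization from Lemma \ref{lemufamily}---that $f$ is a $\U$-family precisely when $df_\L((u,0))((0,v)) = f_\L(u,v)$---and translate it into constraints on the skeleton via Proposition \ref{propskel}. For the forward direction, first fix $u_0 \in \U_\R$ and consider the restriction $L := f(u_0, \bl)\colon \ol{E}^{(k)} \to \ol{F}^{(k)}$, which is itself a supersmooth $\ol{\R}^{(k)}$-linear morphism. Expanding $L_\L$ via formula (\ref{formulaskel}) on arguments built from pairwise disjoint odd Grassmann generators, using Lemma \ref{lemdnmulti} to kill contributions with repeated generators, and then matching with additivity and $\Lz$-homogeneity forces the skeleton of $L$ to reduce to $(L_0, c_{L_1}, 0, 0, \ldots)$ with $L_0$ linear and $L_1$ a constant element of $\Lc(E_1; F_1)$. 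Reading this back through $L_n(e_0)(e_1, \ldots, e_n) = f_n(u_0, e_0)((0, e_1), \ldots, (0, e_n))$ yields the base-point constraints: $f_0(u_0, e_0)$ is linear in $e_0$, $f_1(u_0, e_0)((0, e_1))$ is independent of $e_0$, and $f_n(u_0, e_0)((0, e_1), \ldots, (0, e_n)) = 0$ for $n \geq 2$.

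To extract the mixed-argument behavior I next apply $\Lz$-linearity at points $u = u_0 + \sum_r \oddgen_{I_r} h_r \in \U_{\L_N}$ whose $H_1$-part is purely odd-nilpotent. Expanding $f_{\L_N}(u, v)$ via formula (\ref{formulaskel}) for $v \in \ol{E}^{(k)}_{\L_N}$ and isolating the coefficient of each distinct product $\oddgen_{I_1} \cdots \oddgen_{I_r} \oddgen_J$ through Lemma \ref{lemdnmulti}, the additivity $f_{\L_N}(u, v + v') = f_{\L_N}(u, v) + f_{\L_N}(u, v')$ for $v, v'$ with disjoint Grassmann supports forces: (i) $f_n(u_0, e_0)(v_1, \ldots, v_n) = 0$ whenever two or more $v_j$ lie in $\{0\} \times E_1$; (ii) when exactly one $v_{i_0}$ lies in $\{0\} \times E_1$, the value equals $f_n(u_0, 0)(v_1, \ldots, v_n)$; and (iii) with all $v_j \in H_1 \times \{0\}$, the value $f_n(u_0, e_0)(v_1, \ldots, v_n)$ is linear in $e_0$. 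A multilinear expansion $v_j = (\pr_1 v_j, 0) + (0, \pr_2 v_j)$ then decomposes $f_n(u, e_0)(v_1, \ldots, v_n)$ into the two summands of the lemma: the $S = \emptyset$ term produces the first summand, the $n$ terms with $|S| = 1$ assemble into the $\alt{n}$-expression after canonicalizing the $(0, e)$-slot to position $1$, and all $|S| \geq 2$ terms vanish by (i). The main obstacle is this last combinatorial step: for each $i_0 \in \{1, \ldots, n\}$, moving the $(0, e)$-slot to position $1$ via alternation of $f_n$ produces a sign $(-1)^{i_0 - 1}$, and summing over the $(n-1)!$ signed permutations of the remaining $H_1$-arguments produces $(n-1)!$ copies of the canonical value per $i_0$, exactly matching the normalization $n/n! = 1/(n-1)!$ in the $\alt{n}$-expression.

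For the backward direction, starting from the stated form of the skeleton, I verify $f_\L(u, \bl)$ is $\Lz$-linear by expanding $f_\L(u, v)$, $f_\L(u, v + v')$, and $f_\L(u, \mu v)$ for $\mu \in \Lz$ via formula (\ref{formulaskel}) and matching term by term. The first summand of $f_n$, being linear in $e_0$, contributes additively and $\Lz$-homogeneously in each $E_0$-slot under multilinear expansion; the second summand, being independent of $e_0$ and involving only one $E_1$-slot, contributes at most linearly in any single $E_1$-slot, while all multi-$E$-slot contributions vanish identically. Lemma \ref{lemdnmulti} handles the disjointness bookkeeping for repeated Grassmann generators and closes the verification.
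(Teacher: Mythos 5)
Your proof is correct and follows essentially the same route as the paper's: in the forward direction both arguments extract joint linearity of $f_n(x,\bl)((y_1,\bl),\ldots,(y_n,\bl))$ in $E_0\oplus E_1\oplus\cdots\oplus E_1$ from Proposition \ref{propskel} and $\Lz$-linearity at points $x+\sum_i\oddgen_i y_i$, then expand multilinearly, kill the terms with two or more $E$-slots, and use the alternating property of $f_n(x,0)$ to obtain the normalization $n\cdot\alt{n}$. The only packaging difference is in the converse, where the paper verifies $df_\L((u,0))((0,v))=f_\L(u,v)$ via Remark \ref{remdf} and invokes Lemma \ref{lemufamily} rather than checking additivity and $\Lz$-homogeneity of $f_\L(u,\bl)$ directly, but both reduce to the same term-by-term comparison through formula (\ref{formulaskel}).
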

\begin{proof}
Let $f\colon\U\times\ol{E}^{(k)}\to\ol{F}^{(k)}$ be an $\U$-family of 
$\ol{\R}$-linear 
morphisms. Choosing $u:=x+\sum_{i=1}^k \oddgen_iy_i$, $x\in U$ and $y_i\in 
H_1$, Proposition 
\ref{propskel} implies that $f_n(x,\bl)((y_1,\bl),\ldots,(y_n,\bl))$ must 
be linear in $E_0\oplus E_1\oplus\cdots\oplus E_1$. We use the multilinearity 
of $f_n(x,v)(\bl)$ to calculate
\begin{align*}
&f_n(x,v)((y_1,w_1),\ldots,(y_n,w_n))=\\
&f_n(x,v)((y_1,0),\ldots,(y_n,0))+\sum_{i=1}^n f_n(x,v)((y_1,0),
\ldots,(0,w_i),\ldots,(y_n,0))\\
 &=\big(f_n(x,v)((\pr_1,0),\ldots,(\pr_1,0))+\\
 &\qquad n\cdot 
\alt{n} 
f_n(x,0)((0,\pr_2),(\pr_1,0),\ldots,(\pr_1,0))\big)((y_1,w_1),\ldots,(y_n,
w_n))
\end{align*}
for $v\in E_0$ and $w_i\in E_1$.
The second equality follows because for $v'\in E_0$, we have
\begin{align*}
f_n(x,v+v')&((y_1,0),
\ldots,(0,w_i),\ldots,(y_n,0))=\\
 &f_n(x,v)((y_1,0),
\ldots,(0,w_i),\ldots,(y_n,0))\\
&+
f_n(x,v')((y_1,0),
\ldots,(0,0),\ldots,(y_n,0)).
\end{align*}

Conversely, let $(f_n)_n$ have the 
aforementioned form. Let $\L\in\Grk{k}$, $(x,y)\in U\times E_0$ and 
$(x_i,y_i)\in (H_i\oplus E_i)\otimes\L^{\nil}_{\ol{i}}$, $i\in\{0,1\}$. 
To simplify our notation, we consider 
$\ol{H}^{(k)}_\L\subseteq\ol{H\oplus E}^{(k)}_\L$ and 
$\ol{E}^{(k)}_\L\subseteq\ol{H\oplus E}^{(k)}_\L$ in the obvious way. 
One sees
\begin{align*}
 d^mf_l(x,y)((x_0,y_0),\ldots,&(x_0,y_0),(x_1,y_1),\ldots,(x_1,y_1))=\\
 &d^m 
f_l(x,y)(x_0,\ldots,x_0,x_1,\ldots,x_1)\\
&+m\cdot 
d^{m-1}f_l(x,y_0)(x_0,\ldots,x_0, x_1,\ldots,x_1)\\
 &+l\cdot d^mf_l(x)(x_0,\ldots,x_0,y_1,x_1\ldots x_1),
\end{align*}
where the last two summands are understood to be zero for $m=0$ and $l=0$, 
respectively.
For  
$u=x+x_0+x_1$ and $v=y+y_0+y_1$, we use Remark \ref{remdf} to get
\begin{align}\label{formparitychange}
df_\L(u)(v)=\sum_{m,l=0}^\infty\frac{1}{m!l!}\cdot&\Big(d^mf_l(x,
y)(x_0,\ldots,x_0,x_1,\ldots,x_1)\nonumber\\
&+m\cdot d^{m-1}f_l(x,y_0)(x_0,\ldots,x_0,x_1,\ldots,x_1)\\
&+l\cdot d^m f_l(x)(x_0,\ldots,x_0,y_1,x_1\ldots,x_1)\Big).\nonumber
\end{align}
Comparing the terms, Proposition 
\ref{propskel} implies that $df_\L(u)(v)=f_\L(u,v)$ and the result 
follows from Lemma \ref{lemufamily}.
\end{proof}
\begin{definition}\label{defsvectorbundle}
 Let $E,F\in\SVec$, $k\in\N_0\cup\{\infty\}$ and let $\E,\M\in\SMank{k}$
 be such that $\E$ is modelled on $E\oplus F$ and $\M$ is modelled on $E$
 together with a supersmooth morphism $\pi\colon\E\to\M$ such that 
$\pi_\L\colon\E_\L\to\M_\L$ is a vector bundle with fiber 
$\ol{F}^{(k)}_\L$ for all $\L\in\Grk{k}$.
 A \textit{bundle atlas of $\E$}\index{Super vector bundle!bundle atlas} is
 an atlas 
$\A:=\{\Psi^\alpha\colon\U^\alpha\times\ol{F}^{(k)}\to\E\colon
\U^\a\subseteq\ol{E}^{(k)},\alpha\in A\}$ such 
that $\{\Psi^\a_\L\colon\a\in A\}$ is a bundle atlas of $\E_\L$ and
the change of two charts $\Psi^\a$ and $\Psi^\b$ has the form
$\Psi^{\alpha\beta}\colon\U^{\alpha\beta}\times\ol{F}^{(k)}\to\U^{\beta\alpha
}
\times\ol{F}^{(k)}$
with $\Psi^{\alpha\beta}=(\phi^{\alpha\beta},\psi^{\alpha\beta})$, where
\begin{enumerate}
\item[(i)] $\phi^{\alpha\beta}\colon\U^{\alpha\beta}\to 
\U^{\beta\alpha}$ and
\item[(ii)] $\psi^{\alpha\beta}\colon\U^{\alpha\beta}\times\ol{F}^{(k)}\to 
\ol{F}^{(k)}$ is an $\U^{\alpha\beta}$-family of $\ol{\R}$-linear maps.
\end{enumerate}
The elements of $\A$ are called \textit{bundle charts}\index{Super vector 
bundle!bundle chart}. Two bundle atlases are \textit{equivalent} if their union 
is again 
a bundle atlas. We call $\pi\colon\E\to\M$ together with an equivalence class 
of bundle atlases a \textit{$k$-super vector bundle over the 
base $\M$ with typical fiber $F$}.\index{Super vector bundle}
\index{Super vector bundle!base}\index{Super vector bundle!typical fiber}
The morphism $\pi$ is called the \textit{projection to the base}.

Let $\E'$ be another $k$-super vector bundle with typical fiber 
$F'\in\SVec$ and base $\Nc$. A supersmooth morphism 
$f\colon\E\to\E'$ is a \textit{morphism of super vector bundles}\index{Super 
vector bundles!morphism of}, if in bundle charts $\Psi^\a$ of $\E$ and 
$\Psi'^{\a'}$ of $\E'$, it 
has the form $(h^{\a\a'},g^{\a\a'})$, where
\begin{enumerate}
\item[(i)] $h^{\alpha\a'}\colon\U^{\alpha\a'}\to 
\U^{\a'}$, $\U^{\a\a'}\subseteq\U^\a$ and
\item[(ii)] $g^{\alpha\a'}\colon\U^{\alpha\a'}\times\ol{F}^{(k)}\to 
\ol{F'}^{(k)}$ is an $\U^{\alpha\a'}$-family of $\ol{\R}$-linear maps.
\end{enumerate}
Clearly, the $h^{\a\a'}$ define a supersmooth morphism $h\colon\M\to\Nc$ such 
that
$\pi_{\Nc}\circ f=h\circ \pi_\M$. We say that \textit{$f$ is a morphism over 
$h$}. The $k$-super vector bundles and their 
morphisms form a category, which we denote by 
$\SVBunk{k}$\nomenclature{$\SVBunk{k}$}{}, resp.\ 
$\SVBun$
\nomenclature{$\SVBun$}{} 
if $k=\infty$.
\end{definition}
By this definition, a $k$-super vector bundle can be 
seen as a functor $\Grk{k}\to\VBun$. This point of view is taken by Molotkov in 
\cite{Mol}.
\begin{remark}\label{remsvbunlocal}
 It follows from Proposition \ref{proplocaldescsman} that, if one has a 
collection of change of charts that satisfy the conditions of Definition 
\ref{defsvectorbundle}, then one gets a (up to unique isomorphism) unique super 
vector bundle.
In the notation of the definition, the $\phi^{\a\b}$ then define the base 
supermanifold $\M$ and the bundle projection is locally given by
\[
 (\phi^\alpha)^{-1}\circ\pi\circ\Psi^\alpha:=\pr_{\U^\alpha}\colon
 \U^\alpha  \times\ol{F}^{(k)}\to\U^\alpha.
\]
In the same way, morphisms of super vector bundles are determined 
by their local description.
\end{remark}
\begin{lemma/definition}\label{lemdeffiber}
 Let $k\in\N_0\cup\{\infty\}$, let $\E$ be a $k$-super vector bundle 
with typical fiber $F\in\SVec$ over $\M$ modelled on $E\in\SVec$ and let 
$x\colon\po^{(k)}\to\M$ be a point of $\M$. We define $\E_x$, the \textit{fiber 
of $\E$ at $x$}\index{Super vector bundle!fiber}, by setting 
$(\E_x)_\L:=(\pi_\M)^{-1}(\{x_\L\})$ for every $\L\in\Grk{k}$. Then $\E_x$ is a 
sub-supermanifold of $\E$ and $\E_x$ is, in a canonical way, an $\ol{\R}$-module
such that $\E_x\cong\ol{E}^{(k)}$.
\end{lemma/definition}
\begin{proof}
 Let $\{\Psi^\a\colon\U^\a\times\ol{F}^{(k)}\to\E\colon \a\in A\}$ be a bundle 
atlas of $\E$ with corresponding atlas $\{\phi^\a\colon\U^\a\to\M\colon \a\in 
A\}$ of $\M$. Let $\phi^\a\colon\U^\a\to\M$ be such that 
$x_\R\in\phi^\a_\R(\U^\a_\R)$. We may assume that $0\in\U^\a_\R$ and that
$\phi^\a_\R(0)=x_\R$, because the translation defined by 
$\ol{E}^{(k)}_\L\to\ol{E}^{(k)}_\L,\ v\mapsto v-(\varphi^\a_\R)^{-1}(x_\R)$ is 
clearly an isomorphism in $\SMank{k}$. Let $\L\in\Grk{k}$. We have
$y_\L\in(\E_x)_\L$ if and only if $(\pi_\M)_\L(y_\L)=x_\L$ holds and thus if 
and only if 
$y_\L\in\Psi^\a_\L(\ol{\{0\}}_\L^{(k)}\times\ol{F}_\L^{(k)})$ holds. Therefore, 
$\E_x$ 
is a sub-supermanifold of $\E$. We define an $\ol{\R}$-module structure on 
$\E_x$ via the isomorphism
\[
 \Psi^\a\circ(0,\id_{\ol{F}^{(k)}})\colon\ol{F}^{(k)}\to\E_x.
\]
The $\ol{\R}$-module structure on $\E_x$ does not depend on $\Psi^\a$ because a 
change of bundle charts leads to an isomorphism of $\ol{\R}$-modules in the 
second component.
\end{proof}
\begin{lemma}
 Let $k\in\N_0\cup\{\infty\}$ and let $\E$ and $\E'$ be $k$-super vector 
bundles over $\M$ and $\Nc$. If $f\colon\E\to\E'$ is a morphism of $k$-super 
vector bundles over $g\colon\M\to\Nc$, then for every point $x$ of $\M$ 
the morphism
\[
 f|_{\E_x}\colon\E_x\to\E'_{g\circ x}
\]
is a well-defined morphism of $\ol{\R}$-modules (and in particular supersmooth).
\end{lemma}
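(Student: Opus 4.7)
The plan is to verify the claim in three stages: well-definedness as a natural transformation, supersmoothness, and $\ol{\R}$-linearity. Well-definedness is immediate from the commutativity $\pi_{\E'}\circ f = g\circ \pi_\E$ built into the definition of a morphism of super vector bundles over $g$: for $\L\in\Grk{k}$ and $y\in(\E_x)_\L = \pi_{\E,\L}^{-1}(x_\L)$, we get $\pi_{\E',\L}(f_\L(y)) = g_\L(x_\L) = (g\circ x)_\L$, so $f_\L$ restricts to $(\E_x)_\L\to(\E'_{g\circ x})_\L$. These restrictions assemble (by the restricted naturality squares of $f$) into a natural transformation $f|_{\E_x}\colon\E_x\to\E'_{g\circ x}$.

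For the remaining two assertions I would work in local bundle charts, following the setup of Lemma/Definition \ref{lemdeffiber}. Choose bundle charts $\Psi^\a$ of $\E$ and $\Psi'^{\a'}$ of $\E'$ whose underlying base charts $\phi^\a$, $\phi'^{\a'}$ satisfy $0\in\U^\a_\R$, $0\in\U'^{\a'}_\R$, $\phi^\a_\R(0)=x_\R$ and $\phi'^{\a'}_\R(0)=(g\circ x)_\R$; such charts exist after the same translation argument used in the previous lemma. The identifications $\ol{F}^{(k)}\cong\E_x$ and $\ol{F'}^{(k)}\cong\E'_{g\circ x}$ are then given by $v\mapsto\Psi^\a(0,v)$ and $v'\mapsto\Psi'^{\a'}(0,v')$. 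In these charts $f$ takes the local form $(h^{\a\a'},g^{\a\a'})$ from Definition \ref{defsvectorbundle}, where $h^{\a\a'}$ represents $g$ and hence satisfies $h^{\a\a'}_\R(0) = (\phi'^{\a'}_\R)^{-1}(g_\R(x_\R)) = 0$, from which naturality yields $h^{\a\a'}_\L(0)=0$ for all $\L\in\Grk{k}$. Consequently $f|_{\E_x}$ corresponds under the chosen identifications to the map $v\mapsto g^{\a\a'}(0,v)$, i.e.\ the restriction of the supersmooth $\U^{\a\a'}$-family $g^{\a\a'}$ to the slice $\{0\}\times\ol{F}^{(k)}$. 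This restriction is supersmooth, and its $\L$-component is $\Lz$-linear by the defining property of a $\U$-family (Definition \ref{ufamily}), hence $\ol{\R}$-linear in the sense of $\ol{\R}$-modules.

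The main point to check — and I expect the only mildly subtle one — is that the preceding description yields the same morphism of $\ol{\R}$-modules independently of the bundle charts used, so that the abstract claim about $f|_{\E_x}\colon\E_x\to\E'_{g\circ x}$ is intrinsic rather than chart-dependent. This follows from condition (ii) of Definition \ref{defsvectorbundle}: changes of bundle charts in both $\E$ and $\E'$ are themselves $\U$-families of $\ol{\R}$-linear maps, so conjugating $g^{\a\a'}(0,\bl)$ by two such changes again yields an $\ol{\R}$-linear morphism $\ol{F}^{(k)}\to\ol{F'}^{(k)}$, and this conjugation is compatible with the way the $\ol{\R}$-module structures on $\E_x$ and $\E'_{g\circ x}$ are transported between different chart identifications in Lemma/Definition \ref{lemdeffiber}. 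This completes the proposed argument.
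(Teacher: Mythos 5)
Your proposal is correct and follows essentially the same route as the paper: well-definedness comes from the commuting square $\pi_{\E'}\circ f=g\circ\pi_\E$, and $\ol{\R}$-linearity comes from the fact that in bundle charts the second component of $f$ is an $\U$-family of $\ol{\R}$-linear maps. The extra chart-independence check you include is the content the paper delegates to Lemma/Definition \ref{lemdeffiber}, so nothing is missing or redundant in substance.
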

\begin{proof}
  Let $\L\in\Grk{k}$ and $y_\L\in(\E_x)_\L$. We have
 \[
 \xymatrix{ y_\L \ar[r]^{f_\L} \ar[d]_{(\pi_\M)_\L} & *+[r]{(\E')_\L} 
\ar[d]^{(\pi_{\Nc})_\L} \\
               x_\L \ar[r]_{g_\L} & *+[r]{g_\L(x_\L)} }
\]
and $g_\L(x_\L)=(g\circ x)_\L$ implies that the morphism is 
well-defined. In charts the second component of $f$ is $\ol{\R}$-linear. Thus, 
$f|_{\E_x}$ is a morphism of $\ol{\R}$-modules.
\end{proof}
\begin{lemma}\label{lemsvbunfunct}
 The functors $\i^0_k$, $\i^1_k$ and 
$\pi^k_n$ from Proposition \ref{propisman}, Proposition \ref{propvbunman}
and Lemma \ref{lemsmankproj} extend to functors
\begin{align*}
 &\i^0_k\colon\SVBunk{0}\to\SVBunk{k}\ \text{for}\ k\in\N_0\cup\{\infty\},\\
 &\i^1_k\colon\SVBunk{1}\to\SVBunk{k}\ \text{for}\ k\in\N\cup\{\infty\}\
 \text{and}\\
 &\pi^k_n\colon\SVBunk{k}\to\SVBunk{n}\ \text{for}\ k\in\N_0\cup\{\infty\},\  
0\leq n\leq k.
\end{align*}
With these functors, we have $\pi^k_0\circ\i^0_k\cong\id_{\SVBunk{0}}$ and
$\pi^k_1\circ\i^1_k\cong\id_{\SVBunk{1}}$.
\end{lemma}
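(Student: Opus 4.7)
The plan is to lift each of the three functors to the super vector bundle categories by acting on the defining bundle atlases and morphisms via their local skeleton descriptions, then invoke Remark \ref{remsvbunlocal} to package this local data into an actual super vector bundle. Throughout, the checks needed are essentially the ``bundle analogues'' of what was verified in Propositions \ref{propisman}, \ref{propvbunman} and Lemma \ref{lemsmankproj}, applied separately to the base component $\phi^{\alpha\beta}$ and to the fiber component $\psi^{\alpha\beta}$ of a change of bundle charts $\Psi^{\alpha\beta}=(\phi^{\alpha\beta},\psi^{\alpha\beta})$.

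First I would handle the restriction $\pi^k_n$, which is the easiest. Given a $k$-super vector bundle $\pi\colon\E\to\M$ with bundle atlas $\{\Psi^\alpha\colon\U^\alpha\times\ol{F}^{(k)}\to\E\colon\alpha\in A\}$, one simply restricts every natural transformation to $\Grk{n}\subseteq\Grk{k}$. The base component $\phi^{\alpha\beta}$ restricts to a change of charts of the underlying $n$-supermanifold (Lemma \ref{lemsmankproj}), and the fiber component $\psi^{\alpha\beta,(n)}$ remains supersmooth and satisfies the $\Lz$-linearity in Definition \ref{ufamily} for all $\L\in\Grk{n}$ since this was already the case for all $\L\in\Grk{k}$; thus it remains a $\U^{\alpha\beta,(n)}$-family of $\ol{\R}^{(n)}$-linear morphisms. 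The same truncation applies to morphisms, yielding a functor $\pi^k_n\colon\SVBunk{k}\to\SVBunk{n}$.

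Next I would handle the embeddings $\i^1_k$ and $\i^0_k$, for which the procedure is to extend local skeletons by zeros. For $\i^1_k$, take a $1$-super vector bundle with change of bundle charts $(\phi^{\alpha\beta}_0,\phi^{\alpha\beta}_1)$ and $(\psi^{\alpha\beta}_0,\psi^{\alpha\beta}_1)$, and set
\[
\i^1_k(\phi^{\alpha\beta}):=(\phi^{\alpha\beta}_0,\phi^{\alpha\beta}_1,0,0,\ldots),\quad
\i^1_k(\psi^{\alpha\beta}):=(\psi^{\alpha\beta}_0,\psi^{\alpha\beta}_1,0,0,\ldots).
\]
The cocycle condition lifts from $\SVBunk{1}$ because, in the composition formula of Proposition \ref{propskelmult}, every multi-index $(\alpha,\beta)\in I^n_{m,l}$ with $n>1$ contains a component $\geq 2$ (in $\alpha$) or $\geq 3$ (in $\beta$), forcing some $f_i=0$ or $g_i=0$; this is exactly the computation already performed in the proof of Proposition \ref{propvbunman}. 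To verify that the extended $\i^1_k(\psi^{\alpha\beta})$ is still a $\U^{\alpha\beta}$-family of $\ol{\R}$-linear morphisms, I would use Lemma \ref{lemufamily2}: its nontrivial components inherit the required form from $\SVBunk{1}$, while the appended zero components trivially have the required form. Morphisms are lifted in exactly the same way, and functoriality follows from uniqueness in Remark \ref{remsvbunlocal}. The case $\i^0_k$ is literally the same with a shorter skeleton.

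Finally, the natural isomorphisms $\pi^k_0\circ\i^0_k\cong\id_{\SVBunk{0}}$ and $\pi^k_1\circ\i^1_k\cong\id_{\SVBunk{1}}$ are immediate from the local skeleton description: appending zeros and then truncating recovers the original skeleton unchanged, both for changes of bundle charts and for bundle morphisms, so the composed functors are literally equal to the identity on local data, which by Remark \ref{remsvbunlocal} induces a natural isomorphism of functors. The main obstacle is not conceptual but combinatorial, namely the verification that zero-tails of skeletons persist under the composition formula in Proposition \ref{propskelmult}; this, however, is precisely the argument already carried out in Proposition \ref{propvbunman} and requires no further work here.
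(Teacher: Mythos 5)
Your proposal is correct and follows essentially the same route as the paper: the whole content of the lemma is that the extension/truncation of skeletons underlying $\i^0_k$, $\i^1_k$ and $\pi^k_n$ preserves the property of being a $\U$-family of $\ol{\R}$-linear morphisms, which you (like the paper) read off from the concrete description in Lemma~\ref{lemufamily2}, with the cocycle and compatibility checks already done in Propositions~\ref{propisman}, \ref{propvbunman} and Lemma~\ref{lemsmankproj}. (Only a cosmetic remark: for $n>1$ the vanishing of the composed skeleton components can also come from $g_l=0$ with all $\beta_i=1$ and $l=n\geq 2$, not only from a large entry of $\alpha$ or $\beta$, but this is exactly the computation of Proposition~\ref{propvbunman} to which you defer.)
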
 
\begin{proof}
Let us consider $\i^0_k$ and $\i^1_k$ as functors $\SMank{0}\to\SMank{k}$ and 
$\SMank{1}\to\SMank{k}$.
 Let $k\in\N_0\cup\{\infty\}$, $E\in\SVec$, $\U\subseteq\ol{E}^{(k)}$.
 We see from the concrete description in Lemma \ref{lemufamily2} that every 
 $\U$-family of $\ol{\R}$-linear morphisms is mapped to an $\U$-family of 
$\ol{\R}$-linear morphisms under the original functors. From this, the result 
follows.
\end{proof}
Note also that for 
any super vector bundle $\E$ with base $\M$ and typical fiber $F$, the 
inverse limit 
$\varprojlim\E$ is in a natural way a vector bundle over $\varprojlim\M$ with 
typical fiber $\varprojlim\ol{F}$.
\subsubsection{The Change of Parity Functor}
The space of sections of a super vector bundle can be turned into a vector 
space, as we will see below. However, in a sense this describes only the 
even sections. To incorporate odd sections and obtain a super vector space 
of sections, we need the so called change of parity functor.
On super vector spaces this functor simply swaps the even and odd parts. Doing 
this fiberwise, one gets the change of parity functor for super vector 
bundles.  
As before, it will be useful to express this functor in terms of skeletons.
\begin{definition}\label{defcop}
 Let $E,F\in\SVec$ and $f\colon E\to F$ be a morphism. We define 
a functor $\Pi\colon\SVec\to\SVec$ by setting 
$\Pi(E)_i:=E_{\ol{i+1}}$ and $\Pi(f)_i:=f_{\ol{i+1}}$ for $i\in\{0,1\}$ 
\nomenclature{$\Pi$}{}. 
Now, let 
$\L=\L_n\in\Gr$, $n\in\N$ and $g\colon \ol{E}_\L\to\ol{F}_\L$ be $\R$-linear 
such 
that there exist linear maps
\[
 g_{(0)}\colon E_0\to\ol{F}_\L\quad\text{and}\quad g_{(1)}\colon 
E_1\to\ol{\Pi(F)}_\L
\]
with $g(\oddgen_Iv_I)=\oddgen_Ig_{(i)}(v_I)$ for $I\in\Po^n_i$, $v_I\in E_i$, 
$i\in\{0,1\}$.
 We call such a map $g$ \textit{parity changeable}\index{Parity changeable}.
We define a parity changeable map
\[
\ol{\Pi}_\L(g)\colon 
\ol{\Pi(E)}_\L\to\ol{\Pi(F)}_\L
\] 
by setting
 $\ol{\Pi}_\L(g)_{(i)}:=g_{(\ol{i+1})}$ for $i\in\{0,1\}$.
\end{definition}
Note that in the above situation $g$ is automatically 
$\Lz$-linear and we have
$\ol{\Pi}_\L(\ol{\Pi}_\L(g))=g$. What is more, with $f_{(0)}:=f_0$ and 
$f_{(1)}:=f_1$, we see that $\ol{f}_\L$ is parity changeable and it follows
$\ol{\Pi}_\L(\ol{f}_\L)=\ol{\Pi(f)}_\L$.
\begin{lemma}\label{lemparchangefunc}
 Let $E,F,H\in\SVec$, $\L=\L_n\in\Gr$ with $n\in\N$ and let $f\colon E_\L\to 
F_\L$, $g\colon F_\L\to H_\L$ be parity changeable. Then $g\circ f$ is also 
parity changeable and we have
\[
 \ol{\Pi}_\L(g\circ f)=\ol{\Pi}_\L(g)\circ\ol{\Pi}_\L(f).
\]
\end{lemma}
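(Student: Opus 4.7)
The plan is to construct $(g\circ f)_{(0)}$ and $(g\circ f)_{(1)}$ explicitly by expanding $f_{(i)}(v)$ in the basis $(\oddgen_J)$ of $\ol{\Pi^i(F)}_\L$ and applying $g$ termwise via its parity-changeability, then to deduce the functoriality identity by a direct comparison of the resulting formulas.

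First I would fix $v\in E_i$ for $i\in\{0,1\}$ and expand
\[
 f_{(i)}(v)=\sum_{J\subseteq\{1,\ldots,n\}}\oddgen_J\,c_J
\]
with $c_J\in\Pi^i(F)_{\ol{|J|}}=F_{\ol{|J|+i}}$. For any $I\in\Po^n_i$, the identity $\oddgen_I\oddgen_J=\pm\oddgen_{I\cup J}$ (and zero when $I\cap J\neq\emptyset$) together with $|I\cup J|\equiv i+|J|\pmod 2$ shows that $c_J$ lies in $F_{\ol{|I\cup J|}}$. Parity-changeability of $g$ then gives $g(\oddgen_{I\cup J}c_J)=\oddgen_{I\cup J}\,g_{(\ol{i+|J|})}(c_J)$, and pulling $\oddgen_I$ back out (with the same sign) yields
\[
 (g\circ f)(\oddgen_I v)=\oddgen_I\Big(\sum_J \oddgen_J\,g_{(\ol{i+|J|})}(c_J)\Big).
\]
This forces the definition $(g\circ f)_{(i)}(v):=\sum_J\oddgen_J\,g_{(\ol{i+|J|})}(c_J)$. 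It remains to check that each summand lies in $\ol{\Pi^i(H)}_\L$: a short four-case analysis on the parities of $i$ and $|J|$ does this, using that multiplication by $\oddgen_J$ exchanges $\ol{H}_\L$ with $\ol{\Pi(H)}_\L$ precisely when $|J|$ is odd.

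For the equality $\ol{\Pi}_\L(g\circ f)=\ol{\Pi}_\L(g)\circ\ol{\Pi}_\L(f)$, I would apply the composition formula just derived to both sides. The left-hand side has $i$-component $(g\circ f)_{(\ol{i+1})}$, which by the formula equals $\sum_J\oddgen_J\,g_{(\ol{i+1+|J|})}(c'_J)$, where $f_{(\ol{i+1})}(v)=\sum_J\oddgen_J c'_J$. For the right-hand side I would use $\ol{\Pi}_\L(f)_{(i)}=f_{(\ol{i+1})}$ and $\ol{\Pi}_\L(g)_{(j)}=g_{(\ol{j+1})}$, together with the identification $\Pi^i(\Pi(F))=\Pi^{\ol{i+1}}(F)$, and apply the composition formula to $\ol{\Pi}_\L(g)\circ\ol{\Pi}_\L(f)$; the output is again $\sum_J\oddgen_J\,g_{(\ol{i+|J|+1})}(c'_J)$, which coincides with the left-hand side.

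The only real difficulty is keeping the three interacting parities (those of $I$, of $J$, and of the index attached to $g_{(\cdot)}$) consistent through the calculation; once the $\pmod 2$ bookkeeping is set up, both claims reduce to straightforward manipulations using the graded commutativity of $\L$ and the defining property of parity-changeability.
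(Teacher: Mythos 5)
Your proposal is correct and follows essentially the same route as the paper's proof: expand $f_{(i)}(v)$ in the $\oddgen_J$-basis, apply $g$ termwise via parity-changeability to obtain the explicit composition formula for $(g\circ f)_{(i)}$, and then verify the functoriality identity by applying that formula to both sides. The paper merely packages the resulting formula more compactly as $(g\circ f)_{(0)}=g\circ f_{(0)}$ and $(g\circ f)_{(1)}=\ol{\Pi}_\L(g)\circ f_{(1)}$, which is exactly your formula in disguise.
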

\begin{proof}
 Let $f_{(0)}, f_{(1)}$ and $g_{(0)}, g_{(1)}$ be as in Definition 
\ref{defcop}. For $I\in \Po^n_i$, $v\in E_i$, $i\in\{0,1\}$ let
\[
 f(\oddgen_Iv)=\oddgen_If_{(i)}(v)=\oddgen_I\sum_{J\in 
\Po^{n}}\oddgen_Jw_J,
\]
where $w_J\in F_{\ol{|I|+|J|}}$.
It follows that
\[
 (g\circ f)(\oddgen_I v)=\sum_{J\in 
\Po^{n}}\oddgen_I\oddgen_J g_{(\ol{|I|+|J|})}(w_J).
\]
This implies that $g\circ f$ is parity changeable with $(g\circ f)_{(0)}=g\circ 
f_{(0)}$ and
$(g\circ f)_{(1)}=\ol{\Pi}_\L(g)\circ f_{(1)}$.
Thus, $\ol{\Pi}_\L(g\circ f)_{(0)}=\ol{\Pi}_\L(g)\circ f_{(1)}$ and
$\ol{\Pi}_\L(g\circ f)_{(1)}=g\circ f_{(0)}$.
Applying this to $\ol{\Pi}_\L(g)\circ \ol{\Pi}_\L(f)$, we get
\[
 (\ol{\Pi}_\L(g)\circ 
\ol{\Pi}_\L(f))_{(0)}=\ol{\Pi}_\L(g)\circ\ol{\Pi}_\L(f)_{(0)}=\ol{\Pi}
_\L(g)\circ f_{(1)}
\]
and
\[
 (\ol{\Pi}_\L(g)\circ \ol{\Pi}_\L(f))_{(1)}=\ol{\Pi}_\L(\ol{\Pi}_\L(g))\circ 
\ol{\Pi}_\L(f)_{(1)}=g\circ f_{(0)}
\]
and therefore
\begin{align*}
 &\ol{\Pi}_\L(g\circ f)=\ol{\Pi}_\L(g)\circ\ol{\Pi}_\L(f).\qedhere
\end{align*}
\end{proof}
\begin{lemma}\label{lemparchange}
 Let $E,F,H\in\SVec$, $k\in\N\cup\{\infty\}$, $\U\subseteq \ol{H}^{(k)}$ be an 
open subfunctor and let
$f\colon\U\times\ol{E}^{(k)}\to\ol{F}^{(k)}$ be an
$\U$-family of $\ol{\R}$-linear morphisms. For $n\in\N$, $\L=\L_n\in\Grk{k}$  
and $u\in\U_\L$, the map $f_\L(u,\bl)\colon\ol{E}_\L^{(k)}\to\ol{F}_\L^{(k)}$ 
is 
parity changeable.
Defining
 $(\ol{\Pi}(f))_\L(u,\bl):=\ol{\Pi}_\L(f_\L(u,\bl))$
leads to an $\U$-family of $\ol{\R}$-linear morphisms
\[
\ol{\Pi}(f)\colon\U\times\ol{\Pi(E)}^{(k)}\to\ol{\Pi(F)}^{(k)}.
\]
The skeleton of $\ol{\Pi}(f)$ has the components
\begin{align*}
 \tilde{f}_0&=f_1(\pr_U)(\pr_{\Pi(E)_0})
 \quad\text{and}\\
 \tilde{f}_l&=f_{l+1}(\pr_U)(\pr_{\Pi(E)_0},\pr_1,\ldots,\pr_1)+l\cdot 
\alt{l}f_{l-1}(\pr_U,\pr_2)(\pr_1,\ldots,\pr_1)
\end{align*}
for $l>0$, where we consider
\begin{align*}
 \alt{l}f_{l-1}(\pr_U,\pr_2)(\pr_1,\ldots,\pr_1)\colon 
U\times\Pi(E)_0\to\Alt^l(H_1\oplus\Pi(E)_1;\Pi(F)_{\ol{k}})
\end{align*}
and
\begin{align*}
 f_{l+1}(\pr_U)&(\pr_{\Pi(E)_0},\pr_1,\ldots,\pr_1)\colon\\ 
&U\times\Pi(E)_0\to\Alt^l(H_1\oplus\Pi(E)_1;\Pi(F)_{\ol{k}})
\end{align*}
with the projections $\pr_U\colon U\times \Pi(E)_0\to U$, 
$\pr_{\Pi(E)_0}\colon U\times\Pi(E)_0\to\Pi(E)_0$, 
$\pr_1\colon H_1\times \Pi(E)_1\to H_1$ and 
$\pr_2\colon H_1\times\Pi(E)_1\to \Pi(E)_1$.
\end{lemma}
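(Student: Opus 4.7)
The plan is to establish three things in sequence: that $f_\Lambda(u,\cdot)$ is parity changeable, that the resulting $\ol{\Pi}(f)$ is a supersmooth $\U$-family of $\ol{\R}$-linear morphisms, and that its skeleton matches the stated formula. The main tool throughout is the decomposition of $f$'s skeleton provided by Lemma~\ref{lemufamily2}, combined with Proposition~\ref{propskel} to translate between skeletons and pointwise values.

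For parity changeability, fix $\Lambda = \Lambda_n$ and write $u = x + u_0 + u_1 \in \U_\Lambda$ with $x \in \U_\R$, $u_0 \in \ol{H}^{(k)}_{\L^\nil_{\ol 0}}$, and $u_1 \in \ol{H}^{(k)}_{\Lo}$. Set $T_u := f_\Lambda(u,\cdot)$. The even case is immediate: by $\Lz$-linearity of $T_u$, for $v_0 \in E_0$ and $|I|$ even we have $T_u(\oddgen_I v_0) = \oddgen_I T_u(v_0)$, so we may set $f_{(0)}(v_0) := T_u(v_0) \in \ol{F}^{(k)}_\Lambda$. For $v_1 \in E_1$ and $|I|$ odd, I would expand $f_\Lambda(u, \oddgen_I v_1)$ via Proposition~\ref{propskel} with basic point $(x,0)$ and odd nilpotent piece $(u_1, \oddgen_I v_1)$. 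Using Lemma~\ref{lemufamily2} to split each $f_l$ into an ``$A$-part'' linear in $\pr_{E_0}$ and a ``$B$-part'' using one $\pr_2$-slot, together with Lemma~\ref{lemdnmulti} to pull $\oddgen_I$ out of any summand involving $(0, \oddgen_I v_1)$, and noting that this piece can appear in at most one alternating slot by alternation of $f_l$, I identify the total contribution as $\oddgen_I \cdot \Phi(v_1)$ for a $\Phi(v_1) \in \ol{\Pi(F)}^{(k)}_\Lambda$ independent of $I$. Setting $f_{(1)}(v_1) := \Phi(v_1)$ then gives parity-changeability. Moreover, since $\Lz$-linearity is forced by parity-changeability together with $\R$-linearity (via $\oddgen_J \oddgen_I = \pm\oddgen_{J \sqcup I}$ for disjoint $J, I$ with $|J|$ even), $\ol{\Pi}_\Lambda(T_u)$ is automatically $\Lz$-linear.

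Next, I use the formulas for $(\tilde{f}_n)_n$ in the statement to define, via Proposition~\ref{propskel}, a supersmooth natural transformation $\tilde{f}\colon \U \times \ol{\Pi(E)}^{(k)} \to \ol{\Pi(F)}^{(k)}$. Direct inspection confirms each $\tilde{f}_n$ already has the form demanded by Lemma~\ref{lemufamily2}: the first summand is linear in $\pr_{\Pi(E)_0}$ and uses only $\pr_1$ in its remaining $l$ alternating slots, while the second is precisely an alternating symmetrization with a single $\pr_2$-argument. Hence $\tilde{f}$ is a $\U$-family of $\ol{\R}$-linear morphisms. To identify $\tilde f_\Lambda(u,\cdot)$ with $\ol{\Pi}_\Lambda(T_u)$, I would evaluate both sides on arbitrary $\oddgen_I w_i \in \ol{\Pi(E)}^{(k)}_\Lambda$, expand each side via Proposition~\ref{propskel}, and match terms. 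The first summand of $\tilde f_l$ arises from the $B$-part of $f_{l+1}$ when the parity-swapped argument $\pr_{\Pi(E)_0} \in E_1$ enters its $(0,\pr_2)$-slot, while the second arises from the $A$-part of $f_{l-1}$ with $\pr_2 \in \Pi(E)_1 = E_0$ feeding the $y$-derivative position; the factor $l$ reflects alternation redistributing this single $\pr_2$-slot across all $l$ positions.

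The main obstacle will be the combinatorial bookkeeping in this last matching: aligning slots and tracking signs under the alternating symmetrization, handling the boundary case $l = 0$ separately (where only the first summand survives, as $f_{-1}$ does not exist, consistent with $\tilde{f}_0 = f_1(\pr_U)(\pr_{\Pi(E)_0})$), and verifying that no contributions are double-counted when the $(0, \pr_2)$ and $(\pr_1, 0)$ slots of $f$'s skeleton are redistributed among those of $\tilde{f}$.
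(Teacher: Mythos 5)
Your proposal is correct and follows essentially the same route as the paper: both arguments rest on expanding $f_\L(u,v)$ via Proposition~\ref{propskel} using the decomposition of the skeleton from Lemma~\ref{lemufamily2} (the paper reuses its formula~(\ref{formparitychange}) directly) to read off the parity-changeable components, and then identify the skeleton of $\ol{\Pi}(f)$ by the same term-matching computation you outline. Your shortcut for the even part via $\Lz$-linearity and your explicit appeal to Lemma~\ref{lemufamily2} to see that $(\tilde f_n)_n$ again defines a $\U$-family are fine minor streamlinings, not a different method.
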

\begin{proof}
 Let $U:=\U_\R$. We set 
$\ol{\Pi}(f)_\R:=\ol{\Pi}(f)_{\L_1}|_{U\times 
E_1}\colon\U_\R\times\ol{\Pi(E)}^{(k)}_\R\to\ol{\Pi(F)}^{(k)}_\R$ so that 
$\ol{\Pi}(f)_\L$ is defined for all $\L\in\Grk{k}$. 
To simplify our notation, we consider 
$\ol{H}^{(k)}_\L\subseteq\ol{H\oplus E}^{(k)}_\L$ and 
$\ol{E}^{(k)}_\L\subseteq\ol{H\oplus E}^{(k)}_\L$ in the obvious way.
Let $x\in U$, $x_0\in\ol{H_0}^{(k)}_{\L^\nil}$, $x_1\in\ol{H_1}^{(k)}_\L$ and
$y_0\in\ol{E_0}^{(k)}_\L$, $y_1\in\ol{E_1}^{(k)}_\L$.
For  
$u=x+x_0+x_1$ and $v=y_0+y_1$, we  
use formula (\ref{formparitychange}) 
to get
\begin{align*}
f_\L(u,v)=\sum_{m,l=0}^\infty\frac{1}{m!l!}&d^{m+1}f_l(x)(y_0,x_0,
\ldots , x_0,x_1,\ldots,x_1)+\\
\sum_{m,l=0}^\infty\frac{1}{m!l!}& d^m 
f_{l+1}(x)(x_0,\ldots,x_0,y_1,x_1\ldots,x_1).
\end{align*}
Therefore, $f_\L(u,\bl)$ is parity changeable with
\begin{align*}
&(f_\L(u,\bl))_{(0)}=\sum_{m,l=0}^\infty\frac{1}{m!l!} d^{m+1}
f_l(x)(\bl,x_0,\ldots , x_0,x_1,\ldots,x_1)\quad\text{and}\\
&(f_\L(u,\bl))_{(1)}=\sum_{m,l=0}^\infty\frac{1}{m!l!} d^m 
f_{l+1}(x)(x_0,\ldots,x_0,\bl,x_1\ldots,x_1).
\end{align*}
In the next step, we show that $(\tilde{f}_n)_n$ is the skeleton of $\Pi(f)$.
Let $\tilde{y}\in \Pi(E)_0$, $\tilde{y}_0\in\ol{\Pi(E)_0}^{(k)}_{\L^\nil}$ and
$\tilde{y}_1\in\ol{\Pi(E)_1}^{(k)}_{\L}$. We calculate
\begin{align*}
d^m\tilde{f}_l(x,\tilde{y})((x_0,\tilde{y}_0),\ldots,&(x_0,\tilde{y}_0),(x_1,
\tilde { y } _1),\ldots,(x_1,\tilde{y}_1))=\\
&d^mf_{l+1}(x)(x_0,\ldots,x_0,\tilde{y},x_1,\ldots,x_1)\\
&+m\cdot 
d^{m-1}f_{l+1}(x)(x_0,\ldots,x_0,\tilde{y}_0,x_1,\ldots,x_1)\\
&+l\cdot  d^{m} f_{l-1}(x,\tilde{y}_1)(x_0,\ldots,x_0,x_1,\ldots,x_1),
\end{align*}
where the last two 
summands are zero for $m=0$ and $l=0$, respectively. Note that 
\begin{align*}
 &l\cdot  d^{m} f_{l-1}(x,\tilde{y}_1)(x_0,\ldots,x_0,x_1,\ldots,x_1)\\
 &=
 l \cdot d^{m+1} f_{l-1}(x)(\tilde{y}_1,x_0,\ldots,x_0,x_1,\ldots,x_1)
\end{align*}
holds because of Lemma \ref{lemufamily2}.
If $\tilde{f}\colon\U\times\ol{\Pi(E)}^{(k)}\to\ol{\Pi(F)}^{(k)}$ is the 
morphism defined by $(\tilde{f}_n)_n$, then it follows
\begin{align*}
\tilde{f}_\L(u,\tilde{v})=\sum_{m,l=0}^\infty\frac{1}{m!l!}&d^{m}f_{l+1}(x)(x_0,
\ldots , x_0,\tilde{y}+\tilde{y}_0,x_1,\ldots,x_1)+\\
\sum_{m,l=0}^\infty\frac{1}{m!l!}& d^{m+1} 
f_{l}(x)(\tilde{y}_1,x_0,\ldots,x_0,x_1\ldots,x_1)
\end{align*}
for $\tilde{v}=\tilde{y}+\tilde{y}_0+\tilde{y}_1$. This is exactly 
$(\ol{\Pi}(f))_\L(u,\tilde{v})$.
\end{proof}
\begin{corollary}\label{corparchangefunc}
 Let $E,E',F,F',H,H'\in\SVec$, $k\in\N\cup\{\infty\}$ and $\U\subseteq 
\ol{H}^{(k)}$, $\V\subseteq\ol{H'}^{(k)}$ be open subfunctors.
Moreover,
 let
$f\colon\U\times\ol{E}^{(k)}\to\ol{F}^{(k)}$ be an
$\U$-family of $\ol{\R}$-linear morphisms, 
$g\colon\V\times\ol{E'}^{(k)}\to\ol{F'}^{(k)}$ be an
$\V$-family of $\ol{\R}$-linear morphisms and $h\colon\U\to\V$ be supersmooth.
Then $g\circ(h,f)\colon\U\times\ol{E}^{(k)}\to\ol{F'}^{(k)}$ is an $\U$-family 
of $\ol{\R}$-linear morphisms and we have
\[
 \ol{\Pi}(g\circ(h,f))=\ol{\Pi}(g)\circ(h,\ol{\Pi}(f)).
\]
In addition, we have $\ol{\Pi}(\ol{\Pi}(f))=f$.
\end{corollary}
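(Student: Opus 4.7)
The plan is to reduce everything to a pointwise statement about parity changeable linear maps and then invoke Lemma \ref{lemparchangefunc}. The point is that once we fix $\L\in\Grk{k}$ and $u\in\U_\L$, all of $f_\L(u,\bl)$, $g_\L(h_\L(u),\bl)$, and $(g\circ(h,f))_\L(u,\bl)$ are $\Lz$-linear maps between appropriate modules; by Lemma \ref{lemparchange} the first two (and, as we shall check, the third as well) are parity changeable, and then Lemma \ref{lemparchangefunc} gives the identity on the nose at each $(\L,u)$.

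First I would verify claim (a), that $g\circ(h,f)$ is again a $\U$-family of $\ol{\R}$-linear morphisms. Supersmoothness is automatic since $f$, $g$ and $h$ are supersmooth and hence $(h,f)\colon\U\times\ol{E}^{(k)}\to\V\times\ol{E'}^{(k)}$ is supersmooth (here one uses $F=E'$, which is implicit in the statement). For $\Lz$-linearity in the second argument: fix $\L\in\Grk{k}$ and $u\in\U_\L$; then $v\mapsto f_\L(u,v)$ is $\Lz$-linear, and $w\mapsto g_\L(h_\L(u),w)$ is $\Lz$-linear, so their composition $v\mapsto g_\L(h_\L(u),f_\L(u,v))$ is $\Lz$-linear. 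Therefore $g\circ(h,f)$ satisfies Definition \ref{ufamily}.

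Next, for claim (b), fix $\L=\L_n\in\Grk{k}$ and $u\in\U_\L$. By Lemma \ref{lemparchange} applied to $f$ and to $g$, both $\varphi:=f_\L(u,\bl)\colon\ol{E}_\L^{(k)}\to\ol{F}_\L^{(k)}$ and $\psi:=g_\L(h_\L(u),\bl)\colon\ol{E'}_\L^{(k)}\to\ol{F'}_\L^{(k)}$ are parity changeable, and the definitions of $\ol{\Pi}(f)$ and $\ol{\Pi}(g)$ give
\[
 \ol{\Pi}(f)_\L(u,\bl)=\ol{\Pi}_\L(\varphi),\qquad \ol{\Pi}(g)_\L(h_\L(u),\bl)=\ol{\Pi}_\L(\psi).
\]
Lemma \ref{lemparchangefunc} then yields that $\psi\circ\varphi$ is parity changeable and
\[
 \ol{\Pi}_\L(\psi\circ\varphi)=\ol{\Pi}_\L(\psi)\circ\ol{\Pi}_\L(\varphi).
\]
Since $\psi\circ\varphi=(g\circ(h,f))_\L(u,\bl)$, this says that this map is parity changeable (which justifies applying $\ol{\Pi}$ to $g\circ(h,f)$ via Lemma \ref{lemparchange}) and that
\[
 \ol{\Pi}(g\circ(h,f))_\L(u,\bl)=\ol{\Pi}(g)_\L(h_\L(u),\bl)\circ\ol{\Pi}(f)_\L(u,\bl)=\bigl(\ol{\Pi}(g)\circ(h,\ol{\Pi}(f))\bigr)_\L(u,\bl).
\]
As this holds for every $\L$ and every $u\in\U_\L$, the two $\U$-families of $\ol{\R}$-linear morphisms agree. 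Finally, for the involutivity $\ol{\Pi}(\ol{\Pi}(f))=f$, apply the identity $\ol{\Pi}_\L(\ol{\Pi}_\L(\varphi))=\varphi$ noted directly after Definition \ref{defcop} to $\varphi=f_\L(u,\bl)$ for each $(\L,u)$.

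There is no real obstacle here: the only thing one has to be a bit careful about is that $\ol{\Pi}$ was only defined on parity changeable maps, so one must check at each step that the relevant composites fall in that class before applying $\ol{\Pi}_\L$; but Lemma \ref{lemparchange} (showing that the components of $\U$-families of $\ol{\R}$-linear morphisms are parity changeable) and Lemma \ref{lemparchangefunc} (showing parity changeability is preserved under composition) between them handle all such checks automatically, so the corollary reduces to an $\L$-by-$\L$, $u$-by-$u$ application of these two lemmas.
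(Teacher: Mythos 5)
Your proof is correct and is essentially the paper's own argument: the paper's proof of this corollary is the one-line remark that it ``follows from the pointwise definition of $\ol{\Pi}$ in Lemma \ref{lemparchange} and Lemma \ref{lemparchangefunc}'', and your write-up simply spells out that pointwise reduction (fixing $\L$ and $u$, checking parity changeability, and applying Lemma \ref{lemparchangefunc}) in full detail. The only cosmetic point is that for $\L=\R$ the parity-changeable formalism does not directly apply, but this is already handled by the way $\ol{\Pi}(f)_\R$ is defined via restriction from $\L_1$ in Lemma \ref{lemparchange}.
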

\begin{proof}
 This follows from the pointwise definition of $\ol{\Pi}$ in Lemma 
\ref{lemparchange} and Lemma \ref{lemparchangefunc}.
\end{proof}
\begin{proposition}
 For $k\in\N\cup\{\infty\}$ let $\pi\colon\E\to\M$ be a $k$-super vector 
bundle with typical fiber $F\in\SVec$, bundle atlas 
$\{\Psi^\a\colon\U^\a\times\ol{F}^{(k)}\to\E\colon\a\in A\}$ and the 
respective change of 
charts $\Psi^{\a\b}=(\phi^{\a\b},\psi^{\a\b})$, $\a,\b\in A$. Then the 
morphisms
$(\phi^{\a\b},\ol{\Pi}(\psi^{\a\b}))$ define a $k$-super vector bundle 
$\ol{\Pi}(\E)$ over $\M$ with typical fiber $\Pi(F)$. 

Let $\pi'\colon\E'\to\M'$ be another $k$-vector bundle and
$f\colon\E\to\E'$ be a morphism of $k$-super vector bundles
over $h\colon\M\to\M'$.
If $f$ has the local form $(g^{\a\a'},\varphi^{\a\a'})$, then 
$(g^{\a\a'},\ol{\Pi}(\varphi^{\a\a'}))$ defines a morphism 
$\ol{\Pi}(f)\colon\ol{\Pi}(\E)\to\ol{\Pi}(\E')$ over $h$. This construction is 
functorial and defines an equivalence of categories
\[
 \ol{\Pi}\colon\SVBunk{k}\to\SVBunk{k}.\nomenclature{$\ol{\Pi}$}{}
\]
\end{proposition}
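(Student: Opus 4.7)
The plan is to use the local description of super vector bundles from Remark \ref{remsvbunlocal} together with Corollary \ref{corparchangefunc} throughout. First I would show that $(\phi^{\a\b},\ol{\Pi}(\psi^{\a\b}))$ satisfies the hypotheses of Definition \ref{defsvectorbundle}: Lemma \ref{lemparchange} guarantees that each $\ol{\Pi}(\psi^{\a\b})$ is an $\U^{\a\b}$-family of $\ol{\R}$-linear morphisms with values in $\ol{\Pi(F)}^{(k)}$, so condition (ii) holds. The cocycle condition
\[
(\phi^{\a\g},\ol{\Pi}(\psi^{\a\g})) = (\phi^{\b\g},\ol{\Pi}(\psi^{\b\g})) \circ (\phi^{\a\b},\ol{\Pi}(\psi^{\a\b}))
\]
follows from the original cocycle condition $\Psi^{\a\g} = \Psi^{\b\g}\circ\Psi^{\a\b}$ by applying Corollary \ref{corparchangefunc} to the fiber slots (with $h := \phi^{\a\b}$). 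Proposition \ref{proplocaldescsman}, in the form of Remark \ref{remsvbunlocal}, then yields a super vector bundle $\ol{\Pi}(\E)$ over the same base $\M$, with typical fiber $\Pi(F)$.

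Next, for a morphism $f\colon\E\to\E'$ over $h\colon\M\to\M'$ with local form $(g^{\a\a'},\varphi^{\a\a'})$, I would verify that $(g^{\a\a'},\ol{\Pi}(\varphi^{\a\a'}))$ is compatible with the new changes of bundle charts. The compatibility identity
\[
\Psi'^{\a'\b'}\circ (g^{\a\a'},\varphi^{\a\a'}) = (g^{\b\b'},\varphi^{\b\b'})\circ\Psi^{\a\b},
\]
holding on the relevant overlaps, becomes the analogous compatibility identity for the parity-changed data upon one more application of Corollary \ref{corparchangefunc}. Remark \ref{remsvbunlocal} then assembles these local data into a unique supersmooth bundle morphism $\ol{\Pi}(f)\colon\ol{\Pi}(\E)\to\ol{\Pi}(\E')$ over the unchanged base morphism $h$.

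Functoriality, namely $\ol{\Pi}(\id_\E) = \id_{\ol{\Pi}(\E)}$ and $\ol{\Pi}(f_2\circ f_1) = \ol{\Pi}(f_2)\circ\ol{\Pi}(f_1)$, reduces locally to the composition statement in Corollary \ref{corparchangefunc}, together with the trivial observation that the identity is parity changeable with $\ol{\Pi}(\id) = \id$. Finally, the equivalence of categories follows from the involutive identity $\ol{\Pi}\circ\ol{\Pi} = \id_{\SVBunk{k}}$: on objects this is the last statement of Corollary \ref{corparchangefunc} applied to each $\Psi^{\a\b}$, and on morphisms the same statement applied to each local $\varphi^{\a\a'}$. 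Thus $\ol{\Pi}$ is not merely an equivalence but an involutive isomorphism of categories.

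The only real obstacle is bookkeeping: one must carefully track which slot (the base $\U$ versus the fiber $\ol{E}^{(k)}$) the parity change is acting on, so that Corollary \ref{corparchangefunc} is invoked with the correct roles of $h$, $f$ and $g$. Since $\ol{\Pi}$ acts only on the fiber direction and Lemma \ref{lemparchange} already records the precise skeleton-level formula, no further analytic input is needed and everything reduces to these two results.
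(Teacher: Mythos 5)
Your proposal is correct and follows essentially the same route as the paper: the local description of Remark \ref{remsvbunlocal} combined with Lemma \ref{lemparchange} and Corollary \ref{corparchangefunc} handles the cocycle condition, the morphisms, functoriality, and the involutivity all at once. The only quibble is that the bundle $\ol{\Pi}(\ol{\Pi}(\E))$ is produced by Proposition \ref{proplocaldescsman} only up to unique isomorphism, so one should claim $\ol{\Pi}\circ\ol{\Pi}\cong\id$ (an equivalence) rather than a strict isomorphism of categories, as the paper does.
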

\begin{proof}
In light of Remark \ref{remsvbunlocal},
 it follows from Corollary \ref{corparchangefunc} that the 
morphisms $(\phi^{\a\b},\ol{\Pi}(\psi^{\a\b}))$ define a super vector bundle.
That $\ol{\Pi}$ is well-defined on morphisms and functorial follows by the same 
argument. The corollary also implies that $\ol{\Pi}(\ol{\Pi}(\E))\cong\E$ and 
$\ol{\Pi}(\ol{\Pi}(f))\cong f$ hold under this identification, which shows that 
$\ol{\Pi}$ is an equivalence of categories.
\end{proof}
\subsection{The Tangent Bundle of a Supermanifold}
In this section, we expand on the definition of the tangent functor $\T$ 
for supermanifolds given by Molotkov (see \cite[Section 5.3, p. 404f.]{Mol}).

Let $k\in\N_0\cup\{\infty\}$ and $\M\in\Man^{\Grk{k}}$. We define a functor 
$\T\M\in\Man^{\Grk{k}}$
by setting $(\T\M)_\L=\T\M_\L:=T\M_\L$ for 
all $\L\in\Grk{k}$ and 
$(\T\M)_\varrho=\T\M_\varrho:=T\M_\varrho\colon\T\M_\L\to\T\M_{\L'}$ 
for all $\varrho\in\Hom_{\Grk{k}}(\L,\L')$. It follows from the functoriality 
of $T\colon\Man\to\Man$ that $\T\M$ is indeed a functor.
By the same argument, the bundle projections 
$\pi^{\T\M}_\L\colon\T\M_\L\to\M_\L$ define a natural transformation 
$\pi^{\T\M}\colon\T\M\to\M$.

If $\Nc\in\Man^{\Grk{k}}$ and $f\colon\M\to\Nc$ is a natural transformation, 
then it is easy to see that setting $\T 
f_\L:=Tf_\L\colon \T\M_\L\to\T\Nc_\L$ for all $\L\in\Grk{k}$ defines a 
natural transformation $\T f\colon\T\M\to\T\Nc$ and that this gives us a functor
$\T\colon\Man^{\Grk{k}}\to\Man^{\Grk{k}}$.
We obviously have $\pi^{\T\Nc}\circ\T f=f \circ\pi^{\T\M}$.
\begin{lemma}
 Let $k\in\N_0\cup\{\infty\}$ and $\M\in\SMank{k}$ be modelled on $E\in\SVec$ 
with 
the atlas $\{\varphi^\a\colon\U^\a\to\M\colon \a\in A\}$. Then $\T\M$ is a 
$k$-super vector bundle over $\M$ with typical fiber $E$, the bundle atlas 
$\{\T\varphi^\a\colon\T\U^\a\to\T\M\colon \a\in A\}$ and the projection 
$\pi^{\T\M}$. If $f\colon\M\to\Nc$ is a 
morphism of $k$-supermanifolds, then
 $\T f\colon\T\M\to\T\Nc$
is a morphism of $k$-super vector bundles and the above defines a functor
\[
  \T\colon\SMank{k}\to\SVBunk{k}.\nomenclature{$\T$}{}
\]
\end{lemma}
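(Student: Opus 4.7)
The plan is to unpack what the functor $\T\M$ looks like on objects and morphisms chart-by-chart and to verify that the data $(\T\M,\pi^{\T\M},\{\T\varphi^\a\})$ fits the definition of a $k$-super vector bundle (Definition \ref{defsvectorbundle}). For every $\L\in\Grk{k}$ and every open $U\subseteq\ol{E}^{(k)}_\L$ we have a canonical identification $TU\cong U\times\ol{E}^{(k)}_\L$, and under the natural isomorphism $\ol{E}^{(k)}_\L\times\ol{E}^{(k)}_\L\cong\ol{E\oplus E}^{(k)}_\L$ this exhibits $\T\U^\a$ as the product superdomain $\U^\a\times\ol{E}^{(k)}$, which is an open subfunctor of $\ol{E\oplus E}^{(k)}$. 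Thus the charts $\T\varphi^\a$ take values in a space modelled on $E\oplus E$, with typical fiber $E$, and the bundle projection $\pi^{\T\M}$ corresponds locally to the first projection.

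Next I would check that $\{\T\varphi^\a\colon\a\in A\}$ is a bundle atlas. In these charts the transition map is $\T\varphi^{\a\b}$ with $(\T\varphi^{\a\b})_\L=T(\varphi^{\a\b}_\L)$, and by the chain rule
\[
(\T\varphi^{\a\b})_\L(u,v)=\bigl(\varphi^{\a\b}_\L(u),\,d\varphi^{\a\b}_\L(u)(v)\bigr),
\]
so $\T\varphi^{\a\b}$ splits as $(\varphi^{\a\b},\de\varphi^{\a\b})$. The first component is supersmooth by hypothesis, and Lemma \ref{lemdfss} shows that $\de\varphi^{\a\b}\colon\U^{\a\b}\times\ol{E}^{(k)}\to\ol{E}^{(k)}$ is again supersmooth. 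Moreover, since $\varphi^{\a\b}$ is supersmooth, $d\varphi^{\a\b}_\L(u,\cdot)$ is $\Lz$-linear for every $u\in\U^{\a\b}_\L$; that is exactly the condition that $\de\varphi^{\a\b}$ be a $\U^{\a\b}$-family of $\ol{\R}$-linear morphisms. This verifies condition (ii) of Definition \ref{defsvectorbundle} for the change of bundle charts and, together with Remark \ref{remsvbunlocal}, yields the super vector bundle structure on $\T\M$.

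For a morphism $f\colon\M\to\Nc$, I would work in bundle charts $\T\varphi^\a$ and $\T\psi^\b$: the local description of $\T f$ in these charts is
\[
(\psi^\b)^{-1}_{\L}\circ f_\L\circ\varphi^\a_\L\ \text{on the base, and}\ d\bigl((\psi^\b)^{-1}\circ f\circ\varphi^\a\bigr)_\L(u,\cdot)\ \text{on fibers},
\]
so $\T f$ has exactly the form $(h^{\a\b},g^{\a\b})$ required of a super vector bundle morphism, where $g^{\a\b}$ is a $\U$-family of $\ol{\R}$-linear morphisms by the same argument as above applied to $(\psi^\b)^{-1}\circ f\circ\varphi^\a$. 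Functoriality of $\T$ on $\SMank{k}$ follows from functoriality of $T$ on $\Man$ applied objectwise (each $\L\in\Grk{k}$), combined with the already established naturality of $\T\M$ and $\T f$ in $\L$.

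The only subtle step is the identification of the fiberwise derivative with a $\U^{\a\b}$-family of $\ol{\R}$-linear morphisms, and it is entirely handled by supersmoothness (giving $\Lz$-linearity of $d\varphi^{\a\b}_\L(u,\cdot)$) together with Lemma \ref{lemdfss} (giving supersmoothness of the total map $\de\varphi^{\a\b}$); everything else is bookkeeping in the charts $\T\varphi^\a$, using that tangent bundles of open subsets of locally convex spaces are trivial products.
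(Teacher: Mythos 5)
Your proposal is correct and follows essentially the same route as the paper: identify $\T\U^\a$ with the product superdomain $\U^\a\times\ol{E}^{(k)}$, split the transition map as $\T\varphi^{\a\b}=(\varphi^{\a\b},\de\varphi^{\a\b})$, invoke Lemma \ref{lemdfss} for supersmoothness of $\de\varphi^{\a\b}$ and the definition of supersmoothness for its $\Lz$-linearity in the fiber variable, and then repeat the argument for morphisms with functoriality inherited from $T$ on $\Man$. The only difference is that you spell out explicitly why $\de\varphi^{\a\b}$ is a $\U^{\a\b}$-family of $\ol{\R}$-linear morphisms, where the paper simply says this holds by definition.
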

\begin{proof}
That $\{\T\varphi^\a\colon\T\U^\a\to\T\M\colon \a\in A\}$ is a covering is 
obvious.
By functoriality, we have
\[
 \T(\varphi^\b)^{-1}\circ \T\varphi^\a=\T\varphi^{\a\b}
\]
on $\T\U^{\a\b}=\U^{\a\b}\times\ol{E}^{(k)}$ for all $\a,\b\in A$ and by 
definition, we have
\[
 \T\varphi^{\a\b}= (\varphi^{\a\b},\de\varphi^{\a\b})
 \colon \U^{\a\b}\times\ol{E}^{(k)}\to \U^{\b\a}\times\ol{E}^{(k)},
\]
which is a supersmooth morphism because of Lemma \ref{lemdfss}. 
Clearly, each $\pi^{\T\M}_\L\colon\T\M_\L\to\M_\L$ is a vector bundle and we 
have that
\[
(\varphi^\a)^{-1}\circ\pi^{\T\M}\circ\T\varphi^\a\colon\U^\a\times\ol{E}^{(k)
}\to\U^\a
\]
is simply the projection and thus supersmooth.
Since, by definition, $\de\varphi^{\a\b}$ is an $\U^{\a\b}$-family of 
$\ol{\R}$-linear morphisms, the above atlas is indeed a bundle 
atlas for $\T\M$.
In such charts, $\T f$ has locally the form $(f^{\a\b'},\de f^{\a\b'})$ and 
therefore is a morphism of $k$-super vector bundles for the same reason. 
Functoriality follows from the 
functoriality of $\T$ as a functor $\Man^{\Grk{k}}\to\Man^{\Grk{k}}$.
\end{proof}
In the situation of the lemma, we call $\T\M$ 
\textit{the tangent bundle of }$\M$\index{Supermanifold!tangent bundle}.
We will write $\pi^{\T\M}\colon \T\M\to\M$\nomenclature{$\pi^{\T\M}$}{} 
for the bundle projection and $\T_x\M$\nomenclature{$\T_x\M$}{} instead 
of $(\T\M)_x$ for the fiber of $\T\M$ at a point $x$ of $\M$.
\begin{lemma}\label{lemtlim}
 For every $\M\in\SMan$, we have
 \[
  \varprojlim\T\M\cong T\varprojlim\M
 \]
 in $\MBunk{\infty}$
 with the functor $\varprojlim$ from Theorem \ref{thrmsmanmbun}.
 Moreover, $\varprojlim \pi^{\T\M}=\pi^{T\varprojlim \M}$ holds for the 
bundle 
projections $\pi^{T\varprojlim \M}\colon 
T\varprojlim \M\to\varprojlim \M$ and $\pi^{\T\M}\colon\T\M\to\M$.
 For morphisms $f\colon\M\to\Nc$ of supermanifolds, we have
 \[
  \varprojlim \T f=T\varprojlim f
 \]
 under the above identification.
\end{lemma}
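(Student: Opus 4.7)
The plan is to reduce the claim to a local computation in charts and then invoke Lemma \ref{lemdirectlimsmooth} to exchange the tangent functor $T$ with the projective limit $\varprojlim_n$.

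First I would unpack the adapted atlases on both sides. If $\M$ is modelled on $E\in\SVec$ with atlas $\{\varphi^\a\colon\U^\a\to\M\colon\a\in A\}$, then by Theorem \ref{thrmsmanmbun} the family $\{(\varprojlim_n\varphi^\a_{\L_n})^{-1}\colon\a\in A\}$ is an atlas of the manifold $\varprojlim\M$, whose model space is $\varprojlim_n\ol{E}_{\L_n}$. Likewise $\T\M$ carries the bundle atlas $\{\T\varphi^\a\colon\a\in A\}$ with change of charts $\T\varphi^{\a\b}=(\varphi^{\a\b},\de\varphi^{\a\b})$, so $\varprojlim\T\M$ inherits the adapted atlas $\{(\varprojlim_n\T\varphi^\a_{\L_n})^{-1}\}$ modelled on $\varprojlim_n(\ol{E}_{\L_n}\times\ol{E}_{\L_n})$. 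Since projective limits commute with finite products, this last space is canonically isomorphic to $(\varprojlim_n\ol{E}_{\L_n})\times(\varprojlim_n\ol{E}_{\L_n})$.

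Next I would match the two bundle structures locally. In the chart $\varphi^\a$, the superdomain $\U^\a_{\L_n}$ is an open subset of the locally convex space $\ol{E}_{\L_n}$, so $T\U^\a_{\L_n}=\U^\a_{\L_n}\times\ol{E}_{\L_n}$. Taking $\varprojlim_n$ yields
\[
\varprojlim_n\T\U^\a_{\L_n}=\varprojlim_n(\U^\a_{\L_n}\times\ol{E}_{\L_n})\cong\Bigl(\varprojlim_n\U^\a_{\L_n}\Bigr)\times\Bigl(\varprojlim_n\ol{E}_{\L_n}\Bigr).
\]
On the other hand, $\varprojlim_n\U^\a_{\L_n}$ is open in the locally convex space $\varprojlim_n\ol{E}_{\L_n}$, hence its tangent bundle is precisely the same product. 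This gives a canonical chartwise identification $\Phi^\a\colon\varprojlim_n\T\U^\a_{\L_n}\xrightarrow{\cong}T(\varprojlim_n\U^\a_{\L_n})$ that intertwines the two bundle projections, since both are the first-factor projection. To glue these to a global isomorphism $\Phi\colon\varprojlim\T\M\to T\varprojlim\M$, I would use Lemma \ref{lemdirectlimsmooth}: if $\varphi^{\a\b}\colon\U^{\a\b}\to\U^{\b\a}$ has components $\varphi^{\a\b}_{\L_n}$, then the projective limit $\varprojlim_n\varphi^{\a\b}_{\L_n}$ is smooth and its differential equals $\varprojlim_n d\varphi^{\a\b}_{\L_n}=\varprojlim_n\de\varphi^{\a\b}_{\L_n}$. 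Consequently the transition morphism of $T\varprojlim\M$ in the chart $\varphi^\a$ is exactly the projective limit of $\T\varphi^{\a\b}$, so the $\Phi^\a$ assemble into an isomorphism of manifolds (in fact of fiber bundles). Commutation with the bundle projections then follows, since $\varprojlim\pi^{\T\M}$ and $\pi^{T\varprojlim\M}$ both correspond in charts to first-factor projection.

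For a morphism $f\colon\M\to\Nc$, in adapted charts $f$ is locally a skeleton $(f^{\a\b})$, and both $\varprojlim \T f$ and $T\varprojlim f$ are locally represented by $\varprojlim_n(f^{\a\b}_{\L_n},\de f^{\a\b}_{\L_n})$: the first by definition, the second because by Lemma \ref{lemdirectlimsmooth} the differential of the projective limit is the projective limit of the differentials. Hence they agree under $\Phi$, and naturality of $\Phi$ in $\M$ follows. The one technical step that needs care is the verification of this commutation of $d$ with $\varprojlim_n$: this is where Lemma \ref{lemdirectlimsmooth} is essential, since without it one cannot identify tangent vectors on a projective limit of locally convex spaces with compatible families of tangent vectors on the pieces. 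Once this is in place, the rest of the argument is purely bookkeeping with the adapted atlases.
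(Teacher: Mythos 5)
Your proposal is correct and is essentially the paper's argument: the paper's proof is a one-line reduction to Lemma \ref{lemtmbun} (that $T$ commutes with inverse limits of multilinear bundles), whose own proof is precisely your chartwise computation using Lemma \ref{lemdirectlimsmooth} to identify $d\varprojlim_n\varphi^{\a\b}_{\L_n}$ with $\varprojlim_n \de\varphi^{\a\b}_{\L_n}$. You have simply inlined that general lemma for the specific inverse system $(\M_{\L_n},\M_{\ep_{m,n}})$, which is fine.
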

\begin{proof}
 This follows from the definition of $\T\M$, Lemma 
\ref{lemtmbun} and the definition of $\varprojlim$ in Theorem 
\ref{thrmsmanmbun}.
\end{proof}
\begin{remark}
 In view of Lemma \ref{lemtlim}, it seems likely that one can 
 describe higher tangent bundles, higher jet bundles and higher tangent Lie 
supergroups analogously to \cite{Bert}.
\end{remark}
\begin{lemma}\label{lemipit}
 Let $k\in\N_0\cup\{\infty\}$ and $\M\in\SMank{k}$. With the functors from 
Lemma \ref{lemsvbunfunct}, we have  
 $\T\i^n_k(\M)\cong\i^n_k(\T\M)$ for $n\in\{0,1\}, n\leq 
k$ in $\SVBunk{k}$ and
 $\T\pi^k_n(\M)\cong\pi^k_n(\T\M)$ for $0\leq n\leq k$ in $\SVBunk{n}$.
\end{lemma}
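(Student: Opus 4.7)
The plan is to verify each isomorphism locally in charts, relying on the skeleton description of supersmooth morphisms in Proposition \ref{propskel} together with the explicit skeleton formula for $\de f$ given in Remark \ref{remdf}. Since morphisms of super vector bundles are determined by their local descriptions in charts (Remark \ref{remsvbunlocal} and Proposition \ref{proplocaldescsman}), it is enough to exhibit matching changes of bundle charts for the left-hand and right-hand sides on a common atlas.

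The commutation $\T\pi^k_n(\M)\cong\pi^k_n(\T\M)$ is essentially definitional. Both sides arise by restricting the underlying functor $\Grk{k}\to\Man$ of $\M$ to $\Grk{n}$ and then applying the pointwise tangent functor $T$; these two operations commute strictly, and the bundle atlases, given by $\T\varphi^\a$ on each side, restrict identically. Hence one actually obtains equality, not just isomorphism.

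For $\i^n_k$ with $n\in\{0,1\}$, fix an atlas $\{\varphi^\a\colon\U^\a\to\M\}$ of $\M\in\SMank{n}$ whose changes of charts $\varphi^{\a\b}$ have skeletons $(\varphi^{\a\b}_0,\ldots,\varphi^{\a\b}_n)$ of length $n+1$. By Proposition \ref{propisman}, resp.\ Proposition \ref{propvbunman}, $\i^n_k(\M)$ has changes of charts with skeleton $(\varphi^{\a\b}_0,\ldots,\varphi^{\a\b}_n,0,0,\ldots)$, so by Definition \ref{defsvectorbundle} the tangent bundle $\T\i^n_k(\M)$ has change of bundle charts $(\i^n_k(\varphi^{\a\b}),\de\,\i^n_k(\varphi^{\a\b}))$. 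The key observation is that, in the formula of Remark \ref{remdf}, the $m$-th component $(\de f)_m$ depends only on the $m$-th component $f_m$ of the original skeleton; consequently the skeleton of $\de\,\i^n_k(\varphi^{\a\b})$ vanishes for $m>n$ and for $m\leq n$ coincides term-by-term with the corresponding component of $\de\varphi^{\a\b}$ computed in $\SVBunk{n}$. On the other hand, $\i^n_k(\T\M)$ is obtained by first forming $\T\M\in\SVBunk{n}$, whose change of bundle charts is $(\varphi^{\a\b},\de\varphi^{\a\b})$ with skeletons of length $n+1$, and then extending trivially via $\i^n_k\colon\SVBunk{n}\to\SVBunk{k}$ of Lemma \ref{lemsvbunfunct}. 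The two skeletons thus match, and Proposition \ref{proplocaldescsman} yields the isomorphism.

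The main obstacle is structural rather than computational: one must verify that both $\de\,\i^n_k(\varphi^{\a\b})$ and $\i^n_k(\de\varphi^{\a\b})$ genuinely define $\U^{\a\b}$-families of $\ol{\R}$-linear morphisms with respect to the same base-fiber decomposition, so that the skeleton comparison above actually takes place inside $\SVBunk{k}$. This is settled by combining Lemma \ref{lemdfss}, which ensures that $\de$ preserves supersmoothness, with the local characterization of $\U$-families in Lemma \ref{lemufamily2}, which makes it transparent that zero-extending an $\U$-family skeleton preserves both the $\U$-family property and its multilinear structure.
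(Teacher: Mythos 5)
Your proposal is correct and follows essentially the same route as the paper: both arguments reduce to checking, on a common atlas, that applying $\T\circ\i^n_k$ and $\i^n_k\circ\T$ (respectively $\T\circ\pi^k_n$ and $\pi^k_n\circ\T$) to a change of charts yields the same morphism. Your additional observation from Remark \ref{remdf} that $(\de f)_m$ depends only on $f_m$, so zero-extended skeletons have zero-extended derivatives, is exactly the computation the paper leaves implicit.
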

\begin{proof}
 With any atlas $\A:=\{\varphi^\a\colon\a\in A\}$ of $\M$ it is obvious that 
applying $\T\circ\i^n_k$ and $\i^n_k\circ\T$ to a change of charts leads to the 
same morphism. The same is true for $\T\circ\pi^k_n$ and $\pi^k_n\circ\T$.
\end{proof}

\appendix
\section{Multilinear Bundles}\label{chapmullin}
Multilinear bundles were introduced in \cite{Bert} to describe higher order 
tangent bundles. As it turns out, the structure of supermanifolds is closely 
related to the structure of multilinear bundles. One important addition 
introduced below, is the inverse limit of multilinear bundles.

For this section, we fix
the infinitesimal generators 
$\ep_k$\nomenclature{$\ep_k$}{}, $k\in\N$,  with the 
relations $\ep_i\ep_j=\ep_j\ep_i$ and $\ep_i\ep_i=0$.
As usual, 
we set 
$\ep_I:=\ep_{i_1}\cdots\ep_{i_r}$\nomenclature{$\ep_I$}{} for 
$I=\{i_1,\ldots,i_r\}\subseteq\{1,\ldots,k\}$.
\subsection{Multilinear Spaces}
\begin{definition}[{\cite[MA.2, p.169]{Bert}}]\label{defcube}
 Let $k\in\N$. A \textit{(locally convex) $k$-dimensional 
cube}\index{Cube}\index{Cube!$k$-dimensional} is a family 
$(E_I)_{I\in\Po^k_+}$ of (locally convex) $\R$-vector spaces with the 
\textit{total space}\index{Cube!total space of a}
\[
 E:=\bigoplus_{I\in 
\Po^k_+}E_I.
\]
We denote the elements of $E$ by $v=\sum_{I\in\Po^k_+}v_I$ or by 
$v=(v_I)_{I\in\Po^k_+}$ with $v_I\in E_I$. By abuse of notation, we will  
call $E$ a $k$-dimensional cube as well.
For convenience, we let a $0$-dimensional cube be defined by the total space 
$\{0\}$. The spaces $E_I$ are called the \textit{axes of $E$}\index{Cube!axes 
of a}.

Let $(E_I)$ and $(E'_I)$ be $k$-dimensional 
cubes.
For each partition $\nu\in\Part(I)$, $I\in\Po^k_+$, let $f^\nu$ be an 
$\R$-$\ell(\nu)$-multilinear map
\[
 f^\nu\colon E_\nu:=E_{\nu_1}\times\ldots\times 
E_{\nu_{\ell(\nu)}}\to 
E'_I, 
\]
\[
v_\nu:=(v_{\nu_1},\ldots,v_{\nu_{\ell(\nu)}})\mapsto 
f^\nu(v_\nu).
\]
A \textit{morphism}\index{Cube!morphism of} of (locally convex) $k$-dimensional 
cubes 
$E$ and $E'$ is a (continuous) map of 
the form
\[
 f\colon E\to E',\quad \sum_{I\in 
\Po^k_+}v_I\mapsto\sum_{I\in 
\Po^k_+}
\sum_{\nu\in \Part(I)}f^\nu(v_\nu ).
\]
The composition of two morphisms is simply the composition of maps.
We define the \textit{product}\index{Cube!product of} $E\times E'$ by
$(E\times E')_I:=E_I\times E'_I$.
\end{definition}
Clearly,
a morphism $f$ of $k$-multilinear bundles (that are also 
locally convex $k$-multilinear bundles) is continuous if and only if all 
$f^\nu$ are continuous.
\begin{theorem}\label{theomspace}
\
\begin{enumerate}
 \item[(a)]The (locally convex) $k$-dimensional cubes and their 
(continuous) morphisms  
form a category, 
which we will 
call the category of (locally convex) $k$-multilinear 
spaces.\index{$k$-multilinear space}
\index{$k$-multilinear space!locally convex}
\item[(b)] A morphism $f\colon E\to E'$ of 
$k$-dimensional cubes is 
invertible if and only if $f^\nu$ is a bijection for all partitions of the form
$\nu=\{I\}$, $I\in\Po^k_+$, i.e., for all partitions of length one. In this 
case 
$f^{-1}$ is again a 
morphism of $k$-dimensional cubes.
\item[(c)] If $f\colon E\to E'$ is a morphism of locally convex $k$-dimensional 
cubes such that 
$f^{\{I\}}$ is bijective with continuous inverse
for all $I\in\Po^k_+$, then $f$ is invertible.
\end{enumerate}
\end{theorem}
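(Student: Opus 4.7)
The proof of (a) is to verify that the composition of two morphisms of $k$-dimensional cubes is again a morphism; associativity and the existence of an identity are then routine. Given $f\colon E\to E'$ with family $(f^\mu)$ and $g\colon E'\to E''$ with family $(g^\nu)$, I would substitute and expand multilinearly. For fixed $I\in\Po^k_+$ and $\nu\in\Part(I)$, the map $g^\nu$ is evaluated on inputs of the form $\sum_{\mu_j\in\Part(\nu_j)}f^{\mu_j}(v_{\mu_j})$ in each slot, and multilinear expansion splits this into terms indexed by the ``combined partition'' $\omega=\mu_1\cup\cdots\cup\mu_{\ell(\nu)}\in\Part(I)$, which is automatically a refinement of $\nu$ (so $\nu\preceq\omega$ and $\mu_j=\nu_j|\omega$). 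Regrouping by $\omega$ yields the composition formula
\[
(g\circ f)^\omega=\sum_{\nu\preceq\omega}\,g^\nu\circ\bigl(f^{\nu_1|\omega}\times\cdots\times f^{\nu_{\ell(\nu)}|\omega}\bigr),
\]
which is $\R$-multilinear on $E_\omega$, so $g\circ f$ is a morphism. The identity on $E$ is $\id_E^\omega=\id_{E_I}$ if $\omega=\{I\}$ and $0$ otherwise, and associativity comes for free from the underlying set-theoretic composition.

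For (b), the ``only if'' direction is immediate: if $v\in E$ has only $v_I\in E_I$ non-zero, then $f^\nu(v_\nu)$ vanishes unless every part $\nu_j$ of $\nu$ equals $I$, which by disjointness of the parts forces $\nu=\{I\}$. Hence $f$ restricted to $E_I$ is $f^{\{I\}}$, and any two-sided inverse of $f$ must supply a two-sided inverse of each $f^{\{I\}}$. For the ``if'' direction, assume each $f^{\{I\}}$ is a bijection and construct the candidate inverse $g\colon E'\to E$ by induction on $\ell(\omega)$. The base case $\ell(\omega)=1$ is handled by setting $g^{\{I\}}:=(f^{\{I\}})^{-1}$. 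For $\ell(\omega)>1$, the composition formula from (a) isolates the contribution indexed by $\nu=\omega$, namely $g^\omega\circ\bigl(f^{\{\omega_1\}}\times\cdots\times f^{\{\omega_{\ell(\omega)}\}}\bigr)$, while every $\nu\prec\omega$ contributes a term involving $g^\nu$ with $\ell(\nu)<\ell(\omega)$, already defined. Demanding $(g\circ f)^\omega=0$ and composing on the right with the bijections $(f^{\{\omega_j\}})^{-1}$ determines $g^\omega$ uniquely. By construction $g\circ f=\id_E$. Running the same induction from the other side produces $g'\colon E'\to E$ with $f\circ g'=\id_{E'}$, and then the standard calculation $g=g\circ(f\circ g')=(g\circ f)\circ g'=g'$ shows that $g$ is a two-sided inverse.

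Part (c) is then essentially automatic: by hypothesis the $(f^{\{I\}})^{-1}$ are continuous, and the same inductive recipe expresses each $g^\omega$ as a finite sum of compositions of continuous multilinear maps with the continuous inverses $(f^{\{\omega_j\}})^{-1}$, so $g^\omega$ is itself continuous multilinear and $g$ is a morphism in the category of locally convex $k$-multilinear spaces. I expect the main obstacle to be combinatorial rather than conceptual: one must verify precisely that in the sum defining $(g\circ f)^\omega$ the term indexed by $\nu=\omega$ is the unique contribution of maximal $\ell(\nu)$, and that all other $\nu\preceq\omega$ have $\ell(\nu)<\ell(\omega)$. This triangularity of the composition with respect to $\ell$ is exactly what makes the inductive definition of $g^\omega$ well-posed and unique, and it is the point at which the bookkeeping around the induced partitions $\nu_j|\omega$ needs to be handled carefully.
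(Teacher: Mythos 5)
Your argument is essentially the proof the paper relies on: the paper itself only cites \cite[Theorem MA.6, p.172]{Bert} and records the resulting composition formula in Remark \ref{remgenorgder}, and your derivation of
$(g\circ f)^\omega=\sum_{\nu\preceq\omega}g^\nu\circ(f^{\nu_1|\omega}\times\cdots\times f^{\nu_{\ell(\nu)}|\omega})$
by multilinear expansion, together with the triangular induction on $\ell(\omega)$ that solves $(g\circ f)^\omega=0$ for $g^\omega$ after composing with the $(f^{\{\omega_j\}})^{-1}$, is exactly Bertram's construction; part (c) then follows because each $g^\omega$ is built from finitely many sums and compositions of continuous multilinear maps and the assumed continuous inverses. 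The one soft spot is the ``only if'' direction of (b): since ``invertible'' here means bijective as a map (the theorem separately asserts that $f^{-1}$ is then a morphism), you cannot yet assume the set-theoretic inverse respects the grading, so the phrase ``any two-sided inverse of $f$ must supply a two-sided inverse of each $f^{\{I\}}$'' presupposes what is to be shown. Injectivity of $f^{\{I\}}$ does follow from $f|_{E_I}=f^{\{I\}}$, but surjectivity needs its own short induction on $|I|$: given $w\in E'_I$, pick $v$ with $f(v)=w$; the $E'_J$-component of $f(v)$ depends only on the $v_K$ with $K\subseteq J$, so working upward from singletons and using the already-established injectivity forces $v_K=0$ for $K\subsetneq I$ and hence $f^{\{I\}}(v_I)=w$. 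This is a two-line patch, not a flaw in the overall strategy.
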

\begin{proof}
 Items (a) and (b) are just \cite[MA.6, p.172]{Bert} and
 (c) follows from the inductive construction in that 
proof.
\end{proof}
We denote the category of $k$-multilinear spaces by 
$\MSpacek{k}$\nomenclature{$\MSpacek{k}$}{}.
\begin{remark}\label{remgenorder}
 It is calculated in the proof of \cite[Theorem MA.6, p.172]{Bert}  that the 
composition of morphisms $f\colon E\to E'$, $g\colon E'\to E''$ of 
$k$-dimensional cubes $E, E'$ and $E''$ is given by
\begin{align}\label{formulacubemult}
 (g\circ 
f)^\nu(v_\nu)=\sum_{\omega\preceq\nu}g^\omega\left(f^{
\omega_1|\nu}(v_{\omega_1|\nu}),\ldots,f^{
\omega_{\ell(\omega)}|\nu}(v_{\omega_{\ell(\omega)}|\nu}) \right).
\end{align}
Of course the sets
$\underline{\omega_1|\nu},\ldots,\underline{\omega_{\ell(\omega)}|\nu}$ need 
not be in (graded) lexicographic order but
by abuse of notation, we also write $g^\omega$ for the map that arises from 
permuting the factors.
\end{remark}
\begin{definition}\label{defsubcube}
Let $(E_I)$ be a $k$-dimensional cube.
For $P\subseteq \Po^k_+$, we define the 
\textit{restriction}\index{Cube!restriction}
$((E|_P)_I)$ of $(E_I)$ by 
\[
(E|_P)_I:=\begin{cases}
           E_I&\quad\text{if}\ I\in P\\
           \{0\}&\quad\text{if}\ I\in\Po^k_+\setminus P.
          \end{cases}
\]  
It has the total space
\[
 E|_P=\bigoplus_{J\in P}E_J\oplus\bigoplus_{I\in\Po^k_+\setminus P}\{0\}.
\]
When convenient, we identify the restriction $E|_P$ with the respective 
$n$-dimensional cube in the obvious way if $P=\Po^n_+\subseteq\Po^k_+$ for 
$n\leq k$. 
If $E|_{\Poee{k}}=E$ holds, i.e., if $E_I=\{0\}$ for $I\in\Po^k_1$, we 
call $E$ \textit{purely even}\index{Cube!purely even}.
\end{definition}
\begin{lemma}\label{lemcubesubalg}
 Let $P\subseteq \Po_+^k$ be a subset such that 
$\{\sum_I\ep_Ia_I\colon I\in P,a_I\in\R\}$ is a subalgebra of 
$\R[\ep_1,\ldots,\ep_k]$. Then every morphism of $k$-multilinear spaces 
$f\colon E\to E'$ can be restricted in a natural way to a morphism $f|_P\colon 
E|_P\to E'|_P$.
This restriction defines a functor 
\[
\MSpacek{k}\to\MSpacek{k}
\]
that respects products.
\end{lemma}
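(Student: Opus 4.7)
The plan is first to unpack the subalgebra hypothesis on $P$ into a concrete combinatorial condition, then to give the only sensible definition of $f|_P$, and finally to verify functoriality via the composition formula of Remark \ref{remgenorder}; preservation of products will then be immediate.

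First I would observe that since $\ep_I\ep_J = \ep_{I\cup J}$ if $I\cap J = \emptyset$ and $0$ otherwise, the assumption that $\{\sum_{I\in P}\ep_I a_I : a_I\in\R\}$ is a subalgebra of $\R[\ep_1,\ldots,\ep_k]$ is equivalent to the combinatorial statement: whenever $I_1,\ldots,I_r\in P$ are pairwise disjoint, their union $I_1\cup\cdots\cup I_r$ also belongs to $P$. This is the form of the hypothesis I will actually use.

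Next, given a morphism $f\colon E\to E'$ with components $f^\nu\colon E_\nu\to E'_I$ indexed by $\nu\in\Part(I)$ and $I\in\Po^k_+$, I define
\[
 (f|_P)^\nu := \begin{cases} f^\nu & \text{if } I\in P \text{ and } \nu_j\in P \text{ for all } 1\le j\le \ell(\nu),\\ 0 & \text{otherwise.}\end{cases}
\]
Because $(E|_P)_J = \{0\}$ for $J\notin P$, any component with either $I\notin P$ or some $\nu_j\notin P$ is forced to vanish, so this is the only possible definition; it is clearly (continuous) multilinear and assembles into a morphism $f|_P\colon E|_P\to E'|_P$ of (locally convex) $k$-multilinear spaces. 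The identity is obviously preserved.

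The main step, and the only one requiring care, is functoriality $(g\circ f)|_P = g|_P\circ f|_P$. Both sides vanish on $\nu\in\Part(I)$ unless $I\in P$ and every $\nu_j\in P$, so fix such a $\nu$. Expanding via the formula of Remark \ref{remgenorder},
\[
 (g\circ f)^\nu(v_\nu) = \sum_{\omega\preceq\nu} g^\omega\bigl(f^{\omega_1|\nu}(v_{\omega_1|\nu}),\ldots,f^{\omega_{\ell(\omega)}|\nu}(v_{\omega_{\ell(\omega)}|\nu})\bigr),
\]
while $(g|_P\circ f|_P)^\nu(v_\nu)$ gives the same sum but restricted to those $\omega$ for which every $\omega_i$ belongs to $P$ (needed so that the $g^\omega$-factor survives restriction) and every block of each $\omega_i|\nu$ belongs to $P$ (needed for the $f^{\omega_i|\nu}$-factors). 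The latter is automatic since the blocks of $\omega_i|\nu$ are among the $\nu_j$, all of which lie in $P$ by assumption on $\nu$. For the former, note that for $\omega\preceq\nu$ each $\omega_i$ is the disjoint union of the elements of $\omega_i|\nu$, i.e.\ a disjoint union of certain $\nu_j\in P$; hence by the subalgebra condition, $\omega_i\in P$ automatically. So the two sums agree term by term, and functoriality follows.

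Finally, since $(E\times E')_I = E_I\times E'_I$, restriction to $P$ gives $(E\times E')|_P = E|_P\times E|_P'$, and the local formula for $f|_P$ shows $(f\times g)|_P = f|_P\times g|_P$, so the functor respects products. The only nontrivial step is the third one, where the subalgebra hypothesis is used precisely to guarantee that coarsenings of partitions stay inside $P$; everything else is bookkeeping in the definitions.
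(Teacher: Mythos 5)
Your proof is correct and follows essentially the same route as the paper: the same definition of $(f|_P)^\nu$ and the same use of the subalgebra hypothesis, namely that disjoint unions of members of $P$ remain in $P$, so that a partition all of whose blocks lie in $P$ has its total set (and every coarsening's blocks) in $P$. The only difference is that you verify functoriality term by term via the composition formula of Remark \ref{remgenorder}, whereas the paper gets it in one line by observing that $f|_P$ is the set-theoretic restriction of $f$ to $E|_P$ corestricted to $E'|_P$, so composition of restrictions is automatically the restriction of the composition; both arguments are valid.
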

\begin{proof}
 Let $E,E'$ be $k$-dimensional cubes and $f\colon E\to E'$ be a morphism 
given by
 \[
  f^\nu\colon E_{\nu_1}\times\cdots\times 
E_{\nu_{\ell(\nu)}}\to E'_I,
 \]
 for $\nu\in \Part(I)$ and $I\in\Po^k_+$. 
 If $I\notin P$, then there exists $1\leq i\leq\ell(\nu)$ such that 
$\nu_i\notin 
P$, which implies $f(E|_P)\subseteq E'|_P$.
 Therefore, we can define $f|_P$ by setting $(f|_P)^\nu:=f^\nu$ if 
$\nu_1,\ldots,\nu_{\ell(\nu)}\in P$ and $(f|_{P})^\nu:=0$ else.
Let $E''$ be another $k$-dimensional cube and $g\colon E'\to E''$ be a morphism.
Since $g(E'|_P)=g|_P(E'|_P)$ holds, functoriality follows. 
That the restriction respects products is obvious.
\end{proof}
The purely even $k$-dimensional cubes clearly form a full subcategory of 
$\MSpacek{k}$ which we denote by 
$\MSpaceke{k}$.\nomenclature{$\MSpaceke{k}$}{} It follows from Lemma 
\ref{lemcubesubalg} that we have an essentially surjective restriction functor
\[
 \MSpacek{k}\to\MSpaceke{k},\quad E\mapsto E|_{\Poee{k}}\quad\text{and}\quad 
f\mapsto f|_{\Poee{k}}
\]
for $E,E'\in\MSpacek{k}$ and $f\colon E\to E'$ a morphism.
\begin{definition}\label{defsgnpart}
 Let $I\in \Po^k_+$ and $\nu=\{\nu_1,\ldots,\nu_\ell\}\in \Part(I)$. We 
define a tuple $(\nu_1|\cdots|\nu_\ell)$ by concatenating the elements of 
$\nu_1,\ldots,\nu_\ell$ 
in ascending order and define $\sgn(\nu)$\nomenclature{$\sgn(\nu)$}{}, the 
\textit{sign 
of}\index{Partition!sign of a} $\nu$, as the 
sign of the permutation needed to bring this tuple into strictly ascending 
order.
\end{definition}
This definition depends on the order one chooses on the partitions. However, 
the sign of a partition taken with regard to the lexicographic order is the 
same as when one takes it with regard to the graded lexicographic order, 
because changing the position of sets with even cardinality does not change 
the sign. We will only use these two orders in the following.
\begin{example}
 Let $\nu=\{\{2\},\{1,3\}\}$. Then we get the tuple $(\nu_1|\nu_2)=(2,1,3)$ 
and 
to 
permutate this tuple to $(1,2,3)$, the permutation $\sigma=(1, 2)$ is needed. 
Thus $\sgn(\nu)=-1$.
\end{example}

\begin{lemma}\label{lemmspaceinus}
 Let $E, E'\in \MSpaceke{k}$ and $f\colon E\to E'$ be a morphism 
defined by the family $(f^\nu)_{\nu\in\Part(\{1,\ldots,k\})}$. Setting 
$E^-:=E$, 
we let the 
morphism $f^-\colon E^-\to E'^-$ be given by 
$(\sgn(\nu)f^\nu)_{\nu\in\Part(\{1,\ldots,k\})}$. This defines 
a functor
\[
 {}^-\colon\MSpaceke{k}\to\MSpaceke{k}.
\]
The functor is inverse to itself and respects products.
\end{lemma}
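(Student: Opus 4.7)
The plan is to reduce everything to a single sign identity and verify the remaining claims directly. On objects, $E^- = E$, so there is nothing to check; on morphisms, $(f^-)^\nu = \sgn(\nu) f^\nu$ is again multilinear, hence defines a morphism of $k$-multilinear spaces. Identity is preserved since $\id_E$ has non-zero components only for length-one partitions $\{I\}$, and $\sgn(\{I\}) = 1$. The involutive property $(f^-)^- = f$ is immediate from $\sgn(\nu)^2 = 1$, so ${}^-$ will be inverse to itself once functoriality is established. Product compatibility is equally direct: from $(f \times f')^\nu = f^\nu \times (f')^\nu$, attaching the same scalar $\sgn(\nu)$ to both factors gives $(f \times f')^- = f^- \times (f')^-$.

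The step with content is $(g \circ f)^- = g^- \circ f^-$. By the composition formula from Remark \ref{remgenorder}, the $\nu$-component of the left-hand side is
\[
\sgn(\nu) \sum_{\omega \preceq \nu} g^{\omega}\bigl(f^{\omega_1|\nu}(v_{\omega_1|\nu}), \ldots, f^{\omega_{\ell(\omega)}|\nu}(v_{\omega_{\ell(\omega)}|\nu})\bigr),
\]
while, by multilinearity of each $g^\omega$, the $\nu$-component of the right-hand side is
\[
\sum_{\omega \preceq \nu} \sgn(\omega)\Bigl(\prod_{i=1}^{\ell(\omega)} \sgn(\omega_i|\nu)\Bigr)\, g^{\omega}\bigl(f^{\omega_1|\nu}(v_{\omega_1|\nu}), \ldots, f^{\omega_{\ell(\omega)}|\nu}(v_{\omega_{\ell(\omega)}|\nu})\bigr).
\]
Since the identity must hold for all morphisms $f,g$ (and one can isolate a single $\omega_0$ by taking $g^\omega = 0$ for $\omega \neq \omega_0$), functoriality reduces to the combinatorial identity
\[
\sgn(\nu) = \sgn(\omega) \cdot \prod_{i=1}^{\ell(\omega)} \sgn(\omega_i|\nu), \qquad \omega \preceq \nu.
\]
Because we are in $\MSpaceke{k}$, only partitions whose blocks all have even cardinality give non-vanishing terms, so it suffices to prove the identity in that setting.

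To establish the sign identity, I would decompose the permutation that sorts $(\nu_1|\cdots|\nu_{\ell(\nu)})$ into ascending order into three stages. First, rearrange the blocks $\nu_j$ (each already ascending internally) so that those sharing an $\omega$-parent are grouped together, with the groups in $\omega$-lex order and the children in $\nu$-lex order within each group. This is a permutation of even-length blocks, and since swapping adjacent blocks of sizes $p$ and $q$ contributes $(-1)^{pq} = +1$ whenever one of $p,q$ is even, this stage contributes sign $+1$. Second, within each $\omega_i$-group, sort the elements of $\omega_i$ from the order prescribed by $\omega_i|\nu$ into ascending order; by definition this contributes $\sgn(\omega_i|\nu)$, yielding $\prod_i \sgn(\omega_i|\nu)$ overall. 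Third, the concatenation $(\omega_1|\cdots|\omega_{\ell(\omega)})$ is sorted into ascending order by a permutation of sign $\sgn(\omega)$. Composing the three stages gives the desired identity. The main obstacle is precisely this combinatorial identity, and in particular the vanishing of the first-stage sign is where the purely even hypothesis is essential; without it, a mixed variant would appear with extra signs from block interchanges.
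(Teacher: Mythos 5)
Your proof is correct and follows essentially the same route as the paper: reduce functoriality via the composition formula to the identity $\sgn(\nu)=\sgn(\omega)\cdot\prod_i\sgn(\omega_i|\nu)$ for $\omega\preceq\nu$ with all blocks of even cardinality, and prove that identity by decomposing the sorting permutation, using that rearranging blocks of even length contributes sign $+1$. Your three-stage decomposition is just a slightly more explicit version of the paper's argument, and your treatment of identities, involutivity and products matches what the paper dismisses as obvious.
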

\begin{proof}
 Let $E,E',E''\in\MSpaceke{k}$
and let $f\colon E\to E'$, $g\colon E'\to E''$ be morphisms. 
To check functoriality, it suffices to assume 
$I=\{i_1,\ldots,i_s\}\in\Poee{k}$ and 
$\nu\in\Parte(I)$ because if any $|\nu_j|$ is odd, 
then 
$f^\nu=0$ holds.
Recall formula (\ref{formulacubemult}) from Remark \ref{remgenorder}. On the 
one 
hand, we have
\begin{align*}
 &\big((g\circ 
f)^-\big)^\nu(v_\nu)=\\
&\sgn(\nu)\sum_{\omega\in\Parte(I),\ \omega\preceq\nu}
g^\omega\left(f^ {
\omega_1|\nu}(v_{\omega_1|\nu}),\ldots,f^{
\omega_{\ell(\omega)}|\nu}(v_{\omega_{\ell(\omega)}|\nu}) \right).
\end{align*}
On the other hand, we calculate
\begin{align*}
 (g^-\circ f^-)^\nu(v_\nu)=&
 \sum_{\substack{\omega\in\Parte(I),\\ \omega\preceq\nu}}
 \sgn(\omega)
 \cdot\sgn(\omega_1|\nu)\cdots\sgn(\omega_{\ell(\omega)}|\nu)\\
 &\phantom{\sum}\quad
 g^\omega\left(f^{
\omega_1|\nu}(v_{\omega_1|\nu}),\ldots,f^{
\omega_{\ell(\omega)}|\nu}(v_{\omega_{\ell(\omega)}|\nu}) \right).
\end{align*}
The sign $\sgn(\nu)$ describes the reordering of the tuple 
$(\nu_1|\cdots|\nu_{\ell(\nu)})$ to $(i_1,\ldots, i_s)$.
For $\omega\preceq\nu$ let 
$\omega_j|\nu=\{\nu_1^j,\ldots,\nu^j_{\ell_j}\}\in\Part(\omega_j)$. Then 
\[
\sgn(\omega)
 \cdot\sgn(\omega_1|\nu)\cdots\sgn(\omega_{\ell(\omega)}|\nu)
\]
 gives the sign of the reordering of the tuple
$(\nu^1_1|\cdots|\nu^1_{\ell_1}|\cdots|\nu^{\ell}_1|\cdots|\nu^{\ell}_{
\ell_{\ell(\omega)} })$ to $(i_1,\ldots, i_s)$. Since we only need to 
consider 
$\nu_j$ with even cardinality, reordering 
$(\nu^1_1|\cdots|\nu^1_{\ell_1}|\cdots|\nu^{\ell}_1|\cdots|\nu^{\ell}_{
\ell_{\ell(\omega)} })$ to $(\nu_1|\cdots|\nu_{\ell(\nu)})$ does not change the 
sign and it follows $\sgn(\nu)=\sgn(\omega)
 \cdot\sgn(\omega_1|\nu)\cdots\sgn(\omega_{\ell(\omega)}|\nu)$. This implies 
$g^-\circ f^-=(g\circ f)^-$.
That the functor respects products is obvious.
\end{proof}
The motivation for the lemma is essentially to substitute the infinitesimal 
generators $\ep_i$ with $\oddgen_i$. For more details see Remark 
\ref{remminus} below.
\subsection{Multilinear Bundles}
\begin{definition}[{compare \cite[15.4, p.81]{Bert}}]\label{defmbun}
\
\begin{enumerate}
 \item[(a)] Let $E$ be a locally convex $k$-dimensional cube.
 A \textit{multilinear bundle (with base $M$, of degree 
$k$)}\index{Multilinear bundle} is a smooth fiber bundle 
$F$ over a manifold $M$ with typical fiber $E$ together with an equivalence 
class of bundle atlases
such that 
the change of charts leads to an isomorphism of 
locally convex $k$-dimensional cubes on the fibers.
 \item[(b)] Let $F$ and $F'$ be multilinear bundles of degree $k$ with base 
$M$, resp.\ $M'$. A \textit{morphism of multilinear 
bundles}\index{Multilinear bundle!morphism of} is a smooth fiber 
bundle morphism $f\colon F\to F'$ that locally (i.e., in 
bundle charts) leads to a morphism of the respective $k$-dimensional cubes 
in 
each fiber.
\end{enumerate}
We identify multilinear bundles of degree zero with their base manifold in the 
obvious way.
\end{definition}
It follows from Theorem \ref{theomspace} that the multilinear bundles form a 
category 
which we denote by $\MBun$\nomenclature{$\MBun$}{}. Multilinear bundles 
of degree $k$ form a full subcategory 
denoted by $\MBunk{k}$\nomenclature{$\MBunk{k}$}{}.
\begin{remark}\label{fmbunmorph}
 The above definition means that in bundle charts a morphism $f\colon F\to F'$ 
of multilinear bundles of degree $k$ with fiber $E$, resp.\ 
$E'$, has the form
\begin{align*}
 U\times E\to U'\times E',\quad 
 (x,(v_I))\mapsto
 \Big(\varphi(x),\sum_{I\in\Po^k_+}
\sum_{\nu\in \Part(I)}f_{x}
^\nu(v_\nu )\Big),
\end{align*}
where $\varphi\colon U\to U'$ is the local representation of the morphism 
induced by $f$ on the base manifolds and 
\[
f_{x}^\nu\colon 
E_{\nu_1}\times\ldots\times E_{\nu_{\ell(\nu)}}\to E'_I
\]
is a multilinear map for each $x\in U$, $I\in\Po^k_+$ and $\nu\in \Part(I)$. A 
function of this form is smooth if and only if $\varphi\colon U\to U'$ is 
smooth and the maps $(x,v_\nu)\mapsto 
f_{x}^\nu(v_\nu)$ are all smooth. This can be easily checked by 
restricting to the closed subspaces of $E$ defined by a given partition. 
\end{remark}
\begin{definition}
 Let $F$ and $F'$ be multilinear bundles of degree $k$ over $M$ and $M'$, 
with typical fiber $E$ and $E'$. Further, let $\{\varphi_\a\colon V_\a\to 
U_\a\times E\colon \a\in A\}$ and
$\{\psi_\b\colon V'_\a\to 
U'_\a\times E'\colon \b\in B\}$ be bundle atlases of $F$ and $F'$.
We let the \textit{product bundle} $F\times F'$\index{Multilinear 
bundle!product} be the multilinear bundle of 
degree $k$ over $M\times M'$ with typical fiber $E\times E'$ given by the 
bundle atlas 
\[
\{\varphi_\a\times\psi_b\colon V_\a\times V'_b\to (U_\a\times 
U'_b)\times (E\times E')\colon \a\in A,\b\in B\}.
\] 
\end{definition}
\begin{lemma/definition}\label{lemdefmlsubbundle}
 Let $F$ be a multilinear bundle of degree $k$ with 
typical fiber $E$ and bundle atlas 
$\{\varphi_\a\colon V_\a\to U^\a\times E\colon \a\in A\}$. Let 
$P\subseteq \Po_+^k$ be a subset 
such that 
$\{\sum_I\ep_Ia_I\colon I\in P,a_I\in\R\}$ is a subalgebra of 
$\R[\ep_1,\ldots,\ep_k]$. Each change of bundle charts
$\varphi_{\a\b}\colon U_{\a\b}\times E\to U_{\b\a}\times E$
of $F$ restricts to a map 
\[
\varphi_{\a\b}|_P\colon U_{\a\b}\times E|_P\to U_{\b\a}\times E|_P.
\]
We denote the multilinear bundle of degree $k$ that is defined by the charts 
$\varphi_\a|_{(\varphi_a)^{-1}(U_\a\times E|_P)}$ by $F|_P$ and call it a 
\textit{subbundle of $F$}\index{Multilinear bundle!subbundle of a}. If 
$P=\Po^n_+\subseteq \Po^k_+$ for $k\leq n$, we identify $F|_P$ with the 
respective multilinear bundle of degree $n$ in the obvious way (compare to 
Definition \ref{defsubcube}).
Any morphism $f\colon F\to F'$ of multilinear bundles of degree $k$ 
restricts 
to a morphism $f|_P\colon F|_P\to F'|_P$. This restriction defines a functor
\[
 \MBunk{k}\to\MBunk{k}\nomenclature{$"|_P$}{}
\]
that respects products.
\end{lemma/definition}
\begin{proof}
Applying Lemma \ref{lemcubesubalg} pointwise to transition maps and morphisms 
of 
multilinear 
bundles in their chart representation shows that $F|_P$ and $f|_P$ are 
well-defined and that the restriction is functorial.
That the restriction respects products is obvious.
\end{proof}
Note that in the situation of the definition, the identification of 
$F|_{\Po^n_+}$ with a bundle of degree $n$ is not a morphism of multilinear 
bundles but only a diffeomorphism of manifolds. 
There are cases where a subbundle is a multilinear bundle of lesser 
degree in a natural way that are not contained in the above definition. One 
important example is the following.
\begin{lemma/definition}
 Let $\pi\colon F\to M$ be a multilinear bundle of degree $k$ with 
typical fiber 
$E$. For each $I\in \Po^k_+$, we have a 
subbundle $F|_{\{I\}}$ which has the structure of a vector bundle with fiber 
$E_I$ in a natural way.
The $2^k-1$ vector bundles obtained in this way are called the 
\textit{axes of $F$}\index{Multilinear bundle!axes of a}.
\end{lemma/definition}
\begin{proof}
For any change of bundle charts $\varphi_{\a\b}\colon U_{\a\b}\times E\to 
U_{\b\a}\times E$ of $F$, we have
that the corresponding change of bundle charts 
\[
 \varphi_{\a\b}|_{\{I\}}\colon U_{\a\b}\times E_I\to 
U_{\b\a}\times E_I
\]
of $F|_{\{I\}}$ is linear in the second component. Thus the restricted charts 
define a vector bundle with typical fiber $E_I$.
\end{proof}
Bertram uses the above fact in \cite[15.4, p.81]{Bert} to 
\textit{define} multilinear 
bundles by letting these axes take an analogous role to the axes in cubes. It 
is 
easy to see that both definitions are equivalent but our 
definition via bundle charts makes the relation of multilinear bundles 
to supermanifolds more direct.
\begin{definition}\label{defpebundle}
 A \textit{purely even multilinear bundle}\index{Multilinear bundle!purely 
even} is a multilinear bundle $F$ of degree $k$ such 
that $F|_{\Poee{k}}=F$.
The purely even multilinear bundles form a full subcategory of 
$\MBun$ (resp.\ $\MBunk{k}$), which we denote by 
$\MBun_{\ol{0}}$\nomenclature{$\MBun_{\ol{0}}$}{} 
(resp.\ $\MBunk{k}_{\ol{0}}$\nomenclature{$\MBunk{k}_{\ol{0}}$}{}) and we have 
the essentially surjective restriction functor
\[
 \MBun\to\MBun_{\ol{0}},\quad F\mapsto F|_{\Poee{k}}\quad \text{and}\quad 
f\mapsto f|_{\Poee{k}}
\]
for $F,F'\in\MBunk{k}$ and $f\in\Hom_{\MBunk{k}}(F,F')$ (resp.\ 
$\MBunk{k}\to\MBunk{k}_{\ol{0}}$).
\end{definition}

\begin{example}\label{extangent}
Let $k\in\N$.
 \begin{enumerate}
  \item[(a)] Let $U\subseteq E$ be an open subset of a locally convex vector 
space $E$. Define inductively $TU:=U\times\ep_1 E$, $T^2U=T(U\times 
\ep_1E)=U\times\ep_1 E\times\ep_2 E\times\ep_1\ep_2 E$ and so on. 
Then $T^kU=U\times\bigoplus_{I\in 
\Po^k_+}\ep_IE$ is a 
trivial multilinear bundle over $U$ of degree $k$. The axes are the trivial 
vector 
bundles 
$U\times\ep_IE\to U$.
  \item[(b)] Let $M$ be a manifold with the atlas 
$\{\varphi_\alpha\colon V_\alpha\to U_\a\colon\alpha\in A\}$. Then $T^kM$ is a 
multilinear 
bundle over $M$ of degree $k$ with the bundle atlas 
$\{T^k\varphi_\alpha\colon T^kV_{\alpha}\to T^kU_\a\colon\alpha\in A\}$. Let 
$\varphi_{\a\b}$ 
be a 
change of charts. Using (a), the corresponding change of 
bundle charts is given by
\[
 T^k\varphi_{\a\b}\Big(x,\sum_{I\in\Po^k_+} 
\ep_Iv_I\Big)=\Big(\varphi_{\a\b}(x),\sum_{m=1}^k\sum_{|I|=m}\ep_I\sum_{\nu\in 
\Part(I)}d^m\varphi_{\a\b}(x)(v_\nu) \Big)
\]
(see \cite[Theorem 7.5, p.47]{Bert}). The axes of $T^kM$ are thus 
all 
isomorphic to $TM$ and we write $\ep_ITM$\nomenclature{$\ep_ITM$}{} to 
differentiate between them. 
It also follows from \cite[Theorem 7.5, 
p.47]{Bert} that for each smooth map $f\colon M\to N$ between manifolds, 
$T^kf$ is a 
morphism of multilinear bundles and we get a functor $T^k\colon \Man\to 
\MBunk{k}$ in this way.
 \end{enumerate}
\end{example}
\begin{lemma}\label{lemtaxes}
 Let $k\in\N$ and let $f\colon M\to N$ be a smooth map between manifolds.
 For each $I\in\Po^k_1$, we have
 \[
  T^kf(\ep_ITM)\subseteq \ep_I TN
 \]
 with the notation of Example \ref{extangent}(b).
\end{lemma}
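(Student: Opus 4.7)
The plan is to reduce everything to the explicit coordinate formula for $T^k\varphi_{\a\b}$ recorded in Example \ref{extangent}(b), and observe that when the input is concentrated on a single axis $\ep_ITM$, essentially all terms in that formula vanish for purely combinatorial reasons (the fact that a partition consists of pairwise disjoint blocks).

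Concretely, first I would pick bundle charts $T^k\varphi_\a\colon T^kV_\a\to T^kU_\a$ of $T^kM$ and $T^k\psi_\b\colon T^kV'_\b\to T^kU'_\b$ of $T^kN$ as in Example \ref{extangent}(b). In such charts an element of the axis $\ep_ITM$ has the form $\big(x,\sum_{J\in\Po^k_+}\ep_Jv_J\big)$ with $v_J=0$ for $J\neq I$. In charts the map $T^kf$ is given by the composition $T^k(\psi_\b\circ f\circ\varphi_\a^{-1})$, so by Example \ref{extangent}(b) applied to the smooth map $g:=\psi_\b\circ f\circ\varphi_\a^{-1}$, the chart image is
\[
\Big(g(x),\sum_{m=1}^k\sum_{|J|=m}\ep_J\sum_{\nu\in\Part(J)}d^mg(x)(v_\nu)\Big).
\]

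The key observation is then that for a partition $\nu=\{\nu_1,\ldots,\nu_{\ell(\nu)}\}\in\Part(J)$, the tuple $v_\nu=(v_{\nu_1},\ldots,v_{\nu_{\ell(\nu)}})$ can be nonzero only if every $\nu_j$ equals $I$; but because the blocks of a partition are pairwise disjoint, this forces $\ell(\nu)=1$ and $\nu=\{I\}$, in which case $J=I$. Hence the double sum collapses to the single term $\ep_I\,dg(x)(v_I)$, so the image lies in the axis $\ep_ITN$ in the chart $T^k\psi_\b$. Since axes are defined independently of the chart (the transition maps of the axis subbundle $F|_{\{I\}}$ are $\R$-linear in the fiber), this is enough.

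I expect no real obstacle: the argument does not use that $|I|$ is odd, it works for every $I\in\Po^k_+$; the restriction to $I\in\Po_1^k$ in the statement is only what is needed later (for instance in Proposition \ref{proppeventk}, where purely odd axes must be separated from purely even ones). The only mild subtlety is to note that the naive chart description is intrinsic because each axis $F|_{\{I\}}$ is a genuine subbundle whose transition functions are $\R$-linear, so membership in $\ep_ITN$ is a chart-independent condition.
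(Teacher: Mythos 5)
Your proof is correct and is essentially the paper's argument spelled out: the paper disposes of this with ``obvious because $T^kf$ is a morphism of multilinear bundles,'' and your chart computation (the partition sum collapsing to the single term $\nu=\{I\}$ because blocks are pairwise disjoint) is precisely the reason why any morphism of multilinear bundles preserves each axis. Your closing remarks on chart-independence and on the statement holding for all $I\in\Po^k_+$ are accurate but add nothing beyond what the paper's appeal to Definition \ref{defmbun} already contains.
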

\begin{proof}
This is obvious because $T^kf$ is a morphism of multilinear bundles.
\end{proof}

\begin{lemma}\label{lemmbunminus}
 Applying the functor from Lemma \ref{lemmspaceinus} pointwise to 
transition maps and local chart representations of morphisms, we get a functor
\[
 {}^-\colon\MBun_{\ol{0}}\to\MBun_{\ol{0}},\quad F\mapsto F^-\quad 
\text{and}\quad h\mapsto h^-,\nomenclature{$"|^{-}$}{}
\]
where $F,F'\in\MBunk{k}_{\ol{0}}$ and $h\in\Hom_{\MBunk{k}_{\ol{0}}}(F,F')$. 
This functor is an 
equivalence of categories and so are the restrictions
$\MBunk{k}_{\ol{0}}\to\MBunk{k}_{\ol{0}}$. All these functors respect products.
\end{lemma}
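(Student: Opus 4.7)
The plan is to lift the pointwise functor from Lemma \ref{lemmspaceinus} to bundles by applying it fiberwise to transition cocycles. Fix $F\in\MBunk{k}_{\ol{0}}$ over the base $M$ with a bundle atlas $\{\varphi_\a\colon V_\a\to U_\a\times E\}$, where $E$ is a purely even locally convex $k$-cube. For each change of bundle charts $\varphi_{\a\b}\colon U_{\a\b}\times E\to U_{\b\a}\times E$ and each $x\in U_{\a\b}$, the fiber map $(\varphi_{\a\b})_x$ is an isomorphism of purely even $k$-cubes, given by a family $((\varphi_{\a\b})_x^\nu)_{\nu\in\Parte(\{1,\ldots,k\})}$. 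I would define the new transition $\varphi_{\a\b}^{-}$ by $((\varphi_{\a\b}^{-})_x)^\nu := \sgn(\nu)(\varphi_{\a\b})_x^\nu$. Smoothness of $\varphi_{\a\b}^-$ is immediate from smoothness of $\varphi_{\a\b}$ in view of Remark \ref{fmbunmorph}, since each $\sgn(\nu)\in\{\pm 1\}$ is a constant. Applying Lemma \ref{lemmspaceinus} pointwise at each $x$ in the overlap of three charts yields the cocycle identity $\varphi_{\b\g}^-\circ\varphi_{\a\b}^- = (\varphi_{\b\g}\circ\varphi_{\a\b})^- = \varphi_{\a\g}^-$; in particular $\varphi_{\a\a}^- = \id$. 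This cocycle defines a multilinear bundle $F^-$ over $M$ with the same typical fiber.

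For morphisms $h\colon F\to F'$, whose local representations $h^{\a\b'}$ consist of families $(h^{\a\b',\nu}_x)_\nu$ of multilinear maps smoothly parametrized by $x$, I would set $(h^-)^{\a\b',\nu}_x := \sgn(\nu)h^{\a\b',\nu}_x$. Again pointwise functoriality from Lemma \ref{lemmspaceinus}, applied to the local data, shows that $h^-$ is compatible with the transitions $\varphi_{\a\b}^-$ and $\varphi_{\a'\b'}'^{-}$, that $(h\circ g)^- = h^-\circ g^-$ and $(\id_F)^- = \id_{F^-}$. To see independence of the construction from the choice of bundle atlas, observe that any two equivalent bundle atlases of $F$ merge to a common bundle atlas whose transitions are still pointwise isomorphisms of purely even cubes; applying ${}^-$ to the union yields the same equivalence class of bundle atlases on $F^-$.

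To establish the equivalence of categories, I would invoke the fact from Lemma \ref{lemmspaceinus} that on cubes the assignment ${}^-$ is an involution: $(E^-)^- = E$ and $(f^-)^- = f$ with $\sgn(\nu)^2 = 1$. Applying this pointwise to transitions and morphisms yields $(F^-)^- = F$ and $(h^-)^- = h$ on the nose (rather than up to natural isomorphism), so ${}^-$ is an isomorphism of categories, hence in particular an equivalence. The restriction to $\MBunk{k}_{\ol{0}}$ for fixed $k$ is the same construction applied within that subcategory. Finally, products are respected because the product bundle atlas is formed by charts $\varphi_\a\times\psi_\b$ whose transitions factor as $\varphi_{\a\b}\times\psi_{\b\b'}$, and the pointwise functor from Lemma \ref{lemmspaceinus} already respects products of cubes.

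The only potentially subtle step is checking that the sign twist preserves the smooth structure globally, i.e., that the new transitions really form a smooth cocycle rather than just a set-theoretic one. This reduces to the observation in Remark \ref{fmbunmorph} that smoothness of a fiber-bundle morphism of multilinear bundles is equivalent to joint smoothness of the component maps $(x,v_\nu)\mapsto f_x^\nu(v_\nu)$, and multiplication by a constant sign preserves this. Once that is in hand, the rest is formal bookkeeping driven entirely by Lemma \ref{lemmspaceinus}.
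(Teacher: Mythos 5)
Your proposal is correct and follows the same route as the paper: apply the sign-twist functor of Lemma \ref{lemmspaceinus} pointwise to the transition cocycles and to local representations of morphisms, use Remark \ref{fmbunmorph} to see that smoothness is preserved, and deduce the equivalence of categories from the involutive character of the cube-level functor, with products handled locally. The paper's own proof is just a terser version of exactly this argument.
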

\begin{proof}
 Locally this is obvious in view of Remark \ref{fmbunmorph} and Lemma 
\ref{lemmspaceinus}.
 By functoriality, applying this to all the change of charts of $F$ leads to 
new 
cocycles that define a bundle $F^-$.
Likewise, applying it pointwise to the chart representation of a morphism 
$h\colon F\to F'$ of purely even multilinear bundles leads in a functorial 
way to a morphism $h^-\colon F^-\to F'^-$. Obviously $(F^-)^-\cong F$ and 
$(h^-)^-=h$ under this identification, which shows that the functor is 
an equivalence of categories. 
That these functors respect products also follows because it is true locally.
\end{proof}
\begin{remark}\label{remminus}
 The intuition behind the above equivalence of categories is as follows. One 
can take the case of higher tangent bundles as exemplary and define  
$k$-dimensional cubes as families $(\ep_IE_I)$ of vector spaces. A morphism 
$f\colon E\to E'$ of $k$-multilinear spaces consists then as before of maps
\[
 f^\nu\colon E_{\nu_1}\times\cdots\times E_{\nu_{\ell(\nu)}}\to E'_I
\]
for $I\in\Po^k_+$, $\nu\in\Part(I)$, where it is understood that 
\[
 f^\nu(\ep_{\nu_1}v_{\nu_1},\ldots,\ep_{\nu_{\ell(\nu)}}v_{\nu_{\ell(\nu)}})=
\underbrace{\ep_{\nu_1}\cdots\ep_{\nu_{\ell(\nu)}}}_{=\ep_I}f^\nu(v_{\nu_1},
\ldots , v_ { \nu_ { \ell(\nu) } } ).
\]
Because of the relations of the infinitesimal generators, this point of view 
also explains why one only considers partitions for the morphisms and why the 
order of the partitions can usually be disregarded.

We would like to substitute the generators $\ep_i$ with the odd generators 
$\oddgen_i$. One immediately sees that the order of the partition now plays a 
role, as a change of signs might occur. However, as we have shown in Lemma 
\ref{lemmspaceinus}, in the 
case of purely even multilinear bundles a consistent choice can be made such 
that this substitution leads to well-defined bundles and morphisms.
In general this is not the case. With the notation of Lemma \ref{lemmspaceinus} 
one could define a new composition law 
\begin{align*}
 &(g^-\circ f^-)^\nu:=\\
 &\sum_{\omega\preceq\nu}\sgn(\sigma_{\omega|\nu})
 \sgn(\omega)
 \sgn(\omega_1|\nu)\cdots\sgn(\omega_{\ell(\omega)}|\nu)
 g^\omega\left(f^{
\omega_1|\nu},\ldots,f^{
\omega_{\ell(\omega)}|\nu} \right),
\end{align*}
where $\sigma_{\omega|\nu}\in \Sy_{|I|}$ is the 
permutation that reorders the tuple $({\nu_1}|\cdots|{\nu_{\ell}})$ to
$\big({\nu^1_1}|\cdots|{\nu^{\ell(\omega)}_{\ell_{\ell(\omega)}}}\big)$. 
If all 
$\nu_i$ have even cardinality then $\sgn(\sigma_{\omega|\nu})=1$ and we get the 
same definition as above.
In general the formula does not appear to lead to 
natural manifold structures though there is one interesting case where it 
does: If only those $f^\nu$, where $\nu$ contains at most one set of odd 
cardinality, are not zero, the same argument as before applies. This means that 
for a supermanifold $\M$ of Batchelor type, at least $\M_\L^-$ would be 
well-defined. 
However, morphisms remain problematic.
\end{remark}
\subsubsection{The tangent bundle of a multilinear bundle}
Let $F$ be a multilinear bundle of degree $k$ over $M$ with typical 
fiber $E$. 
Assume that $M$ is modelled on $E_0$ and let $\varphi\colon U\times E\to 
V\times E$ be a change of bundle charts. Then by definition
\[
\varphi(x,(v_I)_{I\in\Po^k_+})=\varphi_0(x)+\sum_{I\in\Po^k_+}\sum_{\nu\in\Part(
I)}b^\nu(x,v_\nu),
\]
where $\varphi_0\colon U\to V$ is a diffeomorphism and $b^\nu(x,\bl)\colon 
E_{\nu_1}\times\cdots\times E_{\nu_{\ell(\nu)}}\to E_I$ are multilinear maps 
for $x\in U$, $\nu\in\Part(I)$. For $y\in E_0$ and $(w_I)_{I\in\Po^k_+}\in E$, 
we calculate
\begin{align*}
 &d\varphi\big((x,(v_I)_{I\in\Po^k_+}), (y,(w_I)_{I\in\Po^k_+})\big)=\\
 &\quad d\varphi_0(x,y)+\sum_{I\in\Po^k_+}\sum_{\nu\in\Part(
I)}d_1b^\nu(x,y,v_\nu)+\sum_{I\in\Po^k_+}\sum_{\nu\in\Part(
I)}\sum_{i=1}^{\ell(\nu)}b^\nu(x,\reallywidehat[i]{v_\nu}),
\end{align*}
where 
$\reallywidehat[i]{v_\nu}:=(v_{\nu_1},\ldots,v_{\nu_{i-1}},w_{\nu_i},v_{\nu_{i+1
} } ,\ldots, v_{\nu_{\ell(\nu)}} )\in E_{\underline{\nu}}$.
The corresponding change of charts for the tangent bundle $TF$ is given by
\[
  (\varphi,d\varphi)\colon (U\times E_0)\times E^2\to(V\times E_0)\times E^2.
\]
For $I\in\Po^k_+$ let $\pr^I_1\colon E_I\times E_I\to E_I$ be the projection to 
the first and $\pr^I_2\colon E_I\times E_I\to E_I$ be the projection to the 
second component. Then
\begin{align*}
 &(\varphi,d\varphi)\big((x,(v_I)_{I\in\Po^k_+}), (y,(w_I)_{I\in\Po^k_+})\big)=
 (\varphi_0(x),d\varphi_0(x,y))+\\
 &\qquad\sum_{I\in\Po^k_+}\sum_{\nu\in\Part(
I)}
\textstyle\Big(\pr_1^I( b^\nu(x,v_\nu))+\pr_2^I\big(d_1b^\nu(x,y,v_\nu)+ 
\sum_{i=1}^{\ell(\nu)}b^\nu(x,\reallywidehat[i]{v_\nu})\big)\Big)
\end{align*}
holds.
Thus, $TF$ can be seen as a multilinear bundle of degree $k$ over $TM$ with 
typical fiber $E\times E$. The exact same calculation shows that for a morphism 
of 
multilinear bundles $f\colon F\to F'$, the tangent map $Tf\colon TF\to TF'$ 
is also a morphism of multilinear bundles. We have thus shown:
\begin{lemma}
 For each $k\in\N_0$, the tangent functor $T\colon\Man\to\Man$ restricts to a 
functor
 \[
  T\colon\MBunk{k}\to\MBunk{k}.
 \]
\end{lemma}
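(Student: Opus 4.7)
The plan is to verify that the explicit computation carried out immediately before the lemma statement does indeed witness $TF$ as a multilinear bundle of degree $k$, and then transfer the argument to morphisms.

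First, I would fix a multilinear bundle $\pi\colon F\to M$ of degree $k$ with typical fiber $E=\bigoplus_{I\in\Po^k_+}E_I$ and a bundle atlas $\{\varphi_\a\colon V_\a\to U_\a\times E\}$. The plan is to endow $E\times E$ with the structure of a locally convex $k$-dimensional cube by declaring its axes to be $(E\times E)_I:=E_I\times E_I$ for $I\in\Po^k_+$ and to use the bundle atlas $\{T\varphi_\a\colon TV_\a\to TU_\a\times E\times E\}$ of $TF$ (viewed as a smooth fiber bundle over $TM$) as the candidate bundle atlas for the multilinear structure.

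Next I would use the explicit formula for the change of charts of $TF$ displayed just before the lemma. For each $(x,y)\in U\times E_0=TU$ the transformation on the fiber $E\times E$ has the form $\Phi_{x,y}(v,w)$ whose $I$-component is
\[
\Phi_{x,y}(v,w)_I=\sum_{\nu\in\Part(I)}\Bigl(b^\nu(x,v_\nu),\;d_1b^\nu(x,y,v_\nu)+\sum_{i=1}^{\ell(\nu)}b^\nu(x,\reallywidehat[i]{v_\nu})\Bigr).
\]
The key point is to read this as a sum indexed by partitions $\nu\in\Part(I)$ of multilinear maps
\[
\tilde b^\nu_{x,y}\colon(E_{\nu_1}\times E_{\nu_1})\times\cdots\times(E_{\nu_{\ell(\nu)}}\times E_{\nu_{\ell(\nu)}})\to E_I\times E_I,
\]
defined by sending $\bigl((v_{\nu_j},w_{\nu_j})\bigr)_j$ to the corresponding term in the bracket above. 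A quick check shows that $\tilde b^\nu_{x,y}$ is $\ell(\nu)$-multilinear: the first component only uses the $v_{\nu_j}$ linearly, while in the second component the $d_1b^\nu$-term is linear in each $v_{\nu_j}$ and the sum $\sum_i b^\nu(x,\reallywidehat[i]{v_\nu})$ is linear in $w_{\nu_j}$ through the $i=j$ summand and linear in $v_{\nu_j}$ through the other summands. Smoothness in $(x,y)$ of $(x,y)\mapsto\tilde b^\nu_{x,y}$ follows from smoothness of $\varphi_\a$ and $\varphi_\b$ together with Remark \ref{fmbunmorph}. Hence $T\varphi_{\a\b}$ is in bundle charts a morphism of locally convex $k$-dimensional cubes on every fiber, so $TF$ is indeed a multilinear bundle of degree $k$ over $TM$.

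For morphisms $f\colon F\to F'$ in $\MBunk{k}$, the same kind of computation applied to the local form of $f$ from Remark \ref{fmbunmorph} expresses $Tf$ in bundle charts $\{T\varphi_\a\}$ and $\{T\varphi'_{\a'}\}$ as a smooth fiber bundle morphism whose fiber component admits exactly the same decomposition into multilinear pieces $\tilde f^\nu_{x,y}$. Therefore $Tf$ is a morphism of multilinear bundles of degree $k$. Functoriality of the restricted assignment $F\mapsto TF$, $f\mapsto Tf$ follows directly from functoriality of $T\colon\Man\to\Man$, and the case $k=0$ is the statement that $T\colon\Man\to\Man$ restricts to itself, which is trivial. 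The only mildly subtle point, and the one I would pay attention to, is choosing the correct cube structure on $E\times E$: grouping together the $v_I$-part and the $w_I$-part into a single axis $E_I\times E_I$ (rather than, say, trying to enlarge the degree) is what makes all the terms in the change of charts land in the prescribed multilinear form.
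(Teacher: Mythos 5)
Your proposal is correct and follows essentially the same route as the paper, whose proof of this lemma is precisely the chain-rule computation displayed immediately before the statement, read as exhibiting $TF$ as a multilinear bundle over $TM$ with typical fiber the cube $E\times E$ having axes $E_I\times E_I$. Your explicit verification that each $\tilde b^\nu_{x,y}$ is $\ell(\nu)$-multilinear in the paired arguments $(v_{\nu_j},w_{\nu_j})$ just makes precise what the paper leaves implicit.
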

The functor $T\colon\MBunk{k}\to\MBunk{k}$ commutes with 
restrictions of bundles:
\begin{lemma}\label{lemmbuntrestr}
 Let $k\in\N_0$, $F\in\MBunk{k}$ and $P\subseteq\Po^k_+$ as in 
Lemma/ Definition \ref{lemdefmlsubbundle}. Then
 $(TF)|_P\cong T(F|_P)$ holds as multilinear bundles. 
If $f\colon F\to F'$ is a morphism of multilinear bundles, then 
\[
  (Tf)|_P=T(f|_P)\colon T(F|_P)\to T(F|_P)
\] 
holds under the above 
identification.
\end{lemma}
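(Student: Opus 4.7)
The plan is to verify the statement locally in bundle charts and then use the local-to-global principle implicit in the definition of multilinear bundles. Both $(TF)|_P$ and $T(F|_P)$ are multilinear bundles of degree $k$ over $TM$, with typical fiber the $k$-multilinear space whose axis at $I \in P$ is $E_I \times E_I$ and whose other axes are trivial (using, on the $T(F|_P)$ side, that $T$ commutes with products of vector spaces and that restriction commutes with products). It therefore suffices to identify the cocycles defining the two bundles.

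I would fix a bundle chart change for $F$, which (as recalled just before the lemma) has the form
\[
 \varphi(x,(v_I)) = \varphi_0(x) + \sum_{I \in \Po^k_+}\sum_{\nu \in \Part(I)} b^\nu(x,v_\nu),
\]
and compute the change of charts for $T(F|_P)$ and for $(TF)|_P$ separately. For $T(F|_P)$, first apply the restriction from Lemma/Definition \ref{lemdefmlsubbundle}: by the argument in Lemma \ref{lemcubesubalg}, $b^\nu$ survives precisely when $\underline{\nu} = I \in P$ and every $\nu_i \in P$ (the latter automatic from the former by the subalgebra hypothesis). Then applying $T$ yields the formula displayed above the lemma but summed only over such $\nu$. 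For $(TF)|_P$, start with the explicit formula for the $TF$-cocycle (derived right above the lemma) and then apply the restriction $|_P$, which again keeps exactly those partition-indexed summands with $I \in P$ and $\nu_i \in P$. The two expressions agree term by term after the obvious identification $(E \times E)|_P \cong E|_P \times E|_P$, which is natural in the bundle chart; naturality of this identification under change of charts is exactly what the matching of the cocycles proves. Thus the identity on underlying sets furnishes a canonical isomorphism $(TF)|_P \cong T(F|_P)$ of multilinear bundles.

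For the statement about morphisms, I would apply the same local analysis to the chart description of $f$ given in Remark \ref{fmbunmorph}: both $(Tf)|_P$ and $T(f|_P)$ act on a chart by the same surviving partition-indexed multilinear terms (with the differential in the base direction attached exactly as in the preparatory computation before the lemma), and therefore coincide under the identification from the first part. Functoriality of $T$ and of $|_P$ (Lemma/Definition \ref{lemdefmlsubbundle}) then ensures global consistency.

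The main obstacle is bookkeeping: one must confirm that the restriction of the $TF$-cocycle along $P$ and the $T$-differential of the restricted $F$-cocycle yield \emph{literally the same} partition sums, not merely isomorphic data. This is a purely combinatorial check enabled by the subalgebra condition on $P$, which guarantees that a summand indexed by a partition $\nu$ of $I$ survives in both constructions under the same condition ($I \in P$ together with $\nu_i \in P$ for all $i$). No nontrivial analytic input is needed beyond the chain and product rules already used in deriving the $TF$-cocycle.
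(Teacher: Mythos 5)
Your proposal is correct and follows essentially the same route as the paper: both verify locally in bundle charts that the cocycle of $(TF)|_P$ and that of $T(F|_P)$ (and likewise the chart representations of $(Tf)|_P$ and $T(f|_P)$) coincide, the paper phrasing this as $d(\varphi_{\a\b}|_P)=(d\varphi_{\a\b})|_P$. One harmless slip: the subalgebra hypothesis gives that $\nu_i\in P$ for all $i$ implies $\underline{\nu}\in P$, not the converse as your parenthetical claims (e.g.\ $P=\Poee{k}$, $\nu=\{\{1\},\{2,3,4\}\}$); since the surviving condition you actually use is ``all $\nu_i\in P$,'' the argument is unaffected.
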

\begin{proof}
 Let $F$ have typical fiber $E$ and let the base $M$ of $F$ be modelled on 
$E_0$. Since each change of charts
$\varphi_{\a\b}\colon U_{\a\b}\times E\to U_{\b\a}\times E$
of $F$ restricts to a map 
\[
\varphi_{\a\b}|_P\colon U_{\a\b}\times E|_P\to U_{\b\a}\times E|_P,
\]
we have that $d\varphi_{\a\b}$ restricts to
\[
 d\varphi_{\a\b}|_P=d(\varphi_{\a\b}|_P)\colon (U_{\a\b}\times E_0)\times 
(E|_P\times E|_P)\to U_{\b\a}\times E|_P.
\]
It follows that
$(\varphi_{\a\b},d\varphi_{\a\b})|_P=(\varphi_{\a\b}|_P,d\varphi_{\a\b}|_P)$ 
holds.
We can repeat the same argument for morphisms.
\end{proof}
By using this lemma, we shall simply write $TF|_P$, resp.\ $Tf|_P$, for the 
respective restrictions in the sequel.

\subsection{Inverse Limits of Multilinear Bundles}
\begin{lemma}\label{lemqkn}
 Let $k\in\N_0$ and $ F$ be a multilinear bundle of degree $k$ 
with typical fiber $E$ and the bundle atlas 
$\{\varphi_\a\colon V_\a\to U_\a\times E\colon\a\in A\}$. For 
$n\leq k$, the 
projections
\[
 (q^k_n)_\a\colon U_\a\times E\to U_\a\times E|_{\Po^n_+},\quad 
\big(x,(v_I)_{I\in\Po^k_+}\big)\mapsto \big(x,(v_I)_{I\in\Po^n_+}\big)
\]
define a smooth surjective morphism $q^k_n\colon F\to F|_{\Po^n_+}$ 
with 
$\varphi_\a|_{\Po^n_+}\circ q^k_n\circ \varphi^{-1}_\a= 
(q^k_n)_\a$\nomenclature{$q^k_n$}{}.
\end{lemma}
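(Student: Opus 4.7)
The plan is to construct $q^k_n$ by gluing the local projections $(q^k_n)_\a$, so the central issue is compatibility with changes of bundle charts. First, I would record the general form of a change of bundle chart. By definition of a multilinear bundle (see Remark \ref{fmbunmorph}), any change of charts $\varphi_{\a\b}\colon U_{\a\b}\times E\to U_{\b\a}\times E$ has the form
\[
\varphi_{\a\b}(x,(v_I)_{I\in\Po^k_+}) = \big(\varphi_0^{\a\b}(x),\ \textstyle\sum_{I\in\Po^k_+}\sum_{\nu\in\Part(I)} b^\nu_{\a\b}(x,v_\nu)\big),
\]
where the $I$-component on the right depends only on those $v_J$ with $J\in\nu$ for some partition $\nu$ of $I$, i.e.\ on $v_J$ with $J\subseteq I$.

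The key observation is that if $I\in\Po^n_+$, then every $J\in\nu\in\Part(I)$ satisfies $J\subseteq I\subseteq\{1,\ldots,n\}$, hence $J\in\Po^n_+$. Consequently the $I$-components of $\varphi_{\a\b}(x,(v_I)_{I\in\Po^k_+})$ for $I\in\Po^n_+$ depend only on $x$ and on $(v_J)_{J\in\Po^n_+}$. This is precisely the statement that
\[
(q^k_n)_\b\circ\varphi_{\a\b} \;=\; \varphi_{\a\b}|_{\Po^n_+}\circ (q^k_n)_\a
\]
on $U_{\a\b}\times E$, where $\varphi_{\a\b}|_{\Po^n_+}$ is the restricted change of charts from Lemma/Definition \ref{lemdefmlsubbundle}. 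Note that $\Po^n_+$ indeed satisfies the subalgebra condition there, since $\{\sum_{I\in\Po^n_+}\ep_Ia_I\}\subseteq\R[\ep_1,\ldots,\ep_k]$ is closed under multiplication.

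With this compatibility established, the local morphisms
\[
\varphi_\a|_{\Po^n_+}^{-1}\circ (q^k_n)_\a\circ \varphi_\a\colon V_\a\to (F|_{\Po^n_+})|_{\varphi_\a|_{\Po^n_+}(U_\a\times E|_{\Po^n_+})}
\]
agree on overlaps $V_\a\cap V_\b$ and glue to a well-defined global map $q^k_n\colon F\to F|_{\Po^n_+}$ satisfying the identity $\varphi_\a|_{\Po^n_+}\circ q^k_n\circ\varphi^{-1}_\a = (q^k_n)_\a$ by construction. Smoothness and surjectivity are immediate in charts, as $(q^k_n)_\a$ is simply a coordinate projection; the same local description shows that $q^k_n$ is fiberwise a morphism of $k$-multilinear spaces (identity on the partitions $\nu\in\Part(I)$ with $I\in\Po^n_+$, and zero elsewhere), hence a morphism of multilinear bundles. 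The main (and essentially only) obstacle is the first step, which is entirely a bookkeeping argument about partitions being contained in their total sets — no further analytic input is required.
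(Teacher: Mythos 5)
Your proof is correct and follows essentially the same route as the paper: both reduce the lemma to the chart-compatibility identity $(q^k_n)_\b\circ\varphi_{\a\b}=\varphi_{\a\b}|_{\Po^n_+}\circ(q^k_n)_\a$, which holds because every set occurring in a partition of some $I\in\Po^n_+$ again lies in $\Po^n_+$, and then glue the local projections. Only a small wording slip in your closing remark: fiberwise, $q^k_n$ is the identity on the length-one partitions $\{I\}$ with $I\in\Po^n_+$ and zero on all other partitions, not the identity on every $\nu\in\Part(I)$.
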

\begin{proof}
 We only need to show that $q^k_n$ is well-defined, then smoothness and 
surjectivity follow immediately.
Let $\a,\b\in A$, $x\in U_{\a\b}$ and $(v_I)_{I\in\Po^k_+}\in E|_{\Po^n_+}$.
Then
$\varphi_{\a\b}(x,(v_I)_I)=\varphi_{\a\b}|_{\Po^n_+}(x,(v_I)_I)$ holds for 
the 
change of bundle charts $\varphi_{\a\b}$. In 
particular, we have
$\varphi_{\b\a}|_{\Po^n_+}\circ \varphi_{\a\b}(x,(v_I)_I)=(x,(v_I)_I)$. It 
follows
\[
 \varphi_{\b\a}|_{\Po^n_+}\circ (q^k_n)_\b\circ\varphi_{\a\b}= (q^k_n)_\a
\]
on $U_{\a\b}\times E$.
With this, the lemma follows from the local description 
of 
smooth maps between manifolds.
\end{proof}
\begin{definition}
Let $(F_k)_{k\in\N_0}$ be a family of multilinear bundles 
$F_k$ of degree $k$ with 
typical fiber $E^{(k)}$ and respective bundle atlas 
$\{\varphi_\a^{(k)}\colon V^{(k)}_\a\to U_\a\times E^{(k)}\colon\a\in 
A\}$ 
such that for all 
$n\leq k$, we have 
$E^{(k)}|_{\Po^n_+}=E^{(n)}$ and 
$\varphi_{\a}^{(k)}|_{\Po^n_+}=\varphi_\a^{(n)}$
with the identifications from Definition 
 \ref{defsubcube} and Lemma/Definition \ref{lemdefmlsubbundle}. In particular 
$F_k|_{\Po^n_+}=F_n$ and all $F_k$ are bundles over $F_0$. 
Then 
the family 
\[
  \big((F_k)_{k\in\N_0},(q^k_n)_{n\leq k}\big),
\] 
where $q^k_n$ is defined as in Lemma \ref{lemqkn}, is called an \textit{inverse 
system of multilinear bundles}\index{Inverse system!of multilinear bundles}.
We shall simply write $(F_k,q^k_n)$\nomenclature{$(F_k,q^k_n)$}{} in this 
situation. We call
$\{\varphi^\a_k\colon k\in\N_0,\a\in A\}$ an \textit{adapted 
atlas}\index{Inverse 
system!of multilinear bundles!adapted atlas} of $(F_k,q^k_n)$.
Two adapted atlases of $(F_k,q^k_n)$ are \textit{equivalent} if they lead to 
equivalent atlases for each $F_k$.

Let $(F_k,q^k_n)$ and $(F'_k,q'^k_n)$ be inverse systems of multilinear 
bundles. A \textit{morphism of inverse systems of multilinear 
bundles}\index{Inverse system!of multilinear bundles!morphism of} is a family 
$(f_k)_{k\in\N_0}$ of morphisms $f_k\colon F_k\to F'_k$ of multilinear 
bundles such that $q'^k_n\circ f_k= f_n\circ q^k_n$ for all $n\leq k$. We write 
$(f_k)_{k\in\N_0}\colon (F_k,q^k_n) \to(F'_k,q'^k_n)$.
\end{definition}
\begin{proposition}\label{proplimmbun}
 The inverse system of multilinear bundles with their morphisms 
are a subcategory of the category of inverse systems of
topological spaces and their morphisms.
Let $(F_k,q^k_n)$ be an inverse system of multilinear bundles and 
$\{\varphi_\a^{(k)}\colon k\in\N_0,\a\in A\}$ be an adapted atlas of 
$(F_k,q^k_n)$. Then 
$\{\varprojlim_k \varphi_\a^{(k)}\colon \a\in A\}$ is an atlas of
$\varprojlim_k F_k$. Equivalent adapted atlases of $(F_k,q^k_n)$ lead to 
equivalent atlases of $\varprojlim_k F_k$.\index{Inverse limit!of multilinear 
bundles}
With this manifold structure, $\varprojlim_k 
f_k\colon \varprojlim_k F_k\to \varprojlim_k F'_k$ is smooth for 
morphisms $(f_k)_{k\in\N_0}\colon (F_k,q^k_n) \to(F'_k,q'^k_n)$ of inverse 
systems of multilinear bundles.
\end{proposition}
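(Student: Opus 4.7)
The plan is to first dispose of the subcategory claim: by Lemma \ref{lemqkn} each projection $q^k_n\colon F_k\to F_n$ is smooth and hence continuous, and the compatibility $q'^k_n\circ f_k=f_n\circ q^k_n$ built into the definition of a morphism of inverse systems of multilinear bundles is exactly the condition making $(f_k)$ a morphism of the underlying inverse systems in $\Top$. So there is a faithful forgetful functor to inverse systems of topological spaces.

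For the manifold structure, I first identify the inverse limit of the typical fibers. The adapted atlas condition $\varphi_\a^{(k)}|_{\Po^n_+}=\varphi_\a^{(n)}$ for $n\leq k$ forces the axis $E_I$ of $E^{(k)}$ to be independent of $k$ once $k\geq\max I$, so I may denote it simply by $E_I$. In the local chart $\varphi_\a^{(k)}$, the projection $q^k_n$ becomes the coordinate projection $U_\a\times\bigoplus_{I\in\Po^k_+}E_I\to U_\a\times\bigoplus_{I\in\Po^n_+}E_I$ forgetting the components with $\max I>n$. Writing $\F:=\prod_I E_I$ with $I$ ranging over all nonempty finite subsets of $\N$ and endowed with the product topology, the topological inverse limit of the $V_\a^{(k)}$ is therefore canonically homeomorphic to $U_\a\times\F$, which is an open subset of the Hausdorff locally convex space $E_0\times\F$. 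The chart $\varprojlim_k\varphi_\a^{(k)}$ is then the induced homeomorphism, and these charts cover $\varprojlim_k F_k$ because the $U_\a$ cover $F_0$.

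To verify smooth compatibility I invoke Lemma \ref{lemdirectlimsmooth}: a map into an inverse limit is smooth if and only if its composition with each limit projection is smooth. By the tower compatibility one has $q_n\circ\varprojlim_k\varphi_{\a\b}^{(k)}=\varphi_{\a\b}^{(n)}\circ p_n$, where $p_n\colon E_0\times\F\to E_0\times\bigoplus_{I\in\Po^n_+}E_I$ is the continuous linear coordinate projection onto a finite direct summand. Since $p_n$ is smooth (as a continuous linear map, via Lemma \ref{lemmullinder}) and $\varphi_{\a\b}^{(n)}$ is smooth by hypothesis, the composition is smooth. Equivalent adapted atlases yield equivalent atlases on the limit because equivalence was defined levelwise and the argument applies to any pair of adapted charts; Hausdorffness of the total manifold is automatic from Hausdorffness of its local models.

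Finally, for a morphism $(f_k)\colon(F_k,q^k_n)\to(F'_k,q'^k_n)$, the compatibility $q'^k_n\circ f_k=f_n\circ q^k_n$ already produces a well-defined continuous map $\varprojlim_k f_k$ between the limits, and smoothness follows by the same invocation of Lemma \ref{lemdirectlimsmooth} applied to $q'_n\circ\varprojlim_k f_k=f_n\circ q_n$, which is a composition of smooth maps. The only mildly delicate point throughout is ensuring that the infinite-product fiber $\F$ interacts correctly with the locally convex differential calculus, but this is precisely what Lemma \ref{lemprodsmooth} and Lemma \ref{lemdirectlimsmooth} are designed to guarantee, so no serious obstacle arises.
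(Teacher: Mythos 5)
Your argument follows essentially the same route as the paper: identify $\varprojlim_k(U_\a\times E^{(k)})$ with $U_\a\times\prod_I E_I$ as an open subset of a locally convex space, check that the limits of the adapted charts are homeomorphisms covering the limit, and use Lemma \ref{lemdirectlimsmooth} to get smoothness of the transition maps and of $\varprojlim_k f_k$ from smoothness at each finite level. The subcategory claim and the treatment of morphisms also match the paper's proof, modulo the omitted (but trivial) verification that $q^n_m\circ q^k_n=q^k_m$, which is needed to know that $(F_k,q^k_n)$ really is an inverse system.

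The one step that does not survive scrutiny is your disposal of the Hausdorff property: ``Hausdorffness of the total manifold is automatic from Hausdorffness of its local models'' is a false principle (the line with two origins is locally Hausdorff with Hausdorff charts but not Hausdorff), and since the paper's category $\Man$ consists of Hausdorff manifolds, this point must actually be proved. The repair is easy and you should state it: the topological inverse limit is by definition a subspace of $\prod_k F_k$, a product of Hausdorff spaces, hence Hausdorff; alternatively, the paper argues via the surjection $q_0\colon\varprojlim_k F_k\to F_0$ onto the Hausdorff base with fibers homeomorphic to the Hausdorff space $\varprojlim_k E^{(k)}$. With that substitution your proof is complete and coincides in substance with the paper's.
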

\begin{proof}
  Let $ F_k$ have the typical fiber $E^{(k)}$ and let $F_0$ be 
modelled on $E_{\emptyset}$.
 It is clear from the local definition in Lemma \ref{lemqkn} that $q^n_m \circ 
q^k_n= q^k_m$ for all $m\leq n\leq k$. It then follows from the definition that 
inverse systems of multilinear bundles, resp.\ morphisms thereof, are inverse 
systems, resp.\ morphisms thereof, in the usual sense. Clearly, the composition 
of two morphisms of inverse systems of multilinear bundles is again a morphism 
of this type.
Let 
$\{\varphi_\a^{(k)}\colon V^{(k)}_\a\to 
U_\a\times E^{(k)}\colon k\in\N_0,\a\in A\}$
be an adapted atlas of $(F_k,q^k_n)$.
By definition $E^{(k)}_I=E^{(n)}_I$ holds for all $n\leq k$ and 
$I\in\Po^n_+$. Thus, for each $\a\in A$, the local projection 
\[
(q^k_n)_\a\colon U^\a\times \prod_{I\in\Po^k_+}E^{(k)}_I\to U^\a\times 
\prod_{I\in\Po^n_+}E^{(n)}_I
\]
is just the usual projection and 
\[
\varprojlim_k(U_\a\times E^{(k)})= 
U_\a\times\prod_{I\subseteq\N,0<|I|<\infty}E^{(\max(I))}_I,
\] 
which is an open 
subset of the locally convex space
\[E_{\emptyset}\times\prod_{I\subseteq\N,0<|I|<\infty}E^{(\max(I))}_I.\]
Also by definition, $q^k_n\circ\varphi_\a^{(k)}=\varphi_\a^{(n)}\circ 
(q^k_n)_\a$ holds for all $\a\in A$, $n\leq k$ and therefore 
$\varprojlim_k\varphi_\a^{(k)}\colon 
\varprojlim_k(U_\a\times E^{(k)})\to\varprojlim_k F_k$ is well-defined and a 
homeomorphism because each $\varphi_\a^{(k)}$ is so.
We have already seen in Lemma \ref{lemqkn} that the changes of charts 
$\varphi_{\a\b}^{(k)}\colon U_{\a\b}\times E^{(k)}\to U_{\b\a}\times E^{(k)}$
define a morphism of inverse systems of multilinear bundles and that we have
\[
\textstyle
\varprojlim_k \varphi_\b^{(k)}\circ \varprojlim_k 
\big(\varphi_\a^{(k)}\big)^{-1}|_{U_{\a\b}\times 
E^{(k)}}=
\varprojlim_k \varphi_{\a\b}^{(k)}.
\]
Clearly, $\varprojlim_k(U_{\a\b}\times 
E^{(k)})$ is an open subset of $\varprojlim_k(U_{\a}\times 
E^{(k)})$ and
$\varprojlim_k\varphi^{\a\b}_k$
because $\varphi_{\a\b}^{(k)}$ is smooth for each $k\in\N_0$, so is 
$\varprojlim_k\varphi_{\a\b}^{(k)}$ by Lemma \ref{lemdirectlimsmooth}.

It remains to be seen that $\varprojlim_k F_k$ 
is covered by $\{\varprojlim_k\varphi_\a^{(k)}\colon \a\in A\}$. Because the 
index set 
$\N_0$ is countable and the maps $q^k_n$ are all surjective, the  
projections $q_n\colon\varprojlim_k F_k\to F_n$ are also surjective
(see for example \cite[Exercise 7.6.10, p. 269]{DumFoo}).
For each $n\in\N_0$ and $\a\in A$, we have
$(q^n_0)^{-1}((\varphi_\a^{(0)})^{-1}(U^\a))=(\varphi_\a^{(n)})^{-1}(U_\a\times 
E^{(n)})$ which 
implies 
\[
\textstyle
q_0^{-1}((\varphi_\a^{(0)})^{-1}(U_\a))=\varprojlim_n(\varphi_\a^{(n)})^{-1}
\big(\varprojlim_n 
(U_\a\times E^{(n)})\big).
\]
Since the sets $(\varphi^\a_0)^{-1}(U^\a)$ cover $F_0$, the result follows. The 
change 
of charts with an adapted atlas leads to smooth maps in the same way.
Because $q_0\colon\varprojlim_k F_k\to F_0$ is surjective and for each $x\in 
F_0$, we have that $q_0^{-1}(\{x\})$ is homeomorphic to the 
Hausdorff space $\varprojlim_k E^{(k)}$, it follows that $\varprojlim_k F_k$ is 
Hausdorff.

Now, let $(f_k)_{k\in\N_0}\colon (F_k,q^k_n) \to(F'_k,q'^k_n)$  
be a morphism of inverse 
systems of multilinear bundles and $\{\psi^{(k)}_\b\colon V'_\b\to U'_\b\times 
E'^{(k)}\colon k\in\N_0,\b\in B\}$ be an adapted atlas of $(F'_k,q'^k_n)$. We 
define
\[
 f^{\a\b}_k:=\psi_{\b}^{(k)}\circ 
f_k\circ(\varphi_\a^{(k)})^{-1}|
_{\varphi_\a^{(k)}\circ f_k^{-1}
(V'^{(k)}_\b)}
 \]
for $\b\in B$ and $\a\in A$. Because $f_k$ is a morphism of multilinear 
bundles, we have $\varphi^{(k)}_\a\circ f_k^{-1}(V'^{(k)}_\b)=
\big(\varphi_\a^{(0)}\circ f_0^{-1}(V'^{(0)}_\b)\big)\times E^{(k)}$ 
for all 
$k\in\N_0$. By 
definition,
\[
 (q'^k_n)_\b\circ f^{\a\b}_k=f^{\a\b}_n\circ (q^k_n)_\a
\]
holds
 and thus
\[
  \textstyle
 \varprojlim_k \psi_\b^{(k)}\circ \varprojlim_k 
f_k\circ(\varprojlim_k\varphi_\a^{(k)})^{-1}
 =\varprojlim_k f^{\a\b}_k
\]
holds 
on $\varprojlim_k \big(\big(\varphi_\a^{(0)}\circ 
f_0^{-1}(V'^{(0)}_\b)\big)\times E'^{(k)}\big)$ for all $\a\in A$, $\b\in B$. 
These maps are smooth by the same argument as above.
\end{proof}
We denote by $\MBunk{\infty}$\nomenclature{$\MBunk{\infty}$}{} the category 
of all 
manifolds arising as such a limit (together with an equivalence class of 
atlases that come from limits of equivalent adapted atlases) and morphisms 
that come from a respective limit of morphisms.
Taking the inverse limit gives us a functor from the 
category of inverse systems of topological spaces to the category of 
topological spaces that respects products. 
By the above, if we restrict this functor to the subcategory 
of inverse systems of multilinear bundles (and the respective morphisms), we 
get a functor into the category $\MBunk{\infty}$.
 We also get a functor 
to $\Man$ along the forgetful functor.
\begin{example}\label{extinfty}
  Let $M$ be a manifold modelled on the locally convex space $E$ with the atlas 
$\{\varphi_\a\colon V_\a\to U_\a\colon \a\in A\}$.
  For $n\in\N_0$ we set $\pi^n_n:=\id_{T^nM}$ and we have the natural 
projection $\pi^{n+1}_n\colon T^{n+1}M\to T^nM$. For $n<k$, we define 
inductively $\pi^k_n:=\pi^k_{k-1}\circ\cdots\circ\pi^{n+1}_n\colon T^kM\to 
T^nM$.
 Continuing from Example \ref{extangent}(b), one easily sees that 
 $(T^kM,\pi^k_n)$ is an inductive system of multilinear bundles with the 
adapted atlas $\{T^k\varphi_\a\colon 
T^kV_\a\to 
U_\a\times\prod_{I\in\Po^k_+}\ep_IE\colon \a\in A,k\in\N_0\}$. It follows from 
Proposition \ref{proplimmbun} that 
$T^\infty M:=\varprojlim_k T^kM$ is a manifold with the atlas 
$\{\varprojlim_k\varphi_\a\colon\a\in A\}$. For any smooth map $f\colon M\to N$
between manifolds, one obviously has $\pi'^k_n\circ T^kf= T^nf\circ \pi^k_n$ if 
$\pi'^k_n\colon T^kN \to T^nN$ denotes the projection. Thus 
$(T^kf)_{k\in\N_0}$ 
is a morphism of inductive systems of multilinear bundles and 
$T^\infty f:=\varprojlim_k T^kf\colon T^\infty M\to T^\infty N$ is smooth.
Moreover, for any Lie group $(G,\mu,i,e)$, we get a Lie group $(T^\infty G, 
T^\infty\mu, T^\infty i, e)$ because the inverse limit preserves products.
\end{example}
\begin{lemma}\label{lemtmbun}
 If $(F_k,q^k_n)$ is an inverse system of multilinear bundles, then so is
 $(TF_k,Tq^k_n)$ and 
 \[
 \textstyle
  T\varprojlim_k((F_k,q^k_n))\cong\varprojlim_k (TF_k,Tq^k_n)
 \]
 holds
 as manifolds. If $(f_k)_{k\in\N_0}\colon (F_k,q^k_n) \to(F'_k,q'^k_n)$ is a 
morphism of inverse systems of multilinear bundles, then 
so is $(Tf_k)_{k\in\N_0}$ and we have
\[
 \textstyle
  \varprojlim_k Tf_k=T\varprojlim_k f_k\colon T\varprojlim_k((F_k,q^k_n))\to 
 T\varprojlim_k((F'_k,q'^k_n))
\]
under the above identification.
Thus, we may consider $ T\varprojlim_k((F_k,q^k_n))$ as an object in 
$\MBunk{\infty}$ in a natural way.
\end{lemma}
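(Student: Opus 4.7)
The plan is to work chart-by-chart, since both sides of the proposed identification are constructed from the same adapted atlases and the tangent functor is local in nature.

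First I would verify that $(TF_k, Tq^k_n)$ actually forms an inverse system of multilinear bundles. From the computation preceding Lemma \ref{lemmbuntrestr}, each $TF_k$ is a multilinear bundle of degree $k$ over $TF_0$ with typical fiber $E^{(k)} \times E^{(k)}$, and the adapted-atlas property $E^{(k)}|_{\Po^n_+} = E^{(n)}$ clearly yields $(E^{(k)} \times E^{(k)})|_{\Po^n_+} = E^{(n)} \times E^{(n)}$. Moreover, by Lemma \ref{lemmbuntrestr} applied to $q^k_n$ (viewed locally as the projection killing the indices in $\Po^k_+ \setminus \Po^n_+$), we have $Tq^k_n = T(q^k_n)|_{\Po^n_+}$, so the projections fit together as the corresponding $\tilde{q}^k_n$ of Lemma \ref{lemqkn} for the family $(TF_k)$. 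Hence $\{T\varphi_\alpha^{(k)}\colon k \in \N_0, \alpha \in A\}$ is an adapted atlas for $(TF_k, Tq^k_n)$.

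Next I would identify the two manifolds $T\varprojlim_k F_k$ and $\varprojlim_k TF_k$ in charts. In an adapted chart $\varphi_\alpha^{(k)} \colon V_\alpha^{(k)} \to U_\alpha \times E^{(k)}$, the tangent bundle $TF_k$ is described by $T\varphi_\alpha^{(k)}$ as $(U_\alpha \times E^{(k)}) \times (E_\emptyset \times E^{(k)})$. Taking the inverse limit over $k$, and using that the transition maps between the factors $U_\alpha \times E^{(k)}$ in the base and $E_\emptyset \times E^{(k)}$ in the fiber respect the product structure, one obtains
\[
\textstyle\varprojlim_k\bigl((U_\alpha \times E^{(k)}) \times (E_\emptyset \times E^{(k)})\bigr) \cong \bigl(\varprojlim_k(U_\alpha \times E^{(k)})\bigr) \times \bigl(E_\emptyset \times \varprojlim_k E^{(k)}\bigr),
\]
because inverse limits commute with finite products in $\Top$. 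The right-hand side is exactly the chart description of $T\varprojlim_k F_k$ in the chart $\varprojlim_k \varphi_\alpha^{(k)}$.

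I would then check that the locally defined identifications assemble into a global diffeomorphism. The change of charts on the $T\varprojlim_k F_k$ side is $d\bigl(\varprojlim_k \varphi_{\alpha\beta}^{(k)}\bigr)$, while on the $\varprojlim_k TF_k$ side it is $\varprojlim_k T\varphi_{\alpha\beta}^{(k)} = \varprojlim_k (\varphi_{\alpha\beta}^{(k)}, d\varphi_{\alpha\beta}^{(k)})$. These coincide because, by Lemma \ref{lemdirectlimsmooth}, the differential of the limit agrees componentwise with the limit of the differentials. This identifies the bundle atlases and hence the manifolds. Applying Lemma \ref{lemmbuntrestr} pointwise also shows that the limit inherits the structure of an object in $\MBunk{\infty}$.

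Finally, for a morphism $(f_k)_{k\in\N_0}\colon (F_k, q^k_n) \to (F'_k, q'^k_n)$, functoriality of $T$ together with the fact that $T$ preserves both multilinear bundle morphisms and the defining relations $q'^k_n \circ f_k = f_n \circ q^k_n$ (obtained by applying $T$ to them and invoking Lemma \ref{lemmbuntrestr}) shows that $(Tf_k)_{k\in\N_0}$ is again a morphism of inverse systems. The equality $\varprojlim_k Tf_k = T \varprojlim_k f_k$ then follows by expressing both sides in the adapted charts as in the previous step and applying Lemma \ref{lemdirectlimsmooth}. The main technical obstacle is book-keeping: ensuring that the componentwise identifications between $\varprojlim(A_k \times B_k)$ and $(\varprojlim A_k) \times (\varprojlim B_k)$ are compatible with the multilinear bundle transition functions and with the tangent functor simultaneously, which is why the argument is best carried out in adapted charts where both structures reduce to honest products.
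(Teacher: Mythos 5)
Your proposal is correct and follows essentially the same route as the paper's proof: showing $Tq^k_n$ is the relevant projection so that $\{T\varphi_\a^{(k)}\}$ is an adapted atlas, identifying the charts of both limits, and invoking Lemma \ref{lemdirectlimsmooth} to see that $d\varprojlim_k\varphi_{\a\b}^{(k)}=\varprojlim_k d\varphi_{\a\b}^{(k)}$, with the morphism case handled by the same functoriality argument. The extra bookkeeping you supply on reordering the product factors is a harmless elaboration of what the paper leaves implicit.
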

\begin{proof}
 One easily sees from the local description of $q^k_n$ in Lemma \ref{lemqkn} 
that $Tq^k_n$ is the projection $TF\to TF|_{\Po^n_+}$. If 
$\{\varphi_\a^{(k)}\colon k\in\N_0,\a\in A\}$ is an adapted atlas of $F$, it 
follows by 
functoriality of the tangent functor that $\{T\varphi_\a^{(k)}\colon 
k\in\N_0,\a\in A\}$ 
is an adapted atlas of $(TF_k,Tq^k_n)$. For the same reason 
$(Tf_k)_{k\in\N_0}\colon (TF_k,Tq^k_n)\to (TF'_k,Tq'^k_n)$ is again a morphism.
By Lemma \ref{lemdirectlimsmooth} 
$d\varprojlim_k\varphi_{\a\b}^{(k)}=\varprojlim_k 
d\varphi_{\a\b}^{(k)}$
holds 
for any change of charts $\varphi^{(k)}_{\a\b}$.
 Thus, the change of charts of 
$T\varprojlim_k((F_k,q^k_n))$ and $\varprojlim_k (TF_k,Tq^k_n)$ is the same.
The same argument works for morphisms.
\end{proof}
In other words, taking the inverse limit commutes with the tangent functor.

\bibliographystyle{plain}
\bibliography{literatur}
\newpage
\renewcommand{\nomname}{List of Symbols}
\printnomenclature
\newpage
\printindex
\end{document}